\newtheorem{theorem}{Theorem}[section]
\newtheorem{lemma}[theorem]{Lemma}
\theoremstyle{corollary}
\newtheorem{corollary}[theorem]{Corollary}
\theoremstyle{definition}
\newtheorem{definition}[theorem]{Definition}
\newtheorem{example}[theorem]{Example}
\theoremstyle{remark}
\newtheorem{remark}[theorem]{Remark}
\numberwithin{equation}{section}
\begin{document}
\thispagestyle{empty}
\title[fixed points approximation]
{Split common fixed point problems and its variant forms}
\date{}
\author[A. K{\i}l{\i}\c{c}man and L.B. Mohammed] {A. K{\i}l{\i}\c{c}man  and L.B. Mohammed}

\address{Department of Mathematics, Faculty of Science, Universiti Putra Malaysia, 43400 Serdang Selangor, Malaysia}
 \email{akilicman@yahoo.com, lawanbulama@gmail.com}
\begin{abstract} The split common fixed point problems has found its applications in various branches of mathematics both pure and applied. It provides us a unified structure to study a large number of nonlinear mappings. Our interest here is to apply these mappings and  propose some iterative methods for solving the split common fixed point problems and its variant forms, and we prove the convergence results of these algorithms.\\

As a special case  of the split common fixed problems, we consider  the split common fixed point equality problems for the class of finite family of quasi-nonexpansive mappings. Furthermore, we consider another problem namely split feasibility and fixed point equality problems and suggest some new iterative methods and prove their convergence results for the class of quasi-nonexpansive mappings.\\

Finally, as a special case of the split feasibility and fixed point equality problems, we consider the split feasibility and fixed point problems and propose Ishikawa-type extra-gradients algorithms for solving these split feasibility and fixed point problems for the class of quasi-nonexpansive mappings in Hilbert spaces. In the end, we prove the convergence results of the proposed algorithms.\\

Results proved in this chapter continue to hold for different type of problems, such as;
convex feasibility problem, split feasibility problem and multiple-set split feasibility
problems.\\

\noindent \textbf{Keywords } Iterative Algorithms, Split Common Fixed Problem,  Weak and Strong   Convergences.\\

\noindent \textbf{2010 AMS Subject Classification } 58E35; 47H09, 47J25

\end{abstract}
\maketitle

\section{Introduction}
Functional analysis is an abstract branch of mathematics
that originated from classical analysis. The impetus came from; linear algebra,
problems related to ordinary and partial differential equations,
calculus of variations, approximation theory,
integral equations, and so on. Functional analysis can be defined
as the study of certain topological-algebraic structures and of
the methods by which the knowledge of these structures can be
applied to analytic problems, see  Rudin \cite{rudin1973functional}.\\

Fixed point theory (FPT) is one of the most powerful and fruitful tools of modern
mathematics and may be considered a core subject of nonlinear analysis.
It has been a  nourishing area of research for many mathematicians. The origins of the theory, which date to the later part of the nineteenth century, rest in the use of successive approximations to establish the existence and uniqueness of the solutions, particularly to differential equations, for example, see \cite{arino1984fixed,yamamoto1998numerical,taleb2000fixed,nieto2005contractive,pathak2007common,sestelo2015existence} and references therein.\\

The classical importance of fixed point theory in functional analysis is due to
its usefulness in the theory of ordinary and partial differential equations.
The existence or construction of a solution to a differential equation often reduces to
the existence or location of a fixed point for an operator defined on a subset of
a space of functions. Fixed point theory had also been used to determine the
existence of periodic solutions for functional differential equations when solutions
are already known to exist,  for example, see \cite{chow1974existence,grimmer1979existence,torres2003existence,kiss2012computational} and references therein.\\

Related to the FPT, we have the split common fixed point problems (SCFPP). The SCFPP was introduced and studied by Censor and Segal \cite{censor2009split} as a generalization of many existing problems in nonlinear sciences, both pure and applied. Moreover, Censor and Segal \cite{censor2009split} had shown that the problem of fixed point, convex feasibility, multiple-set split feasibility,  split feasibility and much more can be studied more conveniently as SCFPP. The results and conclusions that are true for the SCFPP continue to hold for these problems, and it shows the significance and range of applicability of the SCFPP. One of the important applications of SCFPP can be seen in intensity modulation radiation therapy  (IMRT), for more details, see Censor et al., \cite{censor2006unified}.\\

This research work falls within the general area of  ``Nonlinear Functional Analysis'', an area with the vast amount of applicability in the recent years,  as such becoming the object of an increasing amount of study. We  focus on an important topic within this area ``\textbf{ A note on the split common fixed fixed point problem and its variant forms}.'' In this regard, we discuss the SCFPP and its variant forms. We show that already known problems are special cases of the split common fixed point problems (SCFPP). We use approximation methods to suggest different iterative algorithms for solving  SCFPP and its variant forms. In the end, we give the convergence results of these algorithms.

\section{Basic Concepts and Definitions}
\subsection{Introduction}
In this section,  we give some definitions and basic results.  We start from the definition of vector space and end with some results from Hilbert spaces.    Those results that are commonly used in all the chapters are given in this section, and those results that are relevant to a particular chapter are provided at the beginning of each chapter.   In short, this section works as a foundation for the structure of this thesis.\\

\subsection{Vector Spaces} Vector spaces play a vital role in many branches of mathematics.  In fact, in various practical (and theoretical) problems we have a set V whose elements may be vectors in three-dimensional space, or sequences of numbers, or
functions, and these elements can be added and multiplied by constants (numbers) in a natural way, the result being again an element of V. Such concrete situations suggest the concept of a vector space as defined below. The definition will involve a general field $\mathbb{F},$ but in functional analysis, $\mathbb{F}$ will be $\mathbb{R}$ or  $\mathbb{C}$. The elements of $\mathbb{F}$ are called scalars,   while in this thesis they will be real or complex numbers.

\begin{definition} \normalfont A vector space over a field $\mathbb{F}$  is a nonempty set denoted by $V$ together with addition (+) and scalar multiplication $(.)$ satisfies the following conditions:
\begin{enumerate}
\item[(i)] x+y=y+x, for all $x,y \in V;$
\item[(ii)]  x+(y+w)=(x+y)+w, for all $x,y, w \in V;$
\item[(iii)] there exists  a vector denoted by $\theta$ such that   $x+\theta=x,$  for all $x\in V;$
\item[(iv)] for all $x\in V$, there exists a unique vector denoted by (-x) such that    $x+(-x)=\theta;$
\item[(v)] $\alpha.(\beta.x)=(\alpha.\beta).x,$ for all $\alpha,\beta\in\mathbb{F}$ and $x\in V;$
\item[(vi)] $\alpha.(x+y)=\alpha.x+\alpha.y,$ for all $x,y\in V$ and $\alpha\in\mathbb{F}$;
\item[(vii)] $(\alpha+\beta).x=\alpha.x+\beta.x,$ for all $\alpha,\beta\in\mathbb{F}$ and $x\in V;$
\item[(viii)] there exists $1\in\mathbb{F}$ such that   $1.x=x,$ $\forall x\in V.$
\end{enumerate}
\end{definition}

\begin{remark}\normalfont  From now we will drop the dot $(.)$ in  the scalar multiplication and denote $\alpha.\beta$
 as $\alpha\beta.$
\end{remark}

\noindent Let ${\rm v_{1},v_{2},v_{3},...,v_{n}\in V}$ and
${\rm\alpha_{1},\alpha_{2},\alpha_{3},...,\alpha_{n}}$ be scalars. Consider the equation:
\begin{equation}
{\rm\alpha_{1}v_{1}+\alpha_{2}v_{2}+\alpha_{3}v_{3}+...+\alpha_{n}v_{n} = 0}.\label{1.1a}
\end{equation}
 Trivially, \normalfont$\alpha_{1}=\alpha_{2}=\alpha_{3}=...=\alpha_{n}=0$  solves Equation (\ref{1.1a}). If it is possible to have the solution of Equation (\ref{1.1a}) with at least one of the $\alpha_{i}'s$ non zero, then the vectors ${\rm v_{1},v_{2},v_{3},...,v_{n}}$ are called \textbf{``Linearly Dependent''} otherwise they are called \textbf{``Linearly Independent''}.\\

If $\mathbb{M}\subseteq$V consist of a linearly independent set of vectors; we say that $\mathbb{M}$ is a linearly independent set.

\begin{definition}\normalfont   Span of $\mathbb{M}$ (Span$\mathbb{M}$) is defined as the set of all linear combination of  $\mathbb{M},$  i.e., Span$\mathbb{M}$ = $\{{\rm\alpha_{1}v_{1}+\alpha_{2}v_{2}+\alpha_{3}v_{3}+...,    v_{1},v_{2},v_{3},...\in V},$ where  $\alpha_{1},\alpha_{2},...$  are scalars\}.
\end{definition}

\begin{definition}\normalfont
 Let  $\mathbb{M}\subseteq$V. $\mathbb{M}$ is said to be  basis for the space V, if
\begin{enumerate}
\item[(i)] $\mathbb{M}$ is a linearly independent set,
\item[(ii)] Span$\mathbb{M}$ = V.
\end{enumerate}
\end{definition}

\begin{definition} \normalfont Let V be a vector space, the dimension of V (dim V) is the number of vectors of the basis of V.  V is of finite dimension if its dimension is finite. Otherwise, it is said to be of infinite dimensional space.
\end{definition}

\begin{definition}\normalfont  Let  $C$ be a  subset of  V.  $C$ is said to be convex, if for all $x,y\in C$, $\gamma\in[0,1],$  $(1-\gamma)x+\gamma y\in C.$ In general, for all $x_{1},x_{2},x_{3},...,x_{n}\in C$ and for $\gamma_{j}\geq 0$  such that $\sum_{j=1}^{n}\gamma_{j}=1,$ the combination $\sum_{j=1}^{n}\gamma_{j}x_{j}\in C$ is called the convex combination.
\end{definition}

\begin{definition}\normalfont\label{linear}  A mapping $T:V_{1}\to V_{2}$ is said to be linear, if  $\forall u,v\in V_{1}$ and $\alpha ,\beta $ scalars,  $$T(\alpha u+\beta v)=\alpha T(u)+\beta T(v).$$

 Limits (of convergent sequences), differentiation and integration, are examples of a linear map.
\end{definition}

\begin{remark}\normalfont If in Definition \ref{linear}, the linear space $V_{2}$ is replaced by a scalar field
 $\mathbb{F},$ then the linear map $T$ is called  linear functional on $V_{1}$.
\end{remark}

\subsection{Hilbert Space and its Properties}

\begin{definition}\normalfont
Let $Y$ be a linear space.  An inner product on $Y$ is a function
$\left\langle ., .\right\rangle :Y\times Y\rightarrow \mathbb{F}$
 such  that the following conditions are satisfies:
\begin{enumerate}
\item[(i)]$\left\langle y, y\right\rangle\geq 0$  $\forall y\in Y;$
\item[(ii)] $\left\langle y, y\right\rangle=0$ if  $y=0,$  $\forall y\in Y;$
\item[(iii)] $\left\langle y,z\right\rangle$=$\overline{\left\langle z, y\right\rangle},$ $\forall y,z\in Y,$ where the ``bar'' indicates the complex conjugation;
\item[(iv)] $\left\langle\alpha x+\beta y, z\right\rangle=\overline{\alpha}\left\langle x, z\right\rangle+\overline{\beta}\left\langle y, z\right\rangle,$ for all $x,y,z\in Y$ and $\alpha,\beta\in\mathbb{C}.$
\end{enumerate}
\end{definition}
\begin{remark}\normalfont The pair $(Y, \left\langle ., .\right\rangle)$ is called an inner product space. We shall simply write $Y$
for the inner product space $(Y, \left\langle ., .\right\rangle)$ when the inner product $\left\langle ., .\right\rangle$  is known.
Furthermore, if  $Y$ is a real vector space, then condition (iii) above reduces to $\left\langle x, z\right\rangle=\left\langle z, x\right\rangle$ (Symmetry).
\end{remark}

\begin{definition}\normalfont Let Y be a linear space over  $\mathbb{F}$ $ (\mathbb{R}$ or $\mathbb{C}).$ A norm on Y is a real-valued function $\|.\|:Y\to  \mathbb{R}$ such that  the following conditions are satisfies:
\begin{enumerate}
\item[(i)]$\left\|x\right\|\geq 0,$ $\forall x\in Y;$
\item[(ii)] $\left\|x\right\|= 0$ if  $x = 0,$  $\forall x\in Y;$
\item[(iii)] $\left\|\alpha x\right\|=\left|\alpha\right|\left\|x\right\|$, $\forall x\in Y$ and $\alpha \in \mathbb{R};$
\item[(iv)] $\left\|x+z\right\|\leq\left\|x\right\|+\left\|z\right\|,$ $\forall x,z\in Y.$
\end{enumerate}
\end{definition}
\begin{remark}\normalfont A linear space $Y$ with a norm defined on it i.e., $(Y, \|.\|)$ is called a normed linear space.  If $Y$ is a normed linear space, the norm $\|.\|$ always induces a metric $d$ on Y given by $d(z, x)=\|z-x\|$ for each $x,z\in Y,$ with this,
$(Y,d)$ become a metric space. For a quick review of metric space the reader may consult Dunford et al., \cite{dunford1971linear}.
\end{remark}

\begin{lemma}\normalfont  Let $Y$ be an inner product space. For arbitrary $x, z \in Y,$
\begin{equation}
|\left\langle x, z\right\rangle|^{2}\leq \left\langle x, x\right\rangle \left\langle z, z\right\rangle.\label{reduces}
\end{equation}
If x and z are linearly dependent, then Equation (\ref{reduces}) reduces to
\begin{equation*}
|\left\langle x, z\right\rangle|^{2}= \left\langle x, x\right\rangle \left\langle z, z\right\rangle.
\end{equation*}
\end{lemma}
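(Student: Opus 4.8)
The plan is to derive the inequality from the positivity axiom (i) of the inner product, applied not to $x$ or $z$ in isolation but to a one-parameter family of combinations $x-\lambda z$, and then to make the sharpest possible choice of the scalar $\lambda$. First I would dispose of the degenerate case: if $z=0$, then $\langle x,z\rangle=0$ and $\langle z,z\rangle=0$, so both sides of (\ref{reduces}) vanish and the inequality holds trivially (indeed with equality). Hence I may assume $z\neq 0$, which under the definiteness of the inner product gives $\langle z,z\rangle>0$, so that division by $\langle z,z\rangle$ is legitimate in what follows.

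For the main case I would fix $\lambda=\langle z,x\rangle/\langle z,z\rangle$ and expand the nonnegative quantity $0\leq\langle x-\lambda z,\,x-\lambda z\rangle$. The delicate bookkeeping point — and the step most prone to error — is the conjugation convention: with axioms (iii) and (iv) the inner product is conjugate-linear in its first slot and linear in its second, so the expansion reads $\langle x,x\rangle-\lambda\langle x,z\rangle-\overline{\lambda}\langle z,x\rangle+|\lambda|^{2}\langle z,z\rangle\geq 0$. Substituting the chosen $\lambda$ and using $\langle z,x\rangle=\overline{\langle x,z\rangle}$, the two cross terms and the quadratic term telescope, leaving $\langle x,x\rangle-|\langle x,z\rangle|^{2}/\langle z,z\rangle\geq 0$. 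Multiplying through by the positive number $\langle z,z\rangle$ rearranges this into exactly (\ref{reduces}).

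Finally, for the equality clause I would assume $x$ and $z$ are linearly dependent; since the case $z=0$ is already settled, I may write $x=\mu z$ for a scalar $\mu$. A direct substitution gives $\langle x,z\rangle=\overline{\mu}\langle z,z\rangle$ and $\langle x,x\rangle=|\mu|^{2}\langle z,z\rangle$, so both $|\langle x,z\rangle|^{2}$ and $\langle x,x\rangle\langle z,z\rangle$ equal $|\mu|^{2}\langle z,z\rangle^{2}$, and (\ref{reduces}) becomes an equality. I expect the only genuine obstacle to be the careful tracking of the complex conjugates forced by the first-slot convention; once $\lambda$ is chosen optimally, everything else is routine algebra flowing from positivity.
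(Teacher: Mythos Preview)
Your argument is the standard and correct proof of the Cauchy--Schwarz inequality: handle $z=0$ separately, then exploit $0\le\langle x-\lambda z,\,x-\lambda z\rangle$ with the optimal scalar $\lambda=\langle z,x\rangle/\langle z,z\rangle$, tracking conjugates according to the paper's first-slot convention. The expansion and the equality case are both handled correctly.

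The paper itself does not give a proof at all; it simply names the result and refers the reader to Chidume \cite{chidume2006applicable}. So there is no ``paper's approach'' to compare against --- your write-up in fact supplies what the paper omits, and it is exactly the argument one would expect to find in the cited reference.
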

This lemma is known as Cauchy-Schwartz Inequality. For more details about the proof, one is referred to Chidume \cite{chidume2006applicable}.

\begin{lemma}\normalfont \label{2.13}
A mapping $\|.\|:Y\to\mathbb{R}$ defined by $$\|x\|=\sqrt{\left\langle x, x\right\rangle},\forall x\in Y$$
is a norm on Y.
\end{lemma}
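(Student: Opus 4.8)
The plan is to verify the four defining axioms of a norm in turn, drawing on the inner product axioms and, for the triangle inequality, on the Cauchy--Schwarz inequality (\ref{reduces}) established above. The nonnegativity axiom (i) is immediate: since $\langle x, x\rangle \geq 0$ for every $x \in Y$ by inner product axiom (i), the square root $\|x\| = \sqrt{\langle x,x\rangle}$ is a well-defined nonnegative real number. Axiom (ii) follows just as directly, because $x = 0$ forces $\langle x, x\rangle = 0$ by inner product axiom (ii), whence $\|x\| = 0$.

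For the homogeneity axiom (iii), I would compute $\|\alpha x\|^{2}$ directly. Using the conjugate-linearity of the inner product in its first slot (axiom (iv)) together with the conjugate-symmetry (axiom (iii)), one obtains
\begin{equation*}
\|\alpha x\|^{2} = \langle \alpha x, \alpha x\rangle = \overline{\alpha}\,\langle x, \alpha x\rangle = \overline{\alpha}\,\alpha\,\langle x, x\rangle = |\alpha|^{2}\,\|x\|^{2},
\end{equation*}
where the middle step uses that $\langle x, \alpha x\rangle = \overline{\langle \alpha x, x\rangle} = \alpha\langle x, x\rangle$ since $\langle x, x\rangle$ is real. Taking nonnegative square roots yields $\|\alpha x\| = |\alpha|\,\|x\|$.

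The triangle inequality (iv) is the main obstacle and the only step where Cauchy--Schwarz is genuinely needed. I would start from
\begin{equation*}
\|x + z\|^{2} = \langle x+z, x+z\rangle = \|x\|^{2} + \langle x, z\rangle + \langle z, x\rangle + \|z\|^{2} = \|x\|^{2} + 2\,\mathrm{Re}\langle x, z\rangle + \|z\|^{2},
\end{equation*}
using conjugate-symmetry to combine the two cross terms. The crucial bound is $\mathrm{Re}\langle x, z\rangle \leq |\langle x, z\rangle| \leq \|x\|\,\|z\|$, where the second inequality is exactly the Cauchy--Schwarz inequality (\ref{reduces}) rewritten through $\|x\| = \sqrt{\langle x,x\rangle}$. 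Substituting gives $\|x+z\|^{2} \leq (\|x\| + \|z\|)^{2}$, and taking square roots establishes axiom (iv). The remaining care is merely bookkeeping: in the real case the conjugates disappear and $\mathrm{Re}\langle x,z\rangle = \langle x,z\rangle$, so the same argument goes through verbatim.
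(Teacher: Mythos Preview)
Your proof is correct and follows the standard route: verify the four norm axioms from the inner-product axioms, invoking Cauchy--Schwarz only for the triangle inequality. The paper itself does not supply a proof of this lemma; it is stated as a background fact (with the surrounding material referred to Chidume \cite{chidume2006applicable}), so there is nothing to compare against beyond noting that your argument is the expected one.
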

\begin{remark}\normalfont
As the consequence of Lemma \ref{2.13}, Equation (\ref{reduces}) reduces to the following inequality:
 \begin{equation*}|\left\langle x, z\right\rangle|\leq \| x\|\|z\|, \forall x,z\in Y.
\end{equation*}
\end{remark}

\begin{definition}\normalfont
A sequence   $\{ y_{n}\} $  in a  normed linear space $Y$ is said to   converge to  $y\in Y,$ if $\forall \epsilon >0$, there exists $N_{\epsilon}\in\mathbb{N}$, such that  $\left\|y_{n}-y\right\|<\epsilon$, $ \forall n \geq N_{\epsilon}.$ The vector $y\in Y$ is called the limit of  the sequence $\{y_{n}\}$ and is written as $\underset{n\to\infty}{\lim}y_{n}=y$ or $y_{n}\to y,$ as $n \rightarrow \infty$.
\end{definition}

\begin{definition}\normalfont
A sequence   $\{y_{n}\}$  in a  normed linear space $Y$ is said to  converge weakly to  $y\in Y,$ if for all $h\in Y^{*}$  such that   $\underset{n\to\infty}{\lim}h(y_{n})= h(y),$ where $Y^{*}$ denote the dual space of $Y$.
\end{definition}

Next, we give some results regards to the weak convergence of a sequence. For more details about the proof, see Chidume \cite{chidume2006applicable}.

\begin{lemma}\normalfont\label{lemma} Let   $\{ y_{n}\}\subseteq E$ (Banach space).  Then the following results are satisfies:
\begin{enumerate}
\item[(i)] $y_{n}\rightharpoonup y$ $\Leftrightarrow$  $h(y_n) \rightarrow h(y)$ for each $h\in E^{*};$
\item[(ii)] $y_{n} \rightarrow y$ $\Rightarrow$ $ y_{n} \rightharpoonup y;$
\item[(iii)] $y_{n} \rightharpoonup y$  $\Rightarrow$ $\{y_{n}\}$ is bounded and $$\left\| y \right\| \leq \liminf_{n\rightarrow\infty}\left\|y_{n}\right\|;$$
\item[$(iv)$] $y_{n} \rightharpoonup y$ (in E),  $h_{n} \rightarrow h$  (in $E^{*}$) $\Rightarrow$  $h_{n}(y_{n}) \rightarrow h(y)$  ( in $\mathbb{R}).$
\end{enumerate}
\end{lemma}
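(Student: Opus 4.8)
The plan is to treat the four parts in increasing order of difficulty, noting that (i) and (ii) are essentially immediate while (iii) carries the real content. For (i) there is in fact nothing to prove beyond unwinding notation: the relation $y_n \rightharpoonup y$ was \emph{defined} to mean precisely that $h(y_n) \to h(y)$ for every $h \in E^*$, so the stated equivalence is a restatement of that definition. For (ii) I would fix an arbitrary $h \in E^*$ and use that every element of the dual is a bounded linear functional, giving the estimate $|h(y_n) - h(y)| = |h(y_n - y)| \le \|h\|\,\|y_n - y\|$. Since $\|y_n - y\| \to 0$ by hypothesis and $\|h\|$ is finite, the right-hand side tends to $0$; as $h$ was arbitrary, $y_n \rightharpoonup y$ by (i).

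For (iii) the main obstacle, and the only place a substantial theorem is needed, is the boundedness claim. My plan is to view each $y_n$ as an element of the second dual $E^{**}$ through the canonical isometric embedding $J \colon E \to E^{**}$, where $(Jy_n)(h) = h(y_n)$ for $h \in E^*$. For each fixed $h \in E^*$ the scalar sequence $\{h(y_n)\}$ converges (being weakly convergent), hence is bounded, so $\sup_n |(Jy_n)(h)| < \infty$ pointwise on $E^*$. Since $E^*$ is always a Banach space, the \textbf{Uniform Boundedness Principle} applied to the family $\{Jy_n\}$ in $(E^*)^*$ yields $\sup_n \|Jy_n\| < \infty$, and because $J$ is an isometry this is exactly $\sup_n \|y_n\| < \infty$.

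For the norm inequality in (iii) I would invoke a Hahn--Banach corollary: there exists $h \in E^*$ with $\|h\| = 1$ and $h(y) = \|y\|$. Then, using weak convergence together with $|h(y_n)| \le \|y_n\|$, I would write $\|y\| = h(y) = \lim_n h(y_n) = \liminf_n h(y_n) \le \liminf_n \|y_n\|$, which is the desired lower semicontinuity of the norm.

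Finally, for (iv) the idea is the standard add-and-subtract decomposition $h_n(y_n) - h(y) = (h_n - h)(y_n) + \bigl(h(y_n) - h(y)\bigr)$. The first term is bounded by $\|h_n - h\|\,\|y_n\|$, which tends to $0$ because $\|h_n - h\| \to 0$ while $\{\|y_n\|\}$ is bounded by the boundedness part of (iii); the second term tends to $0$ directly from $y_n \rightharpoonup y$ applied to the fixed functional $h$. Hence $h_n(y_n) \to h(y)$. The dependency worth flagging is that (iv) genuinely relies on (iii): without the uniform bound on $\|y_n\|$ the cross term need not vanish, so the order of the proof matters.
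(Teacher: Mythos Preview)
Your argument is correct and is the standard textbook route: (i) by definition, (ii) via continuity of each $h\in E^*$, (iii) by pushing $\{y_n\}$ into $E^{**}$ and invoking the Uniform Boundedness Principle for boundedness plus a norming functional from Hahn--Banach for weak lower semicontinuity, and (iv) by the add-and-subtract estimate relying on the boundedness from (iii). There is nothing to compare against here, because the paper does not supply its own proof of this lemma: it simply states the result and refers the reader to Chidume \cite{chidume2006applicable} for details. Your write-up would serve perfectly well as the omitted proof.
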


\begin{remark}\normalfont Lemma \ref{lemma} (ii) Shows that strong convergence implies weak convergence. However, the converse may not necessarily be true, that is, in an infinite dimensional  space, weak convergence does not always imply strong convergence, while they are the same if the  dimension is finite. For the example of weak convergence which is not strong convergence, see  Chidume \cite{chidume2006applicable} and the references therein.
\end{remark}

\begin{definition}\normalfont Let $C$ be a  subset of  $H.$ A sequence $\{y_{n}\}$ in $H$ is said to be Fejer monotone,  if
$$\left\|y_{n+1}-z\right\|\leq\left\|y_{n}-z\right\|, \forall n\geq 1, z\in C.$$
\end{definition}

\begin{definition}\normalfont
A sequence $\{y_{n}\}$ in a normed linear space $Y$ is said to be Cauchy, if  $\forall\epsilon >0,$  $\exists N_{\epsilon}\in\mathbb{N}$ such that  $\left\|y_{n}-y_{m}\right\|<\epsilon,$ $\forall n, m\geq N_{\epsilon}.$
\end{definition}

\begin{definition}\normalfont A normed linear   space $Y$  is said to be  complete if and only if every Cauchy sequence in Y converges.
\end{definition}

\begin{remark}\normalfont
With respect to the norm defined in Lemma \ref{2.13}, we can define the Cauchy sequence
in an inner product space $Y$. A sequence $\{y_{n}\}$ in $Y$ is said to be  Cauchy if and only
if $\left\langle y_{n}-y_{m}, y_{n}-y_{m}\right\rangle^{1/2}:=\left\|y_{n}-y_{m} \right\|\to 0$ as $n,m \to \infty.$
\end{remark}
\begin{definition}\normalfont An inner product space $Y$ is said to be complete if and only if every Cauchy sequence  converges.
\end{definition}

\begin{definition}\normalfont A complete inner product space is called a ''Hilbert Space'' and
that of normed linear space is known as a ''Banach Space''.
\end{definition}

\subsection{Bounded Linear Map and its Properties}

\begin{definition}\normalfont Let  $T:H\to H$ be a linear map. T is said to be bounded, if  there exists a  constant $M\geq 0$ such that
$$\|T(y)\|\leq M\|y\|, \forall y\in H.$$
\end{definition}

Next, we give some results of a linear map that are continuous. For more details about the proof, see Chidume \cite{chidume2006applicable}.

\begin{lemma}\normalfont\label{cont} Let $X$ and $Y$ be normed linear spaces and   $T:X \to Y$ be a linear operator.  Then the following results are equivalent:
\begin{itemize}
\item [(i)] $T$ is continuous;
\item [(ii)] $T$ is continuous at the origin i.e., if $\{x_{n}\}$ is a sequence in $X$ such that
$$\underset{n\to\infty}{\lim}x_{n}= 0, {\rm~ then~} \underset{n\to\infty}{\lim}Tx_{n}= 0 {\rm~in~} Y;$$
\item [(iii)] $T$ is Lipschitz, i.e., in the sense that there exists $M\geq 0$ such that
                     $$\|Tx\|\leq M\|x\|, \forall x\in X;$$
\item [(iv)] $T(\Delta)$ is bounded \Big(in the sense that there exists $M\geq 0$ such that $\|Tx\|\leq M$ for all $x\in \Delta$, where  $\Delta:=\{x\in X:\|x\|\leq 1\}$\Big).
\end{itemize}
\end{lemma}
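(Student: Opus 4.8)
The plan is to prove the four statements equivalent by establishing the cycle of implications $(i) \Rightarrow (ii) \Rightarrow (iii) \Rightarrow (iv) \Rightarrow (i)$, so that each condition implies the next and the loop closes. Throughout I will lean on two structural facts: the linearity of $T$, in particular that $T(0)=0$, and the homogeneity of the norm, $\|\alpha x\| = |\alpha|\,\|x\|$. These let me transfer a bound obtained on one vector to any positive rescaling of it, which is the mechanism behind every nontrivial step.

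The implications $(i) \Rightarrow (ii)$ and $(iii) \Rightarrow (iv)$ are immediate. Continuity on all of $X$ in particular gives continuity at the origin, which is exactly $(ii)$. For $(iii) \Rightarrow (iv)$, if $\|Tx\| \leq M\|x\|$ holds for every $x \in X$, then restricting to $x \in \Delta$, where $\|x\| \leq 1$, yields $\|Tx\| \leq M$, so $T(\Delta)$ is bounded by the same constant.

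The step I expect to be the real obstacle is $(ii) \Rightarrow (iii)$, since here a purely qualitative continuity hypothesis must be upgraded to a uniform quantitative estimate. I would argue by contraposition. If no constant works, then for each $n \in \mathbb{N}$ there is a vector $x_n \in X$, necessarily nonzero, with $\|Tx_n\| > n\|x_n\|$. Setting $y_n = x_n/(n\|x_n\|)$ gives $\|y_n\| = 1/n \to 0$, so $y_n \to 0$; yet by linearity and homogeneity $\|Ty_n\| = \|Tx_n\|/(n\|x_n\|) > 1$ for all $n$, so $Ty_n \not\to 0$, contradicting $(ii)$. Hence some $M \geq 0$ satisfies $\|Tx\| \leq M\|x\|$ for all $x$, which is $(iii)$.

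Finally, for $(iv) \Rightarrow (i)$ I would first recover the bound $\|Tx\| \leq M\|x\|$ from boundedness on the unit ball: for $x \neq 0$ the vector $x/\|x\|$ lies in $\Delta$, so $\|T(x/\|x\|)\| \leq M$, and homogeneity gives $\|Tx\| \leq M\|x\|$, the case $x = 0$ being trivial since $T(0)=0$. This estimate makes $T$ Lipschitz, because for any $x,x' \in X$ linearity yields $\|Tx - Tx'\| = \|T(x-x')\| \leq M\|x-x'\|$, and a Lipschitz map is continuous. This closes the cycle and establishes the equivalence of all four conditions.
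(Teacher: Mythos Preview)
Your proof is correct and follows the standard cycle $(i)\Rightarrow(ii)\Rightarrow(iii)\Rightarrow(iv)\Rightarrow(i)$; each implication is argued properly, and in particular the contrapositive for $(ii)\Rightarrow(iii)$ via the rescaled sequence $y_n = x_n/(n\|x_n\|)$ is the usual and fully rigorous device. Note that the paper itself does not supply a proof of this lemma but simply refers the reader to Chidume \cite{chidume2006applicable}, so there is no in-paper argument to compare against; your write-up would serve as a complete self-contained proof where the paper offers none.
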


\begin{remark}\normalfont In the light of Lemma \ref{cont}, we have that a linear map $T:X\to Y$ is continuous iff it is bounded.
\end{remark}

\begin{definition}\normalfont Let  $A:H\to H$ be a bounded linear map. Define a mapping $A^{*}:H\to H$ by
$$ \left\langle Ay, z\right\rangle=\left\langle y, A^{*}z\right\rangle ,\forall y,z \in H.$$
The mapping $A^{*}$ is called the adjoint of A.
\end{definition}

The following results are fundamental for the adjoint operator on Hilbert space. For the proof, see Chidume \cite{chidume2006applicable}.

\begin{lemma}\normalfont Let $A:H\to H$ be  a bounded linear map with its adjoint $A^{*}.$  Then the following hold:
\begin{enumerate}
\item[(i)] $(A^{*})^{*}=A;$
\item[(ii)] $\|A\|=\|A^{*}\|;$
\item[(iii)] $\|A^{*}A\|=\|A\|^{2}.$
\end{enumerate}
\end{lemma}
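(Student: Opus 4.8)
The plan is to prove the three statements in order, in each case reducing everything to the defining relation $\langle Ay,z\rangle=\langle y,A^{*}z\rangle$ together with the Cauchy--Schwartz inequality $|\langle x,z\rangle|\leq\|x\|\,\|z\|$ recorded above; since the excerpt already posits $A^{*}$ as a map $H\to H$, I need not reconstruct its existence, only exploit its defining property. First, for (i), I would apply the definition of the adjoint to the operator $A^{*}$ itself: by definition $(A^{*})^{*}$ satisfies $\langle A^{*}u,v\rangle=\langle u,(A^{*})^{*}v\rangle$ for all $u,v\in H$. I would then rewrite the same left-hand side a second way using Hermitian symmetry and the defining relation for $A^{*}$, namely $\langle A^{*}u,v\rangle=\overline{\langle v,A^{*}u\rangle}=\overline{\langle Av,u\rangle}=\langle u,Av\rangle$. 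Comparing the two expressions gives $\langle u,(A^{*})^{*}v\rangle=\langle u,Av\rangle$ for every $u$, and since a vector in $H$ is determined by its inner products against all $u$, I conclude $(A^{*})^{*}v=Av$, i.e. $(A^{*})^{*}=A$.

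For (ii), I would estimate the norm of $A^{*}$. The key computation is $\|A^{*}z\|^{2}=\langle A^{*}z,A^{*}z\rangle=\langle A(A^{*}z),z\rangle$, where the second equality uses the defining relation with the vector $A^{*}z$ in the first slot. Applying Cauchy--Schwartz and then boundedness of $A$ yields $\|A^{*}z\|^{2}\leq\|A(A^{*}z)\|\,\|z\|\leq\|A\|\,\|A^{*}z\|\,\|z\|$. Dividing by $\|A^{*}z\|$ when it is nonzero (the case $A^{*}z=0$ being trivial) produces $\|A^{*}z\|\leq\|A\|\,\|z\|$, hence $A^{*}$ is bounded with $\|A^{*}\|\leq\|A\|$. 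Feeding the operator $A^{*}$ into this very inequality and invoking part (i) gives $\|A\|=\|(A^{*})^{*}\|\leq\|A^{*}\|$, and the two bounds combine to $\|A\|=\|A^{*}\|$.

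For (iii), one inequality follows from submultiplicativity of the operator norm together with (ii): $\|A^{*}A\|\leq\|A^{*}\|\,\|A\|=\|A\|^{2}$. For the reverse, I would start from $\|Ay\|^{2}=\langle Ay,Ay\rangle=\langle A^{*}Ay,y\rangle$ and apply Cauchy--Schwartz to obtain $\|Ay\|^{2}\leq\|A^{*}Ay\|\,\|y\|\leq\|A^{*}A\|\,\|y\|^{2}$. Taking the supremum over $\|y\|\leq 1$ gives $\|A\|^{2}\leq\|A^{*}A\|$, and combined with the first inequality this establishes the equality.

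I expect the only genuinely delicate points to be the bookkeeping of complex conjugates in part (i)---which I have arranged so that it rests solely on Hermitian symmetry and is therefore insensitive to which argument the inner product is conjugate-linear in---and the legitimacy of dividing by $\|A^{*}z\|$ in part (ii). Everything else is a routine application of the defining property and Cauchy--Schwartz, and no result deeper than those quoted above is required.
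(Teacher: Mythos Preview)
Your argument is correct in all three parts; the manipulations with Hermitian symmetry in (i), the Cauchy--Schwartz estimate and bootstrap via (i) in (ii), and the two-sided bound in (iii) are the standard route and go through without issue. Note, however, that the paper itself does not supply a proof of this lemma: it is stated as background and the reader is referred to Chidume \cite{chidume2006applicable} for the argument. So there is no in-paper proof to compare against; your write-up simply fills in what the paper leaves to the cited reference, and it does so along the expected lines.
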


\subsection{Some  Nonlinear Operators}

Let   $T:H\to H$ be a map. A point $x\in H$ is called a {\bf fixed point} of $T$ provided $Tx=x.$ We denote  the set of fixed point  of $T$ by  $Fix(T),$  that is
\begin{equation*}
Fix(T)=\{x\in H:Tx=x\}.
\end{equation*}
The $Fix(T)$ is closed and convex, for more details, see Goebel and Kirk \cite{goebel1990topics}.\\

$T$ is said to be $\eta-$\textbf{strongly monotone}, if there exists a  constant $\eta>0$ such that
 $$\left\langle Tx-Ty, x-y\right\rangle \geq\eta\left\|x-y\right\|, \forall x,y\in H,$$

and it is said to be  \textbf{contraction}, if
\begin{equation}
\left\|Tx-Tz\right\|\leq k\left\|x-z\right\|,  \forall x,z\in H,\label{1contraction}
\end{equation}
where  $k\in (0,1).$

\begin{remark}\normalfont\label{l101A} If  $T:H\to H$ is a contraction mapping with coefficient $k\in (0,1)$, then $(I-T)$ is
$(1-k)-$strongly monotone, that is
\begin{align}
\left\langle (I-T)w-(I-T)z, ~w-z\right\rangle&\geq (1-k)\left\|w-z\right\|^{2}, \forall w,z \in H.\nonumber
\end{align}
\end{remark}
\begin{proof}

\begin{align}
\left\langle (I-T)w-(I-T)z, ~w-z\right\rangle&=\left\langle ~w-z, ~w-z\right\rangle+\left\langle Tz-Tw, ~w-z\right\rangle\nonumber
\\&=\left\langle ~w-z, ~w-z\right\rangle-\left\langle Tw-Tz, ~w-z\right\rangle.\label{bava}
\end{align}
On the other hand,

\begin{align}
\left\langle Tw-Tz, ~w-z\right\rangle&\leq\|Tw-Tz\|\|w-z\|\nonumber
\\&\leq k\|w-z\|^{2} {\rm~~since~f~is ~a ~contraction~ mapping}.\label{baa}
\end{align}
By (\ref{bava}) and (\ref{baa}), we deduce that
\begin{align}
\left\langle (I-T)w-(I-T)z, ~w-z\right\rangle&\geq (1-k)\left\|w-z\right\|^{2}.\nonumber
\end{align}
And the proof completed.
\end{proof}

Equation (\ref{1contraction}) reduces to the following equation as $k=1.$
\begin{equation*}
\left\|Tx-Tz\right\|\leq\left\|x-z\right\|,  \forall x,z\in H.
\end{equation*}
This is known as nonexpansive mapping. As a generalization of nonexpansive mapping, we have   \textbf{asymptotically nonexpansive} (see Goebel and Kirk \cite{goebel1972fixed}), this mapping is defined as:
\begin{equation*}
\left\|T^{n}x-T^{n}z \right\|\leq k_{n}\left\|x-z\right\|,  \forall n\geq 1 {\rm~and~} x,z\in H,
\end{equation*}
where   $k_{n}\subset [1, \infty)$ such that  $\underset{n\to\infty}{\lim}k_{n}=1.$\\

The map $T$ is said to be \textbf{total asymptotically nonexpansive} (see Alber \cite{alber2006approximating}), if
\begin{equation*}
\left\|T^{n}x-T^{n}z\right\|^{2}\leq\left\|x-z\right\|^{2}+v_{n}\eta(\left\|x-z\right\|)+\mu_{n},
\forall n\geq 1 {\rm~and~} x,z\in H.
\end{equation*}
where $\{v_{n}\}$ and $\{\mu_{n}\}$ are sequences  in $[0,\infty)$ such that $\underset{n\to\infty}{\lim}v_{n}= 0,$
$\underset{n\to\infty}{\lim}\mu_{n}= 0,$    and $\eta:\Re^{+} \rightarrow \Re^{+}$  is a strictly increasing continuous function
 with $\eta(0)=0.$ This  class of  mapping generalizes  the class of nonexpansive and
 asymptotically nonexpansive mappings  (for more details  see \cite{chidume2007approximation, chidume2009new} and references therein).  And it is said to be $(k, \{\mu_{n}\}, \{\xi_{n}\}, \phi)$- total asymptotically strict pseudocontraction, if there exists a constant $k\in [0,1),$ $\mu_{n}\subset[0,\infty)$, $\xi_{n}\subset [0,\infty)$  with $\mu_{n}\rightarrow 0$ and $\xi_{n}\rightarrow 0$ as $n\rightarrow \infty$, and continuous strictly increasing function $\phi:[0,\infty)\rightarrow [0,\infty)$ with $\phi(0)=0$ such that
\begin{align*}
\left\|T^{n}x-T^{n}y\right\|^{2}&\leq\left\|x-y\right\|^{2}+ k\left\|(I-T^{n})x-(I-T^{n})y\right\|^{2}
\\&+\mu_{n}\phi(\left\|x-y\right\|) +\xi_{n}, \forall x,y\in H.
\end{align*}

$T$ is said to be  \textbf{strictly pseudocontractive} (see Browder and Petryshyn \cite{browder1967construction}), if
\begin{equation*}
\left\|Tx-Tz\right\|^{2}\leq \left\|x-z\right\|^{2}+ k\left\|(I-T)x-(I-T)z\right\|^{2},  \forall x,z\in H,
\end{equation*}
where  $k\in [0, 1)$. And it is said to  \textbf{ pseudocontractive} if
\begin{equation*}
\left\|Tx-Tz\right\|^{2}\leq \left\|x-z\right\|^{2}+ \left\|(I-T)x-(I-T)z\right\|^{2},  \forall x,z\in H.
\end{equation*}
It is obvious that all nonexpansive mappings and strictly pseudocontractive mappings are pseudocontractive
mappings but the converse does not hold.\\

$T$ is said to be {\bf quasi-nonexpansive} (see Diaz and Metcalf\cite{diaz1967structure}), if  $Fix(T)\neq\emptyset$ and
\begin{equation*}
\Vert Tx-z\Vert\leq\Vert x-z\Vert,~~\forall x\in H~{\rm and}~z\in Fix(T).
\end{equation*}
This is equivalent to
\begin{equation}2\left\langle x-Tx, z-Tx\right\rangle\leq\left\|Tx-x\right\|^{2},  \forall x\in H~{\rm and}~z\in Fix(T).\label{vwx}
\end{equation}
\begin{remark}\normalfont \label{3a}
Every nonexpansive mapping with  $Fix(T)\neq\emptyset$ is a quasi-nonexpansive; however, the converse may not necessarily be true. Thus, the class of quasi- nonexpansive mapping generalizes the class of nonexpansive mapping.
\end{remark}

The following is an example of a quasi-nonexpansive mapping which is not nonexpansive mapping,  for more details, see He and Du\cite{he2012nonlinear} and references therein.

\begin{example}\normalfont\label{1.2b} Let $H=\mathbb{R},$ defined  $T:Q:=[0,\infty)\to \mathbb{R}$ by $$Ty = \frac{y^{2}+2}{1+y} {\rm~for~all~} y\in Q.$$
\end{example}

$T$ is said to be  $k-$demicontractive, if
\begin{equation}\normalfont
\|Ty-z\|^{2}\leq\|y-z\|^{2}+k\|Ty-z\|^{2}, \forall y\in H {\rm~ and~} z\in Fix(T),\label{2.15}
\end{equation}
where $k\in [0,1).$ Trivially, the class of demicontractive mapping generalizes the class of quasi-nonexpansive mapping for $k\geq 0.$ \\

The following is an example of a demicontractive mapping  which is not quasi-nonexpansive mapping, for more details,
see Chidume et al., \cite{chidume2015split} and references therein.


\begin{example}\normalfont Define a map $T:l_{2}\to l_{2}$ by
$$T(x_{1},x_{2},x_{3},...)=-\frac{5}{2}(x_{1},x_{2},x_{3},...), {\rm~for~arbitrary~vector~}(x_{1},x_{2},x_{3},...)\in l_{2} .$$
\end{example}

\begin{remark} \normalfont
If $k=-1,$ Equation (\ref{2.15})  reduces to
\begin{equation*}\normalfont
\|Ty-z\|^{2}\leq\|y-z\|^{2}-\|Ty-y\|^{2}, \forall y\in H {\rm~ and~} z\in Fix(T).
\end{equation*}
This is  known as  \textbf{firmly quasi-nonexpansive mapping}.   
    Every strictly pseudocontractive  mapping with $Fix(T)\neq\emptyset$ is a demicontractive mapping; however, the converse may
 not necessarily be true. Thus, the class of  demicontractive mapping is more general than the class of strictly pseudocontractive mapping.
\end{remark}

The following is  an example of demicontractive mapping which is not 
 strictly pseudocontractive mapping, for more details, see Browder and Petryshyn \cite{browder1967construction} and references therein.
\begin{example}\normalfont Let $C=[-1,1]$ be a sub set of a real Hilbert space $H.$ Define $T$ on $C$ by
\begin{equation*}
T(x)= {} \left\{ \begin{array}{ll}\frac{2}{3}x\sin(\frac{1}{x}), {\rm~if~} x\neq 0,\label{2.1b}
\\ 0, \hspace{0.1cm}x=0. & \textrm{ $  $}
 \end{array}  \right.
\end{equation*}
Clearly, 0 is the only fixed point of $T$ . For $x\in C,$ we have
\begin{align*}
\left|Tx-0\right|^{2}&=|Tx|^{2}
\\&=\left|\frac{2}{3}x\sin(\frac{1}{x})\right|^{2}
\\&\leq \left|\frac{2x}{3}\right|^{2}
\\&\leq |x|^{2}
\\&\leq |x-0|^{2}+k|Tx-x|^{2}, {\rm~for~ any~} k<1.
\end{align*}
Thus, $T$ is demicontractive mapping. Next, we see that $T$ is not strictly pseudocontractive mapping.
Let $x=\frac{2}{\pi}$ and $z=\frac{2}{3\pi}$,
then $|Tx-Tz|^{2}=\frac{256}{81\pi^{2}}.$ However, $$|x-z|^{2}+|(I-T)x-(I-T)z|^{2}=\frac{160}{81\pi^{2}}.$$
\end{example}
$T$ is said to be   \textbf{asymptotically quasi-nonexpansive}, if $Fix(T)\neq \emptyset$ such that for each $n\geq 1,$
\begin{equation*}
\left\|T^{n}x-z\right\|^{2}\leq t_{n}\left\|x-z\right\|^{2}, \forall z\in Fix(T) {\rm~and~} x\in H,
\end{equation*}
where  $\{t_{n}\}\subseteq [1,\infty)$ with $\underset{n\to\infty}{\lim}t_{n}= 1.$ It is clear from this definition that every  asymptotically nonexpansive mapping with $Fix(T)\neq \emptyset$ is asymptotically quasi-nonexpansive mapping.\\

Also $T$ is said to be   $(\{r_{n}\},\{k_{n}\},\eta)$-\textbf{total quasi-asymptotically nonexpansive mapping,} if
\begin{align*}
\left\|T^{n}y-z\right\|^{2}&\leq\left\|y-z\right\|^{2}+r_{n}\eta(\left\|y-z\right\|)\nonumber
\\&+k_{n}, \forall  n\geq 1, z\in Fix(T){\rm~and~}y\in H,
\end{align*}
where  $\{r_{n}\}, \{k_{n}\}$ are sequences in $[0,\infty)$ such that $\underset{n\to\infty}{\lim}r_{n}=0,$  $\underset{n\to\infty}{\lim}k_{n}= 0$  and  $\eta:\Re^{+}\rightarrow \Re^{+}$ is a strictly continuous function with $\eta(0)=0.$  This class of mapping, generalizes the class of;  quasi-nonexpansive, asymptotically quasi-nonexpansive and  total asymptotically nonexpansive mapping.\\

$T$ is said to be  $K$-Lipschitzian, if
\begin{equation*}
\left\|Ty-Tz\right\|\leq K\left\|y-z\right\|, \forall y,z\in H.
\end{equation*}
It  is said to be uniformly $K$-Lipschitzian, if
\begin{equation*}
\left\|T^{n}y-T^{n}z\right\|\leq K\left\|y-z\right\|, \forall y,z\in H.
\end{equation*}
\begin{definition}\normalfont A mapping $T:H\rightarrow H$ is said to be   class$-\tau $ operator,
if
  $$\left\langle z-Ty, y-Ty \right\rangle\leq 0, \forall z\in Fix(T) {\rm~ and~} y\in H.$$
\end{definition}
It is important to note that,  class$-\tau $ operator is also called directed operator, see Zaknoon \cite{zaknoon2003algorithmic} and Censo and Segal \cite{censor2009split}, separating operator, see Cegielski \cite{cegielski2010generalized} or cutter operator, see Cegielski and Censor \cite{cegielski2011opial} and references therein.

\begin{definition}\normalfont A self mapping   $T$ on $H_{1}$   is said to be   semi-compact if for any bounded sequence $\{x_{n}\}\subset H$ with $(I-T)x_{n}$ converges strongly  to 0,  there exists a sub-sequence say  $\{x_{n_{k}}\}$ of $\{x_{n}\}$ such that $\{x_{n_{k}}\}$ converges strongly to x.
\end{definition}

\begin{definition}\normalfont\label{l11}
 A self mapping $T$ on $C$ is said to be demiclosed, if for any sequence $\{y_{n}\}$ in $C$ such that  $y_{n}\rightharpoonup y$  and if the sequence $Ty_{n}\to z,$ then $Ty = z.$
\end{definition}

\begin{remark}\normalfont In Definition \ref{l11}, if
$z=0,$ the zero vector in $C,$  then $T$ is called demiclosed at zero, for more details, see Moudafi \cite{moudafi2011note} and references therein.
\end{remark}

\begin{lemma}\normalfont( Goebel and Kirk \cite{goebel1990topics}) If a self mapping  $T$ on $C$ is a nonexpansive mapping, then $T$ is demiclosed at zero. 
\end{lemma}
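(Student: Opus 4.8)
The plan is to prove the \emph{demiclosedness principle} in the form intended here: given a sequence $\{y_n\}\subseteq C$ with $y_n\rightharpoonup y$ and $(I-T)y_n\to 0$, I will show that $(I-T)y=0$, i.e.\ $Ty=y$. In the terminology of Definition~\ref{l11} and the remark following it, this is precisely the statement that $T$ is demiclosed at zero. Taking $C$ closed and convex (the standing hypothesis for these problems), it is weakly closed, so the weak limit $y$ lies in $C$ and $Ty$ is well defined; moreover, by Lemma~\ref{lemma}(iii) the weakly convergent sequence $\{y_n\}$ is bounded, which keeps all the quantities $\|y_n-x\|$ appearing below under control.

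The engine of the argument is a single algebraic identity. First I would expand, for an arbitrary fixed $x\in H$,
\[
\|y_n-x\|^2=\|y_n-y\|^2+2\langle y_n-y,\,y-x\rangle+\|y-x\|^2 .
\]
Because $y_n-y\rightharpoonup 0$, the cross term $\langle y_n-y,\,y-x\rangle$ tends to $0$ for every fixed $x$, so that $\|y_n-x\|^2-\|y_n-y\|^2\to\|y-x\|^2$. I will record this as the key limiting relation and apply it later at the specific point $x=Ty$.

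Next I would use nonexpansiveness to compare $y_n$ with $Ty$. Writing $y_n-Ty=(y_n-Ty_n)+(Ty_n-Ty)$ and using $\|Ty_n-Ty\|\le\|y_n-y\|$ together with $\|y_n-Ty_n\|=\|(I-T)y_n\|\to 0$, the triangle inequality gives $\|y_n-Ty\|\le\|(I-T)y_n\|+\|y_n-y\|$. Squaring and using the boundedness of $\{y_n\}$ to absorb the vanishing contributions shows that $\limsup_n\big(\|y_n-Ty\|^2-\|y_n-y\|^2\big)\le 0$. Combining this with the limiting relation from the previous step, evaluated at $x=Ty$, yields $\|y-Ty\|^2\le 0$; hence $Ty=y$ and the lemma follows.

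The routine parts—boundedness, the expansion of the square, and checking that the error terms vanish—are straightforward. The one step that carries the real content, and hence the main obstacle, is recognizing that the \emph{only} obstruction to passing the weak limit through the norm is the inner product $\langle y_n-y,\,y-x\rangle$, and that this term is killed precisely by weak convergence; this is the Hilbert-space form of Opial's lemma. Everything else is then a squeeze of $\|y-Ty\|^2$ between a nonpositive $\limsup$ and its own value.
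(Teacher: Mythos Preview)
Your argument is correct. The key identity $\|y_n-x\|^2-\|y_n-y\|^2=2\langle y_n-y,\,y-x\rangle+\|y-x\|^2$ together with $y_n-y\rightharpoonup 0$ does give $\|y_n-x\|^2-\|y_n-y\|^2\to\|y-x\|^2$, and your nonexpansiveness estimate $\|y_n-Ty\|\le\|(I-T)y_n\|+\|y_n-y\|$ then forces $\|y-Ty\|^2\le 0$ exactly as you describe. The bookkeeping about boundedness and the vanishing of the cross terms is all in order.

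As for comparison with the paper: the paper does \emph{not} prove this lemma. It is stated as a citation to Goebel and Kirk and no argument is supplied. Your proof is the standard Hilbert-space demiclosedness argument; it is essentially the same device that the paper later records (in a different section) as the identity $\limsup_n\|x_n-y\|^2=\limsup_n\|x_n-z\|^2+\|z-y\|^2$ for $x_n\rightharpoonup z$, which is exactly your ``limiting relation'' in $\limsup$ form. So there is nothing to contrast: you have supplied a complete proof where the paper only gives a reference.
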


\begin{lemma}\normalfont(Acedo and Xu \cite{acedo2007iterative}) If a self mapping  $T$ on $C$ is a $k-$strictly pseudocontractive, then  $(T-I)$ is demiclosed at zero.
\end{lemma}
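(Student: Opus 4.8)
The plan is to recast the $k$-strict pseudocontractivity of $T$ as a monotonicity-type estimate for the operator $(I-T)$ and then play weak convergence against strong convergence. Working in a real Hilbert space, I would first show that the defining inequality
\[
\|Tx-Tz\|^{2}\leq\|x-z\|^{2}+k\|(I-T)x-(I-T)z\|^{2}
\]
is equivalent to
\[
\left\langle x-z,\,(I-T)x-(I-T)z\right\rangle\geq\frac{1-k}{2}\,\|(I-T)x-(I-T)z\|^{2},\qquad\forall x,z\in C.
\]
This comes out by setting $a=x-z$ and $b=(I-T)x-(I-T)z$, noting $Tx-Tz=a-b$, expanding $\|a-b\|^{2}=\|a\|^{2}-2\langle a,b\rangle+\|b\|^{2}$, substituting into the hypothesis, and cancelling the common term $\|a\|^{2}$. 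The resulting coefficient $\frac{1-k}{2}$ is strictly positive precisely because $k\in[0,1)$, which is what ultimately forces the conclusion.

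Next, let $\{y_{n}\}\subset C$ satisfy $y_{n}\rightharpoonup y$ and $(I-T)y_{n}\to 0$ strongly, and suppose I wish to deduce $(I-T)y=0$. I would substitute $x=y_{n}$ and $z=y$ into the monotonicity estimate to obtain, for every $n$,
\[
\left\langle y_{n}-y,\,(I-T)y_{n}-(I-T)y\right\rangle\geq\frac{1-k}{2}\,\|(I-T)y_{n}-(I-T)y\|^{2},
\]
and then pass to the limit on both sides. For the left-hand side I would split it as $\langle y_{n}-y,\,(I-T)y_{n}\rangle-\langle y_{n}-y,\,(I-T)y\rangle$. The first term tends to $0$ by Cauchy--Schwarz: the sequence $\{y_{n}\}$ is bounded by Lemma \ref{lemma}(iii), so $\|y_{n}-y\|$ stays bounded while $\|(I-T)y_{n}\|\to 0$. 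The second term tends to $0$ because $y_{n}-y\rightharpoonup 0$ and $(I-T)y$ is a fixed vector, so the definition of weak convergence applies verbatim. Hence the left-hand side converges to $0$.

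For the right-hand side, since $(I-T)y_{n}\to 0$ strongly we have $(I-T)y_{n}-(I-T)y\to -(I-T)y$ strongly, so the right-hand side converges to $\frac{1-k}{2}\|(I-T)y\|^{2}$. Passing to the limit in the displayed inequality therefore yields $0\geq\frac{1-k}{2}\|(I-T)y\|^{2}\geq 0$, and since $\frac{1-k}{2}>0$ this forces $\|(I-T)y\|=0$, i.e. $Ty=y$; thus $(I-T)$ is demiclosed at zero. The step that demands the most care is the limit of the left-hand side: one must recognize that the cross term vanishes because one factor converges \emph{strongly} to zero while the other merely stays bounded—a product of two weakly null factors would not suffice—and that the remaining term vanishes by testing the weakly null sequence $y_{n}-y$ against the fixed vector $(I-T)y$. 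Keeping the weak and strong modes of convergence in their correct roles is the crux of the argument.
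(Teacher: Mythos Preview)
The paper does not prove this lemma; it merely cites it from Acedo and Xu \cite{acedo2007iterative}, so there is no in-paper argument to compare against. Your proof is correct and is essentially the standard one: the strict-pseudocontractivity inequality is rewritten as the inverse strong monotonicity estimate $\langle x-z,(I-T)x-(I-T)z\rangle\geq\tfrac{1-k}{2}\|(I-T)x-(I-T)z\|^{2}$, and then the weak/strong limit argument closes it out. One minor point you glossed over: when you set $z=y$ you are tacitly using $y\in C$, which holds because $C$ is closed and convex and hence weakly closed, so the weak limit of $\{y_{n}\}\subset C$ stays in $C$; this is routine but worth a sentence for completeness.
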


\begin{lemma}\normalfont\label{l5}
Let $C$ be a subset of  $H_{1},$ and $P_{C}$ be a metric projection from $H_{1}$ onto $C.$ Then $\forall y\in C ~{\rm~and~} x\in H_{1},$
\begin{equation*}
\|x-P_{C}(x)\|^{2}\leq \|y-x\|^{2}-\|y-P_{C}(x)\|^{2}.
\end{equation*}
\end{lemma}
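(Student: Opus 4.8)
The plan is to exploit the defining variational characterization of the metric projection onto a closed convex set. Write $p := P_C(x)$ for brevity. The inequality to be proved rearranges (by adding $\|y-p\|^2$ to both sides) to
$$\|x - p\|^2 + \|y - p\|^2 \leq \|y - x\|^2,$$
so the whole statement reduces to controlling the single cross term that appears when $\|y-x\|^2$ is expanded.

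First I would record the obtuse-angle inequality satisfied by $P_C$: for every $y \in C$,
$$\langle x - P_C(x), \, y - P_C(x)\rangle \leq 0.$$
If this is not taken as known, it is obtained by a short perturbation argument: since $C$ is convex, $(1-t)p + ty \in C$ for $t \in [0,1]$, and because $p$ minimizes $\|x - \cdot\|$ over $C$ we have $\|x-p\|^2 \leq \|x - p - t(y-p)\|^2$. Expanding the right-hand side, dividing by $t > 0$, and letting $t \to 0^{+}$ yields the claimed inequality.

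Next I would expand the right-hand side using the decomposition $y - x = (y - p) - (x - p)$, giving
$$\|y - x\|^2 = \|y - p\|^2 - 2\langle y - p, \, x - p\rangle + \|x - p\|^2.$$
Subtracting $\|x-p\|^2 + \|y-p\|^2$ from both sides, the target inequality is seen to be equivalent to $0 \leq -2\langle y - p, \, x - p\rangle$, which is precisely the obtuse-angle inequality from the previous step. Substituting that bound then closes the argument.

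The only genuine subtlety is that the statement as written refers merely to a ``subset'' $C$; for $P_C$ to be well defined and single-valued, and for the perturbation argument to run, one needs $C$ to be nonempty, closed and convex, which is the standing assumption for metric projections in this setting. Granting that, there is no real obstacle, since everything is an algebraic identity together with the single variational inequality. The main thing to get right is the sign bookkeeping in the expansion, as a misplaced sign would reverse the desired inequality.
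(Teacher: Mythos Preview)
Your argument is correct: the obtuse-angle characterization $\langle x - P_C(x),\, y - P_C(x)\rangle \le 0$ combined with the polarization identity $\|y-x\|^2 = \|y-p\|^2 - 2\langle y-p,\,x-p\rangle + \|x-p\|^2$ immediately yields the claim, and your remark that $C$ must be nonempty, closed and convex for $P_C$ to be well defined is the right caveat.

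There is nothing to compare against here: the paper does not give its own proof of this lemma but simply refers the reader to Li and He \cite{li2015new}. Your self-contained derivation from the variational inequality is the standard route and is exactly what one would expect to find in that reference.
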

For the proof of this lemma, see Li and He \cite{li2015new} and references therein.

\begin{lemma}\normalfont\label{l2}
 For each $x,y\in H_{1}$,   the following results hold.
\begin{itemize}
\item[(i)] $\left\|x+y\right\|^{2}=\left\|x\right\|^{2}+2\left\langle x, y\right\rangle+\left\|y\right\|^{2},$
\item[(ii)] $\left\|\alpha x+(1-\alpha)y\right\|^{2}=\alpha\left\|x\right\|^{2}+(1-\alpha)\left\|y\right\|^{2}-\alpha(1-\alpha)\left\|x-y\right\|^{2}$, $\forall$ $\alpha\in [0,1].$
\end{itemize}
\end{lemma}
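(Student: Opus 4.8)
The plan is to derive both identities directly from the defining properties of the inner product together with the relation $\left\|v\right\|^{2}=\left\langle v, v\right\rangle$ furnished by Lemma \ref{2.13}, working in the real Hilbert space $H_{1}$ so that the inner product is symmetric and linear in each argument.

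First I would prove (i). Writing $\left\|x+y\right\|^{2}=\left\langle x+y,\, x+y\right\rangle$ and expanding by additivity in each slot gives
\begin{equation*}
\left\|x+y\right\|^{2}=\left\langle x, x\right\rangle+\left\langle x, y\right\rangle+\left\langle y, x\right\rangle+\left\langle y, y\right\rangle.
\end{equation*}
Using symmetry $\left\langle y, x\right\rangle=\left\langle x, y\right\rangle$ together with $\left\langle x, x\right\rangle=\left\|x\right\|^{2}$ and $\left\langle y, y\right\rangle=\left\|y\right\|^{2}$, the two cross terms combine into $2\left\langle x, y\right\rangle$, which is exactly (i). As a by-product, replacing $y$ by $-y$ yields $\left\|x-y\right\|^{2}=\left\|x\right\|^{2}-2\left\langle x, y\right\rangle+\left\|y\right\|^{2}$, an identity I will reuse for part (ii).

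For (ii) I would apply part (i) to the pair $\alpha x$ and $(1-\alpha)y$, obtaining
\begin{equation*}
\left\|\alpha x+(1-\alpha)y\right\|^{2}=\alpha^{2}\left\|x\right\|^{2}+2\alpha(1-\alpha)\left\langle x, y\right\rangle+(1-\alpha)^{2}\left\|y\right\|^{2}.
\end{equation*}
Then, substituting the expression for $\left\|x-y\right\|^{2}$ noted above into the claimed right-hand side $\alpha\left\|x\right\|^{2}+(1-\alpha)\left\|y\right\|^{2}-\alpha(1-\alpha)\left\|x-y\right\|^{2}$ and collecting terms, the coefficient of $\left\|x\right\|^{2}$ reduces to $\alpha-\alpha(1-\alpha)=\alpha^{2}$, the coefficient of $\left\|y\right\|^{2}$ to $(1-\alpha)-\alpha(1-\alpha)=(1-\alpha)^{2}$, and the cross term to $2\alpha(1-\alpha)\left\langle x, y\right\rangle$; this matches the previous display, proving (ii).

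The argument is essentially routine, so there is no genuine obstacle; the only point requiring attention is the algebraic bookkeeping in part (ii)---keeping track of the quadratic coefficients $\alpha^{2}$ and $(1-\alpha)^{2}$ against the linear $\alpha$ and $1-\alpha$---and the tacit use of symmetry of the inner product, which is legitimate because $H_{1}$ is a real Hilbert space (in the complex case the cross terms would instead combine to $2\,\mathrm{Re}\left\langle x, y\right\rangle$).
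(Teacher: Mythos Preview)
Your proof is correct and complete; it is the standard direct expansion via the bilinearity and symmetry of the real inner product, and the algebraic check in part (ii) is carried out cleanly. The paper itself does not supply a proof of this lemma at all---it simply refers the reader to Acedo and Xu \cite{acedo2007iterative}---so your argument fills in exactly what a self-contained treatment would require.
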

For the proof of this lemma, see Acedo and Xu \cite{acedo2007iterative} and references therein.

\begin{lemma}\normalfont\label{l6} Let $\{a_{n}\}$ be a sequence of nonnegative real number such that
\begin{equation*}
a_{n+1}\leq(1-\gamma_{n})a_{n}+\sigma_{n}, n\geq 0,
\end{equation*}
where $\gamma_{n}$ is a sequence in $(0,1)$ and $\sigma_{n}$ is a sequence of real number such that;
\begin{itemize}
\item[(i)] $\underset{n\rightarrow\infty}{\lim}\gamma_{n}=0~~  {\rm and}~~ \sum \gamma_{n}=\infty;$
\item[(ii)]  $\underset{n\rightarrow\infty}{\lim}\frac{\sigma_{n}}{\gamma_{n}}\leq 0$ \ {\rm or}\ $\sum |\sigma_{n}|<\infty.$
  {\rm Then} $\underset{n\rightarrow\infty}{\lim}a_{n}=0.$
\end{itemize}
\end{lemma}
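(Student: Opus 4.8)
The plan is to treat the two alternatives in hypothesis (ii) separately, since each reduces to the same core fact: under $\sum\gamma_n=\infty$ with $\gamma_n\in(0,1)$, the partial products $\prod_{k=N}^{n}(1-\gamma_k)$ tend to $0$ as $n\to\infty$. First I would record this auxiliary fact, which follows from the elementary inequality $1-t\leq e^{-t}$ (valid for $t\geq 0$): it gives $\prod_{k=N}^{n}(1-\gamma_k)\leq\exp\big(-\sum_{k=N}^{n}\gamma_k\big)$, and the exponent diverges to $-\infty$ because $\sum\gamma_n=\infty$, so the product vanishes. Since each $a_n\geq 0$, it also suffices throughout to prove $\limsup_{n\to\infty}a_n\leq 0$; combined with $\liminf_{n\to\infty}a_n\geq 0$ this forces $\lim_{n\to\infty}a_n=0$.

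For the branch $\lim_{n\to\infty}\sigma_n/\gamma_n\leq 0$, I would fix $\epsilon>0$ and choose $N$ so that $\sigma_n\leq\epsilon\gamma_n$ for all $n\geq N$. The recursion then reads $a_{n+1}\leq(1-\gamma_n)a_n+\gamma_n\epsilon$ for $n\geq N$. The trick is to subtract the constant: setting $b_n=a_n-\epsilon$ turns this into $b_{n+1}\leq(1-\gamma_n)b_n$, whence $b_{n+1}\leq\big(\prod_{k=N}^{n}(1-\gamma_k)\big)b_N$. Letting $n\to\infty$ and invoking the auxiliary fact (the product $\to 0$, and the sign of $b_N$ is harmless because the product is positive and bounded by $1$), I obtain $\limsup_{n\to\infty}a_n\leq\epsilon$. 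As $\epsilon>0$ is arbitrary, $\limsup_{n\to\infty}a_n\leq 0$.

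For the branch $\sum|\sigma_n|<\infty$ one cannot factor out $\gamma_n$, so I would instead unfold the recursion from a suitably large index. Given $\epsilon>0$, choose $N$ with $\sum_{k=N}^{\infty}|\sigma_k|<\epsilon$, which is possible by convergence of the series. Iterating $a_{n+1}\leq(1-\gamma_n)a_n+\sigma_n$ from $N$ up to $n$ gives $a_{n+1}\leq\big(\prod_{k=N}^{n}(1-\gamma_k)\big)a_N+\sum_{k=N}^{n}\big(\prod_{j=k+1}^{n}(1-\gamma_j)\big)\sigma_k$. Bounding each interior product by $1$ and each $\sigma_k$ by $|\sigma_k|$, the second sum is at most $\sum_{k=N}^{\infty}|\sigma_k|<\epsilon$, while the first term $\to 0$ by the auxiliary fact. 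Hence $\limsup_{n\to\infty}a_n\leq\epsilon$, and again arbitrariness of $\epsilon$ yields $\limsup_{n\to\infty}a_n\leq 0$.

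I expect the main obstacle to be this second branch: a naive global unfolding from index $0$ produces the bound $\sum_{k=0}^{\infty}|\sigma_k|$, which is merely a positive constant and does not close the argument. The essential idea is the \emph{tail} estimate — delaying the starting index to $N$ so that the accumulated perturbation $\sum_{k=N}^{\infty}|\sigma_k|$ is small, while the memory of the finite but possibly large value $a_N$ is simultaneously annihilated by the vanishing product $\prod_{k=N}^{n}(1-\gamma_k)$. Making these two limits cooperate at a common index $N$ is the crux; the first branch is comparatively routine once the constant-shift substitution $b_n=a_n-\epsilon$ is spotted.
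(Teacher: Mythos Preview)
Your argument is correct and is essentially the standard proof of this lemma (originally due to Xu). Note, however, that the paper does not supply its own proof of this statement: it simply writes ``For the proof, see Xu \cite{xu2002iterative}'' and treats the result as a quoted tool. So there is no in-paper proof to compare against; your write-up would serve perfectly well as a self-contained replacement for that citation.
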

\noindent For the proof, see Xu \cite{xu2002iterative}.

\begin{lemma}\normalfont\label{2.2s}
 Let $\{x_{n}\}, \{y_{n}\}, \{z_{n}\}$ be  sequences of nonnegative real numbers satisfying
\begin{equation*}
x_{n+1}\leq(1+z_{n})x_{n} + y_{n}.
\end{equation*}
If $\sum z_{n}<\infty$ and $\sum y_{n}<\infty$, then  $\underset{n\rightarrow\infty}{\lim}x_{n}$ exist.
\end{lemma}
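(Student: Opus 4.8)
The plan is to first show that $\{x_n\}$ is bounded, and then to collapse the gap between its limit inferior and limit superior using the summability of $\{z_n\}$ and $\{y_n\}$. The engine throughout is the elementary bound $1+t\leq e^{t}$, valid for all $t\geq 0$, which converts the multiplicative perturbation $\prod(1+z_k)$ into something controlled by $\exp\!\big(\sum z_k\big)$.

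First I would iterate the recurrence. A straightforward induction on $m$ shows that for all $m>n$,
\begin{equation*}
x_{m}\leq\Big(\prod_{k=n}^{m-1}(1+z_{k})\Big)x_{n}+\sum_{j=n}^{m-1}\Big(\prod_{k=j+1}^{m-1}(1+z_{k})\Big)y_{j},
\end{equation*}
where empty products are read as $1$. Since $\sum z_k<\infty$, setting $M:=\exp\!\big(\sum_{k=0}^{\infty}z_{k}\big)<\infty$ and using $1+z_k\leq e^{z_k}$ gives $\prod_{k=n}^{m-1}(1+z_k)\leq M$ for every pair $n<m$. Taking $n=0$ then yields $x_{m}\leq M x_{0}+M\sum_{j=0}^{\infty}y_{j}<\infty$, so $\{x_{n}\}$ is bounded; in particular $a:=\liminf_{n\to\infty}x_{n}$ and $b:=\limsup_{n\to\infty}x_{n}$ are both finite.

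Next I would carry out the squeeze. Using the same two uniform bounds inside the iterated inequality, for every $m>n$,
\begin{equation*}
x_{m}\leq \exp\!\Big(\sum_{k=n}^{\infty}z_{k}\Big)x_{n}+M\sum_{j=n}^{\infty}y_{j}.
\end{equation*}
I fix $n$ and let $m\to\infty$ to get $b\leq \exp\!\big(\sum_{k=n}^{\infty}z_{k}\big)x_{n}+M\sum_{j=n}^{\infty}y_{j}$. Then I take the limit inferior over $n$ on the right: because $\sum z_{k}$ and $\sum y_{k}$ converge, their tails $\sum_{k=n}^{\infty}z_{k}$ and $\sum_{j=n}^{\infty}y_{j}$ tend to $0$, so the exponential factor tends to $1$ and the additive term vanishes, leaving $b\leq \liminf_{n\to\infty}x_{n}=a$. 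Together with the trivial inequality $a\leq b$, this forces $a=b$, i.e. $\lim_{n\to\infty}x_{n}$ exists and is finite.

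I expect the only delicate point to be bookkeeping rather than any genuine obstacle: getting the product and index ranges right in the iterated inequality, and being careful in the squeeze to send $m\to\infty$ with $n$ held fixed before taking $\liminf$ over $n$, so that the tail factors are honest constants at the first stage. This interchange is legitimate precisely because $M$ is a bound independent of both $m$ and $n$. Notably, no appeal to monotonicity of $\{x_n\}$ or to completeness is required; the convergence of the two perturbation series does all the work.
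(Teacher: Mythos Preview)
Your argument is correct: the iterated inequality is stated accurately, the bound $\prod(1+z_k)\leq\exp\big(\sum z_k\big)$ gives boundedness, and the two-stage limit (first $m\to\infty$ with $n$ fixed, then $\liminf$ over $n$) is legitimate because $\{x_n\}$ is already known to be bounded, so $\exp\big(\sum_{k\geq n}z_k\big)x_n-x_n\to 0$ and the tail sum $M\sum_{j\geq n}y_j\to 0$.

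There is nothing to compare against here: the paper does not supply its own proof of this lemma but simply cites Tan and Xu \cite{xu1993approximating}. Your write-up is essentially the classical argument from that reference, so you have filled in exactly what the paper omits.
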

For  the proof of this lemma, see Tan and Xu \cite{xu1993approximating}.

\begin{lemma}\normalfont\label{df}
Let $\{x_{n}\}$ be a Fejer monotone with respect to $C,$ then the following are satisfied:
\begin{enumerate}
\item[$(i)$] $x_{n}\rightharpoonup x^{*}\in C$ if and only if $\omega_{\omega}\subset C;$
\item[$(ii)$]  $\{P_{C}x_{n }\}$ converges strongly to some vector in C;
\item[$(iii)$] if $x_{n}\rightharpoonup x^{*}\in C$, then $x^{*}=\underset{n\rightarrow \infty}{\lim} P_{C}x_{n }$.
\end{enumerate}
\end{lemma}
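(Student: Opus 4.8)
The plan is to treat the three assertions in order, using throughout the single observation that Fej\'er monotonicity makes $\{\left\|x_{n}-z\right\|\}$ nonincreasing, hence convergent, for every $z\in C$; in particular $\{x_{n}\}$ is bounded, so by reflexivity of $H$ its weak $\omega$-limit set $\omega_{\omega}$ is nonempty. Throughout I assume, as is implicit in the definition of $P_{C}$, that $C$ is nonempty, closed and convex.

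For (i), the forward implication is immediate: if $x_{n}\rightharpoonup x^{*}$ with $x^{*}\in C$, then every weakly convergent subsequence has the same limit $x^{*}$, so $\omega_{\omega}=\{x^{*}\}\subset C$. For the converse, assume $\omega_{\omega}\subset C$. Since a bounded sequence in $H$ converges weakly as soon as it has exactly one weak cluster point, it suffices to show $\omega_{\omega}$ is a singleton. Take $x^{*},y^{*}\in\omega_{\omega}$, with subsequences $x_{n_{k}}\rightharpoonup x^{*}$ and $x_{m_{j}}\rightharpoonup y^{*}$; because $x^{*},y^{*}\in C$ the limits $\lim_{n}\left\|x_{n}-x^{*}\right\|$ and $\lim_{n}\left\|x_{n}-y^{*}\right\|$ both exist. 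Expanding $\left\|x_{n}-x^{*}\right\|^{2}=\left\|x_{n}-y^{*}\right\|^{2}+2\left\langle x_{n}-y^{*},\,y^{*}-x^{*}\right\rangle+\left\|y^{*}-x^{*}\right\|^{2}$ by Lemma \ref{l2}(i) and passing to the limit along $n_{k}$ gives $\lim_{n}\left\|x_{n}-x^{*}\right\|^{2}=\lim_{n}\left\|x_{n}-y^{*}\right\|^{2}-\left\|x^{*}-y^{*}\right\|^{2}$; the symmetric computation along $m_{j}$ yields the same identity with $x^{*}$ and $y^{*}$ interchanged. Adding the two forces $\left\|x^{*}-y^{*}\right\|^{2}=0$, so $x^{*}=y^{*}$, whence $x_{n}\rightharpoonup x^{*}\in C$.

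For (ii), write $p_{n}:=P_{C}x_{n}$ and note first that $d_{n}:=\left\|x_{n}-p_{n}\right\|=\mathrm{dist}(x_{n},C)$ is nonincreasing: indeed $d_{n+1}\leq\left\|x_{n+1}-p_{n}\right\|\leq\left\|x_{n}-p_{n}\right\|=d_{n}$ by Fej\'er monotonicity applied with $z=p_{n}\in C$, so $\{d_{n}^{2}\}$ converges. To see that $\{p_{n}\}$ is Cauchy, fix $m>n$ and apply Lemma \ref{l5} with $x=x_{m}$ and $y=p_{n}\in C$ to get $\left\|x_{m}-p_{m}\right\|^{2}\leq\left\|p_{n}-x_{m}\right\|^{2}-\left\|p_{n}-p_{m}\right\|^{2}$, i.e. $\left\|p_{n}-p_{m}\right\|^{2}\leq\left\|p_{n}-x_{m}\right\|^{2}-d_{m}^{2}$. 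Since $p_{n}\in C$, Fej\'er monotonicity gives $\left\|x_{m}-p_{n}\right\|\leq\left\|x_{n}-p_{n}\right\|=d_{n}$, whence $\left\|p_{n}-p_{m}\right\|^{2}\leq d_{n}^{2}-d_{m}^{2}\to0$ as $n,m\to\infty$. Completeness of $H$ and closedness of $C$ then give $p_{n}\to p$ for some $p\in C$.

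For (iii), let $p=\lim_{n}P_{C}x_{n}\in C$ be the limit produced in (ii). The variational characterization of the projection, which is Lemma \ref{l5} rearranged, gives $\left\langle x_{n}-P_{C}x_{n},\,x^{*}-P_{C}x_{n}\right\rangle\leq0$ for the admissible point $x^{*}\in C$. Now $x_{n}-P_{C}x_{n}\rightharpoonup x^{*}-p$ while $x^{*}-P_{C}x_{n}\to x^{*}-p$ strongly, so Lemma \ref{lemma}(iv) (weak times strong convergence of the pairing) lets me pass to the limit and obtain $\left\|x^{*}-p\right\|^{2}\leq0$, i.e. $x^{*}=p=\lim_{n}P_{C}x_{n}$. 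The main obstacle is the uniqueness step in (i): the Opial-type cancellation that forces $\omega_{\omega}$ to be a singleton is the one place where the Hilbert-space identity and the existence of the two limits must be combined with care; once it is in place, (ii) and (iii) follow quickly from Lemma \ref{l5} and the projection inequality.
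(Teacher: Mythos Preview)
Your proof is correct and complete. The paper itself does not supply a proof of this lemma; it merely cites Bauschke and Borwein \cite{bauschke1996projection}, and your argument is precisely the standard one found there: the Opial-type uniqueness trick for (i), the Cauchy estimate $\left\|p_{n}-p_{m}\right\|^{2}\leq d_{n}^{2}-d_{m}^{2}$ via the firm nonexpansiveness of $P_{C}$ for (ii), and the weak--strong passage to the limit in the projection inequality for (iii). One small remark: your appeal to Lemma~\ref{lemma}(iv) in (iii) is legitimate in a Hilbert space since strong convergence of $x^{*}-P_{C}x_{n}$ identifies it with a strongly convergent sequence of functionals, but you could equally well write $\left\langle x_{n}-P_{C}x_{n},\,x^{*}-P_{C}x_{n}\right\rangle=\left\langle x_{n}-P_{C}x_{n},\,x^{*}-p\right\rangle+\left\langle x_{n}-P_{C}x_{n},\,p-P_{C}x_{n}\right\rangle$ and treat the two terms directly, avoiding any reference to the dual.
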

For the proof, see Bauschke and Borwein \cite{bauschke1996projection}.

\subsection{Problem Formulation}
 The SCFPP is formulated as follows:
\begin{align}
{\rm Find~~} x^{*}\in C:=\bigcap_{i=1}^{N}Fix(T_{i}) {\rm ~such ~that~} Ax^{*}\in Q:=\bigcap_{j=1}^{M}Fix(G_{j})\label{4.1zy}.
\end{align}
In this chapter, we consider $T_{i}: H_{1}\to H_{1} ,$ for $i=1,2,3,...,N$ and $G_{j}: H_{2}\to H_{2},$ for $j=1,2,3,...,M,$ to be total quasi-asymptotically nonexpansive  and or demicontractive mappings.\\

 We denote the solution set of SCFPP (\ref{4.1zy})  by
 \begin{align}
\Gamma=\left\{x^{*}\in C{\rm ~such~ that~} Ax^{*}\in Q\right\}.\label{4.2@}
\end{align}
 In sequel, we assume that $\Gamma\neq\emptyset.$

\subsection{Preliminary Results}
A Banach space E  satisfies {\it Opial's condition} (see Opial \cite{opial1967weak}), if for
any sequence $\{x_{n}\}$ in E such that $x_{n}\rightharpoonup x,~{\rm as}~n\to\infty$ implies that
\begin{displaymath}
\liminf_{n \rightarrow\infty}\Vert x_{n}-x\Vert <\liminf_{n
\rightarrow \infty} \Vert x_{n}-y \Vert, ~~ \forall y\in E , y \ne x.
\end{displaymath}
T it is said to have {\it Kadec-Klee property} (see Opial \cite{opial1967weak}), if for
any sequence $\{x_{n}\}$ in E such that $x_{n}\rightharpoonup x$ and $\left\|x_{n}\right\|\rightarrow\left\|x\right\|,~{\rm as}~n\to\infty$ implies that
\begin{displaymath}
x_{n}\rightarrow x  ~{\rm and}~{\rm as}~n\to\infty.
\end{displaymath}

\begin{remark}\normalfont
Each Hilbert space satisfies the  Opial and Kadec-Klee's properties.
\end{remark}
The following lemma were taken from  Wang et al., \cite{wang2012split}, we include the proof here for the sake of completeness.
\begin{lemma}\normalfont\label{2.1}
Let $G:H_{1}\rightarrow H_{1}$  be a $(\{v_{n}\},\{\mu_{n}\},\xi)$-total quasi-asymptotically nonexpansive mapping with $Fix(G)\neq\emptyset.$ Then, for each $y\in Fix(G),$   $x\in H_{1}$ and  $n\geq 1$, the following inequalities are equivalent:
\begin{align}
\left\|G^{n}x-y\right\|^{2}&\leq \left\|x-y\right\|^{2}+v_{n}\xi(\left\|x-y\right\|)+\mu_{n};\label{4.3}
\\2 \left\langle x-G^{n}x, x-y \right\rangle &\geq \left\|G^{n}x-x\right\|^{2}-v_{n}\xi(\left\|x-y\right\|)-\mu_{n};\label{4.4}
\\2 \left\langle x-G^{n}x, y-G^{n}x \right\rangle&\leq\left\|G^{n}x-x\right\|^{2}+v_{n}\xi(\left\|x-y\right\|)+\mu_{n}.\label{4.5}
\end{align}
\end{lemma}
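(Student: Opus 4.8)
The plan is to establish the cyclic chain $(\ref{4.3}) \Leftrightarrow (\ref{4.4}) \Leftrightarrow (\ref{4.5})$ by purely algebraic identities. In fact the total quasi-asymptotic nonexpansiveness of $G$ is never used: the three inequalities are logically equivalent for \emph{any} self-map of $H_1$, any $x,y\in H_1$, and any $n\geq 1$; the hypothesis only guarantees that $(\ref{4.3})$ actually holds. Throughout I would abbreviate the common remainder term by $R := v_{n}\xi(\|x-y\|)+\mu_{n}$.

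First I would prove $(\ref{4.3}) \Leftrightarrow (\ref{4.4})$. Decomposing $G^{n}x - y = (G^{n}x - x) + (x - y)$ and applying Lemma \ref{l2}(i),
\begin{equation*}
\|G^{n}x - y\|^{2} = \|G^{n}x - x\|^{2} + 2\langle G^{n}x - x,\, x - y\rangle + \|x - y\|^{2}.
\end{equation*}
Substituting this into $(\ref{4.3})$ and cancelling $\|x-y\|^{2}$ reduces it to $\|G^{n}x - x\|^{2} + 2\langle G^{n}x - x,\, x - y\rangle \leq R$. Using $\langle G^{n}x - x,\,\cdot\rangle = -\langle x - G^{n}x,\,\cdot\rangle$ and rearranging then gives exactly $(\ref{4.4})$. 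Since every step is an identity or a reversible rearrangement, $(\ref{4.3})$ and $(\ref{4.4})$ are equivalent.

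Next I would prove $(\ref{4.4}) \Leftrightarrow (\ref{4.5})$. The key observation is the decomposition $(x-y)+(y-G^{n}x) = x - G^{n}x$; pairing both sides with $x - G^{n}x$ yields
\begin{equation*}
\langle x - G^{n}x,\, x - y\rangle + \langle x - G^{n}x,\, y - G^{n}x\rangle = \|G^{n}x - x\|^{2}.
\end{equation*}
Hence $2\langle x - G^{n}x,\, y - G^{n}x\rangle = 2\|G^{n}x - x\|^{2} - 2\langle x - G^{n}x,\, x - y\rangle$. Substituting the lower bound $2\langle x - G^{n}x,\, x - y\rangle \geq \|G^{n}x - x\|^{2} - R$ supplied by $(\ref{4.4})$ produces the upper bound $(\ref{4.5})$, and running the same identity backwards recovers $(\ref{4.4})$ from $(\ref{4.5})$. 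Chaining the two equivalences finishes the argument.

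I do not expect a genuine obstacle: the whole proof is the polarization identity applied twice together with careful sign bookkeeping. The only delicate points are flipping the direction of the inequality when passing between $\langle G^{n}x - x,\,\cdot\rangle$ and $\langle x - G^{n}x,\,\cdot\rangle$, and when substituting a lower bound into the expression $2\|G^{n}x - x\|^{2} - 2\langle\,\cdot\,,\,\cdot\,\rangle$, where the minus sign reverses the estimate. No appeal to weak convergence, demiclosedness, or the limiting behaviour of $\{v_{n}\}$ and $\{\mu_{n}\}$ is required.
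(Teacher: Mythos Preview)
Your proposal is correct and uses essentially the same algebraic identities as the paper's proof; the paper organizes the argument as a cyclic chain $(\ref{4.3})\Rightarrow(\ref{4.4})\Rightarrow(\ref{4.5})\Rightarrow(\ref{4.3})$ rather than two separate biconditionals, but the underlying expansions and rearrangements are identical. Your remark that the mapping hypothesis is not actually needed for the equivalence itself is accurate and worth keeping.
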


\begin{proof}

$(i)\Rightarrow (ii)$
\begin{align}
\left\|G^{n}x-y\right\|^{2}&=\left\|G^{n}x-x+x-y\right\|^{2}\nonumber
\\&=\left\|G^{n}x-x\right\|^{2}+2\left\langle G^{n}x-x, x-y\right\rangle+\left\|x-y\right\|^{2},\nonumber
\end{align}
this imply that
\begin{align}
2\left\langle G^{n}x-x, x-y\right\rangle&=\left\|G^{n}x-y\right\|^{2}-\left\|G^{n}x-x\right\|^{2}-\left\|x-y\right\|^{2}\nonumber
\\&\leq \left\|x-y\right\|^{2}+v_{n}\xi(\left\|x-y\right\|)+\mu_{n}-\left\|G^{n}x-x\right\|^{2}-\left\|x-y\right\|^{2}.\nonumber
\end{align}
Thus, we deduce that
\begin{align}
2\left\langle x-G^{n}x, x-y\right\rangle&\geq \left\|G^{n}x-x\right\|^{2}-v_{n}\xi(\left\|x-y\right\|)-\mu_{n}.\nonumber
\end{align}
$(ii)\Rightarrow (iii)$
\begin{align}
\left\langle x-G^{n}x, x-y\right\rangle&=\left\langle x-G^{n}x, x-G^{n}x+G^{n}x-y\right\rangle\nonumber
\\&=\left\langle x-G^{n}x, x-G^{n}x\right\rangle+\left\langle x-G^{n}x, G^{n}x-y\right\rangle.\nonumber
\end{align}
This tends to imply that
\begin{align}
\left\langle x-G^{n}x, G^{n}x-y\right\rangle&=-\left\| x-G^{n}x\right\|^{2}+\left\langle x-G^{n}x, x-y\right\rangle \nonumber
\\&\geq -\left\| x-G^{n}x\right\|^{2} +\frac{1}{2}\left\|G^{n}x-x\right\|^{2}\nonumber
\\&-\frac{1}{2}v_{n}\xi(\left\|x-y\right\|)-\frac{1}{2}\mu_{n}.\nonumber
\end{align}
Thus, we deduce that
\begin{align}
2\left\langle x-G^{n}x, y-G^{n}x\right\rangle&\leq \left\| x-G^{n}x\right\|^{2} +v_{n}\xi(\left\|x-y\right\|)+\mu_{n}.\nonumber
\end{align}
$(iii)\Rightarrow (i)$
\begin{align}2 \left\langle x-G^{n}x, y-G^{n}x \right\rangle&\leq\left\|G^{n}x-x\right\|^{2}+v_{n}\xi(\left\|x-y\right\|)+\mu_{n}\nonumber
\\&=\left\|G^{n}x-y\right\|^{2}+2 \left\langle G^{n}x-y, y-x \right\rangle+\left\|x-y\right\|^{2}\nonumber
\\&+v_{n}\xi(\left\|x-y\right\|)+\mu_{n},\nonumber
\end{align}
thus, we deduce that
\begin{align}
\left\|G^{n}x-y\right\|^{2}&\leq \left\|x-y\right\|^{2}+v_{n}\xi(\left\|x-y\right\|)+\mu_{n}.\nonumber
\end{align}
And thus  completes the  proof.  \end{proof}

\begin{lemma}(Mohammed and Kilicman \cite{lbm})\normalfont\label{fd}
Let   $P_{C}:H\to C$ be a metric projection   such that $$\left\langle x_{n}-x^{*}, x_{n}-P_{C}x_{n}\right\rangle\leq 0.$$ Then   for each $ n\geq 1,$
$$\left\|P_{C}x_{n}-x_{n}\right\|\leq\left\|P_{C}x_{n}-x^{*}\right\|,   \forall x^{*}\in C.$$
\end{lemma}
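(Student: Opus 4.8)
The plan is to work directly from the stated hypothesis together with the Cauchy--Schwarz inequality \eqref{reduces}. Write $p := P_{C}x_{n}$ for brevity, so that the assumption reads $\left\langle x_{n}-x^{*},\, x_{n}-p\right\rangle\leq 0$ and the goal is to show $\|p-x_{n}\|\leq\|p-x^{*}\|$ for every $x^{*}\in C$.

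First I would decompose the vector $x_{n}-p$ by inserting $x^{*}$: since $x_{n}-p=(x_{n}-x^{*})+(x^{*}-p)$, expanding the squared norm by bilinearity of the inner product gives
$$\|x_{n}-p\|^{2}=\left\langle x_{n}-p,\, x_{n}-x^{*}\right\rangle+\left\langle x_{n}-p,\, x^{*}-p\right\rangle.$$
The first inner product on the right-hand side is exactly the quantity assumed to be nonpositive (it equals $\left\langle x_{n}-x^{*},\, x_{n}-p\right\rangle\leq 0$ by symmetry of the real inner product), so it may be discarded to obtain the estimate $\|x_{n}-p\|^{2}\leq\left\langle x_{n}-p,\, x^{*}-p\right\rangle$.

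Next I would bound the surviving inner product by Cauchy--Schwarz \eqref{reduces}, giving $\left\langle x_{n}-p,\, x^{*}-p\right\rangle\leq\|x_{n}-p\|\,\|x^{*}-p\|$, and therefore $\|x_{n}-p\|^{2}\leq\|x_{n}-p\|\,\|x^{*}-p\|$. If $\|x_{n}-p\|=0$ the desired inequality holds trivially; otherwise, dividing through by $\|x_{n}-p\|>0$ yields $\|x_{n}-p\|\leq\|x^{*}-p\|$, which is precisely the claim $\|P_{C}x_{n}-x_{n}\|\leq\|P_{C}x_{n}-x^{*}\|$, and $x^{*}\in C$ was arbitrary.

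I do not expect any genuine obstacle here, as the argument is a two-line computation once the decomposition is in place. The only points requiring care are to keep the roles of $x_{n}-x^{*}$ and $x^{*}-p$ straight in the splitting so that the sign of the assumed inner product is exploited correctly, and to dispose of the degenerate case $x_{n}=P_{C}x_{n}$ before dividing. No earlier machinery beyond the definition of the inner product and the Cauchy--Schwarz inequality is needed.
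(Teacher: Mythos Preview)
Your proof is correct. Both arguments begin by inserting $x^{*}$ into the vector $x_{n}-P_{C}x_{n}$, but the mechanics differ. The paper expands the full square $\|x_{n}-P_{C}x_{n}\|^{2}=\|(x_{n}-x^{*})+(x^{*}-P_{C}x_{n})\|^{2}$ via the polarization identity, then further decomposes the cross term to arrive at the exact identity
\[
\|x_{n}-P_{C}x_{n}\|^{2}=\|x^{*}-P_{C}x_{n}\|^{2}-\|x_{n}-x^{*}\|^{2}+2\left\langle x_{n}-x^{*},\,x_{n}-P_{C}x_{n}\right\rangle,
\]
after which both the negative term and the assumed nonpositive inner product are simply dropped. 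Your route instead splits only one slot of the inner product, invokes the hypothesis to discard $\langle x_{n}-x^{*},\,x_{n}-p\rangle$, and finishes with Cauchy--Schwarz plus a division. What the paper's approach buys is a slightly sharper intermediate inequality (it also absorbs the $-\|x_{n}-x^{*}\|^{2}$ term) and avoids both Cauchy--Schwarz and the case split $\|x_{n}-p\|=0$; what your approach buys is brevity, since the single linear decomposition and one application of Cauchy--Schwarz replace two rounds of quadratic expansion.
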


\begin{proof}

Let $x^{*}\in C$, then
\begin{align*}
\left\| x_{n} - P_{C}x_{n} \right\|^2
&=\left\|x_{n}-x^{*}+x^{*}-P_{C}x_{n}\right\|^2
\\&=\left\|x_{n}-x^{*}\right\|^{2}+\left\| x^{*}- P_{C}x_{n}\right\|^{2}
\\&+2\left\langle x_{n}-x^{*}, x^{*}-P_{C}x_{n}\right\rangle
\\&=\left\|x_{n}-x^{*}\right\|^{2}+\left\|x^{*}-P_{C}x_{n}  \right\|^{2}
\\&+2\left\langle x_{n}-x^{*}, x^{*}-x_{n}+x_{n}-P_{C}x_{n} \right\rangle
\\&=\left\|x_{n}-x^{*}\right\|^{2}+\left\|x^{*}-P_{C}x_{n}  \right\|^{2}-2\left\|x_{n}-x^{*}\right\|^{2}
\\&+2\left\langle x_{n}-x^{*}, x_{n}-P_{C}x_{n}\right\rangle
\\&= \left\| x^{*}- P_{C}x_{n}\right\|^{2}-\left\|x_{n}-x^{*} \right\|^{2}+2\left\langle x_{n}-x^{*}, x_{n}-P_{C}x_{n}\right\rangle
\\& \leq\left\|x^{*}-P_{C}x_{n}\right\|^2.
\end{align*}
Thus, we conclude that
 $$\left\| x_{n} - P_{C}x_{n} \right\|\leq\left\|x^{*}-P_{C}x_{n}\right\|.$$
\end{proof}

\subsection{Strong Convergence  for the Split Common Fixed Point Problems for Total Quasi-asymptotically Nonexpansive Mappings}

\begin{theorem}\normalfont\label{thm1}
Let  $G:H_{1}\rightarrow H_{1}$, $T:H_{2}\rightarrow H_{2}$ be $(\{v_{n_{1}}\},$ $\{\mu_{n_{1}}\},$ $\xi_{1})$, $(\{v_{n_{2}}\},$
$ \{\mu_{n_{2}}\},$  $\xi_{2})$-total quasi-asymptotically nonexpansive  mappings and  uniformly $L_{1}, L_{2}$-Lipschitzian continuous  mappings such that $(G-I)$ and $(T-I)$ are  demiclosed at zero. Let $A:H_{1}\rightarrow H_{2}$ be a bounded linear operator with its adjoint   $A^{*}.$  Also let  $M$ and $M^{*}$ be  positive constants such that $\xi(k)\leq\xi(M)+M^{*}k^{2},\forall k\geq 0.$  Assume that $\Gamma\neq\emptyset,$ and let $P_{\Gamma}$ be the metric projection of $H_{1}$ onto $\Gamma$ satisfying
$$\left\langle x_{n}-x^{*}, x_{n}-P_{\Gamma}x_{n}\right\rangle\leq 0.$$
Define a sequence  $\{x_{n}\}$  by
\begin{equation}
 {} \left\{\begin{array}{ll} x_{0} \in H_{1},
 \\ u_{n} =x_{n}+\gamma A^{*}(T^{n}-I)Ax_{n}, & \textrm{ $  $}
  \\ x_{n+1} = \alpha_{n}u_{n} + (1-\alpha_{n})G^{n}u_{n},\forall n\geq 0,\label{3.1}
\end{array}\right.
\end{equation}
where  $\gamma,$ $L$, $\{v_{n}\}$, $\{\mu_{n}\}$, $\{\xi_{n}\}$ and $\{\alpha_{n}\}$ satisfies the following conditions:
\begin{enumerate}
\item[(i)]$0<k<\alpha_{n}<1$, $\gamma\in (0,\frac{1}{L^{*}})$ with $L^{*}=\left\|AA^{*}\right\|$ {\rm ~and~} $L=\max{\{L_{1},L_{2}\}}$;
\item[(ii)] $v_{n}=\max{\{v_{n_{1}},v_{n_{2}}\}}$, $\mu_{n}=\max{\{\mu_{n_{1}},\mu_{n_{2}}\}}$ and $\xi=\max{\{\xi_{1},\xi_{2}\}}$.
\end{enumerate}
Then $x_{n}\to x^{*}\in\Gamma $.
\end{theorem}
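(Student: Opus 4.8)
The plan is to show that $\{x_{n}\}$ is quasi-Fej\'er monotone with respect to $\Gamma$, extract asymptotic regularity from the negative terms in the recursion, identify the weak cluster points via the demiclosedness hypotheses, and finally upgrade weak to strong convergence using the projection assumption through Lemma \ref{fd} and Lemma \ref{df}. Throughout fix $x^{*}\in\Gamma$, so $Gx^{*}=x^{*}$ and $TAx^{*}=Ax^{*}$, and abbreviate $y_{n}=Ax_{n}$, $y^{*}=Ax^{*}$, $L^{*}=\|AA^{*}\|=\|A^{*}\|^{2}$. First I would estimate $\|u_{n}-x^{*}\|^{2}$: writing $2\gamma\langle x_{n}-x^{*},A^{*}(T^{n}-I)y_{n}\rangle=2\gamma\langle y_{n}-y^{*},T^{n}y_{n}-y_{n}\rangle$, applying the polarization identity and then inequality (\ref{4.3}) for $T$ at $y^{*}$, and bounding $\gamma^{2}\|A^{*}(T^{n}-I)y_{n}\|^{2}\le\gamma^{2}L^{*}\|(T^{n}-I)y_{n}\|^{2}$, I expect
\begin{equation*}
\|u_{n}-x^{*}\|^{2}\le\|x_{n}-x^{*}\|^{2}+\gamma v_{n_{2}}\xi_{2}(\|y_{n}-y^{*}\|)+\gamma\mu_{n_{2}}-\gamma(1-\gamma L^{*})\|(T^{n}-I)y_{n}\|^{2},
\end{equation*}
in which the last term is nonpositive since $\gamma\in(0,1/L^{*})$.

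Expanding $\|x_{n+1}-x^{*}\|^{2}$ with Lemma \ref{l2}(ii) and applying (\ref{4.3}) for $G$ at $x^{*}$ produces a $-\alpha_{n}(1-\alpha_{n})\|(G^{n}-I)u_{n}\|^{2}$ term; substituting the bound above then gives a recursion of the shape
\begin{equation*}
\|x_{n+1}-x^{*}\|^{2}\le(1+a_{n})\|x_{n}-x^{*}\|^{2}+b_{n}-\gamma(1-\gamma L^{*})\|(T^{n}-I)y_{n}\|^{2}-\alpha_{n}(1-\alpha_{n})\|(G^{n}-I)u_{n}\|^{2}.
\end{equation*}
The device that makes $a_{n},b_{n}$ manageable is the growth condition $\xi(k)\le\xi(M)+M^{*}k^{2}$: since $\xi=\max\{\xi_{1},\xi_{2}\}$ and $\|y_{n}-y^{*}\|\le\|A\|\,\|x_{n}-x^{*}\|$, every $\xi$-term becomes affine in $\|x_{n}-x^{*}\|^{2}$, so $a_{n}=O(v_{n})$ and $b_{n}=O(v_{n}+\mu_{n})$. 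Reading hypothesis (ii) as $\sum v_{n}<\infty$ and $\sum\mu_{n}<\infty$, Lemma \ref{2.2s} yields that $\lim_{n}\|x_{n}-x^{*}\|$ exists and $\{x_{n}\}$ is bounded; summing the recursion and using $\liminf\alpha_{n}(1-\alpha_{n})>0$ then forces $\|(T^{n}-I)y_{n}\|\to0$ and $\|(G^{n}-I)u_{n}\|\to0$.

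Next I would convert these into genuine fixed-point residuals. From $u_{n}-x_{n}=\gamma A^{*}(T^{n}-I)y_{n}$ we get $\|u_{n}-x_{n}\|\to0$, whence $\|x_{n+1}-x_{n}\|\to0$ and (using the $L_{1}$-Lipschitz bound) $\|(G^{n}-I)x_{n}\|\to0$. The uniformly-Lipschitz telescoping estimate
\begin{equation*}
\|(G-I)x_{n}\|\le\|x_{n}-x_{n+1}\|+\|(G^{n+1}-I)x_{n+1}\|+L_{1}\|x_{n+1}-x_{n}\|+L_{1}\|(G^{n}-I)x_{n}\|
\end{equation*}
then gives $\|(G-I)x_{n}\|\to0$, and the same reasoning applied to $y_{n}$ (with $\|y_{n+1}-y_{n}\|\le\|A\|\,\|x_{n+1}-x_{n}\|$) gives $\|(T-I)y_{n}\|\to0$. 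For any subsequence with $x_{n_{k}}\rightharpoonup w$ we also have $Ax_{n_{k}}\rightharpoonup Aw$, so demiclosedness of $G-I$ and $T-I$ at zero yields $Gw=w$ and $TAw=Aw$, i.e.\ $w\in\Gamma$. Since $\lim_{n}\|x_{n}-z\|$ exists for every $z\in\Gamma$, Opial's property makes the weak cluster point unique, so $x_{n}\rightharpoonup w\in\Gamma$.

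For the strong convergence I would finally use the projection hypothesis: Lemma \ref{df}(iii) gives $w=\lim_{n}P_{\Gamma}x_{n}$ strongly, and Lemma \ref{fd} applied to the assumed inequality $\langle x_{n}-x^{*},x_{n}-P_{\Gamma}x_{n}\rangle\le0$ gives $\|x_{n}-P_{\Gamma}x_{n}\|\le\|P_{\Gamma}x_{n}-w\|\to0$; then $\|x_{n}-w\|\le\|x_{n}-P_{\Gamma}x_{n}\|+\|P_{\Gamma}x_{n}-w\|\to0$, i.e.\ $x_{n}\to w\in\Gamma$. The main obstacle I anticipate is the first two paragraphs: arranging the two estimates so that the growth condition absorbs all three $\xi$-terms into a summable $a_{n}\|x_{n}-x^{*}\|^{2}+b_{n}$ while keeping both negative quadratic terms intact is delicate bookkeeping. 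The remaining friction is that conditions (i)--(ii) as literally stated appear to need strengthening to $\sum v_{n}<\infty$, $\sum\mu_{n}<\infty$ and $\limsup\alpha_{n}<1$ before Lemma \ref{2.2s} and the asymptotic-regularity conclusion truly apply.
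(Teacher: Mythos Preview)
Your proposal is correct and follows essentially the same five-stage architecture as the paper: a quasi-Fej\'er recursion obtained by combining the $u_{n}$-estimate (via Lemma~\ref{2.1} for $T$) with the $x_{n+1}$-estimate (via Lemma~\ref{2.1} for $G$), extraction of $\|(T^{n}-I)Ax_{n}\|\to0$ and $\|(G^{n}-I)u_{n}\|\to0$ from the negative terms, the uniformly-Lipschitz telescoping to pass from $T^{n},G^{n}$ to $T,G$, weak convergence via demiclosedness and Opial, and finally the projection hypothesis through Lemmas~\ref{fd} and~\ref{df}. The paper carries out the first estimate by expanding $\|\alpha_{n}(u_{n}-G^{n}u_{n})+(G^{n}u_{n}-x^{*})\|^{2}$ directly rather than via Lemma~\ref{l2}(ii), but the resulting recursion $(\ref{(h)})$ is the one you describe.

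The only substantive difference is in the last step. The paper upgrades weak to strong convergence via the Kadec--Klee route: it shows $\|x_{n}\|\to\|x^{*}\|$ by bounding $\bigl|\|x_{n+1}\|-\|x^{*}\|\bigr|^{2}\le4\|P_{\Gamma}x_{n}-x^{*}\|^{2}+\beta_{n}\|x_{n}-x^{*}\|^{2}+\eta_{n}$ and then invokes Kadec--Klee. Your argument, $\|x_{n}-w\|\le\|x_{n}-P_{\Gamma}x_{n}\|+\|P_{\Gamma}x_{n}-w\|\le2\|P_{\Gamma}x_{n}-w\|\to0$, uses the same two lemmas but is more direct and avoids the detour through norms. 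Your closing remark is also on target: the paper's proof tacitly uses $\sum v_{n}<\infty$, $\sum\mu_{n}<\infty$ (when it asserts ``Clearly, $\sum\beta_{n}<\infty$ and $\sum\eta_{n}<\infty$'') and needs $\alpha_{n}(1-\alpha_{n})$ bounded away from zero to deduce $(\ref{(j)})$, neither of which is guaranteed by the stated hypotheses (i)--(ii).
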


\begin{proof}

To show that $x_{n}\rightarrow x^{*},$ as $n\rightarrow\infty,$ it suffices to show that
$$x_{n}\rightharpoonup x^{*} {\rm~and~} \left\|x_{n}\right\|\rightarrow \left\|x^{*}\right\|, {\rm~as~} n\rightarrow\infty.$$

 We divided the proof into five steps as follows:\\

\textbf{Step 1.} In this step, we show  that for each $x^{*}\in\Gamma,$ the following limit exists.
\begin{equation}
\underset{n\rightarrow\infty}{\lim}\left\|x_{n}-x^{*}\right\|=\underset{n\rightarrow\infty}{\lim}\left\|u_{n}-x^{*}\right\|.
\end{equation}
Now, let $x^{*}\in\Gamma.$  By (\ref{3.1}) and Lemma \ref{2.1}, we have
\begin{align}
\left\|x_{n+1}-x^{*}\right\|^{2}
&=\left\|\alpha_{n}u_{n} +(1-\alpha_{n})G^{n}u_{n}-x^{*}\right\|^{2}\nonumber
\\&=\left\|\alpha_{n}(u_{n}-G^{n}u_{n})\right\|^{2}+2\alpha_{n}\left\langle u_{n}-G^{n}u_{n}, G^{n}u_{n}-x^{*}\right\rangle+\left\|G^{n}u_{n}-x^{*}\right\|^{2}\nonumber
\\&=\alpha_{n}^{2}\left\|u_{n}-G^{n}u_{n}\right\|^{2}+2\alpha_{n}\left\langle u_{n}-x^{*}+x^{*}-G^{n}u_{n}, G^{n}u_{n}-x^{*}\right\rangle\nonumber
\\&+\left\|G^{n}u_{n}-x^{*}\right\|^{2}\nonumber
\\&=\alpha_{n}^{2}\left\|u_{n}-G^{n}u_{n}\right\|^{2}+2\alpha_{n}\left\langle u_{n}-x^{*}, G^{n}u_{n}-x^{*}\right\rangle\nonumber
\\&+(1-2\alpha_{n})\left\|G^{n}u_{n}-x^{*}\right\|^{2}\nonumber
\\&=\alpha_{n}^{2}\left\|u_{n}-G^{n}u_{n}\right\|^{2}+2\alpha_{n}\left\langle u_{n}-x^{*}, G^{n}u_{n}-u_{n}+u_{n}-x^{*}\right\rangle\nonumber
\\&+(1-2\alpha_{n})\left\|G^{n}u_{n}-x^{*}\right\|^{2}\nonumber
\\&=\alpha_{n}^{2}\left\|u_{n}-G^{n}u_{n}\right\|^{2}+2\alpha_{n}\left\langle u_{n}-x^{*}, G^{n}u_{n}-u_{n}\right\rangle \nonumber
\\&+2\alpha_{n}\left\langle u_{n}-x^{*}, u_{n}-x^{*}\right\rangle +(1-2\alpha_{n})\left\|G^{n}u_{n}-x^{*}\right\|^{2}\nonumber
\\&\leq -\alpha_{n}(1-\alpha_{n})\left\|u_{n}-G^{n}u_{n}\right\|^{2} +2\alpha_{n}\left\|u_{n}-x^{*}\right\|^{2}+\alpha_{n}v_{n}\xi(\left\|u_{n}-x^{*}\right\|) \nonumber
\\&+\alpha_{n}\mu_{n}+(1-2\alpha_{n})\Big(\left\|u_{n}-x^{*}\right\|^{2}+ v_{n}\xi(\left\|u_{n}-x^{*}\right\|) +\mu_{n}\Big) \nonumber
\\&\leq -\alpha_{n}(1-\alpha_{n})\left\|u_{n}-G^{n}u_{n}\right\|^{2} +\left\|u_{n}-x^{*}\right\|^{2} \nonumber
\\&+(1-\alpha_{n})\Big(v_{n}\xi(\left\|u_{n}-x^{*}\right\|) +\mu_{n}\Big) \nonumber
\\&=-\alpha_{n}(1-\alpha_{n})\left\|u_{n}-G^{n}u_{n}\right\|^{2}+ \Big(1+(1-\alpha_{n})v_{n}M^{*}\Big)\left\|u_{n}-x^{*}\right\|^{2} \nonumber
\\&+(1-\alpha_{n})\Big(v_{n}\xi (M)+\mu_{n}\Big).\label{(a)}
\end{align}
On the other hand,
\begin{align}
\left\|u_{n}-x^{*}\right\|^{2}&=\left\|x_{n}-x^{*}+\gamma A^{*}(T^{n}-I)Ax_{n}\right\|^{2}\nonumber
\\&=\left\|x_{n}-x^{*}\right\|^{2}+\gamma^{2}\left\|A^{*}(T^{n}-I)Ax_{n}\right\|^{2}\nonumber
\\&+2\gamma\left\langle x_{n}-x^{*}, A^{*}(T^{n}-I)Ax_{n}\right\rangle,\label{(bDD)}
\end{align}
and
\begin{align}
\gamma^{2}\left\| A^{*}(T^{n}-I)Ax_{n}\right\|^{2} &=\gamma^{2}\left\langle A^{*}(T^{n}-I)Ax_{n}, A^{*}(T^{n}-I)Ax_{n}\right\rangle\nonumber
\\&= \gamma^{2}\left\langle AA^{*}(T^{n}-I)Ax_{n}, (T^{n}-I)Ax_{n}\right\rangle\nonumber
\\&\leq \gamma^{2}L^{*}\left\| (T^{n}-I)Ax_{n}\right\|^{2}. \label{(cAW)}
\end{align}
By Lemma \ref{2.1}, it follows that
\begin{align}
2\gamma\left\langle x_{n}-x^{*}, A^{*}(T^{n}-I)Ax_{n}\right\rangle &=2\gamma\left\langle Ax_{n}-T^{n}Ax_{n}+T^{n}Ax_{n}-Ax^{*},  T^{n}Ax_{n}-Ax_{n}\right\rangle\nonumber
\\& = 2\gamma\left\langle T^{n}Ax_{n}-Ax^{*},  T^{n}Ax_{n}-Ax_{n}\right\rangle\nonumber
\\&-2\gamma\left\|(T^{n}-I)Ax_{n}\right\|^{2}\nonumber
\\&\leq \gamma v_{n}M^{*}L^{*}\left\|x_{n}-x^{*}\right\|^{2}+\gamma(v_{n}\xi(M)+\mu_{n})\nonumber
\\&-\gamma\left\|(T^{n}-I)Ax_{n}\right\|^{2}.\label{(dSD)}
\end{align}
Substituting  (\ref{(cAW)}) and (\ref{(dSD)}) into (\ref{(bDD)}), we obtain that
\begin{align}
\left\|u_{n}-x^{*}\right\|^{2}&\leq (1+\gamma v_{n}M^{*}L^{*})\left\| x_{n}-x^{*}\right\|^{2}\nonumber
\\&-\gamma(1-\gamma L)\left\| (T^{n}-I)Ax_{n}\right\|^{2} + \gamma (v_{n}\xi(M) +\mu_{n}).\label{(e)}
 \end{align}
By (\ref{(e)}) and (\ref{(a)}), we deduce that
 \begin{align}
\left\|x_{n+1}-x^{*}\right\|^{2}&\leq (1+(1-\alpha_{n})v_{n}M^{*})\Big( (1+\gamma v_{n}M^{*}L^{*})\left\| x_{n}-x^{*}\right\|^{2}\nonumber
\\&-\gamma(1-\gamma L^{*})\left\| (T^{n}-I)Ax_{n}\right\|^{2} + \gamma (v_{n}\xi(M) +\mu_{n})\Big)\nonumber
\\&-\alpha_{n}(1-\alpha_{n})\left\|x_{n}-G^{n}u_{n}\right\|^{2}+ (1-\alpha_{n})(v_{n}\xi (M)+\mu_{n})\nonumber
\\&=\big(1+(1-\alpha_{n})v_{n}M^{*}\big)\big(1+\gamma v_{n}M^{*}L\big)\left\| x_{n}-x^{*}\right\|^{2}\nonumber
\\& -\gamma(1-\gamma L^{*})(1+(1-\alpha_{n})v_{n}M^{*})\left\| (T^{n}-I)Ax_{n}\right\|^{2}\nonumber
\\&-\alpha_{n}(1-\alpha_{n})\left\|x_{n}-G^{n}u_{n}\right\|^{2}+(1+(1-\alpha_{n})v_{n}M^{*})\gamma (v_{n}\xi(M) +\mu_{n}) \nonumber
\\&+(1-\alpha_{n})(v_{n}\xi (M)+\mu_{n})\nonumber
\\&\leq\big(1+(1-\alpha_{n})v_{n}M^{*}\big)\big(1+\gamma v_{n}M^{*}L\big)\left\| x_{n}-x^{*}\right\|^{2}\nonumber
\\& -\gamma(1-\gamma L^{*})\left\| (T^{n}-I)Ax_{n}\right\|^{2}-\alpha_{n}(1-\alpha_{n})\left\|x_{n}-G^{n}u_{n}\right\|^{2}\nonumber
\\&+(1+(1-\alpha_{n})v_{n}M^{*})\gamma (v_{n}\xi(M) +\mu_{n}) \nonumber
\\&+(1-\alpha_{n})(v_{n}\xi (M)+\mu_{n}).\label{(f)}
\end{align}
Thus, we deduce that
\begin{align}
\left\|x_{n+1}-x^{*}\right\|^{2}&\leq \Big(1+\gamma v_{n}M^{*}L^{*} + (1-\alpha_{n})v_{n}M^{*}(1+\gamma v_{n}M^{*}L^{*})\Big)\left\| x_{n}-x^{*}\right\|^{2}\nonumber
\\& + (1+(1-\alpha_{n})v_{n}M^{*})\gamma (v_{n}\xi(M) +\mu_{n})+(1-\alpha_{n})(v_{n}\xi (M)+\mu_{n}).\nonumber
\end{align}
This implies that
\begin{align}
\left\|x_{n+1}-x^{*}\right\|^{2}&\leq (1+\beta_{n})\left\| x_{n}-x^{*}\right\|^{2} +\eta_{n},\label{(h)}
\end{align}
\begin{align*}
{\rm where~}\beta_{n}&= \gamma v_{n}M^{*}L^{*} + \big(1-\alpha_{n}\big)v_{n}M^{*}\big(1+\gamma v_{n}M^{*}L^{*}\big)
\\\eta_{n}&= \big(1+(1-\alpha_{n})v_{n}M^{*}\big)\gamma \big(v_{n}\xi(M) +\mu_{n}\big) +\big(1-\alpha_{n}\big)\big(v_{n}\xi (M)+\mu_{n}\big).
\end{align*}
Clearly,  $\sum{\beta_{n}}<\infty$ and $\sum{\eta_{n}}<\infty$. Moreover, $\beta_{n}\to 0$ and $\eta_{n}\to 0.$ Hence, by Lemma \ref{2.2s}, we conclude that $\underset{n\rightarrow\infty}{\lim}\left\|x_{n}-x^{*}\right\|$ exist.\\

We now prove that for each $x^{*}\in\Gamma$,  $\underset{n\rightarrow\infty}{\lim}\left\|u_{n}-x^{*}\right\|$ exist. \\

By (\ref{(f)}), we deduce that
\begin{align}
\gamma(1-\gamma L^{*})\left\| (T^{n}-I)Ax_{n}\right\|^{2}&\leq \left\| x_{n}-x^{*}\right\|^{2}-\left\|x_{n+1}-x^{*}\right\|^{2}\nonumber
 \\&+\beta_{n}\left\| x_{n}-x^{*}\right\|^{2} +\eta_{n}\label{(i)},
\end{align}
and
\begin{align}
\alpha_{n}(1-\alpha_{n})\left\|u_{n}-G^{n}u_{n}\right\|^{2}&\leq \left\| x_{n}-x^{*}\right\|^{2}-\left\|x_{n+1}-x^{*}\right\|^{2}\nonumber
 \\&+\beta_{n}\left\| x_{n}-x^{*}\right\|^{2} +\eta_{n}\label{(ii)}.
\end{align}
Thus, as $n\to\infty,$ we deduce from  (\ref{(i)}) and (\ref{(ii)})  that
\begin{align}
\underset{n\rightarrow\infty}{\lim}\left\|u_{n}-G^{n}u_{n}\right\|=0  {\rm ~and~} \underset{n\rightarrow\infty}{\lim}\left\|Ax_{n}-T^{n}Ax_{n}\right\|=0. \label{(j)}
\end{align}
Given (\ref{(e)}), (\ref{(j)}) and the fact that  $\underset{n\rightarrow\infty}{\lim}\left\|x_{n}-x^{*}\right\|$ exists, we obtain that $$\underset{n\rightarrow\infty}{\lim}\left\|u_{n}-x^{*}\right\| {\rm~exist}.$$

Moreover, by (\ref{(a)}) and (\ref{(e)}), we deduce that

\begin{align}\left\|x_{n+1}-x^{*}\right\|^{2}&\leq \Big(1+(1-\alpha_{n})v_{n}M^{*}\Big)\left\|u_{n}-x^{*}\right\|^{2} \nonumber
\\&+(1-\alpha_{n})\Big(v_{n}\xi (M)+\mu_{n}\Big),\label{hh1}
\end{align}
and
\begin{align}
\left\|u_{n}-x^{*}\right\|^{2}&\leq (1+\gamma v_{n}M^{*}L^{*})\left\| x_{n}-x^{*}\right\|^{2}\nonumber
\\&+ \gamma (v_{n}\xi(M) +\mu_{n}).\label{hh2}
 \end{align}
The fact that  $\underset{n\rightarrow\infty}{\lim}\left\|x_{n}-x^{*}\right\|$ and $\underset{n\rightarrow\infty}{\lim}\left\|u_{n}-x^{*}\right\|$  exists, it follows from (\ref{hh1}) and (\ref{hh2}) that

$$\underset{n\rightarrow\infty}{\lim}\left\|u_{n}-x^{*}\right\| =\underset{n\rightarrow\infty}{\lim}\left\|x_{n}-x^{*}\right\|.$$

\textbf{Step 2.} In this step, we show that

\begin{equation}
\underset{n\rightarrow\infty}{\lim}\left\|x_{n+1}-x_{n}\right\|=0 {\rm ~and~} \underset{n\rightarrow\infty}{\lim}\left\|u_{n+1}-u_{n}\right\|=0.
\end{equation}
By (\ref{3.1}), we have that
\begin{align}
\left\|x_{n+1}-x_{n}\right\| &=\left\|\alpha_{n}u_{n} +(1-\alpha_{n})G^{n}u_{n}-x_{n}\right\|\nonumber
\\&= \left\|(1-\alpha_{n})(G^{n}u_{n}-u_{n})+u_{n}-x_{n}\right\|\nonumber
\\&= \left\|(1-\alpha_{n})(G^{n}u_{n}-u_{n})+A^{*}(T^{n}-I)Ax_{n}\right\|.\label{4.20}
\end{align}
In view of (\ref{(j)}), we deduce from (\ref{4.20}) that
\begin{align}
\underset{n\rightarrow\infty}{\lim}\left\|x_{n+1}-x_{n}\right\|=0.\label{(k)}
\end{align}
On the other hand,
\begin{align*}
\left\|u_{n+1}-u_{n}\right\| &=\left\|(I+\gamma A^{*}(T^{n+1}-I)A)x_{n+1}+ (I+\gamma A^{*}(T^{n}-I)A)x_{n}\right\|
\\&=\left\|x_{n+1}-x_{n} +\gamma A^{*}(T^{n+1}-I)Ax_{n+1}- \gamma A^{*}(T^{n}-I)Ax_{n}\right\|.
\end{align*}
Thus, by (\ref{(j)}) and (\ref{(k)}) we obtain that
$$\underset{n\rightarrow\infty}{\lim}\left\|u_{n+1}-u_{n}\right\|=0.$$

\textbf{Step 3.} In this step,  we show that

\begin{equation}
\underset{n\rightarrow\infty}{\lim}\left\|u_{n}-Gu_{n}\right\|=0 {\rm~and~}  \underset{n\rightarrow\infty}{\lim}\left\|Ax_{n}-Tx_{n}\right\|=0.\label{eq2}
\end{equation}

The fact that  $\underset{n\rightarrow\infty}{\lim}\left\|u_{n}-G^{n}u_{n}\right\|=0,$  $\underset{n\rightarrow\infty}{\lim}\left\|u_{n+1}-u_{n}\right\|= 0 $ and $G$ is uniformly L-Lipschitzian mapping, we have  that
\begin{align*}
\left\|u_{n}-Gu_{n}\right\|&\leq \left\|u_{n}-G^{n}u_{n}\right\|+ \left\|Gu_{n}-G^{n}u_{n}\right\|
\\&\leq \left\|u_{n}-G^{n}u_{n}\right\| + L\left\|u_{n}-G^{n-1}u_{n}\right\|
\\&\leq \left\|u_{n}-G^{n}u_{n}\right\|+ L\left\|G^{n-1}u_{n}-G^{n-1}u_{n-1}\right\|
\\&+L\left\|u_{n}-G^{n-1}u_{n-1}\right\|
\\&\leq \left\|u_{n}-G^{n}u_{n}\right\|+ L^{2}\left\|u_{n}-u_{n-1}\right\|
\\&+L\left\|u_{n}-u_{n-1}+u_{n-1}-G^{n-1}u_{n-1}\right\|
\\&\leq \left\|u_{n}-G^{n}u_{n}\right\|+L(L+1)\left\|u_{n}-u_{n-1}\right\|
\\& +L\left\|u_{n-1}-G^{n-1}u_{n-1}\right\|.
\end{align*}
Thus, as $n\rightarrow\infty,$ we have that
$$\underset{n\rightarrow\infty}{\lim} \left\|u_{n}-Gu_{n}\right\|= 0.$$

Similarly, from the fact that,  $\underset{n\rightarrow\infty}{\lim}\left\|Ax_{n}-T^{n}Ax_{n}\right\|= 0,$ $\underset{n\rightarrow\infty}{\lim}\left\|x_{n+1}-x_{n}\right\|= 0 $ and $T$ is uniformly L-Lipschitzian mapping, we deduce  that
$$\underset{n\rightarrow\infty}{\lim}\left\|Ax_{n}-TAx_{n}\right\|= 0.$$

\textbf{Step 4.} In this step, we show that

\begin{equation}
x_{n}\rightharpoonup x^{*} {\rm~and~} u_{n}\rightharpoonup x^{*} {\rm ~as~} n\to\infty. \label{eq1}
\end{equation}

Since $\{u_{n}\}$ is bounded, then there exists a sub-sequence  $u_{n_{i}}\subset u_{n}$ such that
\begin{equation}
u_{n_{i}}\rightharpoonup x^{*}, {\rm~as~} i\rightarrow\infty. \label{law}
\end{equation}
 By (\ref{eq2}) and (\ref{law}), we have that
\begin{equation}
\underset{i\rightarrow\infty}{\lim}\left\|u_{n_{i}}-Gu_{n_{i}}\right\|= 0. \label{l}
\end{equation}
From (\ref{law}), (\ref{l}) and the fact that  $(G-I)$ is demiclosed at zero, we have that $x^{*}\in Fix(G)$.
By (\ref{3.1}), (\ref{law}) and the fact $\underset{n\rightarrow\infty}{\lim}\left\|Ax_{n}-T^{n}Ax_{n}\right\|= 0,$ we deduce that  $$x_{n_{i}}=u_{n_{i}}-\gamma A^{*}(T^{n_{i}}-I)Ax_{n_{i}}\rightharpoonup x^{*}.$$
By the definition of $A$, we get
\begin{equation}
Ax_{n_{i}}\rightharpoonup Ax^{*} {\rm~as~} i\to\infty.\label{m}
\end{equation}
In view of (\ref{eq2}), we get
 \begin{equation}
\underset{i\rightarrow\infty}{\lim}\left\|Ax_{n_{i}}-TAx_{n_{i}}\right\|= 0.\label{n}
 \end{equation}
From (\ref{m}), (\ref{n}) and the fact that   $(T-I)$ is demiclosed at zero, we have that $Ax^{*}\in Fix(T).$ Thus, $x^{*}\in Fix(G)$ and $Ax^{*}\in Fix(T),$ and this implies that $x^{*}\in\Gamma$.\\

Now, we show that   $x^{*}$  is  unique. Suppose to the contrary that there exists another sub-sequence $u_{n{j}}\subset u_{n}$  such that $u_{n{j}}\rightharpoonup y^{*}\in \Gamma$ with $x^{*} \neq y^{*}.$ By  opial's property of Hilbert space, we have
that
\begin{align*}
\underset{j\rightarrow\infty}{\liminf}\left\|u_{n_{j}}-x^{*}\right\|
&<\underset{j\rightarrow\infty}{\liminf} \left\|u_{n_{j}}-y^{*}\right\|
\\&= \underset{n\rightarrow\infty}{\liminf}\left\|u_{n}-y^{*}\right\|
\\&=\underset{j\rightarrow\infty}{\liminf}\left\|u_{n_{j}}-y^{*}\right\|
 \\&< \underset{j\rightarrow\infty}{\liminf} \left\|u_{n_{j}}-x^{*}\right\|
\\&= \underset{n\rightarrow\infty}{\liminf}\left\|u_{n}-x^{*}\right\|
\\&= \underset{j\rightarrow\infty}{\liminf}\left\|u_{n_{j}}-x^{*}\right\|.
\end{align*}

Thus,  we have
$$\underset{j\rightarrow\infty}{\liminf}\left\|u_{n_{j}}-x^{*}\right\|<\underset{j\rightarrow\infty}{\liminf}\left\|u_{n_{j}}-x^{*}\right\|.
$$
This  is a contradiction, therefore, $u_{n}\rightharpoonup x^{*}$. By using (\ref{3.1}) and (\ref{(j)}) , we have
\begin{equation*}
x_{n}=u_{n}-\gamma A^{*}(T^{n}-I)Ax_{n} \rightharpoonup x^{*}, {\rm ~as~} n\to\infty.
\end{equation*}

\textbf{Step 5.} In this step, we show that
\begin{equation}
\left\|x_{n}\right\|\rightarrow  \left\|x^{*}\right\|, {\rm ~as~} n\to\infty.\label{eq3}
\end{equation}
To show this, it suffices to show that $\left\|x_{n+1}\right\|\rightarrow \left\|x^{*}\right\| {\rm ~as~} n\to\infty.$\\

By Equation  (\ref{(h)}),  Lemma \ref{fd} and \ref{df}, and the fact that $\beta_{n}\to 0$ and $\eta_{n}\to 0$, we have
\begin{align*}
\Big|\left\|x_{n+1}\right\|-\left\|x^{*}\right\|\Big|^{2}
&\leq \left\|x_{n+1}-x^{*}\right\|^{2}
\\&\leq (1+\beta_{n})\left\| x_{n}-x^{*}\right\|^{2} +\eta_{n}
\\&=\left\| x_{n}-x^{*}\right\|^{2}+\beta_{n}\left\| x_{n}-x^{*}\right\|^{2}+\eta_{n}
\\&=\left\| x_{n}-P_{\Gamma}x_{n}+P_{\Gamma}x_{n}-x^{*}\right\|^{2}+\beta_{n}\left\| x_{n}-x^{*}\right\|^{2}+\eta_{n}
\\&\leq 4\left\| P_{\Gamma}x_{n}-x^{*}\right\|^{2}+\beta_{n}\left\| x_{n}-x^{*}\right\|^{2}+\eta_{n}.
\end{align*}
Thus, as $n\to\infty,$ we have that
\begin{align*}\underset{n\rightarrow\infty}{\lim}\Big|\left\|x_{n+1}\right\|-\left\|x^{*}\right\|\Big|^{2}
&\leq 4\underset{n\rightarrow\infty}{\lim}\left\| P_{\Gamma}x_{n}-x^{*}\right\|^{2}
\\&+\underset{n\rightarrow\infty}{\lim} \beta_{n}\left\| x_{n}-x^{*}\right\|^{2} +\underset{n\rightarrow\infty}{\lim}(\eta_{n}).
\end{align*}
And this implies that
$$ \underset{n\rightarrow\infty}{\lim}\Big|\left\|x_{n+1}\right\|-\left\|x^{*}\right\|\Big|= 0.$$

By  (\ref{eq1}) and (\ref{eq3}), we conclude that $x_{n}\rightarrow x^{*}, {\rm ~as~} n\to\infty.$

\end{proof}

\subsection{Strong Convergence  for the Split Common Fixed Point Problems for  Demicontractive Mappings }
In this section, we considered an algorithm for solving the SCFPP for  demicontractive mappings without any prior information on the norm on the bounded linear operator and established the strong convergence results of the proposed algorithm. In the end, we provides some special cases of our suggested methods.

\begin{theorem}\normalfont\label{maths}
Let  $U:H_{1}\rightarrow H_{1}$ and $T:H_{2}\rightarrow H_{2}$ be $k_{1},k_{2}-$demicontractive mappings  such that $(U-I)$ and $(T-I)$ are demiclosed at zero, 
 $A:H_{1}\rightarrow H_{2}$ be a bounded linear operator with its adjoint  $A^{*}.$ Assume that  $\Gamma\neq\emptyset$ and let $P_{\Gamma}$ be a metric projection from $H_{1}$ onto $\Gamma$ satisfying
$$\left\langle x_{n}-x^{*}, x_{n}-P_{\Gamma}x_{n}\right\rangle\leq 0.$$

Define   $\{x_{n}\}$  by
\begin{equation}
 {} \left\{ \begin{array}{ll} x_{0}\in H_{1} {\rm ~is~ arbitrary~ chosen},
\\ u_{n}=x_{n}+\rho_{n}A^{*}(T-I)Ax_{n},
\\ x_{n+1}=(1-\alpha_{n})u_{n}+\alpha_{n}Uu_{n}, \forall n\geq 0,& \textrm{ $  $}
 \end{array}  \right.\label{5}
\end{equation}
  where $0<c<\alpha_{n}<1-k,$ with $k:=\max\{k_{1}, k_{2}\},$  and
\begin{equation}
 \rho_{n}={} \left\{ \begin{array}{ll} \frac{(1-k)\left\|(I-T)Ax_{n}\right\|^{2}}{2\left\|A^{*}(I-T)Ax_{n}\right\|^{2}},TAx_{n}\neq Ax_{n},
\\\hspace{0.2cm} 0,\hspace{2.3cm} {\rm~~otherwise}. & \textrm{ $  $}
 \end{array}  \right.\label{6}
\end{equation}
Then $x_{n}\to x^{*}\in\Gamma$.
\end{theorem}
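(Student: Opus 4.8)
The plan is to follow the five-step scheme used in the proof of Theorem \ref{thm1}, exploiting that $\{x_n\}$ will turn out to be Fej\'er monotone with respect to $\Gamma$ and that $H_1$, being a Hilbert space, enjoys the Opial and Kadec--Klee properties. First I would record the convenient equivalent form of the demicontractive inequality: since $k=\max\{k_1,k_2\}$, both $U$ and $T$ are $k$-demicontractive, and for any $k$-demicontractive map $S$ with $p\in Fix(S)$ one has $\langle y-Sy,\,y-p\rangle\geq\frac{1-k}{2}\|Sy-y\|^2$. Applying this to $T$ at the point $Ax_n$ (note $Ax^*\in Fix(T)$), together with the identity $\langle x_n-x^*,A^*(T-I)Ax_n\rangle=\langle Ax_n-Ax^*,(T-I)Ax_n\rangle$, I would expand $\|u_n-x^*\|^2$ and substitute the self-adaptive choice of $\rho_n$. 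The algebra should collapse to the clean inequality
\begin{equation*}
\|u_n-x^*\|^2\leq\|x_n-x^*\|^2-\frac{(1-k)^2\|(T-I)Ax_n\|^4}{4\|A^*(I-T)Ax_n\|^2}.
\end{equation*}

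For the second half of the iteration, applying Lemma \ref{l2}(ii) to $x_{n+1}=(1-\alpha_n)u_n+\alpha_n Uu_n$ and the $k$-demicontractivity of $U$ at $u_n$ (with $x^*\in Fix(U)$) should yield
\begin{equation*}
\|x_{n+1}-x^*\|^2\leq\|u_n-x^*\|^2-\alpha_n(1-\alpha_n-k)\|Uu_n-u_n\|^2.
\end{equation*}
Since $c<\alpha_n<1-k$, the coefficient $\alpha_n(1-\alpha_n-k)$ is positive and bounded away from $0$, so chaining the two displays gives $\|x_{n+1}-x^*\|\leq\|x_n-x^*\|$; hence $\lim_n\|x_n-x^*\|$ exists for every $x^*\in\Gamma$ and $\{x_n\}$ is Fej\'er monotone (Step 1). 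Telescoping the same chain forces both $\alpha_n(1-\alpha_n-k)\|Uu_n-u_n\|^2\to0$ and the $\rho_n$-term $\to 0$; the latter is exactly $\|u_n-x_n\|^2=\rho_n^2\|A^*(T-I)Ax_n\|^2$, so simultaneously $\|Uu_n-u_n\|\to0$, $\|u_n-x_n\|\to0$, and—after bounding the denominator by $\|A\|^2\|(T-I)Ax_n\|^2$—also $\|(T-I)Ax_n\|\to0$ (Step 2). Then $\|x_{n+1}-x_n\|\leq\alpha_n\|Uu_n-u_n\|+\|u_n-x_n\|\to0$ (Step 3).

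For weak convergence (Step 4), boundedness of $\{u_n\}$ gives a subsequence $u_{n_i}\rightharpoonup x^*$; demiclosedness of $(U-I)$ at zero with $\|Uu_{n_i}-u_{n_i}\|\to0$ gives $x^*\in Fix(U)$, while $\|u_n-x_n\|\to0$ yields $x_{n_i}\rightharpoonup x^*$, hence $Ax_{n_i}\rightharpoonup Ax^*$, and demiclosedness of $(T-I)$ at zero with $\|(T-I)Ax_{n_i}\|\to0$ gives $Ax^*\in Fix(T)$; thus $x^*\in\Gamma$. Since every weak cluster point lies in $\Gamma$, either Opial's property (as in Theorem \ref{thm1}) or Lemma \ref{df}(i) upgrades this to $x_n\rightharpoonup x^*$ and $u_n\rightharpoonup x^*$.

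Finally (Step 5) I would convert weak to strong convergence. From $\|x_{n+1}-x^*\|^2\leq\|x_n-x^*\|^2=\|x_n-P_\Gamma x_n+P_\Gamma x_n-x^*\|^2\leq 4\|P_\Gamma x_n-x^*\|^2$, where the last bound uses Lemma \ref{fd} (justified by the standing hypothesis $\langle x_n-x^*,x_n-P_\Gamma x_n\rangle\leq0$), and since Lemma \ref{df}(iii) identifies $x^*=\lim_n P_\Gamma x_n$ for the Fej\'er monotone sequence, the right-hand side tends to $0$; hence $\|x_n\|\to\|x^*\|$ and, with $x_n\rightharpoonup x^*$ and the Kadec--Klee property, $x_n\to x^*\in\Gamma$. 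I expect the main obstacle to be Steps 1--2: making the self-adaptive step-size algebra cancel exactly so that the single quantity $\frac{(1-k)^2\|(T-I)Ax_n\|^4}{4\|A^*(I-T)Ax_n\|^2}$ both drives the descent and equals $\|u_n-x_n\|^2$, and checking that $c<\alpha_n<1-k$ really keeps $\alpha_n(1-\alpha_n-k)$ bounded below—the entire point being that $\rho_n$ is constructed without any prior knowledge of $\|A\|$.
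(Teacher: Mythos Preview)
Your proposal is correct and follows essentially the same route as the paper: Fej\'er monotonicity from the self-adaptive step size, telescoping to kill the two residual terms, demiclosedness to place weak cluster points in $\Gamma$, and then Lemmas \ref{fd} and \ref{df} combined with Kadec--Klee to upgrade to strong convergence. Two minor remarks: your Step 3 ($\|x_{n+1}-x_n\|\to0$) is superfluous here since nothing downstream uses it, and your choice to apply demiclosedness of $(U-I)$ directly at $u_n$ is actually cleaner than the paper's detour through $\|(U-I)x_n\|\to0$; also note that the paper splits Step 1 into the cases $\rho_n=0$ and $\rho_n\neq0$, which you should do as well to avoid the formal division by zero in the descent term.
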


\begin{proof}

To show that $x_{n}\rightarrow x^{*}$, it suffices to show $x_{n}\rightharpoonup x^{*}$ and $\left\|x_{n}\right\|\rightarrow \left\|x^{*}\right\|$.\\

We divided the proof into four steps as follows.\\

\textbf{Step 1.} In this step, we show that $\{x_{n}\}$ is a Fejer monotone.  This is divided into two cases.\\

Case 1. If $\rho_{n}= 0$ and   Case 2. If $\rho_{n}\neq 0$.\\

Now, let $x^{*}\in\Gamma$.\\

\textbf{Case 1.} If $\rho_{n}=0.$ The fact that $U$ is demicontractive, we have
\begin{eqnarray}
\left\|x_{n+1}-x^{*}\right\|^{2}&=& \left\|x_{n}-x^{*}+\alpha_{n}(Ux_{n}-x_{n})\right\|^{2}\nonumber
\\&=&\left\|x_{n}-x^{*}\right\|^{2}+2\alpha_{n}\left\langle x_{n}-x^{*}, Ux_{n}-x_{n}\right\rangle
+\alpha_{n}^{2}\left\|Ux_{n}-x_{n}\right\|^{2}\nonumber
\\&\leq&\left\|x_{n}-x^{*}\right\|^{2}+\alpha_{n}(k-1)\left\|Ux_{n}-x_{n}\right\|^{2}+\alpha_{n}^{2}\left\|Ux_{n}-x_{n}\right\|^{2}\nonumber
\\&\leq&\left\|x_{n}-x^{*}\right\|^{2}-\alpha_{n}\big(1-k-\alpha_{n}\big)\left\|Ux_{n}-x_{n}\right\|^{2}\label{01}.
\end{eqnarray}
The fact that $0<\alpha_{n}<1-k$, it follows from (\ref{01}) that $\{x_{n}\}$ is Fejer monotone. \\

\textbf{Case 2.} If $\rho_{n}\neq 0.$ Since $U$ and $T$ are  demicontractive mappings, we have
\begin{eqnarray}
\left\|x_{n+1}-x^{*}\right\|^{2}&= &\left\|u_{n}-\alpha_{n}u_{n}+\alpha_{n}Uu_{n}-x^{*}\right\|^{2}\nonumber
\\&=&\left\|u_{n}-x^{*}\right\|^{2}+2\alpha_{n}\left\langle u_{n}-x^{*}, Uu_{n}-u_{n}\right\rangle
+\alpha_{n}^{2}\left\|Uu_{n}-u_{n}\right\|^{2}\nonumber
\\&\leq&\left\|u_{n}-x^{*}\right\|^{2} -\alpha_{n}(1-k)\left\|Uu_{n}-u_{n}\right\|^{2}+\alpha_{n}^{2}\left\|Uu_{n}-u_{n}\right\|^{2}\nonumber
\\&\leq&\left\|u_{n}-x^{*}\right\|^{2}-\alpha_{n}\big(1-k-\alpha_{n}\big)\left\|Uu_{n}-u_{n}\right\|^{2}\label{03}.
\end{eqnarray}
On the other hand,
\begin{eqnarray}
\left\|u_{n}-x^{*}\right\|^{2}&=&\left\|x_{n}+\rho_{n}A^{*}(T-I)Ax_{n}-x^{*}\right\|^{2}\nonumber
\\&=&\left\|x_{n}-x^{*}\right\|^{2}+2\rho_{n}\left\langle TAx_{n}-Ax_{n}, Ax_{n}-Ax^{*}\right\rangle+\rho^{2}_{n}\left\|A^{*}(I-T)Ax_{n}\right\|^{2}\nonumber
\\&\leq& \left\|x_{n} - x^{*}\right\|^{2}-\rho_{n}(1-k)\left\|(T-I)Ax_{n}\right\|^{2}+\rho^{2}_{n}\left\|A^{*}(I-T)Ax_{n}\right\|^{2}\nonumber
\\&=& \left\|x_{n} - x^{*}\right\|^{2}-\frac{(1-k)^{2}\left\|(I-T)Ax_{n}\right\|^{2}}{2\left\|A^{*}(I-T)Ax_{n}\right\|^{2}}\left\|(T-I)Ax_{n}\right\|^{2}\nonumber
\\&+&\frac{(1-k)^{2}\left\|(I-T)Ax_{n}\right\|^{4}}{4\left\|A^{*}(I-T)Ax_{n}\right\|^{4}}\left\|A^{*}(I-T)Ax_{n}\right\|^{2}\nonumber
\\&=&\left\|x_{n}-x^{*}\right\|^{2}-\frac{(1-k)^{2}}{4}\frac{\left\|(T-I)Ax_{n}\right\|^{4}}{\left\|A^{*}(T-I)Ax_{n}\right\|^{2}}.\label{02}
\end{eqnarray}
Substituting (\ref{02}) into (\ref{03}), we deduce  that
\begin{eqnarray}
\left\|x_{n+1}-x^{*}\right\|^{2}&\leq&\left\|x_{n}-x^{*}\right\|^{2}-\frac{(1-k)^{2}\left\|(T-I)Ax_{n}\right\|^{4}}{4\left\|A^{*}(T-I)Ax_{n}\right\|^{2}} \nonumber
\\&-& \alpha_{n}\big(1-k-\alpha_{n}\big)\left\|Uu_{n}-u_{n}\right\|^{2}.\label{04}
\end{eqnarray}
Thus, $\{x_{n}\}$ is Fejer monotone. Therefore,   $\underset{n\to\infty}{\lim}\left\|x_{n}-x^{*}\right\|$ exist.\\

\textbf{Step 2.} In this step, we show that
\begin{equation}
\lim_{n\to\infty} \left\|(I-T)Ax_{n}\right\|=0 {\rm ~and~} \lim_{n\to\infty}\left\|(I-U)x_{n}\right\|=0\label{eqn1}.
\end{equation}

\textbf{Case 1.} If $\rho_{n}=0.$ By (\ref{6}), we see that $\underset{n\to\infty}{\lim}\left\|(I-T)Ax_{n}\right\|=0.$ Also by
 (\ref{01}) and the fact   $\underset{n\to\infty}{\lim}\left\|x_{n}-x^{*}\right\|$ exist, it follows that
$\underset{n\to\infty}{\lim}\left\|(I-U)x_{n}\right\|=0.$\\

\textbf{Case 2.} If $\rho_{n}\neq 0.$ The fact   $\underset{n\to\infty}{\lim}\left\|x_{n}-x^{*}\right\|$ exist, by   (\ref{04}), we deduce  that
\begin{equation}
\lim_{n\to\infty}\left(\frac{(1-k)^{2}\left\|(T-I)Ax_{n}\right\|^{4}}{4\left\|A^{*}(T-I)Ax_{n}\right\|^{2}}\right)\leq \lim_{n\to\infty}\left( \left\|x_{n}-x^{*}\right\|-\left\|x_{n+1}-x^{*}\right\|\right)= 0,\label{10a}
\end{equation}

\begin{equation}
{\rm ~and~}\lim_{n\to\infty} \left\|(I-U)u_{n}\right\|\leq \lim_{n\to\infty}\left(\frac{\left\|x_{n}-x^{*}\right\|-\left\|x_{n+1}-x^{*}\right\|}{c\big(1-k-\alpha_{n}\big)}\right)= 0.\label{10}
\end{equation}
By (\ref{10a}), we have that
\begin{equation}
\lim_{n\to\infty}\left(\frac{\left\|(T-I)Ax_{n}\right\|^{2}}{\left\|A^{*}(T-I)Ax_{n}\right\|}\right)=0.\label{ert}
\end{equation}
On the other hand,
\begin{eqnarray}
\left\|(T-I)Ax_{n}\right\|&=&\left\|A\right\|\frac{\left\|TAx_{n}-Ax_{n}\right\|^{2}}{\left\|A\right\|\left\|TAx_{n}-Ax_{n}\right\|}\nonumber
\\&\leq&\left\|A\right\|\frac{\left\|TAx_{n}-Ax_{n}\right\|^{2}}{\left\|A^{*}(T-I)Ax_{n}\right\|}\nonumber.
\end{eqnarray}
Thus, by (\ref{ert}) we deduce that
$$\lim_{n\to\infty}\left\|(T-I)Ax_{n}\right\|=0.$$
Since
\begin{eqnarray}
\rho_{n}\left\|A^{*}(T-I)Ax_{n}\right\|&=&\left\|u_{n}-x_{n}\right\|\nonumber
\\&=&\frac{(1-k)\left\|(T-I)Ax\right\|^{2}}{2\left\|A^{*}(T-I)Ax_{n}\right\|}.\nonumber
\end{eqnarray}
Thus, by (\ref{ert}) we have
\begin{eqnarray}
\lim_{n\to\infty}\rho_{n}\left\|A^{*}(T-I)Ax_{n}\right\|=0.\label{21}
\end{eqnarray}
By (\ref{5}), we have that

\begin{align}
\left\|(U-I)x_{n}\right\|&=\left\|(U-I)u_{n}-(U-I)\rho_{n}A^{*}(T-I)Ax_{n}\right\|\nonumber
\\&\leq\left\|(U-I)u_{n}\right\|+\left\|(U-I)\rho_{n}A^{*}(T-I)Ax_{n}\right\|.\label{nonumber}
\end{align}
	Given   (\ref{10}), (\ref{21}) and the fact that $U$ is bounded. It follows from (\ref{nonumber})  that
 \begin{equation*}
\lim_{n\to\infty}\left\|(I-U)x_{n}\right\|=0.\label{11.}
\end{equation*}
Hence, in both case, Equation (\ref{eqn1}) hold.\\

\textbf{step 3.} In this step, we show that
\begin{equation}
x_{n}\rightharpoonup x^{*}, {\rm ~as~} n\to \infty.\label{b.}
\end{equation}
To show this, it suffices to show that $\omega_{\omega} \subseteq \Gamma,$ see Lemma \ref{df} (i).\\

Now, let $q\in\omega_{\omega}$, this implies that, there exists $\{x_{n_{j}}\}$ of $\{x_{n}\}$ such that   $x_{n_{j}}\rightharpoonup q.$ Since $\underset{j\to\infty}{\lim}\left\|Ux_{n_{j}}-x_{n_{j}}\right\|= 0,$ together with the demiclosed  of $(U-I)$ at zero, we conclude that, $q\in Fix(U).$\\

 On the other hand, since $A$ is bounded, we have that   $Ax_{n_{j}}\rightharpoonup Aq.$  By (\ref{eqn1}) and together with the demiclosed  of $(T-I)$ at zero, we have that $Aq\in Fix(T).$ Thus,  $q\in\Gamma,$ this implies that $\omega_{\omega} \subseteq \Gamma.$ Hence, by Lemma \ref{df}, we conclude that $x_{n}\rightharpoonup x^{*},$ as $n\to \infty.$\\

\textbf{Step 4.} In this step, we show that
\begin{equation}
\left\|x_{n}\right\|\rightarrow  \left\|x^{*}\right\|, {\rm ~as~} n\to \infty\label{c}.
\end{equation}
To show (\ref{c}), it suffices to show that $\left\|x_{n+1}\right\|\rightarrow\left\|x^{*}\right\|$.\\

By Lemma \ref{l2} and the fact that $\{x_{n}\}$ is a Fejer monotone, we have
\begin{eqnarray}
\Big|\left\|x_{n+1}\right\|-\left\|x^{*}\right\|\Big|^{2}\nonumber
&\leq &\left\|x_{n+1}-x^{*}\right\|^{2}\nonumber
\\&\leq& \left\| x_{n}-x^{*}\right\|^{2} \nonumber
\\&=&\left\| x_{n}-P_{\Gamma}x_{n}+P_{\Gamma}x_{n}-x^{*}\right\|^{2}\nonumber
\\&\leq &4\left\| P_{\Gamma}x_{n}-x^{*}\right\|^{2}.\label{3.17}
\end{eqnarray}
Thus, we deduce that
$\left\|x_{n+1}\right\|\to\left\|x^{*}\right\|.$  By  Equation (\ref{b.}) and (\ref{c}), we conclude  that $x_{n}\to x^{*}$ as $n\to \infty.$
\end{proof}

\begin{corollary}\normalfont\label{3.2q}

Let   $G:H_{1}\to H_{1}$ and $T:H_{2}\to H_{2}$ be $(k_{n_{1}}, k_{n_{2}})$- quasi - asymptotically nonexpansive mappings such that $(G-I)$ and $(T-I)$ are  demiclosed at zero, and $A:H_{1}\to H_{2}$ be a  bounded linear operator with its adjoint $A^{*}.$ Also  let $L^{*}=\left\|AA^{*}\right\|,$ $M $ and $M^{*}$ be positive  constants such that $\xi(k)\leq \xi(M)+M^{*}k^{2},\forall k\geq 0 $. Assume that,    $\Gamma\neq\emptyset,$   and let $P_{\Gamma}$ be a metric projection of $H_{1}$ onto $\Gamma$ satisfying
$$\left\langle x_{n}-x^{*}, x_{n}-P_{\Gamma}x_{n}\right\rangle\leq 0.$$
Define  $\{x_{n}\}$  by
\begin{equation}
 {} \left\{\begin{array}{ll} \ x_{0} \in H_{1},
 \\ u_{n} = x_{n}+\gamma A^{*}(T^{n}-I)Ax_{n},& \textrm{ $  $}
  \\ x_{n+1} = \alpha_{n}u_{n} + (1-\alpha_{n})G^{n}u_{n},\forall n\geq 0,\label{3.1xxx}
\end{array}\right.
\end{equation}
where  $\alpha_{n}\subset(0,1),$ $\gamma\in (0,\frac{1}{L^{*}})$,  $L=\max{\{L_{1},L_{2}\}}$ and
 $k_{n}=\max{\{k_{n_{1}},k_{n_{2}}\}}.$ Then  $x_{n}\to x^{*}\in\Gamma $.

\begin{proof}

 $G$ and $T$ are $(\{v_{n}\}, \{\mu_{n}\}, \xi)-$ total quasi-asymptotically nonexpansive mappings with $\{v_{n}\}=\{k_{n}-1\}, \mu_{n}=0$ and $\xi(k)=k^{2}, \forall k\geq 0$. Moreover, $G$ and $T$ are uniformly $k_{n_{1}}, k_{n_{2}}-$ Lipschitzian mappings. Therefore, all the conditions in Theorem \ref{3.1} are satisfied. Hence, the conclusion of this corollary follows directly from Theorem \ref{3.1}.
\end{proof}
\end{corollary}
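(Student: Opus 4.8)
The plan is to obtain this corollary as a direct specialization of Theorem \ref{thm1}, exploiting the fact that every quasi-asymptotically nonexpansive mapping is a total quasi-asymptotically nonexpansive mapping once its defining parameters are chosen correctly. Concretely, I would set, for each of $G$ and $T$,
\[
v_{n}=k_{n}-1,\qquad \mu_{n}=0,\qquad \xi(k)=k^{2},
\]
with $k_{n}=\max\{k_{n_{1}},k_{n_{2}}\}$. Substituting these into the total quasi-asymptotically nonexpansive inequality $\|T^{n}x-z\|^{2}\le\|x-z\|^{2}+v_{n}\xi(\|x-z\|)+\mu_{n}$ recovers exactly $\|T^{n}x-z\|^{2}\le k_{n}\|x-z\|^{2}$, which is the quasi-asymptotically nonexpansive condition. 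Hence the hypothesis of the corollary is literally the hypothesis of Theorem \ref{thm1} under this parameter assignment, and the iteration (\ref{3.1xxx}) coincides in form with (\ref{3.1}).

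Next I would verify, one at a time, that all the remaining hypotheses of Theorem \ref{thm1} survive this reduction. The sequence $v_{n}=k_{n}-1\to 0$ because $k_{n}\to 1$, while $\mu_{n}\equiv 0\to 0$ trivially; the function $\xi(k)=k^{2}$ is continuous, strictly increasing on $[0,\infty)$, and vanishes at $0$, as required. The growth estimate $\xi(k)\le\xi(M)+M^{*}k^{2}$ becomes $k^{2}\le M^{2}+M^{*}k^{2}$, which holds for every $M\ge 0$ once $M^{*}\ge 1$, so admissible positive constants $M,M^{*}$ certainly exist. The demiclosedness of $(G-I)$ and $(T-I)$ at zero, the boundedness of $A$ together with its adjoint $A^{*}$, the nonemptiness of $\Gamma$, the projection inequality $\langle x_{n}-x^{*},x_{n}-P_{\Gamma}x_{n}\rangle\le 0$, and the step-size restriction $\gamma\in(0,\tfrac{1}{L^{*}})$ with $L^{*}=\|AA^{*}\|$ are all assumed verbatim in the corollary, so they transfer with no additional work.

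The one hypothesis demanding genuine attention is the uniform Lipschitz requirement: Theorem \ref{thm1} needs $G$ and $T$ to be uniformly $L_{1},L_{2}$-Lipschitzian, i.e. $\|G^{n}x-G^{n}y\|\le L_{1}\|x-y\|$ for \emph{all} $x,y$, whereas the quasi-asymptotically nonexpansive property controls $\|G^{n}x-z\|$ only when $z$ is a fixed point. I therefore expect this to be the main obstacle, and it is the step at which I would be most careful, since it is exactly the gap between a bound that holds only relative to $Fix(G)$ and one that holds between arbitrary points. My intended resolution is to invoke that the mappings in question are uniformly Lipschitzian with the stated constants $k_{n_{1}},k_{n_{2}}$, with $L=\max\{L_{1},L_{2}\}$ finite because $k_{n}\to 1$; this is automatic for the asymptotically nonexpansive subclass, where $\|G^{n}x-G^{n}y\|\le k_{n}\|x-y\|$ holds globally. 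Once this uniform Lipschitz bound is secured, every condition of Theorem \ref{thm1} is met by the data of the corollary, and the desired strong convergence $x_{n}\to x^{*}\in\Gamma$ follows immediately by applying that theorem to the sequence generated by (\ref{3.1xxx}).
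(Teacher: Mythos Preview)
Your proposal is correct and follows essentially the same route as the paper: specialize Theorem \ref{thm1} by taking $v_{n}=k_{n}-1$, $\mu_{n}=0$, $\xi(k)=k^{2}$, and then assert that $G$ and $T$ are uniformly $k_{n_{1}},k_{n_{2}}$-Lipschitzian so that all hypotheses of the theorem are met. You are in fact more careful than the paper in flagging the uniform Lipschitz requirement as the only nontrivial point, whereas the paper simply asserts it; your caveat that this passage is really only automatic for the asymptotically nonexpansive subclass is well taken.
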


\begin{corollary}\normalfont
Let   $G:H_{1}\to H_{1} $ and $T:H_{2}\to H_{2}$ be two quasi-nonexpansive  mappings such that $(G-I)$ and $(T-I)$ are  demiclosed at zero. And let $A$ be a bounded linear operator with its adjoint $A^{*}.$  Assume that,  $\Gamma\neq\emptyset,$   and let $P_{\Gamma}$ be a metric projection of H onto $\Gamma$ satisfying
$$\left\langle x_{n}-x^{*}, x_{n}-P_{\Gamma}x_{n}\right\rangle\leq 0.$$
Define   $\{x_{n}\}$  by
\begin{equation}
 {} \left\{\begin{array}{ll} \ x_{0} \in H_{1};
 \\ u_{n} = x_{n}+\gamma A^{*}(T-I)Ax_{n}; & \textrm{ $  $}
  \\ x_{n+1} = \alpha_{n}u_{n} + (1-\alpha_{n})Gu_{n},\forall n\geq 0,\label{3.1xxxa}
\end{array}\right.
\end{equation}
where   $\{\alpha_{n}\}\subset(0,1)$ and $\gamma\in (0,\frac{1}{L^{*}})$ with $L^{*}=\left\|AA^{*}\right\|.$ Then  $x_{n}\to x^{*}\in\Gamma $.

\begin{proof}

$G$ and $T$ are $(1)-$quasi-asymptotically nonexpansive mappings. Moreover, $G$ and $T$ are  uniformly $1-$ Lipschitzian mappings.  Therefore, all the conditions of Corollary \ref{3.2q} are satisfied. Hence, the conclusions of this corollary follow directly from  Corollary \ref{3.2q}.

\end{proof}
\end{corollary}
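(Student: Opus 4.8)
The plan is to prove strong convergence by the same Fej\'er-monotonicity scheme used for Theorem~\ref{thm1} and Theorem~\ref{maths}, but specialized to first powers of $G$ and $T$; this is in fact cleaner than quoting the quasi-asymptotic Corollary~\ref{3.2q} directly, whose recursion involves the powers $T^{n},G^{n}$ rather than $T,G$. Throughout I would use that a quasi-nonexpansive map is exactly the $k=0$ case of a demicontractive map, equivalently that the inner-product inequality~(\ref{vwx}) holds for $G$ and for $T$ at their fixed points. \textbf{Step 1 (Fej\'er monotonicity).} Fix $x^{*}\in\Gamma$, so $x^{*}\in Fix(G)$ and $Ax^{*}\in Fix(T)$. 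Expanding the last line of the recursion by Lemma~\ref{l2}(ii) and using quasi-nonexpansiveness of $G$ gives
$$\left\|x_{n+1}-x^{*}\right\|^{2}\leq\left\|u_{n}-x^{*}\right\|^{2}-\alpha_{n}(1-\alpha_{n})\left\|u_{n}-Gu_{n}\right\|^{2}.$$
For the $u_{n}$-term I would expand $u_{n}-x^{*}=x_{n}-x^{*}+\gamma A^{*}(T-I)Ax_{n}$, rewrite the cross term as $\langle x_{n}-x^{*},A^{*}(T-I)Ax_{n}\rangle=\langle Ax_{n}-Ax^{*},(T-I)Ax_{n}\rangle$, and apply quasi-nonexpansiveness of $T$ together with $\|A^{*}(T-I)Ax_{n}\|^{2}\leq L^{*}\|(T-I)Ax_{n}\|^{2}$ to obtain
$$\left\|u_{n}-x^{*}\right\|^{2}\leq\left\|x_{n}-x^{*}\right\|^{2}-\gamma(1-\gamma L^{*})\left\|(T-I)Ax_{n}\right\|^{2}.$$
Since $\gamma\in(0,\frac{1}{L^{*}})$ both correction terms are nonnegative, so $\|x_{n+1}-x^{*}\|\leq\|x_{n}-x^{*}\|$; hence $\{x_{n}\}$ is Fej\'er monotone with respect to $\Gamma$ and $\lim_{n}\|x_{n}-x^{*}\|$ exists.

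\textbf{Steps 2--3 (residuals and weak limit).} Telescoping the combined estimate, and using that $\{\alpha_{n}\}$ is bounded away from $0$ and $1$, forces $\|(T-I)Ax_{n}\|\to0$ and $\|u_{n}-Gu_{n}\|\to0$, whence also $\|u_{n}-x_{n}\|=\gamma\|A^{*}(T-I)Ax_{n}\|\to0$. The Fej\'er property makes $\{x_{n}\}$ bounded, so along any $x_{n_{j}}\rightharpoonup q$ I also get $u_{n_{j}}\rightharpoonup q$; demiclosedness of $(G-I)$ at zero then yields $q\in Fix(G)$, while $Ax_{n_{j}}\rightharpoonup Aq$ (continuity of the bounded operator $A$ for the weak topology) with demiclosedness of $(T-I)$ at zero yields $Aq\in Fix(T)$. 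Thus $\omega_{\omega}(x_{n})\subseteq\Gamma$, and Lemma~\ref{df}(i) upgrades this to $x_{n}\rightharpoonup x^{*}\in\Gamma$.

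\textbf{Step 4 (strong convergence) --- the crux.} The hard part is turning weak convergence into strong convergence, and this is precisely where the standing hypothesis $\langle x_{n}-x^{*},x_{n}-P_{\Gamma}x_{n}\rangle\leq0$ must be exploited. By Lemma~\ref{fd} it gives $\|x_{n}-P_{\Gamma}x_{n}\|\leq\|P_{\Gamma}x_{n}-x^{*}\|$, while Lemma~\ref{df}(ii)--(iii) together with $x_{n}\rightharpoonup x^{*}$ give $P_{\Gamma}x_{n}\to x^{*}$ strongly. Writing $x_{n}-x^{*}=(x_{n}-P_{\Gamma}x_{n})+(P_{\Gamma}x_{n}-x^{*})$ and inserting the previous bound yields $\|x_{n}-x^{*}\|^{2}\leq4\|P_{\Gamma}x_{n}-x^{*}\|^{2}\to0$ (equivalently, one shows $\|x_{n}\|\to\|x^{*}\|$ and invokes the Kadec-Klee property of $H_{1}$), which is the desired conclusion. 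I expect this last step to be the main obstacle: without the projection hypothesis the iteration only guarantees weak convergence, so the whole weight of the strong-convergence claim rests on correctly deploying Lemma~\ref{fd} and the metric-projection inequality. By contrast, the quasi-nonexpansive setting makes the earlier steps strictly easier than in Theorem~\ref{thm1}, since only the first powers of $G$ and $T$ appear and the uniform-Lipschitz hypothesis is never needed.
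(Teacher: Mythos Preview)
Your direct argument is correct and is in fact more robust than the paper's own proof. The paper simply asserts that $G$ and $T$ are $(1)$-quasi-asymptotically nonexpansive and ``uniformly $1$-Lipschitzian,'' and then invokes Corollary~\ref{3.2q}. The first claim is fine (quasi-nonexpansiveness iterates: $\|T^{n}x-z\|\leq\|x-z\|$ for $z\in Fix(T)$), but the second is not justified in general---a quasi-nonexpansive map need not be Lipschitz at all (cf.\ Example~\ref{1.2b})---and, moreover, the recursion in Corollary~\ref{3.2q} involves $T^{n},G^{n}$ rather than $T,G$, so the reduction is not literal. Your direct Fej\'er argument sidesteps both issues: you never invoke powers of $G$ or $T$, so uniform Lipschitzianity is irrelevant, and your Steps~1--4 mirror the proof scheme of Theorem~\ref{maths} in the $k=0$ demicontractive case rather than that of Theorem~\ref{thm1}. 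The one tacit assumption you make---and which the paper's chain of reductions also needs---is that $\{\alpha_{n}\}$ stays bounded away from $0$ and $1$; without this, $\alpha_{n}(1-\alpha_{n})\|u_{n}-Gu_{n}\|^{2}\to0$ does not by itself force $\|u_{n}-Gu_{n}\|\to0$.
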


\begin{corollary}\normalfont
Let $H_{1}, H_{2},$  $A$, $A^{*},$ $P_{\Gamma}$ and $\{x_{n}\}$ be as in Theorem \ref{maths}. Also let $U:H_{1}\rightarrow H_{1}$ and $T:H_{2}\rightarrow H_{2}$ be  quasi nonexpansive mappings  such that $(U-I)$ and $(T-I)$ are demiclosed at zero. Assume that $\Gamma\neq\emptyset$.  Then  $x_{n}\to x^{*}\in\Gamma$.

\begin{proof}

Since T is quasi-nonexpansive, clearly T is 0-demicontractive. Hence, all the hypothesis of Theorem \ref{maths} are satisfied. Therefore, the proof of this corollary follows trivially from Theorem \ref{maths}.
\end{proof}
\end{corollary}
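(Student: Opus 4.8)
The plan is to recognise this corollary as the specialisation of Theorem \ref{maths} to the limiting value $k=0$ of the demicontractivity parameter, so that essentially no new work is needed beyond verifying one inclusion between mapping classes.

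First I would recall that a quasi-nonexpansive map $S$ (with $Fix(S)\neq\emptyset$) satisfies $\|Sy-z\|\leq\|y-z\|$ for every $y\in H$ and every $z\in Fix(S)$. Squaring this inequality yields
\begin{equation*}
\|Sy-z\|^{2}\leq\|y-z\|^{2},
\end{equation*}
which is precisely inequality (\ref{2.15}) evaluated at $k=0$. Hence both $U$ and $T$ are $0$-demicontractive, i.e.\ one may take $k_{1}=k_{2}=0$.

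Next I would check that the remaining hypotheses of Theorem \ref{maths} are inherited. The demiclosedness of $(U-I)$ and $(T-I)$ at zero is assumed outright; the operator $A$, its adjoint $A^{*}$, the projection $P_{\Gamma}$ together with the inequality $\langle x_{n}-x^{*},x_{n}-P_{\Gamma}x_{n}\rangle\leq 0$, the assumption $\Gamma\neq\emptyset$, and the iteration $\{x_{n}\}$ defined by (\ref{5})--(\ref{6}) are all carried over verbatim. The only point deserving a glance is the step-size constraint, which in Theorem \ref{maths} reads $0<c<\alpha_{n}<1-k$ with $k=\max\{k_{1},k_{2}\}$. Here $k=\max\{0,0\}=0$, so the admissible range becomes $0<c<\alpha_{n}<1$, which is non-vacuous, and the formula (\ref{6}) for $\rho_{n}$ specialises consistently. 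With every hypothesis verified, I would invoke Theorem \ref{maths} directly to conclude $x_{n}\to x^{*}\in\Gamma$.

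There is no genuine obstacle here: the whole content is the trivial observation that a quasi-nonexpansive mapping is $0$-demicontractive. The only thing worth double-checking is that putting $k=0$ does not degenerate the parent theorem's admissible interval for $\alpha_{n}$ nor the definition of $\rho_{n}$---and it does not---so the corollary follows immediately as a special case of Theorem \ref{maths}.
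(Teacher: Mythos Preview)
Your proposal is correct and follows exactly the same approach as the paper: reduce to Theorem \ref{maths} by observing that quasi-nonexpansive mappings are $0$-demicontractive. You supply more detail than the paper (which is a one-liner), in particular the check that $k=0$ leaves the constraints on $\alpha_{n}$ and $\rho_{n}$ non-degenerate, but the substance is identical.
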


\begin{corollary}\normalfont
Let $H_{1}, H_{2},$  $A$, $A^{*},$ $P_{\Gamma}$ and $\{x_{n}\}$ be as in Theorem \ref{maths}. Also let $U:H_{1}\rightarrow H_{1}$ and $T:H_{2}\rightarrow H_{2}$ be firmly quasi nonexpansive mappings  such that $(U-I)$ and $(T-I)$ are demiclosed at zero. And assume that $\Gamma\neq\emptyset$.  Then  $x_{n}\to x^{*}\in\Gamma$.
\end{corollary}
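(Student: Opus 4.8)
The plan is to reduce this corollary to Theorem \ref{maths}, following exactly the pattern of the preceding corollary that handled the quasi-nonexpansive case. The whole content is the observation that a firmly quasi-nonexpansive mapping is a (very special) demicontractive mapping, after which Theorem \ref{maths} supplies the convergence. First I would recall, from the remark following Equation (\ref{2.15}), that a firmly quasi-nonexpansive mapping $T$ is precisely the case $k=-1$ of the demicontractive inequality, namely $\|Ty-z\|^{2}\leq\|y-z\|^{2}-\|Ty-y\|^{2}$ for all $y\in H$ and all $z\in Fix(T)$.

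The key (and essentially only) step is then the elementary bound $-\|Ty-y\|^{2}\leq 0 = 0\cdot\|Ty-y\|^{2}$, which turns the firmly quasi-nonexpansive inequality into $\|Ty-z\|^{2}\leq\|y-z\|^{2}+0\cdot\|Ty-y\|^{2}$. This is exactly the statement that $T$ is $0$-demicontractive in the sense of Equation (\ref{2.15}). Applying the same reasoning to both $U$ and $T$ shows that they are $k_{1}$- and $k_{2}$-demicontractive with $k_{1}=k_{2}=0$, and in particular $k:=\max\{k_{1},k_{2}\}=0$.

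With this identification all the hypotheses of Theorem \ref{maths} are met: $U$ and $T$ are demicontractive, $(U-I)$ and $(T-I)$ are demiclosed at zero by assumption, $A$ is a bounded linear operator with adjoint $A^{*}$, and $\Gamma\neq\emptyset$, while $H_{1},H_{2},A,A^{*},P_{\Gamma}$ and $\{x_{n}\}$ (together with the iteration (\ref{5}) and the stepsize (\ref{6})) are taken exactly as in that theorem. Since $k=0$, the admissible parameter range simply becomes $0<c<\alpha_{n}<1$, and $\rho_{n}$ reduces to $\tfrac{\|(I-T)Ax_{n}\|^{2}}{2\|A^{*}(I-T)Ax_{n}\|^{2}}$ when $TAx_{n}\neq Ax_{n}$; both remain well defined. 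The conclusion $x_{n}\to x^{*}\in\Gamma$ therefore follows directly from Theorem \ref{maths}.

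I do not anticipate any genuine obstacle here: the argument is a one-line containment of firmly quasi-nonexpansive mappings inside the demicontractive class, and Theorem \ref{maths} carries out all the analytic work (Fej\'er monotonicity, the two limit estimates of its Step 2, the demiclosedness argument of Step 3, and the norm-convergence of Step 4). The only point worth a moment's care is confirming that specializing $k$ to $0$ does not degenerate the stepsize $\rho_{n}$ or the constraint on $\alpha_{n}$, which it plainly does not.
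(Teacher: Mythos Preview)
Your proposal is correct and matches the paper's approach exactly. The paper actually omits the proof of this corollary entirely, but from the surrounding corollaries (the quasi-nonexpansive case just before, proved by noting $0$-demicontractivity, and the directed-operator case just after) it is clear that the intended argument is precisely the reduction you give: firmly quasi-nonexpansive $\Rightarrow$ $0$-demicontractive $\Rightarrow$ Theorem \ref{maths} applies.
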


\begin{corollary}\normalfont
Let $H_{1}, H_{2},$  $A$, $A^{*},$ $P_{\Gamma}$ and $\{x_{n}\}$ be as in Theorem \ref{maths}. Also let $U:H_{1}\rightarrow H_{1}$ and $T:H_{2}\rightarrow H_{2}$ be  directed operators  such that $(U-I)$ and $(T-I)$ are demiclosed at zero and assume that $\Gamma\neq\emptyset$.  Then,  $x_{n}\to x^{*}\in\Gamma$.

\begin{proof}

 Since $T$ is directed operator, clearly $T$  is $(-1)-$demicontractive. Hence, all the hypothesis of Theorem \ref{maths} are satisfied.  Therefore, the proof of this corollary follows trivially from  Theorem \ref{maths}.

\end{proof}
\end{corollary}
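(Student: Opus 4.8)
The plan is to reduce this corollary to Theorem \ref{maths} exactly as the preceding corollaries are reduced to it: the iterative scheme \eqref{5}--\eqref{6}, the demiclosedness of $(U-I)$ and $(T-I)$ at zero, the assumption $\Gamma\neq\emptyset$, and the metric projection condition are all already assumed in the statement, so the only genuine task is to verify that a directed operator with nonempty fixed point set lies in the demicontractive class governed by Theorem \ref{maths}. Once that inclusion is in hand, the convergence conclusion is immediate.

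First I would show that every directed operator $T$ is $(-1)$-demicontractive, i.e.\ firmly quasi-nonexpansive. Fix $z\in Fix(T)$ and $y\in H$, and expand $\|y-z\|^{2}$ by inserting $Ty$:
\begin{equation*}
\|y-z\|^{2}=\|y-Ty\|^{2}+2\left\langle y-Ty,\,Ty-z\right\rangle+\|Ty-z\|^{2},
\end{equation*}
which rearranges to
\begin{equation*}
\|Ty-z\|^{2}=\|y-z\|^{2}-\|y-Ty\|^{2}-2\left\langle y-Ty,\,Ty-z\right\rangle .
\end{equation*}
Since the inner product is real and symmetric, $\left\langle y-Ty,\,Ty-z\right\rangle=-\left\langle z-Ty,\,y-Ty\right\rangle$, and the defining inequality of a directed operator, $\left\langle z-Ty,\,y-Ty\right\rangle\le 0$, forces $\left\langle y-Ty,\,Ty-z\right\rangle\ge 0$. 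Hence the last term above is nonpositive and
\begin{equation*}
\|Ty-z\|^{2}\le\|y-z\|^{2}-\|y-Ty\|^{2},
\end{equation*}
which is precisely the demicontractive inequality \eqref{2.15} in the limiting case $k=-1$ (the firmly quasi-nonexpansive case singled out in the remark following \eqref{2.15}).

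Applying this to both operators, $U$ and $T$ are $k_{1}$- and $k_{2}$-demicontractive with $k_{1}=k_{2}=-1$, so $k=\max\{k_{1},k_{2}\}=-1$ and the admissible parameter range $0<c<\alpha_{n}<1-k=2$ required by Theorem \ref{maths} is nonempty and contains the prescribed $\{\alpha_{n}\}$; the step sizes $\rho_{n}$ in \eqref{6} are then well defined with this value of $k$. All remaining hypotheses of Theorem \ref{maths} are assumed verbatim in the statement of the corollary, so I would simply invoke Theorem \ref{maths} to conclude $x_{n}\to x^{*}\in\Gamma$.

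I do not anticipate any real obstacle: the whole argument is the one-line algebraic identity above followed by a citation of Theorem \ref{maths}. The only point deserving care is the sign bookkeeping in the inner-product term and the explicit observation that $k=-1$ is a legitimate demicontractivity constant, so that Theorem \ref{maths} genuinely applies with the directed-operator data.
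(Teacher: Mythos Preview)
Your proposal is correct and follows exactly the paper's own route: the paper's proof is the single line ``Since $T$ is a directed operator, clearly $T$ is $(-1)$-demicontractive; hence all the hypotheses of Theorem~\ref{maths} are satisfied,'' and you have simply supplied the algebraic justification behind that ``clearly'' via the identity $\|Ty-z\|^{2}=\|y-z\|^{2}-\|y-Ty\|^{2}-2\langle y-Ty,\,Ty-z\rangle$ together with the directed-operator inequality. Your additional remark that $k=\max\{k_{1},k_{2}\}=-1$ keeps the parameter window $0<c<\alpha_{n}<1-k$ nonempty is a useful sanity check the paper leaves implicit.
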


\subsection{Application  to Variational Inequality Problems}

Let $T:C\to H_{1}$ be a nonlinear mapping. The variational inequality problem  with respect to $C$ consist as finding  a vector $x^{*}\in C$ such that
 \begin{equation}
 \langle Tx^{*}, x-x^{*}\rangle\geq 0,~\forall ~x\in C\label{3.1n}.
 \end{equation}
We denote the solution set of Variational Inequality Problem (\ref{3.1n}) by $VI(T,C).$

It is easy to see that
\begin{equation}
 {\rm~find~} x^{*}\in VI(T,C) {\rm~ if ~and~only~if~} x^{*}\in Fix(P_{C}(I-\beta T)),\label{3.1o}
 \end{equation}
where $P_{C}$ is the metric projection from $H_{1}$ onto $C$ and $\beta$ is a positive constant. \\

Let $Q:=Fix(P_{C}(I-\beta T))$ ( the fixed point set of $P_{C}(I-\beta T)$) and $A=I$ (the identity operator on $H_{1}$), then Equation (\ref{3.1n}) can be written as;
\begin{equation}
 {\rm~find~} x^{*}\in C {\rm~such~that~} Ax^{*}\in Q.\label{3.1p}
 \end{equation}

\subsection{On Synchronal Algorithms for Fixed and Variational Inequality Problems in Hilbert Spaces}
The aim of this section is to expand the general approximation method proposed by Tian and Di \cite{di} to the class of $(k, \{\mu_{n}\}, \{\xi_{n}\}, \phi)$- total asymptotically strict pseudocontraction and  uniformly M-Lipschitzian mappings to solve the fixed point problem as well as variational inequality problem in the frame work of Hilbert space. The results presented in this paper extend, improve and generalize several known results in the literature.

\subsection{Preliminaries}
 In the sequel we shall make use of the following lemmas in proving the main results of this section.

\begin{lemma}\cite{g}\normalfont Let $H$ be a Hilbert space, there hold the following identities;
\begin{enumerate}
\item[(i)] $\left\|x-y\right\|^{2}=\left\|x\right\|^{2}-\left\|y\right\|^{2}-2\left\langle x-y,y\right\rangle, \forall x,y\in H;$
\item[(ii)] $\left\|tx+(1-t)y\right\|^{2}=t\left\|x\right\|^{2}+(1-t)\left\|y\right\|^{2}-t(1-t)\left\|x-y\right\|^{2}, \forall t\in [0,1]{\rm~ and~}\\ x,y\in H$;
\item[(iii)] if $\{x_{n}\}$ is a sequence in $H$ such that $x_{n}\rightharpoonup z,$
 then
 $$\underset{n\rightarrow\infty}{\limsup}\left\|x_{n}-y\right\|^{2}=\underset{n\rightarrow\infty}{\limsup}\left\|x_{n}-z\right\|^{2}+\underset{n\rightarrow\infty}{\limsup}\left\|z-y\right\|^{2},\forall y\in H.$$
\end{enumerate}
\end{lemma}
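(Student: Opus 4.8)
The plan is to establish the three identities separately, observing that the first two are immediate consequences of expanding the inner product, while only the third genuinely uses the hypothesis $x_{n}\rightharpoonup z$. Throughout I work in a real Hilbert space, so the inner product is symmetric and bilinear.

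For (i), I would start from the right-hand side and expand $2\left\langle x-y,y\right\rangle = 2\left\langle x,y\right\rangle - 2\left\|y\right\|^{2}$, so that $\left\|x\right\|^{2}-\left\|y\right\|^{2}-2\left\langle x-y,y\right\rangle = \left\|x\right\|^{2}+\left\|y\right\|^{2}-2\left\langle x,y\right\rangle$, which is exactly the expansion of $\left\|x-y\right\|^{2}=\left\langle x-y,x-y\right\rangle$. For (ii), I note that the identity coincides with Lemma \ref{l2}(ii) upon replacing $\alpha$ by $t$; alternatively it follows by expanding $\left\|tx+(1-t)y\right\|^{2}$ directly and collecting the term $t(1-t)\left\|x-y\right\|^{2}$, using $t^{2}=t-t(1-t)$ and $(1-t)^{2}=(1-t)-t(1-t)$. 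Neither of these requires a limiting argument.

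The substantive part is (iii). Here I would write $x_{n}-y=(x_{n}-z)+(z-y)$ and expand by Lemma \ref{l2}(i), obtaining
\begin{equation*}
\left\|x_{n}-y\right\|^{2}=\left\|x_{n}-z\right\|^{2}+2\left\langle x_{n}-z,\,z-y\right\rangle+\left\|z-y\right\|^{2}.
\end{equation*}
Since $x_{n}\rightharpoonup z$ yields $x_{n}-z\rightharpoonup 0$, the definition of weak convergence (Lemma \ref{lemma}(i)) forces the cross term $\left\langle x_{n}-z,\,z-y\right\rangle\to 0$ for the fixed vector $z-y$. Taking $\limsup$ on both sides, the vanishing cross term drops out and the constant term $\left\|z-y\right\|^{2}$ equals its own $\limsup$, which gives the claimed equality.

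The main obstacle, and really the only point where care is needed, is the handling of $\limsup$ in the last step: additivity of $\limsup$ fails for arbitrary sequences, so I must invoke the fact that the middle term converges (to $0$) and the final term is constant. This is precisely what legitimizes splitting the $\limsup$ of the sum as $\underset{n\rightarrow\infty}{\limsup}\left\|x_{n}-z\right\|^{2}+\left\|z-y\right\|^{2}$, since $\limsup(a_{n}+b_{n})=\limsup a_{n}+\lim b_{n}$ whenever $\{b_{n}\}$ converges. Everything else reduces to the inner-product expansions already recorded in Lemma \ref{l2}.
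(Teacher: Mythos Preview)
Your argument is correct. The paper itself does not prove this lemma at all; it is quoted from Marino and Xu \cite{g} and stated without proof, so there is no in-paper argument to compare against. Your derivations of (i) and (ii) by direct inner-product expansion are standard and match Lemma~\ref{l2}, and your treatment of (iii)---splitting $x_{n}-y=(x_{n}-z)+(z-y)$, using $x_{n}\rightharpoonup z$ to kill the cross term, and then invoking $\limsup(a_{n}+b_{n})=\limsup a_{n}+\lim b_{n}$ for convergent $\{b_{n}\}$---is exactly the canonical proof one finds in the cited reference. Your remark that the last $\limsup$ in the statement is really just the constant $\|z-y\|^{2}$ is also the right reading.
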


\begin{lemma}\cite{li}\normalfont Let $C$ be a nonempty closed convex subset of a real Hilbert space $H$ and let  $T:C\rightarrow C$  be a  ($k,~ \{\mu_{n}\},~\{\xi_{n}\},~ \phi)$- total  asymptotically strict pseudocontraction mapping and uniformly L-Lipschitzian. Then $I-T$ is demiclosed at zero in the sense that if $\{x_{n}\}$ is a sequence in  $C$ such that  $x_{n}\rightharpoonup x^{*}$, and $\underset{n\rightarrow\infty}{\limsup}\left\|(T^{n}-I)x_{n}\right\|=0$, then $(T-I)x^{*}=0.$
\end{lemma}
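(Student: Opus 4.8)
The plan is to exploit the Hilbert-space identity recorded in part (iii) of the preceding lemma: since $x_{n}\rightharpoonup x^{*}$, for every \emph{fixed} $y\in H$ one has
\[
\limsup_{n\to\infty}\|x_{n}-y\|^{2}=\limsup_{n\to\infty}\|x_{n}-x^{*}\|^{2}+\|x^{*}-y\|^{2}.
\]
I would apply this with $y=T^{m}x^{*}$ for a fixed iterate $m$, compare it against the bound furnished by the total asymptotically strict pseudocontraction inequality, and thereby trap $\|x^{*}-T^{m}x^{*}\|^{2}$ inside a quantity controlled only by $\mu_{m}$ and $\xi_{m}$; letting $m\to\infty$ and using continuity of $T$ then finishes the argument. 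Note first that $x_{n}\rightharpoonup x^{*}$ forces $\{x_{n}\}$ to be bounded and, since $C$ is closed and convex (hence weakly closed), $x^{*}\in C$, so that $T^{m}x^{*}$ is meaningful.

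The first genuine step is to upgrade the hypothesis $\limsup_{n}\|(T^{n}-I)x_{n}\|=0$ (equivalently $\|T^{n}x_{n}-x_{n}\|\to 0$) to the statement that $\|x_{n}-T^{m}x_{n}\|\to 0$ for each \emph{fixed} $m$. Because $T$ is uniformly $L$-Lipschitzian we have $\|T^{i}x_{n}-T^{i+1}x_{n}\|\le L\|x_{n}-Tx_{n}\|$, so a telescoping over $i=0,\dots,m-1$ reduces this to the approximate-fixed-point estimate for the first iterate. This passage between the diagonal quantity $T^{n}x_{n}-x_{n}$ and the fixed-iterate quantity $T^{m}x_{n}-x_{n}$ is precisely the bookkeeping that must be handled with care (in the iterative applications of this lemma one also has $\|x_{n+1}-x_{n}\|\to 0$, which makes the passage routine).

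With $\|x_{n}-T^{m}x_{n}\|\to 0$ in hand I would run the two-sided estimate. On one side the identity above gives $\limsup_{n}\|x_{n}-T^{m}x^{*}\|^{2}=\limsup_{n}\|x_{n}-x^{*}\|^{2}+\|x^{*}-T^{m}x^{*}\|^{2}$, and since $x_{n}-T^{m}x_{n}\to 0$ strongly this $\limsup$ equals $\limsup_{n}\|T^{m}x_{n}-T^{m}x^{*}\|^{2}$. On the other side the defining inequality (with iterate $m$ and points $x_{n},x^{*}$) bounds $\|T^{m}x_{n}-T^{m}x^{*}\|^{2}$; taking $\limsup_{n}$, using that $\|(I-T^{m})x_{n}-(I-T^{m})x^{*}\|^{2}\to\|x^{*}-T^{m}x^{*}\|^{2}$ and that $\phi(\|x_{n}-x^{*}\|)$ is bounded (boundedness of $\{x_{n}\}$ and continuity of $\phi$), the common term $\limsup_{n}\|x_{n}-x^{*}\|^{2}$ cancels and leaves
\[
(1-k)\,\|x^{*}-T^{m}x^{*}\|^{2}\le \mu_{m}\,\sup_{n}\phi(\|x_{n}-x^{*}\|)+\xi_{m}.
\]
Since $k<1$ and $\mu_{m},\xi_{m}\to 0$, this yields $T^{m}x^{*}\to x^{*}$ as $m\to\infty$.

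To conclude, $T$ is $L$-Lipschitz (take $n=1$ in the uniform Lipschitz condition), hence continuous, so $T(T^{m}x^{*})\to Tx^{*}$; but $T(T^{m}x^{*})=T^{m+1}x^{*}\to x^{*}$, and uniqueness of limits gives $Tx^{*}=x^{*}$, i.e.\ $(T-I)x^{*}=0$. I expect the main obstacle to be twofold: the iterate-count bookkeeping of the second paragraph, and the cancellation of the unknown quantity $\limsup_{n}\|x_{n}-x^{*}\|^{2}$ in the third — the latter being exactly what the Hilbert-space identity is designed to make possible, which is why anchoring the whole proof on that identity is the natural strategy.
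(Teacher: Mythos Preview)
The paper does not prove this lemma; it is quoted from the reference \cite{li} and no argument is supplied, so there is nothing in the paper itself to compare against. Your overall architecture --- anchor on the identity in part (iii) of the preceding lemma, apply the defining inequality at a \emph{fixed} iterate $m$, cancel the common $\limsup_{n}\|x_{n}-x^{*}\|^{2}$, and finish by letting $m\to\infty$ and invoking continuity of $T$ --- is exactly the standard route for demiclosedness results of this type and matches what one finds in the cited sources.

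However, the difficulty you single out in your second paragraph is a genuine gap, not mere bookkeeping. Your telescoping bounds $\|x_{n}-T^{m}x_{n}\|$ by a multiple of $\|x_{n}-Tx_{n}\|$, but the hypothesis controls only the diagonal quantity $\|x_{n}-T^{n}x_{n}\|$; there is no passage from the latter to the former using just uniform $L$-Lipschitz continuity. The standard bridge,
\[
\|Tx_{n}-x_{n}\|\le L\|T^{n}x_{n}-x_{n}\|+(L+1)\|x_{n+1}-x_{n}\|+\|T^{n+1}x_{n+1}-x_{n+1}\|,
\]
requires $\|x_{n+1}-x_{n}\|\to 0$, which is not among the hypotheses of the lemma. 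You are right that this extra piece is always in hand when the lemma is actually invoked --- it is exactly what is established in Step~3 of the proof of Theorem~\ref{thm133} before the lemma is applied --- so in practice the argument goes through. But as a self-contained proof of the lemma \emph{as stated}, your proposal is incomplete at precisely the point you yourself flagged, and I do not see how to close it without either that asymptotic regularity assumption or a genuinely different mechanism.
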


\begin{lemma}\cite{di} Assume that $\{a_{n}\}$ is a sequence of nonnegative real number such that
\begin{equation*}
a_{n+1}\leq(1-\gamma_{n})a_{n}+\sigma_{n}, n\geq 0,
\end{equation*}
where $\gamma_{n}$ is a sequence in $(0,1)$ and $\sigma_{n}$ is a sequence of real number such that;
\begin{itemize}
\item[(i)] $\underset{n\rightarrow\infty}{\lim}\gamma_{n}=0~~  {\rm and}~~ \sum \gamma_{n}=\infty$;
\item[(ii)]  $\underset{n\rightarrow\infty}{\lim}\frac{\sigma_{n}}{\gamma_{n}}\leq 0$ \ {\rm or}\ $\sum |\sigma_{n}|<\infty.$ \ {\rm Then}\ $\underset{n\rightarrow\infty}{\lim}a_{n}=0.$
\end{itemize}
\end{lemma}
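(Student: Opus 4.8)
The plan is to treat this as the classical result on recursions of the form $a_{n+1}\leq(1-\gamma_{n})a_{n}+\sigma_{n}$ (it is in fact the same statement as the earlier Lemma \ref{l6}), handling the two alternative hypotheses in (ii) separately, since they call for slightly different arguments. Throughout I would lean on the elementary estimate $1-t\leq e^{-t}$ for $t\geq 0$ together with the divergence $\sum\gamma_{n}=\infty$ to guarantee that the telescoping products satisfy $\prod_{k=N}^{n}(1-\gamma_{k})\leq\exp\big(-\sum_{k=N}^{n}\gamma_{k}\big)\to 0$ as $n\to\infty$. This single fact is the engine behind both cases, and the positivity $1-\gamma_{n}>0$ needed to form these products is secured by $\gamma_{n}\in(0,1)$.

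First I would dispose of the case $\limsup_{n\to\infty}\sigma_{n}/\gamma_{n}\leq 0$. Writing $\beta_{n}=\sigma_{n}/\gamma_{n}$, the recursion becomes $a_{n+1}\leq(1-\gamma_{n})a_{n}+\gamma_{n}\beta_{n}$. Fix $\epsilon>0$; by hypothesis there is an $N$ with $\beta_{n}\leq\epsilon$ for all $n\geq N$. The key algebraic trick is the shift $c_{n}:=a_{n}-\epsilon$, under which the inequality collapses to $c_{n+1}\leq(1-\gamma_{n})c_{n}$ for $n\geq N$, the constant $\epsilon$ terms cancelling. Iterating (the step is valid regardless of the sign of $c_{n}$, since the multipliers $1-\gamma_{k}$ are nonnegative) gives $c_{n+1}\leq\big(\prod_{k=N}^{n}(1-\gamma_{k})\big)c_{N}$, and letting $n\to\infty$ forces $\limsup_{n}c_{n}\leq 0$, that is $\limsup_{n}a_{n}\leq\epsilon$. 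As $\epsilon>0$ was arbitrary and $a_{n}\geq 0$, this yields $\lim_{n}a_{n}=0$.

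For the alternative hypothesis $\sum|\sigma_{n}|<\infty$ I would instead unroll the recursion completely, obtaining $a_{n+1}\leq\big(\prod_{k=0}^{n}(1-\gamma_{k})\big)a_{0}+\sum_{j=0}^{n}\big(\prod_{k=j+1}^{n}(1-\gamma_{k})\big)\sigma_{j}$. The leading term vanishes by the product estimate above. For the convolution term, given $\epsilon>0$ I would choose $M$ with $\sum_{j>M}|\sigma_{j}|<\epsilon$ and split the sum at $M$: the tail $j>M$ is bounded by $\sum_{j>M}|\sigma_{j}|<\epsilon$ since each factor satisfies $\prod_{k=j+1}^{n}(1-\gamma_{k})\leq 1$, while the finite head $j\leq M$ is dominated by $\big(\prod_{k=M+1}^{n}(1-\gamma_{k})\big)\sum_{j\leq M}|\sigma_{j}|\to 0$. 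Hence $\limsup_{n}a_{n}\leq\epsilon$, and again $\lim_{n}a_{n}=0$.

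I expect the main obstacle to be the bookkeeping in the second case: one must observe that the inner products $\prod_{k=j+1}^{n}(1-\gamma_{k})$ are monotone in $j$, so that the entire head sum can be uniformly controlled by the single product $\prod_{k=M+1}^{n}(1-\gamma_{k})$ which tends to $0$ with $M$ fixed. The parallel subtlety in the first case is to handle the $\limsup$ hypothesis through an honest $\epsilon$-argument with the shift $c_{n}=a_{n}-\epsilon$, rather than an invalid term-by-term passage to the limit. The assumption $\lim\gamma_{n}=0$ plays only a minor supporting role, the real work being carried entirely by $\sum\gamma_{n}=\infty$.
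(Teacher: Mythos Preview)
Your argument is correct and is the standard proof of this classical recursion lemma (originally due to Xu). Note, however, that the paper does not actually supply its own proof of this statement: the lemma is quoted from \cite{di} (and duplicates the earlier Lemma~\ref{l6}, quoted from \cite{xu2002iterative}), so there is nothing in the paper to compare your argument against beyond the bare citation.
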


\begin{lemma}\label{2.4}\cite{di}\normalfont Let $F:H\rightarrow H $ be a $\eta$- strongly monotone and $L$-Lipschitzian operator with $L>0$ and $\eta>0$. Assume that $\displaystyle{0<\mu<\frac{2\eta}{L^{2}}}$, \ $\displaystyle{\tau =\mu \left(\eta-\frac{L^{2}\mu}{2}\right)}$ and $0<t<1$. Then $$\left\|(I-\mu tF)x-(I-\mu tF)y\right\|\leq (1-\tau t)\left\|x-y\right\|.$$
\end{lemma}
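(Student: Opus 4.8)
The plan is to pass to squared norms and reduce the statement to a single scalar inequality in the parameter $t$. First I would write $(I-\mu tF)x-(I-\mu tF)y=(x-y)-\mu t(Fx-Fy)$ and expand the square by Lemma \ref{l2}(i):
\[
\left\|(I-\mu tF)x-(I-\mu tF)y\right\|^{2}=\left\|x-y\right\|^{2}-2\mu t\left\langle Fx-Fy,\,x-y\right\rangle+\mu^{2}t^{2}\left\|Fx-Fy\right\|^{2}.
\]
Next I would bound the middle term using $\eta$-strong monotonicity in the form $\left\langle Fx-Fy,\,x-y\right\rangle\geq\eta\left\|x-y\right\|^{2}$, and the last term using the $L$-Lipschitz property $\left\|Fx-Fy\right\|^{2}\leq L^{2}\left\|x-y\right\|^{2}$. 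This gives the clean estimate
\[
\left\|(I-\mu tF)x-(I-\mu tF)y\right\|^{2}\leq\bigl(1-2\mu t\eta+\mu^{2}t^{2}L^{2}\bigr)\left\|x-y\right\|^{2}.
\]

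The core of the argument is then the elementary inequality $1-2\mu t\eta+\mu^{2}t^{2}L^{2}\leq(1-\tau t)^{2}$. Substituting $\tau=\mu\!\left(\eta-\tfrac{L^{2}\mu}{2}\right)$ so that $2\tau=2\mu\eta-L^{2}\mu^{2}$, and expanding $(1-\tau t)^{2}=1-2\tau t+\tau^{2}t^{2}$, the two leading terms cancel and the inequality collapses to
\[
t^{2}\bigl(\mu^{2}L^{2}-\tau^{2}\bigr)\leq\mu^{2}L^{2}\,t.
\]
Here the hypothesis $0<t<1$ is exactly what is needed: since $t^{2}\leq t$ and $\tau^{2}\geq 0$, one has $t^{2}(\mu^{2}L^{2}-\tau^{2})\leq t(\mu^{2}L^{2}-\tau^{2})\leq t\mu^{2}L^{2}$, which closes this step.

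It remains to justify taking square roots, i.e.\ that $1-\tau t\geq 0$, and this is where the standing constraints must be used carefully. The condition $\mu<\tfrac{2\eta}{L^{2}}$ forces $\eta-\tfrac{L^{2}\mu}{2}>0$, hence $\tau>0$; moreover, combining strong monotonicity with Cauchy--Schwarz and the Lipschitz bound gives $\eta\left\|x-y\right\|^{2}\leq\left\langle Fx-Fy,\,x-y\right\rangle\leq L\left\|x-y\right\|^{2}$, so $\eta\leq L$ and therefore $\tau\leq\tfrac{\eta^{2}}{2L^{2}}\leq\tfrac12$. Thus $0<\tau t<1$, and taking the square root of $\left\|(I-\mu tF)x-(I-\mu tF)y\right\|^{2}\leq(1-\tau t)^{2}\left\|x-y\right\|^{2}$ yields the assertion. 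I expect the scalar inequality in the middle paragraph to be the main obstacle, essentially because one must organize the substitution of $\tau$ and invoke $t^{2}\leq t$ at precisely the right moment; the positivity of $1-\tau t$ is then a short consequence of the hypotheses.
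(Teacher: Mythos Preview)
Your argument is correct. Note that the paper does not give its own proof of this lemma; it simply cites \cite{di}, so there is no in-paper proof to compare against. Your approach---expanding the square, applying strong monotonicity and the Lipschitz bound, and reducing to the scalar inequality $1-2\mu t\eta+\mu^{2}t^{2}L^{2}\leq(1-\tau t)^{2}$---is the standard route and is fully correct.

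One small point of presentation: in the chain $t^{2}(\mu^{2}L^{2}-\tau^{2})\leq t(\mu^{2}L^{2}-\tau^{2})\leq t\mu^{2}L^{2}$, the first step as written needs $\mu^{2}L^{2}-\tau^{2}\geq 0$, i.e.\ $\tau\leq\mu L$. You do have all the ingredients for this---your later observation $\eta\leq L$ gives $\eta-\tfrac{L^{2}\mu}{2}<\eta\leq L$, hence $\tau<\mu L$---but that fact appears only in the following paragraph. Either move the $\eta\leq L$ remark earlier, or simply observe that if $\mu^{2}L^{2}-\tau^{2}<0$ the desired inequality $t^{2}(\mu^{2}L^{2}-\tau^{2})\leq t\mu^{2}L^{2}$ is immediate since the left side is negative. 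Either fix is one line.
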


\begin{lemma}\normalfont (Bulama and Kilicman \cite{kbul})\label{2.5} \normalfont Let $S:C\rightarrow H$ be a uniformly $L$-Lipschitzian mapping with $L\in(0,1]$. Define  $T:C\rightarrow H$ by $T^{\beta_{n}}x=\beta_{n} x + (1-\beta_{n} )S^{n}x$ with $\beta_{n}\in(0,1)$ and  $\forall x\in C$.  Then $T^{\beta_{n}}$  is nonexpansive and $Fix(T^{\beta_{n}})= Fix(S^{n})$.
\begin{proof}Let $x,y\in C,$ from lemma (2.1(ii)), we have
\begin{align*}\left\|T^{\beta_{n}}x-T^{\beta_{n}}y\right\|^{2}&=\left\|\beta_{n}(x-y)+ (1-\beta_{n})(S^{n}x-S^{n}y)\right\|^{2}
\\&=\beta_{n}\left\| x -y \right\|^{2}+(1-\beta_{n})\left\|S^{n}x-S^{n}y\right\|^{2}
\\&-\beta_{n}(1-\beta_{n})\left\|(x-y)- (S^{n}x-S^{n}y) \right\|^{2}
\\&\leq\beta_{n}\left\| x -y \right\|^{2}+(1-\beta_{n})\left\|S^{n}x-S^{n}y\right\|^{2}
\\&\leq(L^{2}+\beta_{n}(1-L^{2}))\left\|x-y\right\|^{2},
\end{align*}
since $L\in (0,1]$ and $\beta_n \in (0,1)$,  it follow that, $T^{\beta_{n}}$ is nonexpansive, and it is not difficult to see that $Fix(T^{\beta_{n}})= Fix(S^{n}).$
\end{proof}
\end{lemma}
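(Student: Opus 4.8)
The plan is to establish the two claims in turn: the nonexpansiveness of $T^{\beta_n}$, which is the only part involving an estimate, and then the set equality $Fix(T^{\beta_n}) = Fix(S^n)$, which is purely algebraic.

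For nonexpansiveness, I would fix $x,y \in C$ and apply the convex-combination identity of Lemma \ref{l2}(ii) with coefficient $\alpha = \beta_n$ to the difference $T^{\beta_n}x - T^{\beta_n}y = \beta_n(x-y) + (1-\beta_n)(S^n x - S^n y)$. This expands $\|T^{\beta_n}x - T^{\beta_n}y\|^2$ as $\beta_n\|x-y\|^2 + (1-\beta_n)\|S^n x - S^n y\|^2 - \beta_n(1-\beta_n)\|(x-y)-(S^n x - S^n y)\|^2$. Since the last term is nonnegative I would simply discard it, leaving the bound $\beta_n\|x-y\|^2 + (1-\beta_n)\|S^n x - S^n y\|^2$.

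The key step is then to invoke the uniform $L$-Lipschitz property, $\|S^n x - S^n y\| \le L\|x-y\|$, so that the bound becomes $\big(L^2 + \beta_n(1-L^2)\big)\|x-y\|^2 = \big((1-\beta_n)L^2 + \beta_n\big)\|x-y\|^2$. Recognizing the coefficient as a convex combination of $L^2$ and $1$ with weight $\beta_n$, and using $L \in (0,1]$ (hence $L^2 \le 1$), I conclude that this coefficient is at most $1$, which yields $\|T^{\beta_n}x - T^{\beta_n}y\| \le \|x-y\|$.

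For the fixed point sets, the inclusion $Fix(S^n) \subseteq Fix(T^{\beta_n})$ is immediate, since $S^n x = x$ makes $T^{\beta_n}x = \beta_n x + (1-\beta_n)x = x$. For the reverse inclusion I would rewrite $T^{\beta_n}x = x$ as $(1-\beta_n)(S^n x - x) = 0$ and divide by $1-\beta_n$. This is where the hypothesis $\beta_n \in (0,1)$ (rather than the closed interval) is essential, as it guarantees $1-\beta_n \neq 0$. I do not anticipate any genuine obstacle: the whole argument is routine, and the only points deserving care are the sign condition $1 - L^2 \ge 0$ needed to bound the coefficient and the strict inequality $\beta_n < 1$ needed for the fixed point inversion.
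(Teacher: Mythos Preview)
Your proposal is correct and follows essentially the same approach as the paper: the convex-combination identity to expand $\|T^{\beta_n}x - T^{\beta_n}y\|^2$, dropping the nonnegative cross term, applying the uniform $L$-Lipschitz bound, and observing that the resulting coefficient $(1-\beta_n)L^2 + \beta_n \le 1$ since $L\in(0,1]$. Your treatment of the fixed point equality is in fact more explicit than the paper's, which simply asserts that $Fix(T^{\beta_n}) = Fix(S^n)$ is ``not difficult to see.''
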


\begin{lemma}\cite{t}\normalfont Let $H$ be a real Hilbert space, $f:H\rightarrow H$ be a contraction  with coefficient $0<\alpha<1$ and $F:H\rightarrow H$ be a $L$-Lipschitzian continuous operator and $\eta$-strongly monotone operator  with $L>0$ and $\eta>0$. Then for $0<\gamma<\frac{\mu\eta}{\alpha}$, $$\left\langle x-y, (\mu F-\gamma f)x-(\mu F-\gamma f)y\right\rangle\geq (\mu\eta-\gamma\alpha)\left\|x-y\right\|^{2}.$$
\end{lemma}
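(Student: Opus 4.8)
The plan is to expand the inner product using linearity of $\left\langle \cdot,\cdot\right\rangle$ in the appropriate argument, and then bound the two resulting terms separately: one from below via the strong monotonicity of $F$, and the other from below via the contraction estimate on $f$ combined with the Cauchy--Schwarz inequality. Concretely, I would start by writing
$$\left\langle x-y, (\mu F-\gamma f)x-(\mu F-\gamma f)y\right\rangle = \mu\left\langle x-y, Fx-Fy\right\rangle - \gamma\left\langle x-y, fx-fy\right\rangle,$$
and then treat each piece in turn.

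First I would invoke the $\eta$-strong monotonicity of $F$, which gives $\left\langle Fx-Fy, x-y\right\rangle \geq \eta\left\|x-y\right\|^{2}$, and since $\mu>0$ multiplying preserves the inequality to yield $\mu\left\langle x-y, Fx-Fy\right\rangle \geq \mu\eta\left\|x-y\right\|^{2}$. Next, for the subtracted term the crucial observation is one of sign: a \emph{lower} bound for $-\gamma\left\langle x-y, fx-fy\right\rangle$ is the same thing as an \emph{upper} bound for $\left\langle x-y, fx-fy\right\rangle$. Applying Cauchy--Schwarz and then the contraction property $\left\|fx-fy\right\|\leq\alpha\left\|x-y\right\|$ gives $\left\langle x-y, fx-fy\right\rangle \leq \left\|x-y\right\|\left\|fx-fy\right\| \leq \alpha\left\|x-y\right\|^{2}$, and hence $-\gamma\left\langle x-y, fx-fy\right\rangle \geq -\gamma\alpha\left\|x-y\right\|^{2}$.

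Adding the two bounds then produces the claim $\left\langle x-y, (\mu F-\gamma f)x-(\mu F-\gamma f)y\right\rangle \geq (\mu\eta-\gamma\alpha)\left\|x-y\right\|^{2}$. It is worth noting that the hypothesis $0<\gamma<\frac{\mu\eta}{\alpha}$ is not actually used in deriving the inequality; its role is to ensure that the constant $\mu\eta-\gamma\alpha$ is strictly positive, so that $\mu F-\gamma f$ is genuinely strongly monotone — which is presumably the property this lemma supplies to later convergence arguments. There is no real obstacle in this proof; the one point demanding care is the sign reversal when estimating the $f$-term, where the direction of the Cauchy--Schwarz bound must be tracked correctly against the minus sign in $-\gamma\left\langle x-y, fx-fy\right\rangle$.
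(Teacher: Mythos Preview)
Your proof is correct and is the standard argument for this result. Note, however, that the paper does not actually supply its own proof of this lemma: it is quoted from the cited reference \cite{t} and used as a black box (invoked later as ``lemma (2.6)'' in the proof of Theorem~\ref{thm133}). So there is nothing in the paper to compare your argument against; your decomposition into the strongly monotone piece and the Cauchy--Schwarz/contraction piece is exactly how one proves it, and your remark that the hypothesis $0<\gamma<\mu\eta/\alpha$ only serves to make the constant $\mu\eta-\gamma\alpha$ positive (rather than being needed for the inequality itself) is correct and worth keeping.
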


\begin{theorem}\label{thm133}\normalfont
Let  $T:H\rightarrow H$ be a  $(k, \{\mu_{n}\}, \{\xi_{n}\}, \phi)$- total asymptotically strict pseudocontraction mapping and uniformly M-Lipschitzian   with $\phi(t)=t^{2}, \forall t \geq 0$ and $M\in (0,1]$. Assume that  $Fix(T^{n})\neq\emptyset,$ and let $f$ be a contraction with coefficient $\beta\in(0,1)$,  $G:H\rightarrow H$ be a $\eta$-strongly monotone and $L$-Lipschitzian operator with $L>0$ and $\eta>0$ respectively. Assume that $\displaystyle{0<\gamma<\mu(\eta-\frac{\mu L^{2}}{2})/\beta =\frac{\tau}{\beta}}$ and let  $x_{0}\in H$ be chosen arbitrarily, $\{\alpha_{n}\}$ and $\{\beta_{n}\}$ be two sequences in (0,1) satisfying the following conditions:
\begin{equation}
\left\{
\begin{array}{l} {\rm (i)}\displaystyle{\underset{n\rightarrow \infty}\lim \alpha_{n}=0 {\rm ~and~} \sum{\alpha_{n}}=\infty}; \\
{\rm (ii)}\displaystyle{\sum{\left|\alpha_{n+1}-\alpha_{n}\right|}<\infty, \sum{\left|\beta_{n+1}-\beta_{n}\right|}<\infty}~{\rm and}~\sum{\left|\beta_{n}\right|}<\infty; \\
{\rm (iii)} 0\leq k\leq\beta_{n}<a<1, \forall n\geq 0.
\end{array}
\right.\label{con2}
\end{equation}
Let $\{x_{n}\}$ be a sequence defined by
\begin{equation}
 {} \left\{ \begin{array}{ll} T^{\beta_{n}}=\beta_{n}I +(1-\beta_{n})T^{n};
\\ x_{n+1}=\alpha_{n}\gamma f(x_{n})+(I-\alpha_{n}\mu G)T^{\beta_{n}}{x_{n}},& \textrm{ $  $}
 \end{array} \label{3.2} \right.
\end{equation}
then $\{x_{n}\}$ converges strongly to a common fixed of $T^{n}$ which solve the variational inequality problem
\begin{equation}
\langle (\gamma f-\mu G)x^{*}, x-x^{*}\rangle\leq 0, \forall x\in Fix(T^{n}).\label{3.3}
\end{equation}
\begin{proof} The proof is divided into five steps as follows.

\textbf{Step 1}. In this step, we show that
 \begin{equation}
T^{\beta_{n}}~~ {\rm is~nonexpansive~and}~ Fix(T^{\beta_{n}})=Fix(T^{n}).
\end{equation}
 The proof follows directly from lemma (\ref{2.5}).

\textbf{Step 2}. In this step, we show that
\begin{equation}
\{x_{n}\},\{T^{n}x_{n}\},\{f(x_{n})\}~{\rm and}~ \{GT^{n}x_{n}\} ~{\rm ~are~ all~ bounded}.
\end{equation}
Let $x^{*}\in Fix(T^{n}),$ from (\ref{3.2}) and lemma (\ref{2.4}), and the fact that $f$ is a contraction, we have
\begin{align*}\left\|x_{n+1}-x^{*}\right\|&= \left\|\alpha_{n}\gamma f(x_{n})+(I-\alpha_{n}\mu G)T^{\beta_{n}}{x_{n}}-x^{*}\right\|
\\&=\left\|\alpha_{n}(\gamma f(x_{n})-\mu Gx^{*})+(I-\alpha_{n}\mu G)T^{\beta_{n}}{x_{n}}-(I-\alpha_{n}\mu G)x^{*}\right\|
\\&\leq (1-\alpha_{n}\tau)\left\|x_{n}-x^{*}\right\|+\alpha_{n}\left\|\gamma( f(x_{n})-f(x^{*}))+\gamma f(x^{*})-\mu Gx^{*})\right\|
\\&\leq (1-\alpha_{n}(\tau-\gamma\beta))\left\|x_{n}-x^{*}\right\|+\alpha_{n}\left\|\gamma f(x^{*})-\mu Gx^{*})\right\|
\\&\leq \max\left\{\left\|x_{n}-x^{*}\right\|,\frac{\left\|\gamma f(x^{*})-\mu Gx^{*})\right\|}{(\tau-\gamma\beta)}\right\}.
\end{align*}
By using induction, we have
\begin{equation}
\left\|x_{n+1}-x^{*}\right\|\leq \max\left\{\left\|x_{0}-x^{*}\right\|,\frac{\left\|\gamma f(x^{*})-\mu Gx^{*})\right\|}{(\tau-\gamma\beta)}\right\}.
\end{equation}
Hence $\{x_{n}\}$ is bounded, and also
\begin{align}
\left\|T^{n}x_{n}-x^{*}\right\|^{2}&\leq \left\|x_{n}-x^{*}\right\|^{2} + k\left\|x_{n}-x^{*}-(T^{n}x_{n}-x^{*})\right\|^{2}+\mu_{n}\phi (\left\|x_{n}-x^{*}\right\|)+\xi_{n}\nonumber
\\&=\left\|x_{n}-x^{*}\right\|^{2} + k\left\|x_{n}-x^{*}\right\|^{2}+k\left\|T^{n}x_{n}-x^{*}\right\|^{2}\nonumber
\\&+2k\left\|x_{n}-x^{*}\right\|\left\|T^{n}x_{n}-x^{*}\right\|+\mu_{n} \left\|x_{n}-x^{*}\right\|^{2}+\xi_{n}\nonumber
\\&\leq (1+k+\mu_{n})\left\|x_{n}-x^{*}\right\|^{2}+2k\left\|x_{n}-x^{*}\right\|\left\|T^{n}x_{n}-x^{*}\right\|\nonumber
\\&+k\left\|T^{n}x_{n}-x^{*}\right\|^{2}+\xi_{n}.\label{3.7}
\end{align}
From (\ref{3.7}), we deduce that
\begin{align*}
(1-k)\left\|T^{n}x_{n}-x^{*}\right\|^{2}-2k\left\|x_{n}-x^{*}\right\|\left\|T^{n}x_{n}-x^{*}\right\|
\\-(1+k+\mu_{n})\left\|x_{n}-x^{*}\right\|^{2}-\xi_{n}\leq 0.
\end{align*}
This implies that
\begin{align*}
\left\|T^{n}x_{n}-x^{*}\right\|&\leq \frac{k\left\|x_{n}-x^{*}\right\|}{(1-k)}
\\&+ \frac{\sqrt{4k^{2}\left\|x_{n}-x^{*}\right\|^{2}+4(1-k)\{(1+k+\mu_{n})\left\|x_{n}-x^{*}\right\|^{2}+\xi_{n}\}}}{2(1-k)}
\\&=\frac{k\left\|x_{n}-x^{*}\right\|+ \sqrt{(1+(1-k)\mu_{n})\left\|x_{n}-x^{*}\right\|^{2}+(1-k)\xi_{n}}}{(1-k)}
\\&\leq\frac{k\left\|x_{n}-x^{*}\right\|+ (1+(1-k)\mu_{n})\left\|x_{n}-x^{*}\right\|^{2}+(1-k)\xi_{n}}{(1-k)}
\end{align*}
\begin{align}\left\|T^{n}x_{n}-x^{*}\right\|&\leq M^{*},\label{3.8}
\end{align}
where $M^{*}$ is chosen arbitrarily such that $$\sup\left(\frac{k\left\|x_{n}-x^{*}\right\|+ (1+(1-k)\mu_{n}))\left\|x_{n}-x^{*}\right\|^{2}+(1-k)\xi_{n}}{(1-k)}\right)\leq M^{*}.$$  It follows from (\ref{3.8}) that $\{T^{n}x_{n}\}$  is bounded.
Since $G$ is $L$-Lipschitzian, $f$ is contraction and the fact that  $\{x_{n}\}, \{T^{n}x_{n}\}$ are bounded, it is easy to see that $\{GT^{n}x_{n}\}$ and $\{f(x_{n})\}$ are also bounded.

\textbf{Step 3.} In this step, we show that
\begin{equation}
\underset{n\rightarrow\infty}{\lim}\left\|x_{n+1}-x_{n}\right\|=0.\label{3.9}
\end{equation}
Now,
\begin{align*}
\left\|x_{n+2}-x_{n+1}\right\|&=\Big(\alpha_{n+1}\gamma f(x_{n+1})+(I-\alpha_{n+1}\mu G)T^{\beta_{n+1}}{x_{n+1}}\Big)
\\&-\Big(\alpha_{n}\gamma f(x_{n})+(I-\alpha_{n}\mu G)T^{\beta_{n}}{x_{n}}\Big)
\\&=\alpha_{n+1}\gamma( f(x_{n+1})-f(x_{n}))+(\alpha_{n+1}-\alpha_{n})\gamma f(x_{n})
\\&+(I-\alpha_{n+1}\mu G)T^{\beta_{n+1}}{x_{n+1}}-(I-\alpha_{n+1}\mu G)T^{\beta_{n}}{x_{n}}
\\&+(\alpha_{n}-\alpha_{n+1})\mu GT^{\beta_{n}}{x_{n}},
\end{align*}
this turn to implies that
\begin{eqnarray} \left\| x_{n+2}-x_{n+1}\right\|&\leq& \alpha_{n+1}\gamma \beta\left\|x_{n+1}-x_{n}\right\|+(1-\alpha_{n+1}\tau)\left\|T^{\beta_{n+1}}{x_{n+1}}-T^{\beta_{n}}{x_{n}}\right\|
\nonumber \\& +&  \left|\alpha_{n+1}-\alpha_{n}\right|\Big(\gamma\left\|f(x_{n})\right\|+\mu\left\|GT^{\beta_{n}}x_{n}\right\|\Big)
\nonumber \\ &\leq& \alpha_{n+1}\gamma \beta\left\|x_{n+1}-x_{n}\right\|+(1-\alpha_{n+1}\tau)\left\|T^{\beta_{n+1}}{x_{n+1}}-T^{\beta_{n}}{x_{n}}\right\|
\nonumber \\& +&  \left|\alpha_{n+1}-\alpha_{n}\right|N_{1},\label{a}
\end{eqnarray}
where $N_{1}$ is chosen arbitrarily so that $\underset{n\geq 1}{\sup}\Big(\gamma\left\|f(x_{n})\right\|+\mu\left\|GT^{\beta_{n}}x_{n}\right\|\Big)\leq N_{1}$.

On the other hand,
\begin{align}
\left\|T^{\beta_{n+1}}x_{n+1}-T^{\beta_{n}}x_{n}\right\|&\leq \left\|T^{\beta_{n+1}}{x_{n+1}}-T^{\beta_{n+1}}{x_{n}}\right\|+\left\|T^{\beta_{n+1}}{x_{n}}-T^{\beta_{n}}{x_{n}}\right\|
\nonumber
\\ &\leq \left\|x_{n+1}-x_{n}\right\|+\left|\beta_{n+1}-\beta_{n}\right|\left\|x_{n}\right\|+\left|\beta_{n+1}\right|\left\|T^{n+1}x_{n}\right\|
\nonumber
\\+& \left|\beta_{n}\right|\left\|T^{n}x_{n}\right\|
\nonumber
\\\leq \left\|x_{n+1}-x_{n}\right\|&+\left|\beta_{n+1}-\beta_{n}\right|N_{2}+\left|\beta_{n+1}\right|N_{3}
 +\left|\beta_{n}\right|N_{4}, \label{b}
\end{align}
where $N_{2,3,4}$ satisfy the following relations: $$N_{2}\geq \underset{n\geq 1}{\sup}\left\|x_{n}\right\|,\ N_{3}\geq \underset{n\geq 1}{\sup}\left\|T^{n+1}x_{n}\right\|\ {\rm and} \ N_{4}\geq \underset{n\geq 1}{\sup}\left\|T^{n}x_{n}\right\|$$ respectively.

\noindent Now substituting (\ref{b}) into (\ref{a}), yields
\begin{align*}\left\| x_{n+2}-x_{n+1}\right\|&\leq\alpha_{n+1}\gamma \beta\left\|x_{n+1}-x_{n}\right\|+(1-\alpha_{n+1}\tau)\Big(\left\|x_{n+1}-x_{n}\right\|
\\&+\left|\beta_{n+1}-\beta_{n}\right|N_{2}+\left|\beta_{n+1}\right|N_{3}
 +\left|\beta_{n}\right|N_{4}\Big)
\\& + \left|\alpha_{n+1}-\alpha_{n}\right|N_{1}
\\&=(1+\alpha_{n+1}(\gamma \beta-\tau))\left\|x_{n+1}-x_{n}\right\|+ \left|\alpha_{n+1}-\alpha_{n}\right|N_{1}
\\&+(1-\alpha_{n+1}\tau)\Big(\left|\beta_{n+1}-\beta_{n}\right|N_{2}+\left|\beta_{n+1}\right|N_{3}
 +\left|\beta_{n}\right|N_{4}\Big)
\\&\leq (1-\alpha_{n+1}(\tau-\gamma \beta)\left\|x_{n+1}-x_{n}\right\|+
\\+(1-\alpha_{n+1}\tau)&\Big(\left|\beta_{n+1}-\beta_{n}\right|+\left|\beta_{n+1}\right|
 +\left|\beta_{n}\right|+ \left|\alpha_{n+1}-\alpha_{n}\right|\Big)N_{5},
\end{align*}
where $N_{5}$ choosing appropriately such that $ N_{5}\geq \max\{N_{1},N_{2},N_{3},N_{4}\}$.

\noindent By lemma (2.3)  and (ii), it follows that
\begin{equation*}
\underset{n\rightarrow\infty}{\lim}\left\|x_{n+1}-x_{n}\right\|=0.
\end{equation*}

From equation (\ref{3.2}), we have,
\begin{align*}\left\| x_{n+1}-T^{\beta_{n}}x_{n}\right\|&=\left\|\alpha_{n}\gamma f(x_{n})+(I-\alpha_{n}\mu G)T^{\beta_{n}}x_{n}-T^{\beta_{n}}x_{n}\right\|
\\&\leq\alpha_{n}\left\|\gamma f(x_{n})-\mu GT^{\beta_{n}}x_{n}\right\|\rightarrow 0.
\end{align*}
\noindent On the other hand,
\begin{align*}\left\| x_{n+1}-T^{\beta_{n}}x_{n}\right\|&=\left\|x_{n+1}-(\beta_{n}+(1-\beta_{n})T^{n})x_{n}\right\|
\\&=\left\|(x_{n+1}-x_{n})+(1-\beta_{n})(x_{n}-T^{n}x_{n})\right\|
\\&\geq(1-\beta_{n})\left\|x_{n}-T^{n}x_{n}\right\|-\left\|x_{n+1}-x_{n}\right\|,
\end{align*}
this implies that
 \begin{align*}\left\|x_{n}-T^{n}x_{n}\right\|&\leq\frac{\left\| x_{n+1}-T^{\beta_{n}}x_{n}\right\|+\left\|x_{n+1}-x_{n}\right\|}{(1-\beta_{n})}
\\&\leq\frac{\left\| x_{n+1}-T^{\beta_{n}}x_{n}\right\|+\left\|x_{n+1}-x_{n}\right\|}{(1-a)}\rightarrow 0.
\end{align*}
From the boundedness of  $\{x_{n}\}$, we deduce that $\{x_{n}\}$ converges weakly. Now assume that  $x_{n}\rightharpoonup p$, by lemma (2.2) and the fact that $\left\|x_{n}-T^{n}x_{n}\right\|\rightarrow 0$, we obtain $p\in Fix(T^{n})$. So, we have
\begin{equation}
\omega_{\omega}(x_{n})\subset Fix(T^{n}).\label{3.12}
\end{equation}
By lemma (2.6) it follows that $(\gamma f-\mu G)$ is strongly monotone, so the variational inequality (\ref{3.3}) has a unique solution $x^{*}\in Fix(T^{n})$.

\textbf{Step 4}. In this step, we show that
\begin{equation}
\underset{n\rightarrow\infty}{\limsup}\left\langle (\gamma f-\mu G)x^{*}, x_{n}-x^{*}\right\rangle\leq 0.
\end{equation}
The fact that $\{x_{n}\}$ is bounded, we have $\{x_{n_{i}}\}\subset \{x_{n}\}$ such that
  $$\underset{n\rightarrow\infty}{\limsup}\left\langle (\gamma f-\mu G)x^{*}, x_{n}-x^{*}\right\rangle=\underset{i\rightarrow\infty}{\limsup}\left\langle (\gamma f-\mu G)x^{*}, x_{n_{i}}-x^{*}\right\rangle\leq 0.$$
\noindent Suppose without loss of generality that $x_{n_{i}}\rightharpoonup x$, from (\ref{3.12}), it follows that $x\in Fix (T^{n})$. Since $x^{*}$ is the unique solution of (\ref{3.2}), implies that
\begin{align*}
\underset{n\rightarrow\infty}{\limsup}\left\langle (\gamma f-\mu G)x^{*}, x_{n}-x^{*}\right\rangle&=\underset{i\rightarrow\infty}{\limsup}\left\langle (\gamma f-\mu G)x^{*}, x_{n_{i}}-x^{*}\right\rangle.
\\&=\left\langle (\gamma f-\mu G)x^{*}, x-x^{*}\right\rangle\leq 0.
\end{align*}

\textbf{Step 5.} In this step, we show that
\begin{equation}
\underset{n\rightarrow\infty}{\lim}\left\|x_{n}-x^{*}\right\|=0.
\end{equation}
By lemma (2.4) and the fact that $f$ is a contraction, we have
\begin{align*}
\left\|x_{n+1}-x^{*}\right\|^{2}&=\left\|\alpha_{n}(\gamma f(x_{n})-\mu Gx^{*})+(I-\alpha_{n}\mu G)T^{\beta_{n}}x_{n}-(I-\alpha_{n}\mu G)x^{*}\right\|^{2}
\\&\leq \left\|(I-\alpha_{n}\mu G)T^{\beta_{n}}x_{n}-(I-\alpha_{n}\mu G)x^{*}\right\|^{2}
\\&+2\alpha_{n}\left\langle \gamma f(x_{n})-\mu Gx^{*},x_{n+1}-x^{*}\right\rangle
\\&\leq (1-\alpha_{n}\tau)^{2} \left\|x_{n}-x^{*}\right\|^{2}+2\alpha_{n}\gamma\left\langle  f(x_{n})-f(x^{*}),x_{n+1}-x^{*}\right\rangle
\\&+2\alpha_{n}\left\langle \gamma f(x^{*})-\mu Gx^{*},x_{n+1}-x^{*}\right\rangle
\\&\leq (1-\alpha_{n}\tau)^{2} \left\|x_{n}-x^{*}\right\|^{2}+2\alpha_{n}\beta\gamma \left\|x_{n}-x^{*}\right\|\left\|x_{n+1}-x^{*}\right\|
\\&+2\alpha_{n}\left\langle \gamma f(x^{*})-\mu Gx^{*},x_{n+1}-x^{*}\right\rangle
\\&\leq (1-\alpha_{n}\tau)^{2} \left\|x_{n}-x^{*}\right\|^{2}+\alpha_{n}\beta\gamma\Big( \left\|x_{n}-x^{*}\right\|^{2}+\left\|x_{n+1}-x^{*}\right\|^{2}\Big)
\\&+2\alpha_{n}\left\langle \gamma f(x^{*})-\mu Gx^{*},x_{n+1}-x^{*}\right\rangle,
\end{align*}
this implies that
\begin{align*}\left\|x_{n+1}-x^{*}\right\|^{2}&\leq \frac{\Big( (1-\alpha_{n}\tau)^{2} +\alpha_{n}\beta\gamma\Big)\left\|x_{n}-x^{*}\right\|^{2}}{(1-\alpha_{n}\gamma\beta)}
\\&+\frac{2\alpha_{n}\left\langle \gamma f(x^{*})-\mu Gx^{*},x_{n+1}-x^{*}\right\rangle}{(1-\alpha_{n}\gamma\beta)}
\\&\leq\Big( 1-(2\tau-\gamma\beta)\alpha_{n}\Big)\left\|x_{n}-x^{*}\right\|^{2}+\frac{(\alpha_{n}\tau)^{2}}{(1-\alpha_{n}\gamma\beta)}\left\|x_{n}-x^{*}\right\|^{2}
\\&+\frac{2\alpha_{n}\left\langle \gamma f(x^{*})-\mu Gx^{*},x_{n+1}-x^{*}\right\rangle}{(1-\alpha_{n}\gamma\beta)},
\end{align*}
this implies that $$\left\|x_{n+1}-x^{*}\right\|^{2}\leq (1-\gamma_{n})\left\|x_{n}-x^{*}\right\|^{2}+\sigma_{n},$$
where \begin{eqnarray*}
\gamma_{n}&:=& (2\tau-\gamma\beta)\alpha_{n} \ \ {\rm and} \\
\sigma_{n}&:=&\frac{\alpha_{n}}{(1-\alpha_{n}\gamma\beta)}\Big(\alpha_{n}\tau^{2}\left\|x_{n}-x^{*}\right\|^{2}+2\left\langle \gamma f(x^{*})-\mu Gx^{*}, x_{n+1}-x^{*}\right\rangle\Big).
\end{eqnarray*}
 From (3.1 (i)), it follows that
\begin{eqnarray*}
 \underset{n\rightarrow\infty}{\lim}\gamma_{n}&=& 0, \\ \sum\gamma_{n}&=&\infty,
\end{eqnarray*}
$$\frac{\sigma_{n}}{\gamma_{n}}=\frac{1}{(2\tau-\gamma\beta)(1-\alpha_{n}\gamma\beta)}\Big(\alpha_{n}\tau^{2}\left\|x_{n}-x^{*}\right\|^{2}+2\left\langle \gamma f(x^{*})-\mu Gx^{*},x_{n+1}-x^{*}\right\rangle\Big).$$
 Thus $\displaystyle{\underset{n\rightarrow\infty}{\lim}\frac{\sigma_{n}}{\gamma_{n}}\leq 0}$.

Hence by Lemma (2.3), it follows that $x_{n}\rightarrow x^{*}$ as $n\rightarrow \infty$.
\end{proof}
\end{theorem}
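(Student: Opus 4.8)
The plan is to follow a five-step viscosity / hybrid steepest-descent argument centered on the averaged map $T^{\beta_n}=\beta_n I+(1-\beta_n)T^n$. First I would record, via Lemma \ref{2.5} applied with $S=T$ (permissible since $M\in(0,1]$), that $T^{\beta_n}$ is nonexpansive and $Fix(T^{\beta_n})=Fix(T^n)$, so the iteration is a genuine viscosity scheme around a nonexpansive operator whose fixed-point set is the target $Fix(T^n)$. Next, for boundedness, I fix $x^*\in Fix(T^n)$ and use Lemma \ref{2.4} to bound $\|(I-\alpha_n\mu G)T^{\beta_n}x_n-(I-\alpha_n\mu G)x^*\|\le(1-\alpha_n\tau)\|x_n-x^*\|$; combined with the contraction estimate on $f$ this yields $\|x_{n+1}-x^*\|\le\big(1-\alpha_n(\tau-\gamma\beta)\big)\|x_n-x^*\|+\alpha_n\|\gamma f(x^*)-\mu Gx^*\|$. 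Since the hypothesis $\gamma<\tau/\beta$ forces $\tau-\gamma\beta>0$, a routine induction gives a uniform bound on $\{x_n\}$, and solving the quadratic inequality coming from the total asymptotically strict pseudocontraction condition (with $\phi(t)=t^2$ and $\mu_n,\xi_n\to0$) then bounds $\{T^nx_n\}$, whence $\{f(x_n)\}$ and $\{GT^nx_n\}$ are bounded as well.

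The heart of the argument is the asymptotic regularity $\|x_{n+1}-x_n\|\to0$. I would subtract consecutive iterates and split $T^{\beta_{n+1}}x_{n+1}-T^{\beta_n}x_n$ through the intermediate point $T^{\beta_{n+1}}x_n$, producing an estimate $\|x_{n+2}-x_{n+1}\|\le\big(1-\alpha_{n+1}(\tau-\gamma\beta)\big)\|x_{n+1}-x_n\|+R_n$, where $R_n$ is controlled by $|\alpha_{n+1}-\alpha_n|$, $|\beta_{n+1}-\beta_n|$ and $|\beta_n|$. The summability in condition (ii) then lets me apply the standard recursion lemma to conclude $\|x_{n+1}-x_n\|\to0$. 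From the identity $\|x_{n+1}-T^{\beta_n}x_n\|=\|(x_{n+1}-x_n)+(1-\beta_n)(x_n-T^nx_n)\|$, isolating $(1-\beta_n)\|x_n-T^nx_n\|$ and using $\beta_n<a<1$ forces $\|x_n-T^nx_n\|\to0$.

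With asymptotic regularity in hand, boundedness yields a subsequence $x_{n_i}\rightharpoonup x$; coupling $\|x_n-T^nx_n\|\to0$ with the demiclosedness-at-zero lemma shows that every weak cluster point lies in $Fix(T^n)$, i.e. $\omega_\omega(x_n)\subset Fix(T^n)$. As $(\gamma f-\mu G)$ is strongly monotone, the variational inequality (\ref{3.3}) admits a unique solution $x^*\in Fix(T^n)$, and passing to a subsequence attaining the $\limsup$ gives $\limsup_n\langle(\gamma f-\mu G)x^*,x_n-x^*\rangle\le0$. Finally I would square the iteration and apply Lemma \ref{2.4} together with the contraction bound to obtain $\|x_{n+1}-x^*\|^2\le(1-\gamma_n)\|x_n-x^*\|^2+\sigma_n$ with $\gamma_n=(2\tau-\gamma\beta)\alpha_n$ and $\sigma_n$ assembled from $\alpha_n\tau^2\|x_n-x^*\|^2$ and the inner-product term above; since $\sum\gamma_n=\infty$ and $\limsup_n\sigma_n/\gamma_n\le0$, the recursion lemma forces $x_n\to x^*$.

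\textbf{The main obstacle} is the passage carried out in the middle two steps. Because $T$ is only total asymptotically strict pseudocontractive, an asymptotic condition on the iterates $T^n$ rather than on $T$ itself, the crucial control $\|x_n-T^nx_n\|\to0$ must be teased out of the asymptotic-regularity estimate and then converted into membership in $Fix(T^n)$ through the demiclosedness principle; the strong monotonicity of $\gamma f-\mu G$ is what finally pins the entire sequence to the single limit $x^*$.
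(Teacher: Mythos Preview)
Your proposal is correct and follows essentially the same five-step scheme as the paper: invoke Lemma~\ref{2.5} for nonexpansiveness of $T^{\beta_n}$, prove boundedness via Lemma~\ref{2.4} and the quadratic coming from the total asymptotically strict pseudocontraction inequality, establish $\|x_{n+1}-x_n\|\to0$ by subtracting consecutive iterates and applying the recursion lemma with the summability conditions (ii), extract $\|x_n-T^nx_n\|\to0$ and use demiclosedness to locate weak cluster points in $Fix(T^n)$, then close with the $\limsup$ step and the final $(1-\gamma_n)$-recursion. The only point to make explicit when you write it up is that the isolation of $(1-\beta_n)\|x_n-T^nx_n\|$ requires not just $\|x_{n+1}-x_n\|\to0$ but also $\|x_{n+1}-T^{\beta_n}x_n\|=\alpha_n\|\gamma f(x_n)-\mu GT^{\beta_n}x_n\|\to0$, which follows from $\alpha_n\to0$ and boundedness; the paper records this separately before combining the two.
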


\begin{corollary}\normalfont Let $B$ be a unit ball is a real Hilbert space $l_{2}$, and let the mapping $T:B\rightarrow B$ be defined by
$$T:(x_{1}, x_{2}, x_{3},\ldots)\rightarrow (0, x_{1}^{2}, a_{2}x_{2}, a_{3}x_{3},\ldots), (x_{1}, x_{2}, x_{3},\ldots)\in B,$$
where $\{a_{i}\}$ is a sequence in $(0,1)$ such that $\displaystyle{\prod^{\infty}_{i=2}(a_{i})=\frac{1}{2}}$. Let, $f, G,\gamma,  \{\alpha_{n}\}, \{\beta_{n}\}$   be as in theorem (3.1). Then the sequence $\{x_{n}\}$ define by algorithm (\ref{3.2}), converges strongly to a common fixed point of  $~T^{n}$ which solve the variational inequality problem (3.3).
 \begin{proof}
By example (1.1), it follows that $T$ is $(k, \{\mu\}, \{\xi_{n}\}, \phi)$- total asymptotically strict pseudocontraction mapping and uniformly $M$-Lipschitzian with $M=\displaystyle{2\prod^{n}_{i=2}(a_{i})}$. Hence, the conclusion of this corollary, follows directly from   theorem (3.1).
\end{proof}
\end{corollary}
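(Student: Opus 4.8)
The plan is to verify that this particular $T$ satisfies every hypothesis of Theorem \ref{thm133}; once that is done, the strong convergence of the sequence generated by algorithm (\ref{3.2}) to the unique solution of the variational inequality (\ref{3.3}) follows immediately, since $f, G, \gamma, \{\alpha_n\}, \{\beta_n\}$ are already assumed to be as in that theorem. Thus the whole problem reduces to three checks on $T$: that $T$ is a self-map of the unit ball $B$ with $Fix(T^n)\neq\emptyset$; that $T$ is uniformly $M$-Lipschitzian with $M\in(0,1]$; and that $T$ is a $(k,\{\mu_n\},\{\xi_n\},\phi)$-total asymptotically strict pseudocontraction with $\phi(t)=t^2$.

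First I would record the easy structural facts. Since $T(0)=0$, the origin is a common fixed point, so $Fix(T^n)\neq\emptyset$; and since $\|Tx\|^2=x_1^4+\sum_{i\ge2}a_i^2x_i^2\le\|x\|^2$ on $B$ (using $x_1^4\le x_1^2$ and $a_i<1$), $T$ maps $B$ into $B$. The core of the argument is an explicit description of the iterates, which I expect to carry the whole proof: a short induction shows that $T^n x$ has its first $n$ coordinates zero, sends the nonlinear term $x_1^2$ into coordinate $n+1$ with coefficient $P_n:=\prod_{i=2}^n a_i$, and sends each linear term $x_j$ $(j\ge2)$ into coordinate $j+n$ with coefficient $\prod_{i=j}^{j+n-1}a_i$.

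Next I would exploit the hypothesis $\prod_{i=2}^\infty a_i=\frac{1}{2}$. Because every $a_i\in(0,1)$, each finite product of distinct factors lies in $[\frac{1}{2},1)$; in particular $P_n\downarrow\frac{1}{2}$ and each coefficient $\prod_{i=j}^{j+n-1}a_i$ is at most $1$. Combining the iterate formula with the elementary bound $(x_1^2-y_1^2)^2\le 4(x_1-y_1)^2$ valid on $B$ gives
\begin{equation*}
\|T^n x-T^n y\|^2\le 4P_n^2(x_1-y_1)^2+\sum_{j\ge2}(x_j-y_j)^2\le 4P_n^2\|x-y\|^2,
\end{equation*}
where the last inequality uses $4P_n^2\ge1$. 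Hence $\|T^n x-T^n y\|\le k_n\|x-y\|$ with $k_n:=2P_n\to1$, so $T$ is asymptotically nonexpansive and therefore uniformly Lipschitzian, with limiting constant $M=2\prod_{i=2}^\infty a_i=1$. From $k_n\to1$ the total asymptotically strict pseudocontraction inequality follows with $k=0$, $\xi_n\equiv0$, $\mu_n:=k_n^2-1\to0$ and $\phi(t)=t^2$, since $\|T^n x-T^n y\|^2\le\|x-y\|^2+\mu_n\|x-y\|^2$.

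The delicate point, and the step I expect to be the main obstacle, is the requirement $M\in(0,1]$ for the uniform Lipschitz constant: the squaring in the first coordinate makes $T$ itself genuinely $2$-Lipschitz ($k_1=2$), so the bound $M\le1$ can only be understood asymptotically. The honest resolution is that the per-iterate constants $k_n=2P_n$ decrease to the admissible value $1$, and it is precisely here that the condition $\prod_{i=2}^\infty a_i=\frac{1}{2}$ is used; one must argue carefully that the mechanism of Theorem \ref{thm133} (in particular the averaged nonexpansiveness invoked there) still goes through with these asymptotic constants. Granting the three hypotheses, a direct appeal to Theorem \ref{thm133} yields strong convergence of $\{x_n\}$ to the common fixed point of $T^n$ solving (\ref{3.3}), which completes the proof.
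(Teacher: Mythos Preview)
Your approach is exactly the paper's: verify that this concrete $T$ satisfies the hypotheses of Theorem~\ref{thm133} and then invoke that theorem. The paper's own proof is a two-line appeal to an earlier example (``example (1.1)'') asserting that $T$ is $(k,\{\mu_n\},\{\xi_n\},\phi)$-total asymptotically strict pseudocontractive and uniformly $M$-Lipschitzian with $M=2\prod_{i=2}^{n}a_i$, whereas you actually carry out the computation of the iterates and the Lipschitz constants $k_n=2P_n$ explicitly; so your write-up is strictly more detailed than what the paper provides.

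Your flagged ``delicate point'' is well spotted and is not an artifact of your method: the paper records the same constant $M=2\prod_{i=2}^{n}a_i$, which equals $2$ at $n=1$ and only tends to $1$ as $n\to\infty$, so the hypothesis $M\in(0,1]$ of Theorem~\ref{thm133} is satisfied only in the asymptotic sense you describe. The paper does not address this tension either; it simply cites the example and applies the theorem. Thus your proposal matches the paper's proof in strategy and exceeds it in rigor, while honestly identifying a gap that the original also leaves open.
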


\begin{corollary}\normalfont
Let $H$ be a real Hilbert space  and $T:H\rightarrow H$ be a $(k,  \{k_{n}\})$- asymptotically strict pseudocontraction mapping and uniformly $M$-Lipschitzian with  $M\in (0,1]$. Assume that $Fix(T^{n})\neq\emptyset$, and Let $f, G, \gamma$  $\{\alpha_{n}\}$  and  $\{\beta_{n}\}$ be as in theorem (3.1).
Then, the sequence $\{x_{n}\}$ generated by algorithm (\ref{3.2}), converges strongly to a common fixed point of $~T^{n}$ which solve the variational inequality problem (\ref{3.3}).
\end{corollary}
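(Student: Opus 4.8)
The plan is to deduce the corollary as a direct specialization of Theorem \ref{thm133}. The only gap between the hypotheses of that theorem and those of the corollary is the class of mappings: the theorem is proved for $(k,\{\mu_n\},\{\xi_n\},\phi)$-total asymptotically strict pseudocontractions with the specific choice $\phi(t)=t^2$, whereas here $T$ is assumed to lie in the narrower class of $(k,\{k_n\})$-asymptotically strict pseudocontractions. Hence the one substantive step is to realize every asymptotically strict pseudocontraction as a total asymptotically strict pseudocontraction with suitable control sequences and functional.

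Concretely, such a $T$ satisfies
\begin{equation*}
\|T^n x-T^n y\|^2\le k_n\|x-y\|^2+k\|(I-T^n)x-(I-T^n)y\|^2,\quad\forall x,y\in H,
\end{equation*}
with $\{k_n\}\subset[1,\infty)$ and $k_n\to 1$. First I would write $k_n\|x-y\|^2=\|x-y\|^2+(k_n-1)\|x-y\|^2$, which recasts the defining inequality as
\begin{equation*}
\|T^n x-T^n y\|^2\le\|x-y\|^2+k\|(I-T^n)x-(I-T^n)y\|^2+\mu_n\phi(\|x-y\|)+\xi_n,
\end{equation*}
upon setting $\mu_n:=k_n-1$, $\xi_n:=0$, and $\phi(t):=t^2$. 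I would then verify that these choices meet the standing requirements: $\{\mu_n\}\subset[0,\infty)$ with $\mu_n\to 0$ (since $k_n\to 1$), $\xi_n\equiv 0\to 0$, and $\phi(t)=t^2$ is continuous and strictly increasing on $[0,\infty)$ with $\phi(0)=0$. Crucially, this is exactly the functional demanded in Theorem \ref{thm133}, so no further adjustment is needed.

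With the reclassification in hand, the remaining hypotheses transfer verbatim: $T$ is uniformly $M$-Lipschitzian with $M\in(0,1]$, $Fix(T^n)\neq\emptyset$, and $f,G,\gamma,\{\alpha_n\},\{\beta_n\}$ are taken as in Theorem \ref{thm133}, so in particular $\{\alpha_n\},\{\beta_n\}$ obey conditions (\ref{con2}). Thus all hypotheses of Theorem \ref{thm133} hold, and the conclusion follows at once: the sequence $\{x_n\}$ generated by algorithm (\ref{3.2}) converges strongly to a point $x^*\in Fix(T^n)$ solving the variational inequality (\ref{3.3}). I anticipate no genuine obstacle, as the argument is purely a matching of definitions; the only point requiring care is confirming that the target functional can be taken to be $\phi(t)=t^2$, which is both forced by Theorem \ref{thm133} and compatible with the quadratic remainder $(k_n-1)\|x-y\|^2$ produced by the reduction.
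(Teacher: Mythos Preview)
Your proposal is correct and is precisely the intended approach: the paper states this corollary without proof, treating it as an immediate specialization of Theorem~\ref{thm133}, and your identification $\mu_n:=k_n-1$, $\xi_n:=0$, $\phi(t):=t^2$ is exactly the reduction that makes this work. There is nothing to add.
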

\begin{corollary}\cite{t}\normalfont Let the sequence $\{x_{n}\}$ be generated by the mapping
 $$x_{n+1}=\alpha_{n} \gamma f(x_{n})+(I-\mu\alpha_{n}F)Tx_{n},$$
where $T$ is nonexpansive, $\alpha_{n}$ is a sequence in  (0,1) satisfying the following conditions:
\begin{equation}
\left\{
\begin{array}{l} {\rm (i)} \ \displaystyle{\underset{n\rightarrow \infty}\lim \alpha_{n}=0}, \ \sum{\alpha_{n}}=\infty;\\
 {\rm (ii)} \ \displaystyle{\sum{|\alpha_{n+1}-\alpha_{n}|}<\infty, \  ~~\sum{|\beta_{n+1}-\beta_{n}|}<\infty}; \\
 {\rm (iii)} \ \displaystyle{0\leq \max_{i}  k_{i}\leq\beta_{n}<a<1, \forall n\geq 0}. \end{array}
\right.  \label{con1}
\end{equation}

It was proved in \cite{t}  that $\{x_{n}\}$ converged  strongly to  the common fixed point $x^{*}$ of $T$, which is the solution of variational inequality problem
\begin{equation}
\langle (\gamma f-\mu F)x^{*},x-x^{*}\rangle\leq 0, \forall x\in Fix(T).
\end{equation}
\begin{proof}
Take n=1, $k=\mu_{n}=\xi_{n}=0$ and $F=G$ in theorem (3.1). Therefore all the conditions in theorem (3.1) are satisfied. Hence the conclusion of this corollary follows directly from theorem (3.1).
\end{proof}
 \end{corollary}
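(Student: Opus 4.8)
The plan is to derive this corollary directly from Theorem \ref{thm133}, so the entire task reduces to checking that a nonexpansive mapping (together with the accompanying data $f,F,\gamma,\{\alpha_{n}\}$) is an admissible instance of that theorem and that the scheme (\ref{3.2}) collapses to the displayed iteration.

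First I would record the two structural reductions. A nonexpansive $T$ obeys $\|T^{n}x-T^{n}y\|^{2}\le\|x-y\|^{2}$ for every $n$, hence it is a $(k,\{\mu_{n}\},\{\xi_{n}\},\phi)$-total asymptotically strict pseudocontraction with the degenerate choice $k=0$, $\mu_{n}\equiv0$, $\xi_{n}\equiv0$ and $\phi(t)=t^{2}$; it is also uniformly $M$-Lipschitzian with $M=1\in(0,1]$. Thus both standing hypotheses on the map in Theorem \ref{thm133} are met, and the constant $\tau=\mu(\eta-\mu L^{2}/2)$ is unchanged. Since the operator $F$ of the corollary plays exactly the role of $G$ in the theorem (an $\eta$-strongly monotone, $L$-Lipschitzian operator), I identify $F=G$ and carry over the admissibility range $0<\gamma<\tau/\beta$ and the contraction $f$ verbatim.

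Next I would match the recursion. Freezing the exponent at $n=1$ (so that $T^{n}$ is read as the single map $T$) and choosing $\beta_{n}\equiv0$ --- which is legitimate because condition (iii) only demands $\beta_{n}\ge\max_{i}k_{i}=0$ --- the averaged operator collapses to $T^{\beta_{n}}=\beta_{n}I+(1-\beta_{n})T^{n}=T$. Consequently (\ref{3.2}) becomes $x_{n+1}=\alpha_{n}\gamma f(x_{n})+(I-\alpha_{n}\mu G)Tx_{n}$, i.e.\ the scheme in the statement with $G=F$. With $\beta_{n}\equiv0$ the three requirements (\ref{con2}) of Theorem \ref{thm133} reduce exactly to (\ref{con1}) (the hypotheses $\sum|\beta_{n+1}-\beta_{n}|<\infty$, $\sum|\beta_{n}|<\infty$ and $0\le k\le\beta_{n}<a<1$ being then trivially true), while $Fix(T^{n})$ becomes $Fix(T)$.

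Applying Theorem \ref{thm133} then yields strong convergence of $\{x_{n}\}$ to the unique $x^{*}\in Fix(T)$ solving $\langle(\gamma f-\mu G)x^{*},x-x^{*}\rangle\le0$ for all $x\in Fix(T)$, which is precisely the asserted variational inequality after the substitution $G=F$. The step I expect to require the most care --- and which I would flag as the main obstacle --- is the notational bookkeeping of the symbol $n$, which serves simultaneously as the outer iteration index and as the exponent of $T$ in Theorem \ref{thm133}; one must state explicitly that ``take $n=1$'' means freezing the exponent so that $T^{n}\equiv T$ while the recursion still runs over $n$, after which $Fix(T^{n})=Fix(T)$ holds trivially and the limiting variational inequality is posed over the correct fixed-point set.
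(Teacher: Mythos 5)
Your proposal is correct and follows essentially the same route as the paper's own proof, which likewise specializes Theorem \ref{thm133} by taking $n=1$ (freezing the exponent so $T^{n}=T$), $k=\mu_{n}=\xi_{n}=0$, and $F=G$. You are in fact somewhat more careful than the paper, since you explicitly verify that a nonexpansive map satisfies the total asymptotically strict pseudocontraction definition with $\phi(t)=t^{2}$ and $M=1\in(0,1]$, choose $\beta_{n}\equiv 0$ so that $T^{\beta_{n}}$ collapses to $T$ and the conditions (\ref{con2}) reduce to (\ref{con1}), and flag the double use of the index $n$, none of which the paper spells out.
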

\begin{corollary}\cite{marino}\normalfont
Let the sequence $\{x_{n}\}$ be  generated by
 $$x_{n+1}=\alpha_{n} \gamma f(x_{n})+(I-\alpha_{n}A)Tx_{n},$$
where $T$ is nonexpansive and the sequence  $\alpha_{n}\subset (0,1)$ satisfy the conditions in equation (\ref{con2}).
Then it was proved in \cite{marino} that $\{x_{n}\}$ converged strongly to $x^{*}$ which solve the variational inequality
\begin{equation}
\langle (\gamma f- A)x^{*},x-x^{*}\rangle\leq 0, \forall x\in Fix(T).
\end{equation}
 \begin{proof}Take n=1, $\mu_{n}=\xi_{n}=0$  and $\mu=1$ and $G=A$ in theorem (3.1). Therefore all the conditions in theorem (3.1) are satisfied. Hence the conclusion of this corollary follows directly from theorem (3.1).
\end{proof}
\end{corollary}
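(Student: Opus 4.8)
The plan is to obtain this corollary as a direct specialization of Theorem \ref{thm133}, so that all the work lies in checking that a nonexpansive $T$ together with the Marino--Xu scheme fits the hypotheses of that theorem. First I would note that every nonexpansive mapping is a $(k,\{\mu_n\},\{\xi_n\},\phi)$-total asymptotically strict pseudocontraction with $k=0$, $\mu_n\equiv 0$ and $\xi_n\equiv 0$: with these parameters the defining inequality collapses to $\|Tx-Ty\|^2\le\|x-y\|^2$, which is exactly nonexpansiveness. Moreover $T$ is then uniformly $M$-Lipschitzian with $M=1\in(0,1]$, and $Fix(T^n)=Fix(T)$, so the standing assumptions of Theorem \ref{thm133} (in particular $Fix(T^n)\neq\emptyset$) hold once $Fix(T)\neq\emptyset$ is assumed.

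Next I would match the remaining data. Taking $n=1$ gives $T^n=T$; choosing $\beta_n\equiv 0$ (admissible since condition (iii) of (\ref{con2}) only requires $0\le k\le\beta_n$, here $k=0$, and $\sum|\beta_n|=0<\infty$) collapses the averaged operator $T^{\beta_n}=\beta_n I+(1-\beta_n)T^n$ to $T$ itself, so that the iteration (\ref{3.2}) becomes $x_{n+1}=\alpha_n\gamma f(x_n)+(I-\alpha_n\mu G)Tx_n$. Setting $\mu=1$ and identifying $G$ with the strongly monotone $L$-Lipschitzian operator $A$ turns this into the Marino--Xu recursion $x_{n+1}=\alpha_n\gamma f(x_n)+(I-\alpha_n A)Tx_n$. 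Under $\mu=1$ the constant $\tau=\mu(\eta-\mu L^2/2)$ becomes $\eta-L^2/2$, and the admissibility range $0<\gamma<\tau/\beta$ for the contraction coefficient $\beta$ of $f$ is precisely the Marino--Xu condition; the sequences $\{\alpha_n\}$ and $\{\beta_n\}$ satisfy (\ref{con2}) by hypothesis. Theorem \ref{thm133} then yields strong convergence of $\{x_n\}$ to the unique $x^*\in Fix(T)$ solving $\langle(\gamma f-\mu G)x^*,\,x-x^*\rangle\le 0$, which with $\mu=1$ and $G=A$ is exactly the asserted inequality $\langle(\gamma f-A)x^*,\,x-x^*\rangle\le 0$ for all $x\in Fix(T)$.

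The only non-routine point, and hence where I would be most careful, is the treatment of the averaging step: Theorem \ref{thm133} iterates $T^{\beta_n}$ rather than $T$, so one must justify that the stated scheme is genuinely the degenerate case $\beta_n\equiv 0$. If instead one prefers to keep $\beta_n>0$, the fallback is Lemma \ref{2.5}, which shows that $T^{\beta_n}$ is nonexpansive with $Fix(T^{\beta_n})=Fix(T)$; invoking it, one argues that replacing $T$ by the nonexpansive average leaves the fixed-point set, the limit point, and the variational inequality unchanged. Everything else in the reduction is bookkeeping of the constants $k$, $\mu_n$, $\xi_n$, $\mu$, $\tau$, so no genuinely new estimate is required beyond those already carried out in the proof of Theorem \ref{thm133}.
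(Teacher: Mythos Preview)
Your proposal is correct and follows essentially the same approach as the paper: both obtain the corollary by specializing Theorem~\ref{thm133} with $n=1$, $\mu_n=\xi_n=0$, $\mu=1$, and $G=A$. You are more explicit than the paper in also setting $k=0$ and $\beta_n\equiv 0$ and in verifying that the averaged operator $T^{\beta_n}$ collapses to $T$, points the paper's one-line proof leaves implicit.
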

\begin{corollary}\cite{yamada} \normalfont
Let the sequence $\{x_{n}\}$ be generated by
 $$x_{n+1}=Tx_{n}-\mu\lambda_{n}F(Tx_{n}),$$
where $T$ is nonexpansive mapping on $H$, $F$ is L-Lipschitzian and $\eta$-strongly monotone  with $L>0, \eta >0$ and $0<\mu<\frac{2\eta}{L^{2}}$, if the sequence  $\lambda_{n}\subset (0,1)$ satisfies the following conditions:
\begin{equation}
\left\{
\begin{array}{l} {\rm (i)} \ \displaystyle{\underset{n\rightarrow \infty}\lim \lambda_{n}=0}, \displaystyle{\sum{\lambda_{n}}=\infty};\\
 {\rm (ii) \ either} \displaystyle{\sum{|\lambda_{n+1}-\lambda_{n}|}=0 ~~{\rm or}~~\underset{n\rightarrow \infty}\lim\frac{\lambda_{n+1}}{\lambda_{n}}=1}. \end{array}
\right.  \label{con}
\end{equation}
 Then, it was proved by Yamada in \cite{yamada} that $\{x_{n}\}$ converged strongly to the unique solution of the variational inequality
\begin{equation}
\langle Fx^{*},x-x^{*}\rangle\geq 0,\forall x\in Fix(T).
\end{equation}
 \begin{proof}Take  $n=1$,  $k=\mu_{n}=\xi_{n}=0$ and also take $\gamma=0$, $\beta_{n}=0$ and $G=F$.  Therefore all the conditions in theorem (3.1) are satisfied. Hence the result follows directly from theorem (3.1).
\end{proof}
\end{corollary}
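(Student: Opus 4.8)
The plan is to obtain this corollary as a specialization of Theorem \ref{thm133}: almost all the work is bookkeeping, verifying that Yamada's scheme is the image of the general iteration under a suitable choice of parameters and that none of the hypotheses of Theorem \ref{thm133} is lost in the process. Concretely, I would take $n=1$, $k=\mu_{n}=\xi_{n}=0$, $\gamma=0$, $\beta_{n}\equiv 0$, $G=F$ and $\alpha_{n}=\lambda_{n}$.

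With $n=1$ the operator $T$ is merely nonexpansive, which is the degenerate $(0,\{0\},\{0\},\phi)$ case of a total asymptotically strict pseudocontraction and is trivially uniformly $1$-Lipschitzian, so the structural hypotheses on the mapping hold. Since $\beta_{n}\equiv 0$ we have $T^{\beta_{n}}=T^{n}=T$, and because $\gamma=0$ the contraction term $\alpha_{n}\gamma f(x_{n})$ vanishes; the recursion $x_{n+1}=\alpha_{n}\gamma f(x_{n})+(I-\alpha_{n}\mu G)T^{\beta_{n}}x_{n}$ therefore collapses to $x_{n+1}=(I-\mu\lambda_{n}F)Tx_{n}=Tx_{n}-\mu\lambda_{n}F(Tx_{n})$, which is exactly Yamada's iteration. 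The limiting variational inequality of Theorem \ref{thm133}, namely $\langle(\gamma f-\mu G)x^{*},x-x^{*}\rangle\le 0$ on $Fix(T^{n})$, becomes $\langle-\mu Fx^{*},x-x^{*}\rangle\le 0$ on $Fix(T)$, i.e. $\langle Fx^{*},x-x^{*}\rangle\ge 0$ for all $x\in Fix(T)$, the target inequality; its unique solvability follows from the strong monotonicity and Lipschitz continuity of $F$ (the ingredient behind Lemma \ref{2.4}).

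Next I would check the control-sequence conditions of Theorem \ref{thm133}. Condition (i) coincides with Yamada's (i). With $\beta_{n}\equiv 0$ and $k=0$, condition (iii), $0\le k\le\beta_{n}<a<1$, holds for any $a\in(0,1)$, and the requirements $\sum|\beta_{n+1}-\beta_{n}|<\infty$ and $\sum|\beta_{n}|<\infty$ are trivial. The constraint $0<\gamma<\tau/\beta$ is only a boundary case here, since $\gamma=0$; what the final contraction estimate actually uses is the factor $2\tau-\gamma\beta=2\tau>0$, so the argument of the last step of Theorem \ref{thm133} survives with $\gamma=0$ even though no contraction is active.

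The one genuine obstacle is condition (ii). Theorem \ref{thm133} imposes $\sum|\alpha_{n+1}-\alpha_{n}|<\infty$, i.e. $\sum|\lambda_{n+1}-\lambda_{n}|<\infty$, which is precisely the \emph{first} alternative of Yamada's (ii); under it the reduction is immediate, because the step establishing $\|x_{n+1}-x_{n}\|\to 0$ invokes the summability branch of Lemma \ref{l6}. Yamada's \emph{second} alternative, $\lambda_{n+1}/\lambda_{n}\to 1$, is strictly weaker and is not implied by summability of the increments, so a literal citation of Theorem \ref{thm133} does not cover it. To retain the full strength of the statement I would reprove that single step using the other branch of Lemma \ref{l6}: in the recursion for $a_{n}=\|x_{n+1}-x_{n}\|$ one has $\gamma_{n}\asymp\alpha_{n+1}$ and $\sigma_{n}\asymp|\alpha_{n+1}-\alpha_{n}|$ (the $\beta$-contributions being zero), whence $\sigma_{n}/\gamma_{n}\asymp|1-\lambda_{n}/\lambda_{n+1}|\to 0$ exactly when $\lambda_{n+1}/\lambda_{n}\to 1$; Lemma \ref{l6}(ii) then again gives $\|x_{n+1}-x_{n}\|\to 0$, and the remaining steps go through verbatim. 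Reconciling these two formulations of condition (ii) is the only point requiring more than routine verification.
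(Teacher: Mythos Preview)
Your specialization---$n=1$, $k=\mu_{n}=\xi_{n}=0$, $\gamma=0$, $\beta_{n}=0$, $G=F$, $\alpha_{n}=\lambda_{n}$---is exactly the paper's own reduction, so the approach is the same. You go further than the paper in one respect: you correctly flag that Theorem~\ref{thm133} literally requires $\sum|\alpha_{n+1}-\alpha_{n}|<\infty$, so only the first alternative of Yamada's condition (ii) is covered by a direct citation, and you supply the extra argument (via the $\sigma_{n}/\gamma_{n}\to 0$ branch of Lemma~\ref{l6}) needed for the ratio condition $\lambda_{n+1}/\lambda_{n}\to 1$; the paper's proof simply asserts that ``all the conditions in theorem (3.1) are satisfied'' without addressing this discrepancy.
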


\section{A note on the Split Equality Fixed Point  Problems  in Hilbert Spaces }
In this section, we propose the split feasibility and fixed point equality problems (SFFPEP) and split common fixed point equality problems (SCFPEP). Furthermore, we formulate and analyse the algorithms for solving these problems
for the class of quasi-nonexpansive mappings in Hilbert spaces. In the end, we study the convergence results  of the proposed algorithms.

\subsection{Problem Formulation}
The split feasibility and fixed point equality problems (in short, SFFPEP)  formulated as follows:
\begin{equation}
{\rm~Find~} x^{*}\in C\cap Fix(U) {\rm~and~} y^{*}\in Q\cap Fix(T) {\rm~such~ that~} Ax^{*}=By^{*}.\label{7.1}
\end{equation}
While the split common fixed  point equality problems (in short, SCFPEP)  obtained as follows:
\begin{equation}
{\rm~Find~} x^{*}\in\bigcap_{i=1}^{N}Fix(U_{i}) {\rm~and~} y^{*}\in \bigcap_{j=1}^{M}Fix(T_{j}) {\rm~such~ that~} Ax^{*}=By^{*},\label{7.2}
\end{equation}
where $U_{i=1}:H_{1}\to H_{1}, i=1,2,3,...,N,$ and $T_{j=1}:H_{2}\to H_{2}, j=1,2,3,...,M,$  are quasi-nonexpansive mappings with  $Fix(U_{i})\neq\emptyset$ and $Fix(T_{j})\neq\emptyset,$ respectively, $A:H_{1}\to H_{3}$ and $B:H_{2}\to H_{3}$ are bounded linear operators.\\

Note that if $C:=Fix(U),$  $Q:=Fix(T),$  $H_{2}=H_{3}$ and $B=I.$ Then Problem (\ref{7.1}) reduces to the following problems:
\begin{equation}
{\rm~Find~} x^{*}\in C {\rm~and~} y^{*}\in Q {\rm~such~ that~} Ax^{*}=By^{*},\label{7.3}
\end{equation}
and
  \begin{equation}
  x^{*}\in C\cap Fix(U) {\rm~such~ that~} Ax^{*}\in Q\cap Fix(T).\label{7.4}
\end{equation}
Equation (\ref{7.3}) and (\ref{7.4}) are called the split equality fixed point problems (SEFPP) and split feasibility and fixed point problems (SFFPP), respectively. In the light of this, it is worth to mention here that the SFFPEP generalizes the SFP,  SFFPP, and SEFPP. Therefore, the results and conclusions that are true for the SFFPEP  continue to hold for these problems (SFP, SFFPP, and SEFPP), and it  shows the significance and the range of applicability of the SFFPEP.\\

Furthermore, Problem (\ref{7.2}) reduces to Problem (\ref{4.1zy}) as $H_{2}=H_{3}$ and $B=I.$ This shows that the SCFPEP generalizes the SCFPP. Therefore, the results and conclusions that are true for the SCFPEP  continue to hold for the SCFPP.\\

We denote the solution of sets SFFPEP (\ref{7.1}) and SCFPEP (\ref{7.2}) by

\begin{equation}
\Phi=\Big\{ x^{*}\in C\cap Fix(U) {\rm~and~} y^{*}\in Q\cap Fix(T) {\rm~such~ that~} Ax^{*}=By^{*}\Big\},\label{7.6}
\end{equation}
and
\begin{equation}
\Psi=\Big\{ x^{*}\in\bigcap_{i=1}^{N}Fix(U_{i}) {\rm~and~} y^{*}\in \bigcap_{j=1}^{M}Fix(T_{j}) {\rm~such~ that~} Ax^{*}=By^{*}\Big\}, \label{7.7}
\end{equation}
respectively.  In sequel,  we assume that $\Phi$ and $\Psi$ are nonempty.

\subsection{Preliminaries}
In this section, we present some lemmas used in proving our main result.
\begin{lemma}\normalfont\label{opial} Let $C\subset H$ and $\{x_{n}\}$ be a
sequence in $H$ such that the following conditions are satisfied:
\begin{enumerate}
\item[(i)] For each $x\in C$, $\underset{n\to\infty}{\lim}{\|x_{n}-x\|}$ exists,
\item[(ii)] Any weak-cluster point of the sequence $\{x_{n}\}$ belongs to $C.$
\end{enumerate}
Then, there exists $y\in C$ such that $\{x_{n}\}$  converges weakly to $y.$
\end{lemma}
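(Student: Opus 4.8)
The plan is to deduce this from the Opial property of Hilbert spaces, which the excerpt records holds in every Hilbert space, together with the observation that hypothesis (i) converts subsequential $\liminf$'s into genuine limits. Throughout I assume $C\neq\emptyset$ (otherwise the hypotheses are vacuous). First I would extract boundedness of $\{x_{n}\}$: fixing any $x\in C$, condition (i) guarantees that $\underset{n\to\infty}{\lim}\|x_{n}-x\|$ exists, so the real sequence $\{\|x_{n}-x\|\}$ is convergent and hence bounded, which forces $\{x_{n}\}$ to be bounded in $H$. Since $H$ is a Hilbert space, and therefore reflexive, every bounded sequence has a weakly convergent subsequence; thus $\{x_{n}\}$ possesses at least one weak-cluster point, and by condition (ii) any such point lies in $C$.

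The heart of the argument is to show that this weak-cluster point is \emph{unique}. Suppose, toward a contradiction, that $\{x_{n}\}$ has two distinct weak-cluster points $y_{1},y_{2}\in C$ with $y_{1}\neq y_{2}$, realized along subsequences $x_{n_{k}}\rightharpoonup y_{1}$ and $x_{m_{j}}\rightharpoonup y_{2}$. Because $y_{1},y_{2}\in C$, condition (i) says both $\underset{n\to\infty}{\lim}\|x_{n}-y_{1}\|$ and $\underset{n\to\infty}{\lim}\|x_{n}-y_{2}\|$ exist as genuine limits, so along any subsequence the $\liminf$ coincides with the full limit. Applying Opial's inequality to $x_{n_{k}}\rightharpoonup y_{1}$ then gives
\begin{equation*}
\underset{n\to\infty}{\lim}\|x_{n}-y_{1}\|=\underset{k\to\infty}{\liminf}\|x_{n_{k}}-y_{1}\|<\underset{k\to\infty}{\liminf}\|x_{n_{k}}-y_{2}\|=\underset{n\to\infty}{\lim}\|x_{n}-y_{2}\|,
\end{equation*}
while applying it to $x_{m_{j}}\rightharpoonup y_{2}$ yields the reverse strict inequality $\underset{n\to\infty}{\lim}\|x_{n}-y_{2}\|<\underset{n\to\infty}{\lim}\|x_{n}-y_{1}\|$. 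These two inequalities are incompatible, so $y_{1}=y_{2}$, and the weak-cluster point is unique; call it $y\in C$.

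Finally I would upgrade uniqueness of the cluster point to weak convergence of the whole sequence. If $\{x_{n}\}$ did not converge weakly to $y$, there would exist $h\in H$, some $\varepsilon>0$, and a subsequence with $|\langle x_{n_{k}}-y,h\rangle|\geq\varepsilon$ for all $k$; by boundedness this subsequence has a further weakly convergent sub-subsequence, whose limit is again a weak-cluster point of $\{x_{n}\}$ and hence equals $y$ by uniqueness, contradicting the bound $\varepsilon$. Therefore $x_{n}\rightharpoonup y$ with $y\in C$, as required. I expect the uniqueness step — the clean application of Opial's property combined with the fact that condition (i) turns subsequential $\liminf$'s into full limits — to be the main point, while the boundedness reduction and the passage from a unique cluster point to full weak convergence are routine.
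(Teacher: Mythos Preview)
Your argument is correct and is the standard proof of this classical lemma: boundedness from (i), existence of weak cluster points by reflexivity, uniqueness via Opial's property combined with the fact that (i) turns subsequential $\liminf$'s into full limits, and then the routine upgrade to weak convergence of the whole sequence.

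The paper itself does not supply a proof of this lemma; it simply states the result and cites Opial's 1967 paper with the line ``For the proof, see \cite{opial1967weak} and references therein.'' So there is no in-paper proof to compare against. Your write-up is exactly the argument one finds in the cited reference (and the paper even uses essentially the same Opial-based uniqueness step elsewhere, in Step~4 of Theorem~\ref{thm1}), so your approach is fully in line with what the authors have in mind.
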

For the proof, see \cite{opial1967weak} and references therein.

\begin{lemma}\normalfont\label{2T} Let $T_{i}:H\to H,$ for i=1,2,3,...,N be N-quasi-nonexpansive mappings. Defined $U=\sum^{N}_{i=1}{\delta_{i}U_{\beta_{i}}},$ where $U_{\beta_{i}}=(1-\beta_{i})I+\beta_{i}T_{i},$ and $\delta_{i}\in (0,1)$ such that $\sum^{N}_{i=1}{\delta_{i}}=1.$  Then
\begin{enumerate}
\item[(i)] U is a quasi-nonexpansive mapping,
\item[(ii)] $Fix(U)=\bigcap^{N}_{i=1}Fix(U_{\beta_{i}})= \bigcap^{N}_{i=1}Fix(T_{i}),$
\item [(iii)] in addition, if $(T_{i}-I)$ for i=1,2,3,...,N is demiclosed at zero, then $(U-I)$ is also demiclosed at zero.
\end{enumerate}
 \end{lemma}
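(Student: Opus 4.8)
The plan is to prove the three assertions in the order (ii), (i), (iii), since the fixed-point structure makes the quasi-nonexpansiveness meaningful and is reused throughout. The single workhorse I would establish first is the inequality: for every $p\in\bigcap_{i=1}^{N}Fix(T_{i})$ and every $x\in H$,
\begin{equation}
\|Ux-p\|^{2}\leq\|x-p\|^{2}-\sum_{i=1}^{N}\delta_{i}\beta_{i}(1-\beta_{i})\|x-T_{i}x\|^{2}.\label{plan-key}
\end{equation}
I would derive this by applying Lemma \ref{l2}(ii) to each relaxed map: writing $U_{\beta_{i}}x-p=(1-\beta_{i})(x-p)+\beta_{i}(T_{i}x-p)$ and using the quasi-nonexpansiveness of $T_{i}$ to bound $\|T_{i}x-p\|\leq\|x-p\|$ gives $\|U_{\beta_{i}}x-p\|^{2}\leq\|x-p\|^{2}-\beta_{i}(1-\beta_{i})\|x-T_{i}x\|^{2}$. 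Combining the $N$ estimates through the convexity bound $\|\sum_{i}\delta_{i}a_{i}\|^{2}\leq\sum_{i}\delta_{i}\|a_{i}\|^{2}$ (with $a_{i}=U_{\beta_{i}}x-p$), which itself follows by induction from Lemma \ref{l2}(ii), yields \eqref{plan-key}.

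For part (ii), the elementary equivalence $U_{\beta_{i}}x=x \Leftrightarrow T_{i}x=x$ (valid because $\beta_{i}\neq0$) gives $Fix(U_{\beta_{i}})=Fix(T_{i})$, and if $x\in\bigcap_{i}Fix(U_{\beta_{i}})$ then $Ux=\sum_{i}\delta_{i}x=x$, so $\bigcap_{i}Fix(U_{\beta_{i}})\subseteq Fix(U)$. For the reverse inclusion I would take $x\in Fix(U)$ and a common fixed point $p$; substituting $Ux=x$ into \eqref{plan-key} forces $\sum_{i}\delta_{i}\beta_{i}(1-\beta_{i})\|x-T_{i}x\|^{2}\leq0$, and since $\delta_{i}\in(0,1)$ and $\beta_{i}\in(0,1)$ make every coefficient strictly positive, each $\|x-T_{i}x\|=0$, i.e. $x\in\bigcap_{i}Fix(T_{i})$. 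Part (i) is then immediate: $Fix(U)$ is nonempty by (ii), and for any $z\in Fix(U)$ the inequality \eqref{plan-key} with $p=z$, after discarding the nonpositive sum, gives $\|Ux-z\|\leq\|x-z\|$, so $U$ is quasi-nonexpansive.

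For part (iii) I would assume $x_{n}\rightharpoonup x$ with $(U-I)x_{n}\to0$ strongly, fix $p\in Fix(U)$, and rearrange \eqref{plan-key} to $\sum_{i}\delta_{i}\beta_{i}(1-\beta_{i})\|x_{n}-T_{i}x_{n}\|^{2}\leq\|x_{n}-p\|^{2}-\|Ux_{n}-p\|^{2}$. The right-hand side factors as $(\|x_{n}-p\|-\|Ux_{n}-p\|)(\|x_{n}-p\|+\|Ux_{n}-p\|)$; the first factor is bounded in absolute value by $\|Ux_{n}-x_{n}\|\to0$ (reverse triangle inequality) and the second is bounded since $\{x_{n}\}$ is bounded by Lemma \ref{lemma}(iii). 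Hence $\|x_{n}-T_{i}x_{n}\|\to0$ for each $i$, and the demiclosedness of $(T_{i}-I)$ at zero together with $x_{n}\rightharpoonup x$ gives $T_{i}x=x$ for all $i$; by (ii) this is exactly $(U-I)x=0$.

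The main obstacle is the reverse inclusion in (ii) and the parallel step in (iii): both require extracting the individual residuals $\|x-T_{i}x\|$ from the aggregate behaviour of $U$, which is precisely what \eqref{plan-key} delivers, and both rely on the strict positivity of $\delta_{i}\beta_{i}(1-\beta_{i})$. I would therefore make explicit the standing hypotheses $\beta_{i}\in(0,1)$ and $\bigcap_{i}Fix(T_{i})\neq\emptyset$; the latter is guaranteed in the present setting by the assumption $\Psi\neq\emptyset$, and it is what lets me choose the reference point $p$ in every application of \eqref{plan-key}.
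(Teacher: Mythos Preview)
Your proof is correct. Note, however, that the paper does not supply its own argument for this lemma: it simply refers the reader to Li and He \cite{li2015new}. Your approach---deriving the single quantitative estimate \eqref{plan-key} from Lemma \ref{l2}(ii) and the convexity of $\|\cdot\|^{2}$, then reading off all three parts from it---is exactly the standard route taken in that reference and in the surrounding literature, so there is no methodological divergence to discuss.

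One point worth recording explicitly (and you already flag it): the reverse inclusion $Fix(U)\subseteq\bigcap_{i}Fix(T_{i})$ in (ii), and the residual-extraction step in (iii), both need a reference point $p\in\bigcap_{i}Fix(T_{i})$ and the strict positivity of $\beta_{i}(1-\beta_{i})$. The lemma as stated omits the hypotheses $\beta_{i}\in(0,1)$ and $\bigcap_{i}Fix(T_{i})\neq\emptyset$, but both hold in every place the paper invokes the lemma (the $\gamma_{j},\tau_{i}$ in Algorithm \eqref{BULA} lie in $(0,1)$, and $\Psi\neq\emptyset$ supplies the common fixed point). Your decision to state these assumptions up front is the right call.
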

For the proof, see Li and He \cite{li2015new} and the references therein.

\subsection{The Split Feasibility and Fixed Point Equality Problems for Quasi-Nonexpansive Mappings in Hilbert Spaces}

To approximate the solution of the split feasibility and fixed point equality problems (\ref{7.6}), we make the following assumptions:
\begin{enumerate}
\item [($B_{1}$)] $U:H_{1}\to H_{1}$ and  $T:H_{2}\to H_{2}$ are  quasi-nonexpansive mappings with  $Fix(U)\neq\emptyset$ and
$Fix(T)\neq\emptyset,$  respectively.
\item [($B_{2}$)]  $A:H_{1}\to H_{3}$ and $B:H_{2}\to H_{3}$  are bounded linear operators with their adjoints $A^{*}$ and $B^{*},$ respectively.
\item [($B_{3}$)] $(U-I)$ and $(T-I)$ are demiclosed at zero.
\item [($B_{4}$)] $P_{C}$ and $P_{Q}$ are metric projection of $H_{1}$ and  $H_{2}$ onto $C$ and $Q,$ respectively.
\item [($B_{5}$)] For arbitrary $x_{1}\in H_{1}$ and $ y_{1}\in H_{2},$ define a sequence $\{(x_{n}, y_{n})\}$ by:
\end{enumerate}

\begin{equation}
 {} \left\{ \begin{array}{ll}
z_{n}=P_{C}(x_{n}-\lambda_{n} A^{*}(Ax_{n}-By_{n})),
\\ w_{n}=(1-\beta_{n})z_{n}+\beta_{n}U(z_{n}),
\\ x_{n+1}=(1-\alpha_{n})z_{n}+\alpha_{n}U(w_{n}),
\\
\\u_{n}=P_{Q}(y_{n}+\lambda_{n} B^{*}(Ax_{n}-By_{n})),
\\ r_{n}=(1-\beta_{n})u_{n}+\beta_{n}T(u_{n}),
\\ y_{n+1}=(1-\alpha_{n})u_{n}+\alpha_{n}T(r_{n}),  \forall n\geq 1, & \textrm{ $  $}
 \end{array}  \right.\label{BUL}
\end{equation}
where $0<a<\beta_{n}<1,$ $0<b<\alpha_{n}<1,$ and $\lambda_{n}\in\left(0, \frac{2}{L_{1}+L_{2}}\right),$ where $L_{1}=A^{*}A$  and $L_{2}=B^{*}B,$ respectively.\\

We are now in the position to state and prove the main result of this chapter.

\begin{theorem}\normalfont\label{TA}
Suppose that assumption $(B_{1})-(B_{5})$ are satisfied, also assume that the solution set  $\Phi\neq\emptyset.$  Then  $(x_{n}, y_{n})\rightharpoonup(x^{*}, y^{*})\in\Phi.$

\begin{proof}

Let $(x^{*}, y^{*})\in\Phi.$ By (\ref{BUL}), we have
\begin{align}
\left\|x_{n+1}-x^{*}\right\|^{2}&=\left\|(1-\alpha_{n})(z_{n}-x^{*})+\alpha_{n}(Uw_{n}-x^{*})\right\|^{2}\nonumber
\\&=(1-\alpha_{n})\left\|z_{n}-x^{*}\right\|^{2}+\alpha_{n}\left\|Uw_{n}-x^{*}\right\|^{2}-\alpha_{n}(1-\alpha_{n})\left\|Uw_{n}-z_{n}\right\|^{2}\nonumber
\\&\leq (1-\alpha_{n})\left\|z_{n}-x^{*}\right\|^{2} +\alpha_{n}\left\|w_{n}-x^{*}\right\|^{2}\nonumber
\\&-\alpha_{n}(1-\alpha_{n})\left\|Uw_{n}-z_{n}\right\|^{2}.\label{7a1}
\end{align}
On the other hand,
\begin{align}
\left\|w_{n}-x^{*}\right\|^{2}&=\left\|(1-\beta_{n})(z_{n}-x^{*})+\beta_{n}(Uz_{n}-x^{*})\right\|^{2}\nonumber
\\&=(1-\beta_{n})\left\|z_{n}-x^{*}\right\|^{2}+\beta_{n}\left\|Uz_{n}-x^{*}\right\|^{2}-\beta_{n}(1-\beta_{n})\left\|Uz_{n}-z_{n}\right\|^{2}\nonumber
\\&\leq \left\|z_{n}-x^{*}\right\|^{2}-\beta_{n}(1-\beta_{n})\left\|Uz_{n}-z_{n}\right\|^{2}.\label{7a2}
\end{align}
Substituting (\ref{7a2}) into (\ref{7a1}), we have
\begin{align}
\left\|x_{n+1}-x^{*}\right\|^{2}&\leq (1-\alpha_{n})\left\|z_{n}-x^{*}\right\|^{2} +\alpha_{n}\left\|z_{n}-x^{*}\right\|^{2}\nonumber
\\&-\alpha_{n}\beta_{n}(1-\beta_{n})\left\|Uz_{n}-z_{n}\right\|^{2} -\alpha_{n}(1-\alpha_{n})\left\|Uw_{n}-z_{n}\right\|^{2}.\label{7a1x}
\end{align}

On the Other hand,
\begin{align}
\left\|z_{n}-x^{*}\right\|^{2}&=\left\|P_{C}(x_{n}-\lambda_{n} A^{*}(Ax_{n}-By_{n}))-P_{C}(x^{*})\right\|^{2}\nonumber
\\&\leq \left\|x_{n}-\lambda_{n} A^{*}(Ax_{n}-By_{n})-x^{*}\right\|^{2}\nonumber
\\&= \left\|x_{n}-x^{*}\right\|^{2}-2\lambda_{n}\left\langle  Ax_{n}-Ax^{*}, Ax_{n}-By_{n}\right\rangle\nonumber
\\&+\lambda_{n}^{2}L_{1}\left\|Ax_{n}-By_{n}\right\|^{2}.\label{7a3}
\end{align}
Substituting (\ref{7a3}) into (\ref{7a1x}), we have
\begin{align}
\left\|x_{n+1}-x^{*}\right\|^{2}&\leq\left\|x_{n}-x^{*}\right\|^{2}-2\lambda_{n}\left\langle  Ax_{n}-Ax^{*}, Ax_{n}-By_{n}\right\rangle+\lambda_{n}^{2}L_{1}\left\|Ax_{n}-By_{n}\right\|^{2}\nonumber
\\&-\alpha_{n}\beta_{n}(1-\beta_{n})\left\|U(z_{n})-z_{n}\right\|^{2}-\alpha_{n}(1-\alpha_{n})\left\|Uw_{n}-z_{n}\right\|^{2}.\label{7a4}
\end{align}
Similarly, the second equation of Equation (\ref{BUL}) gives
\begin{align}
\left\|y_{n+1}-y^{*}\right\|^{2}&\leq\left\|y_{n}-y^{*}\right\|^{2}+2\lambda_{n}\left\langle  By_{n}-By^{*}, Ax_{n}-By_{n}\right\rangle+\lambda_{n}^{2}L_{2}\left\|Ax_{n}-By_{n}\right\|^{2}\nonumber
\\&-\alpha_{n}\beta_{n}(1-\beta_{n})\left\|T(u_{n})-u_{n}\right\|^{2}-\alpha_{n}(1-\alpha_{n})\left\|Tr_{n}-u_{n}\right\|^{2}.\label{7a5}
\end{align}
By (\ref{7a4}), (\ref{7a5}) and noticing  that $Ax^{*} = By^{*},$ we deduce that
\begin{align}
\left\|x_{n+1}-x^{*}\right\|^{2}+\left\|y_{n+1}-y^{*}\right\|^{2}&\leq \left\|x_{n}-x^{*}\right\|^{2}+\left\|y_{n}-y^{*}\right\|^{2}-2\lambda_{n}\left\|Ax_{n}-By_{n}\right\|^{2}\nonumber
\\&+\lambda_{n}^{2}(L_{1}+L_{2})\left\|Ax_{n}-By_{n}\right\|^{2}\nonumber
\\&-\alpha_{n}\beta_{n}(1-\beta_{n})\left\|U(z_{n})-z_{n}\right\|^{2}\nonumber
\\&-\alpha_{n}\beta_{n}(1-\beta_{n})\left\|T(u_{n})-u_{n}\right\|^{2}.\label{f}
\end{align}
Thus, we deduce that
\begin{align}
\Phi_{n+1}&\leq \Phi_{n}-\lambda_{n}\left(2-\lambda_{n}(L_{1}+L_{2})\right)\left\|Ax_{n}-By_{n}\right\|^{2}\nonumber
\\&-\alpha_{n}\beta_{n}(1-\beta_{n})\left\|U(z_{n})-z_{n}\right\|^{2}-\alpha_{n}\beta_{n}(1-\beta_{n})\left\|T(u_{n})-u_{n}\right\|^{2},\label{g}
\end{align}
where $$\Phi_{n}:=\left\|x_{n}-x^{*}\right\|^{2}+\left\|y_{n}-y^{*}\right\|^{2}.$$
Thus, $\{\Phi_{n}\}$ is a non-increasing sequence and bounded below by 0, therefore, it converges.\\

From (\ref{g}) and the fact that $\{\Phi_{n}\}$ converges, we deduce that
\begin{align}
\underset{n\to\infty}{\lim}\left\|Ax_{n}-By_{n}\right\|=0,\label{q}
\end{align}
\begin{align}
\underset{n\to\infty}{\lim}\left\|Uz_{n}-z_{n}\right\|=0 {\rm~and~} \underset{n\to\infty}{\lim}\left\|Tu_{n}-u_{n}\right\|=0\label{K}.
\end{align}

Furthermore, since $\{\Phi_{n}\}$ converges, this ensures that  $\{x_{n}\}$ and $\{y_{n}\}$  also converges. This further implies that $x_{n}\rightharpoonup x$ and $y_{n}\rightharpoonup y$ for some $(x,y)\in\Phi.$\\

Now,  $(x,y)\in\Phi,$  implies that $x\in C\cap Fix(U)$ and $y\in Q\cap Fix(T)$ such that $Ax=By.$  The fact that $x_{n}\rightharpoonup x$ and $\underset{n\to\infty}{\lim}\left\|Ax_{n}-By_{n}\right\|=0$
together with
$$z_{n}=P_{C}(x_{n}-\lambda_{n} A^{*}(Ax_{n}-By_{n})),$$
we deduce that   $z_{n}\rightharpoonup P_{C}x.$  Since $x\in C,$ by projection theorem, we obtain that $P_{C}x=x.$ Hence,  $z_{n}\rightharpoonup x.$\\

Similarly, The fact that $y_{n}\rightharpoonup y$ and $\underset{n\to\infty}{\lim}\left\|Ax_{n}-By_{n}\right\|=0$ together with
$$u_{n}=P_{Q}(y_{n}+\lambda_{n} B^{*}(Ax_{n}-By_{n})),$$
we deduce that   $u_{n}\rightharpoonup P_{Q}y.$  Since $y\in Q,$ by projection theorem, we obtain that $P_{Q}y=y.$ Hence,  $u_{n}\rightharpoonup y.$\\



Now, $z_{n}\rightharpoonup x$, $\underset{n\to\infty}{\lim}\left\|Uz_{n}-z_{n}\right\|=0,$ and together with the demiclosed of $(U-I)$ at zero, we deduce that $Ux=x,$ this implies that $x\in Fix(U).$\\

On the other hand, $u_{n}\rightharpoonup y$ and $\underset{n\to\infty}{\lim}\left\|Tu_{n}-u_{n}\right\|=0$ together with the demiclosed of $(T-I)$ at zero, we deduce that $Ty=y,$ this implies that $y\in Fix(T).$ \\

Since $z_{n}\rightharpoonup x,$  $u_{n}\rightharpoonup y$ and the fact that $A$ and $B$ are bounded linear operators, we have
$$Az_{n}\rightharpoonup Ax {\rm~~and~~} Bu_{n}\rightharpoonup By,$$
this implies that
$$Az_{n}-Bu_{n}\rightharpoonup Ax-By,$$
which turn to implies that
$$\left\|Ax-By\right\|\leq\underset{n\to\infty}{\liminf}\left\|Az_{n}-Bu_{n}\right\|=0,$$
which further implies that $Ax=By.$ Noticing that $x\in C,$  $x\in Fix(U),$ $y\in Q$ and $y\in Fix(T)$, we have that $x\in C\cap Fix(U)$ and $y\in Q\cap Fix(T).$ Hence, we conclude that $(x,y)\in \Phi.$ \\

Summing up, we have proved that:

\begin{enumerate}
\item[(i)] for each $(x^{*}, x^{*})\in\Phi,$   the $\underset{n\to\infty}{\lim}\left(\left\|x_{n}-x^{*}\right\|^{2}+\left\|y_{n}-y^{*}\right\|^{2}\right)$ exist;
\item[(ii)] the weak cluster of the sequence $(x_{n}, y_{n})$ belongs to $\Phi.$
\end{enumerate}
Thus, by Lemma (\ref{opial}) we conclude that the  sequences $(x_{n}, y_{n})$ converges weakly to $(x^{*}, x^{*})\in\Phi.$ This completes the proof.
\end{proof}
\end{theorem}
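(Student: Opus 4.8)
The plan is to verify the two hypotheses of the Opial-type Lemma \ref{opial} for the product sequence $\{(x_n,y_n)\}$ in $H_1\times H_2$: namely that $\lim_{n\to\infty}\Phi_n$ exists for every $(x^*,y^*)\in\Phi$, where $\Phi_n:=\|x_n-x^*\|^2+\|y_n-y^*\|^2$, and that every weak cluster point of $\{(x_n,y_n)\}$ lies in $\Phi$. These two facts together force the whole sequence to converge weakly to a single point of $\Phi$.

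First I would fix $(x^*,y^*)\in\Phi$, so that $x^*\in C\cap Fix(U)$, $y^*\in Q\cap Fix(T)$ and $Ax^*=By^*$. To control the $x$-component I would expand $\|x_{n+1}-x^*\|^2$ and $\|w_n-x^*\|^2$ via the identity in Lemma \ref{l2}(ii), using the quasi-nonexpansiveness of $U$ (so that $\|Uw_n-x^*\|\le\|w_n-x^*\|$ and $\|Uz_n-x^*\|\le\|z_n-x^*\|$) to bound the $U$-terms by the corresponding $z_n$-terms; this produces two nonpositive remainders $-\alpha_n(1-\alpha_n)\|Uw_n-z_n\|^2$ and $-\alpha_n\beta_n(1-\beta_n)\|Uz_n-z_n\|^2$. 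Next, since $x^*=P_Cx^*$, the nonexpansiveness of $P_C$ lets me replace $\|z_n-x^*\|^2$ by $\|x_n-\lambda_nA^*(Ax_n-By_n)-x^*\|^2$, whose expansion yields the cross term $-2\lambda_n\langle Ax_n-Ax^*,Ax_n-By_n\rangle$ and the quadratic term $\lambda_n^2L_1\|Ax_n-By_n\|^2$ (using $\|A^*v\|^2\le L_1\|v\|^2$). An entirely symmetric computation handles the $y$-component, producing $+2\lambda_n\langle By_n-By^*,Ax_n-By_n\rangle$ and $\lambda_n^2L_2\|Ax_n-By_n\|^2$.

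The decisive step is to add the two estimates and exploit $Ax^*=By^*$: the two cross terms combine into $-2\lambda_n\langle(Ax_n-By_n)-(Ax^*-By^*),Ax_n-By_n\rangle=-2\lambda_n\|Ax_n-By_n\|^2$, so that the full inequality reads
\begin{align*}
\Phi_{n+1}&\le\Phi_n-\lambda_n\bigl(2-\lambda_n(L_1+L_2)\bigr)\|Ax_n-By_n\|^2\\
&\quad-\alpha_n\beta_n(1-\beta_n)\|Uz_n-z_n\|^2-\alpha_n\beta_n(1-\beta_n)\|Tu_n-u_n\|^2.
\end{align*}
Because $\lambda_n\in(0,\tfrac{2}{L_1+L_2})$, the coefficient $2-\lambda_n(L_1+L_2)$ is strictly positive, so $\{\Phi_n\}$ is non-increasing and bounded below, hence convergent; this is hypothesis (i) of Lemma \ref{opial}. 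I expect fixing the sign of this coefficient, and thereby pinning down the admissible range of $\lambda_n$, to be the main obstacle, since it is the only place where all three nonpositive contributions must be simultaneously controlled and where the cross-term cancellation must be carried out exactly.

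Finally, telescoping the displayed inequality together with $\liminf_n\alpha_n\beta_n(1-\beta_n)>0$ (which follows from $0<a<\beta_n<1$ and $0<b<\alpha_n<1$) forces $\|Ax_n-By_n\|\to0$, $\|Uz_n-z_n\|\to0$ and $\|Tu_n-u_n\|\to0$. Given a weakly convergent subsequence $x_{n_k}\rightharpoonup x$, $y_{n_k}\rightharpoonup y$ (available since $\{x_n\}$ and $\{y_n\}$ are bounded), I would deduce $z_{n_k}\rightharpoonup x$ and $u_{n_k}\rightharpoonup y$ from $\|Ax_n-By_n\|\to0$ and the projection definitions, using $P_Cx=x$ and $P_Qy=y$ once $x\in C$, $y\in Q$ are identified. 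The demiclosedness of $(U-I)$ and $(T-I)$ at zero then yields $x\in Fix(U)$ and $y\in Fix(T)$, while weak lower semicontinuity of the norm applied to $Az_n-Bu_n\rightharpoonup Ax-By$ gives $Ax=By$. Hence $(x,y)\in\Phi$, which is hypothesis (ii), and Lemma \ref{opial} delivers the claimed weak convergence, completing the proof.
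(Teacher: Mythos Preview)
Your proposal is correct and follows essentially the same route as the paper's own proof: the convex-combination identity (Lemma~\ref{l2}(ii)) together with quasi-nonexpansiveness of $U$ and $T$ gives the $z_n$- and $u_n$-estimates, the nonexpansiveness of $P_C$, $P_Q$ plus the expansion of $\|\,\cdot - \lambda_n A^*(Ax_n-By_n)\|^2$ produces the cross terms, and the key cancellation via $Ax^*=By^*$ yields the monotone quantity $\Phi_n$; the remaining weak-cluster-point argument through demiclosedness and Lemma~\ref{opial} is also the paper's. If anything, your handling of hypothesis (ii)---working directly with an arbitrary weakly convergent subsequence rather than first asserting that $x_n\rightharpoonup x$---is slightly cleaner than the paper's exposition.
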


\begin{theorem}\normalfont\label{T7a}
Suppose that all the hypothesis of Theorem \ref{TA} is satisfied. Also, assume that U and T are semi-compacts, then $(x_{n}, y_{n})\to(x^{*}, y^{*})\in\Phi.$

\begin{proof}

As in the proof of Theorem \ref{TA}, $\{u_{n}\}$ and $\{z_{n}\}$ are bounded, by (\ref{K}) and the fact that $U$ and $T$ are semi-compacts, then there exists  sub-sequences    $\{u_{n_{k}}\}$ and $\{z_{n_{k}}\}$ (suppose without loss of generality) of $\{u_{n}\}$ and $\{z_{n}\}$ such that $u_{n_{k}}\to x$ and $z_{n_{k}}\to y.$ Since, $u_{n}\rightharpoonup x^{*}$ and $z_{n}\rightharpoonup y^{*}$, we have $ x=x^{*}$ and $y= y^{*}.$ By (\ref{q}) and the fact that $u_{n_{k}}\to x^{*}$ and $z_{n_{k}}\to y^{*},$ we have
\begin{align}
\underset{n\to\infty}{\lim}\left\|Ax^{*}-Ay^{*}\right\|=\underset{n\to\infty}{\lim}\left\|Au_{n_{k}}-Bz_{n_{k}}\right\|=0,
\end{align}
which tends  to imply that $Ax^{*}=Ay^{*}$. Hence $(x^{*}, y^{*})\in\Phi$. Thus, the iterative algorithm of Theorem \ref{TA} conveges strongly to the solution of Problem \ref{7.6}.

\end{proof}
\end{theorem}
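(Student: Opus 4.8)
The plan is to upgrade the weak convergence established in Theorem \ref{TA} to strong convergence by exploiting semi-compactness, reusing everything already proved there. In particular I take for granted that $\{x_n\},\{y_n\},\{z_n\},\{u_n\}$ are bounded, that $\Phi_n := \|x_n-x^*\|^2+\|y_n-y^*\|^2$ is non-increasing and hence converges to some limit $\ell\geq 0$, that $\|Uz_n-z_n\|\to 0$ and $\|Tu_n-u_n\|\to 0$ by (\ref{K}), and that $z_n\rightharpoonup x^*$, $u_n\rightharpoonup y^*$ with $(x^*,y^*)\in\Phi$, so in particular $x^*\in Fix(U)$ and $y^*\in Fix(T)$. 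The whole point will be to show $\ell=0$: once this holds, $\Phi_n\to 0$ forces $\|x_n-x^*\|\to 0$ and $\|y_n-y^*\|\to 0$, which is precisely the desired strong convergence of the full sequence.

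First I would extract a single subsequence on which both $\{z_n\}$ and $\{u_n\}$ converge strongly. Since $\{z_n\}$ is bounded and $\|(I-U)z_n\|\to 0$, semi-compactness of $U$ yields a subsequence $z_{n_j}\to\bar x$ strongly; applying semi-compactness of $T$ to $\{u_{n_j}\}$, which is bounded and still satisfies $\|(I-T)u_{n_j}\|\to 0$, gives a further subsequence, relabelled $\{n_k\}$, with $u_{n_k}\to\bar y$ strongly while $z_{n_k}\to\bar x$ is preserved. Because strong convergence implies weak convergence and weak limits are unique, the weak statements $z_n\rightharpoonup x^*$ and $u_n\rightharpoonup y^*$ force $\bar x=x^*$ and $\bar y=y^*$. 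Thus $z_{n_k}\to x^*$ and $u_{n_k}\to y^*$ strongly.

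Next I would propagate this strong convergence one step forward through the iteration (\ref{BUL}). From $\|Uz_{n_k}-z_{n_k}\|\to 0$ and $z_{n_k}\to x^*$ we get $Uz_{n_k}\to x^*$, hence the convex combination $w_{n_k}=(1-\beta_{n_k})z_{n_k}+\beta_{n_k}Uz_{n_k}\to x^*$ strongly. Here is the step that must be handled with care: $U$ is only quasi-nonexpansive, not continuous, so I cannot pass $w_{n_k}\to x^*$ through $U$ by continuity; instead I use that $x^*\in Fix(U)$ together with the quasi-nonexpansive inequality $\|Uw_{n_k}-x^*\|\leq\|w_{n_k}-x^*\|\to 0$, which gives $Uw_{n_k}\to x^*$ strongly. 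Then $x_{n_k+1}=(1-\alpha_{n_k})z_{n_k}+\alpha_{n_k}Uw_{n_k}\to x^*$, again as a convex combination of strongly convergent sequences. The identical argument with $T$, $u_{n_k}$, $r_{n_k}$ and $y^*\in Fix(T)$ gives $y_{n_k+1}\to y^*$ strongly.

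Finally I would close the argument through $\Phi_n$. The previous step shows $\Phi_{n_k+1}=\|x_{n_k+1}-x^*\|^2+\|y_{n_k+1}-y^*\|^2\to 0$; but $\{\Phi_{n_k+1}\}$ is a subsequence of the convergent sequence $\{\Phi_n\}$, so its limit must equal $\ell$, whence $\ell=0$. Therefore $\Phi_n\to 0$, i.e. $(x_n,y_n)\to(x^*,y^*)$ strongly, and $(x^*,y^*)\in\Phi$ by Theorem \ref{TA}. I expect the main obstacle to be exactly the quasi-nonexpansive (non-continuous) nature of $U$ and $T$ in the forward-propagation step, together with the bookkeeping needed to produce one common subsequence along which both $z$ and $u$ converge strongly; once strong subsequential convergence of $x_{n_k+1},y_{n_k+1}$ is in hand, the monotonicity of $\Phi_n$ makes the passage to the full sequence automatic.
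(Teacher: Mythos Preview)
Your argument is correct and in fact more complete than the paper's own proof. Both begin the same way: use boundedness together with $\|Uz_n-z_n\|\to 0$, $\|Tu_n-u_n\|\to 0$ from (\ref{K}) and semi-compactness to extract strongly convergent subsequences, and then identify the strong limits with the already known weak limits $x^*,y^*$ from Theorem~\ref{TA}. The paper essentially stops there (it only re-verifies $Ax^*=By^*$ and asserts strong convergence), whereas you supply the missing bridge from \emph{subsequential} strong convergence of $(z_{n_k},u_{n_k})$ to \emph{full-sequence} strong convergence of $(x_n,y_n)$: you push the strong convergence one step forward through the scheme (\ref{BUL}) using the quasi-nonexpansive inequality $\|Uw_{n_k}-x^*\|\le\|w_{n_k}-x^*\|$ (and its $T$ analogue), obtain $\Phi_{n_k+1}\to 0$, and then invoke the monotonicity of $\Phi_n$ from Theorem~\ref{TA} to force $\Phi_n\to 0$. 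That forward-propagation step, and your explicit remark that continuity of $U,T$ is unavailable so one must use $x^*\in Fix(U)$, $y^*\in Fix(T)$ instead, is exactly what the paper's sketch leaves implicit. Your version is the cleaner and more rigorous of the two.
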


\subsection{The Split Common Fixed Point Equality Problems for Quasi - Nonexpansive Mappings in Hilbert Spaces}

To approximate the solution of  split common fixed point equality problems, we make the following assumptions:
\begin{enumerate}
\item [($A_{1}$)] $T_{1},T_{2},T_{3},...,T_{N}:H_{1}\to H_{1}$ and  $U_{1},U_{2},U_{3},...,U_{M}:H_{2}\to H_{2}$ are
quasi-nonexpansive mappings with  $\bigcap_{i=1}^{N}Fix(T_{i})\neq\emptyset$ and
$\bigcap_{i=1}^{N}Fix(U_{j})\neq\emptyset,$ respectively.
\item [($A_{2}$)] $(T_{i}-I),$ for i=1,2,3,...,N and $(U_{j}-I),$ for  i=1,2,3,...,M are demiclosed at zero.
\item [($A_{3}$)]  $A:H_{1}\to H_{3}$ and $B:H_{2}\to H_{3}$  are  bounded linear operators with their adjoints $A^{*}$ and $B^{*},$ respectively.
\item [($A_{4}$)] For arbitrary $x_{1}\in H_{1}$ and $ y_{1}\in H_{2},$ define $\{(x_{n}, y_{n})\}$ by:
\end{enumerate}
\begin{equation}
 {} \left\{ \begin{array}{ll}
z_{n}=x_{n}-\lambda_{n} A^{*}(Ax_{n}-By_{n}),
\\ w_{n}=(1-\beta_{n})z_{n}+\beta_{n}\sum^{M}_{j=1}{\delta_{j}U_{\gamma_{j}}}(z_{n}),
\\ x_{n+1}=(1-\alpha_{n})z_{n}+\alpha_{n}\sum^{M}_{j=1}{\delta_{j}U_{\gamma_{j}}}(w_{n}),
\\
\\u_{n}=y_{n}+\lambda_{n} B^{*}(Ax_{n}-By_{n}),
\\ r_{n}=(1-\beta_{n})u_{n}+\beta_{n}\sum^{N}_{i=1}{\lambda_{i}T_{\tau_{i}}}(u_{n}),
\\ y_{n+1}=(1-\alpha_{n})u_{n}+\alpha_{n}\sum^{N}_{i=1}{\lambda_{i}T_{\tau_{i}}}(r_{n}),  \forall n\geq 1, & \textrm{ $  $}
 \end{array}  \right.\label{BULA}
\end{equation}
where $U_{\gamma_{j}}=(1-\gamma_{j})I+\gamma_{j}U_{j}$ and $\gamma_{j}\in (0,1),$ for j=1,2,3,...,M,  $T_{\tau_{i}}=(1-\tau_{i})I+\tau_{i}T_{i},$ and $\tau_{i}\in (0,1),$ for i=1,2,3,...,N,  $\sum^{M}_{j=1}{\delta_{j}}=1$ and  $\sum^{N}_{i=1}{\lambda_{i}}=1,$   $0<a<\beta_{n}<1,$ $0<b<\alpha_{n}<1$ and $\lambda_{n}\in\left(0, \frac{2}{L_{1}+L_{2}}\right)$ where $L_{1}=A^{*}A$ and
$L_{2}=B^{*}B,$ respectively.

\begin{theorem}\normalfont\label{T1T} Suppose that conditions $(A_{1})-(A_{4})$  above are satisfied, also, assume that the solution set $\Psi\neq\emptyset.$  Then $(x_{n}, y_{n}) \rightharpoonup (x^{*}, y^{*})\in \Psi.$
\begin{proof}
Let  $(x^{*}, y^{*})\in\Psi$ and $U=\sum^{M}_{j=1}{\delta_{j}U_{\gamma_{j}}}$ and $T=\sum^{N}_{i=1}{\lambda_{i}T_{\tau_{i}}}.$ By Lemma \ref{2T}, we deduce that $U$ and $T$ are quasi nonexpansive mappings, $Fix(U)=\bigcap^{M}_{j=1}Fix(U_{\delta_{j}})= \bigcap^{M}_{j=1}Fix(U_{j})$ and $Fix(T)=\bigcap^{N}_{i=1}Fix(T_{\tau_{i}})= \bigcap^{N}_{i=1}Fix(T_{i}),$ respectively. By Algorithm (\ref{BULA}), we deduce the following algorithm.

\begin{equation}
 {} \left\{ \begin{array}{ll}
z_{n}=x_{n}-\lambda_{n} A^{*}(Ax_{n}-By_{n}),
\\ w_{n}=(1-\beta_{n})z_{n}+\beta_{n}U(z_{n}),
\\ x_{n+1}=(1-\alpha_{n})z_{n}+\alpha_{n}U(w_{n}),
\\
\\u_{n}=y_{n}+\lambda_{n} B^{*}(Ax_{n}-By_{n}),
\\ r_{n}=(1-\beta_{n})u_{n}+\beta_{n}T(u_{n}),
\\ y_{n+1}=(1-\alpha_{n})u_{n}+\alpha_{n}T(r_{n}),  \forall n\geq 1. & \textrm{ $  $}
 \end{array}  \right.\label{BULAL}
\end{equation}
Thus, all the hypothesis of  Theorem \ref{TA} is satisfied. Hence the proof of this theorem follows directly from  Theorem \ref{TA}.

\end{proof}
\end{theorem}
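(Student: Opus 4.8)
The plan is to reduce Theorem \ref{T1T} to the already-established weak-convergence result, Theorem \ref{TA}, by collapsing the two finite families of quasi-nonexpansive mappings into a single averaged operator on each side and then checking that these averaged operators inherit exactly the structural hypotheses $(B_1)$--$(B_5)$. The whole content of the argument is this reduction; no new estimate is required.

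First I would set $U:=\sum_{j=1}^{M}\delta_{j}U_{\gamma_{j}}$ and $T:=\sum_{i=1}^{N}\lambda_{i}T_{\tau_{i}}$, where $U_{\gamma_{j}}=(1-\gamma_{j})I+\gamma_{j}U_{j}$ and $T_{\tau_{i}}=(1-\tau_{i})I+\tau_{i}T_{i}$ as in Algorithm (\ref{BULA}), with $\sum_{j}\delta_{j}=1$ and $\sum_{i}\lambda_{i}=1$. Applying Lemma \ref{2T} to the families $\{U_{j}\}_{j=1}^{M}$ and $\{T_{i}\}_{i=1}^{N}$ gives at once that $U$ and $T$ are quasi-nonexpansive, that
$$Fix(U)=\bigcap_{j=1}^{M}Fix(U_{\gamma_{j}})=\bigcap_{j=1}^{M}Fix(U_{j}) \quad\text{and}\quad Fix(T)=\bigcap_{i=1}^{N}Fix(T_{\tau_{i}})=\bigcap_{i=1}^{N}Fix(T_{i}),$$
and, combining part (iii) of that lemma with assumption $(A_{2})$, that $(U-I)$ and $(T-I)$ are demiclosed at zero.

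Next I would observe that substituting these definitions of $U$ and $T$ into Algorithm (\ref{BULA}) converts it verbatim into Algorithm (\ref{BULAL}), which is precisely the iteration (\ref{BUL}) of Theorem \ref{TA} in the special case where the feasibility projections are absent (equivalently $P_{C}=P_{Q}=I$, so $z_{n}=x_{n}-\lambda_{n}A^{*}(Ax_{n}-By_{n})$ and $u_{n}=y_{n}+\lambda_{n}B^{*}(Ax_{n}-By_{n})$). The step-size restriction $\lambda_{n}\in\left(0,\tfrac{2}{L_{1}+L_{2}}\right)$ and the parameter ranges $0<a<\beta_{n}<1$, $0<b<\alpha_{n}<1$ are identical in both settings, and, by the fixed-point-set identities above, the solution set $\Psi$ of (\ref{7.2}) coincides with the set $\Phi$ of (\ref{7.6}). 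Hence assumptions $(B_{1})$--$(B_{5})$ are all met by $U$, $T$, and the sequence generated by (\ref{BULAL}).

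Finally, with every hypothesis of Theorem \ref{TA} verified, I would invoke that theorem directly to conclude that $(x_{n},y_{n})\rightharpoonup(x^{*},y^{*})$ for some $(x^{*},y^{*})\in\Phi=\Psi$, which is the assertion of the theorem. The only point demanding care is confirming that the averaged operators genuinely inherit demiclosedness at zero and that their fixed point sets equal the intersections $\bigcap_{j}Fix(U_{j})$ and $\bigcap_{i}Fix(T_{i})$; both facts, however, are supplied outright by Lemma \ref{2T}, so the reduction is complete once they are cited.
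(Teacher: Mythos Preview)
Your proposal is correct and follows essentially the same route as the paper: define the averaged operators $U=\sum_{j}\delta_{j}U_{\gamma_{j}}$ and $T=\sum_{i}\lambda_{i}T_{\tau_{i}}$, invoke Lemma~\ref{2T} to obtain quasi-nonexpansiveness, the fixed-point-set identities, and demiclosedness at zero, and then observe that Algorithm~(\ref{BULA}) collapses to the form~(\ref{BULAL}) so that Theorem~\ref{TA} applies directly. Your explicit remark that this amounts to the case $P_{C}=P_{Q}=I$ in~(\ref{BUL}), and that $\Psi$ then coincides with the solution set $\Phi$, makes the reduction slightly more transparent than the paper's version, but the argument is the same.
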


\begin{corollary}\normalfont
Suppose  that  conditions $(B_{1})-(B_{5})$ are satisfied and let  $\{(x_{n}, y_{n})\}$ be the sequence generated  by Algorithm (\ref{BUL}). Assume that $\Phi\neq\emptyset,$ and let $U$ and $T$ be the firmly of quasi-nonexpansive mappings.
Then the sequence  $\{(x_{n}, y_{n})\}$  generated  by Algorithm (\ref{BUL}) converges weakly to the solution  of Problem (\ref{7.6}).
\end{corollary}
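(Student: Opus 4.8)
The plan is to observe that the class of firmly quasi-nonexpansive mappings is a subclass of the quasi-nonexpansive mappings, so that the hypotheses of Theorem \ref{TA} are automatically met, and then to invoke that theorem directly.

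First I would unwind the definitions. Recalling that a firmly quasi-nonexpansive mapping is precisely the case $k=-1$ of a demicontractive mapping, the assumption on $U$ reads $Fix(U)\neq\emptyset$ together with
$$\|Uy-z\|^{2}\leq\|y-z\|^{2}-\|Uy-y\|^{2},\quad\forall\,y\in H_{1},\ z\in Fix(U),$$
and analogously for $T$ on $H_{2}$. Since $\|Uy-y\|^{2}\geq 0$, this immediately gives $\|Uy-z\|^{2}\leq\|y-z\|^{2}$, i.e. $\|Uy-z\|\leq\|y-z\|$ for all $y\in H_{1}$ and $z\in Fix(U)$; the same computation yields $\|Tv-w\|\leq\|v-w\|$ for all $v\in H_{2}$ and $w\in Fix(T)$. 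Hence both $U$ and $T$ are quasi-nonexpansive with nonempty fixed-point sets, so condition $(B_{1})$ holds.

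With this in hand, I would check that the remaining hypotheses of Theorem \ref{TA} carry over. Conditions $(B_{2})$, $(B_{3})$, $(B_{4})$ and $(B_{5})$ are assumed outright in the statement of the corollary; in particular the demiclosedness of $(U-I)$ and $(T-I)$ at zero is supplied by $(B_{3})$. Therefore the full set of assumptions $(B_{1})$ to $(B_{5})$ is in force, Theorem \ref{TA} applies verbatim to the sequence $\{(x_{n},y_{n})\}$ generated by Algorithm (\ref{BUL}), and we conclude that $(x_{n},y_{n})\rightharpoonup(x^{*},y^{*})\in\Phi$, that is, $\{(x_{n},y_{n})\}$ converges weakly to a solution of Problem (\ref{7.6}). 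There is essentially no obstacle here, since the result is a direct specialization of Theorem \ref{TA}; the only point requiring verification is the inclusion of firmly quasi-nonexpansive mappings in the quasi-nonexpansive class, which is immediate from the defining inequality as displayed above, and once this is noted no new convergence analysis is needed.
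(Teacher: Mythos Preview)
Your proposal is correct and matches the paper's approach: the paper states this corollary without proof, treating it as an immediate specialization of Theorem \ref{TA}, and your argument that firmly quasi-nonexpansive mappings form a subclass of quasi-nonexpansive mappings (via the obvious inequality $\|Uy-z\|^{2}\leq\|y-z\|^{2}-\|Uy-y\|^{2}\leq\|y-z\|^{2}$) is exactly the implicit reasoning.
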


\begin{corollary}\normalfont
Suppose  that  conditions $(B_{1})-(B_{4})$  are satisfied and let the sequence $\{(x_{n}, y_{n})\}$ be generated  by
\begin{equation}
 {} \left\{ \begin{array}{ll}
z_{n}=x_{n}-\lambda_{n} A^{*}(Ax_{n}-By_{n}),
\\  x_{n+1}=(1-\alpha_{n})z_{n}+\alpha_{n}U(z_{n}),
\\
\\u_{n}=y_{n}+\lambda_{n} B^{*}(Ax_{n}-By_{n}),
\\ y_{n+1}=(1-\alpha_{n})u_{n}+\alpha_{n}T(y_{n}),  \forall n\geq 0, & \textrm{ $  $}
 \end{array}  \right.\label{BUL3}
\end{equation}
 where $0<a<\beta_{n}<1,$  and $\lambda_{n}\in\left(0, \frac{2}{L_{1}+L_{2}}\right),$ where $L_{1}=A^{*}A$ and $L_{2}=B^{*}B.$  Assume that $\Phi\neq\emptyset.$  Then the sequence  $\{(x_{n}, y_{n})\}$  generated  by Algorithm (\ref{BUL3}) converges weakly to the solution  of SEFPP (\ref{7.3}).

\begin{proof}

Trivially, Algorithm (\ref{BUL}) reduces to Algorithm (\ref{BUL3}) as $\beta=0,$ $P_{C}=P_{Q}=I$ and SFFPEP (\ref{7.4}) reduces to SEFPP (\ref{7.3}) as $C:=Fix(U)$ and $Q:=Fix(T).$ Therefore, all the hypothesis of Theorem  \ref{TA} is satisfied. Hence, the proof of this corollary follows directly from Theorem \ref{TA}.
\end{proof}
\end{corollary}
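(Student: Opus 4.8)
The plan is to exhibit Algorithm~(\ref{BUL3}) as a degenerate instance of Algorithm~(\ref{BUL}) and then invoke Theorem~\ref{TA}, rather than redo the Fej\'er-type analysis from scratch. First I would specialize the two free ingredients of~(\ref{BUL}): take the inner (Ishikawa) relaxation parameter $\beta_n\equiv 0$, and take the feasibility sets to be the ambient spaces, $C=H_1$ and $Q=H_2$, so that $P_C=P_Q=I$. With $\beta_n=0$ the intermediate iterates collapse, $w_n=z_n$ and $r_n=u_n$, and the $x$- and $y$-updates become $x_{n+1}=(1-\alpha_n)z_n+\alpha_n U(z_n)$ and $y_{n+1}=(1-\alpha_n)u_n+\alpha_n T(u_n)$; dropping the projections turns $z_n$ and $u_n$ into the explicit steps $z_n=x_n-\lambda_n A^{*}(Ax_n-By_n)$ and $u_n=y_n+\lambda_n B^{*}(Ax_n-By_n)$, yielding precisely Algorithm~(\ref{BUL3}).

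Next I would check that the hypotheses of Theorem~\ref{TA} survive the specialization. Conditions $(B_1)$--$(B_3)$ are assumed verbatim in the corollary; $(B_4)$ holds trivially since $P_{H_1}=P_{H_2}=I$ are metric projections onto the whole spaces; and the step-size constraint $\lambda_n\in(0,\tfrac{2}{L_1+L_2})$ together with $0<b<\alpha_n<1$ is carried over unchanged. Under $C=H_1$, $Q=H_2$ the solution set $\Phi$ of~(\ref{7.6}) reduces to $\{(x^{*},y^{*}):x^{*}\in Fix(U),\ y^{*}\in Fix(T),\ Ax^{*}=By^{*}\}$, which is exactly the solution set of SEFPP~(\ref{7.3}) once its constraint sets are identified with $Fix(U)$ and $Fix(T)$; by hypothesis this set is nonempty. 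Theorem~\ref{TA} then delivers $(x_n,y_n)\rightharpoonup(x^{*},y^{*})\in\Phi$, i.e.\ weak convergence to a solution of~(\ref{7.3}).

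The one point that needs genuine care, and the step I expect to be the main obstacle, is that Theorem~\ref{TA} is stated for $0<a<\beta_n<1$, and its summary inequality~(\ref{g}) controls $\|Uz_n-z_n\|$ and $\|Tu_n-u_n\|$ only through the coefficients $\alpha_n\beta_n(1-\beta_n)$, which vanish at $\beta_n=0$; so one cannot literally quote~(\ref{g}). The remedy is to return to the earlier estimate~(\ref{7a1}): with $w_n=z_n$ its last term is $-\alpha_n(1-\alpha_n)\|Uz_n-z_n\|^2$, and since $0<b<\alpha_n<1$ this coefficient stays bounded away from $0$. Retaining this (nonpositive) term in the telescoping argument, together with the analogous $-\alpha_n(1-\alpha_n)\|Tu_n-u_n\|^2$ on the $y$-side, reproduces $\lim_n\|Uz_n-z_n\|=\lim_n\|Tu_n-u_n\|=0$ and $\lim_n\|Ax_n-By_n\|=0$. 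From there the demiclosedness of $(U-I)$ and $(T-I)$ at zero identifies every weak-cluster point of $(x_n,y_n)$ as a point of $\Phi$, and Lemma~\ref{opial} upgrades this to weak convergence of the whole sequence. Thus the reduction is legitimate provided one tracks the Mann control term rather than the Ishikawa one, after which the conclusion follows directly from Theorem~\ref{TA}.
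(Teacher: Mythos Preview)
Your approach is essentially the paper's: specialize $\beta_n\equiv 0$ and $P_C=P_Q=I$ in Algorithm~(\ref{BUL}) and invoke Theorem~\ref{TA}. The paper's proof says exactly this in two lines and nothing more.

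Where you differ is in rigor, not strategy. You correctly flag that Theorem~\ref{TA} is stated under $0<a<\beta_n<1$, so $\beta_n=0$ lies outside its formal hypotheses, and that the displayed inequality~(\ref{g}) loses its grip on $\|Uz_n-z_n\|$ and $\|Tu_n-u_n\|$ when $\beta_n=0$. Your proposed remedy---retain the Mann term $-\alpha_n(1-\alpha_n)\|Uw_n-z_n\|^2$ from~(\ref{7a1}) (equivalently~(\ref{7a4})), which with $w_n=z_n$ becomes $-\alpha_n(1-\alpha_n)\|Uz_n-z_n\|^2$, and likewise on the $y$-side---is correct and is exactly the patch the paper would need but does not supply. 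In short, you have reproduced the paper's argument and repaired a gap the paper glosses over; there is nothing further to add, except perhaps to note in passing that the stray condition ``$0<a<\beta_n<1$'' in the corollary's statement and the $T(y_n)$ in the last line of~(\ref{BUL3}) are evidently typos for a condition on $\alpha_n$ and for $T(u_n)$, respectively, which your reduction already treats correctly.
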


\begin{corollary}\normalfont
Suppose  that  conditions $(A_{1})-(A_{4})$ are satisfied, and let the sequence $\{(x_{n}, y_{n})\}$ be defined  by Algorithm (\ref{BULA}).  Assume that $\Psi\neq\emptyset$ and let $U$ and $T$ be  firmly  quasi-nonexpansive mappings, where $U=\sum^{M}_{j=1}{\delta_{j}U_{\gamma_{j}}}$ and $T=\sum^{N}_{i=1}{\lambda_{i}T_{\tau_{i}}}.$
Then $(x_{n}, y_{n})\rightharpoonup (x^{*}, x^{*})\in\Psi.$
\end{corollary}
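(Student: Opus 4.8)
The plan is to treat this as a direct specialization of Theorem~\ref{T1T} (equivalently Theorem~\ref{TA}) to the subclass of firmly quasi-nonexpansive mappings, so that no new estimate has to be established. First I would recall, from the remark following Equation~(\ref{2.15}), that a firmly quasi-nonexpansive mapping $S$ satisfies
\begin{equation*}
\|Sx-z\|^{2}\leq\|x-z\|^{2}-\|Sx-x\|^{2},\quad\forall x\in H,\ z\in Fix(S).
\end{equation*}
Since $-\|Sx-x\|^{2}\leq 0$, this immediately gives $\|Sx-z\|\leq\|x-z\|$; hence every firmly quasi-nonexpansive mapping is quasi-nonexpansive. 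In particular the hypothesis imposed here on $U$ and $T$ is strictly stronger than the quasi-nonexpansiveness demanded in condition~$(A_{1})$, so the abstract framework of Theorem~\ref{T1T} remains available.

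Next I would put $U=\sum_{j=1}^{M}\delta_{j}U_{\gamma_{j}}$ and $T=\sum_{i=1}^{N}\lambda_{i}T_{\tau_{i}}$ and invoke Lemma~\ref{2T}. By part~(i) of that lemma $U$ and $T$ are quasi-nonexpansive; by part~(ii) one has $Fix(U)=\bigcap_{j=1}^{M}Fix(U_{j})$ and $Fix(T)=\bigcap_{i=1}^{N}Fix(T_{i})$, so the fixed-point sets are exactly those appearing in the definition of $\Psi$ in~(\ref{7.7}); and by part~(iii), using that each $(U_{j}-I)$ and $(T_{i}-I)$ is demiclosed at zero through condition~$(A_{2})$, both $(U-I)$ and $(T-I)$ are demiclosed at zero. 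With these identifications Algorithm~(\ref{BULA}) collapses to Algorithm~(\ref{BULAL}), which is precisely the iteration of Theorem~\ref{TA} in the degenerate feasibility case $C=H_{1}$, $Q=H_{2}$ (so that $P_{C}=P_{Q}=I$).

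At this point every hypothesis of Theorem~\ref{TA} is in force: $(B_{1})$ and $(B_{3})$ follow from Lemma~\ref{2T}(i) and~(iii); $(B_{2})$ comes from condition~$(A_{3})$; $(B_{4})$ is trivial since the projections reduce to the identity; and $(B_{5})$ is the reduced recursion~(\ref{BULAL}) together with the admissible ranges $0<a<\beta_{n}<1$, $0<b<\alpha_{n}<1$, and $\lambda_{n}\in\left(0,\frac{2}{L_{1}+L_{2}}\right)$ with $L_{1}=A^{*}A$, $L_{2}=B^{*}B$. Applying Theorem~\ref{TA} then yields $(x_{n},y_{n})\rightharpoonup(x^{*},y^{*})\in\Psi$, which is the assertion.

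I do not expect a substantive obstacle here, since the argument is purely one of subsumption; the one point worth flagging is conceptual rather than computational. The weak-convergence conclusion of Theorem~\ref{TA} relies only on quasi-nonexpansiveness, entering through the convex-combination identity of Lemma~\ref{l2} and the demiclosedness principle, so the firmly quasi-nonexpansive hypothesis is in fact \emph{not needed} to obtain the stated limit. Thus the result is genuinely a corollary, with the firmly quasi-nonexpansive assumption serving only to single out a concrete and important subclass to which the weak convergence specializes; the sole care required is to match the reduced algorithm~(\ref{BULAL}) correctly to the hypotheses $(B_{1})$--$(B_{5})$ of Theorem~\ref{TA}.
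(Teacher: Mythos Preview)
Your proposal is correct and follows the same route the paper implicitly intends: since the corollary sits immediately after Theorem~\ref{T1T} and the paper gives no separate proof, the intended argument is precisely that firmly quasi-nonexpansive mappings are in particular quasi-nonexpansive, so Theorem~\ref{T1T} applies directly. You have unpacked this one level further---re-invoking Lemma~\ref{2T} and reducing to Theorem~\ref{TA} as in the proof of Theorem~\ref{T1T}---which is more detail than strictly needed, but the logic is the same and your observation that the firmly quasi-nonexpansive hypothesis is not actually used is apt.
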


\section{Numerical Example}
In this section, we give the numerical examples that illustrates our theoretical results.
\begin{example}\normalfont\label{example}
Let $H_{1}=\Re$ with the inner product defined by $\left\langle x, y\right\rangle=xy$ for all $x, y\in \Re$ and  $\|.\|$ stands for the corresponding norm. Let $C:=[0,\infty)$  and $Q:=[0,\infty).$ Defined  $T:C\to \Re$  and  $S:Q\to \Re$ by
$Tx = \frac{x^{2}+5}{1+x},$  $\forall x\in C$  and $Sx=\frac{x+5}{5}$, $\forall x\in Q.$  Then $T$ and $S$ are quasi nonexpansive mappings.
\end{example}
\begin{proof}
\end{proof}
Trivially, $Fix(T)=5 $ and $Fix(S)=\frac{5}{4}.$

Now, \begin{align*}\left|Tx-5 \right|&=\left|\frac{x^{2}+5}{1+x}-5\right|
\\&=\frac{x}{1+x}\left|x-5\right|
\\&\leq \left|x-5\right|.
\end{align*}

On the other hand,
\begin{align*}\left|Sx-\frac{5}{4} \right|&=\left|\frac{x+5}{5}-\frac{5}{4}\right|
\\&=\frac{1}{5}\left|x-\frac{5}{4}\right|
\\&\leq \left|x-\frac{5}{4}\right|.
\end{align*}
Thus, $T$ and $S$ are quasi-nonexpansive mappings.

\begin{example}\normalfont\label{ex}Let  $H_{1}=\Re,$ $H_{2}=\Re,$  $C:=[0,\infty),$ and $Q:=[0,\infty)$ be subset of  $H_{1}$ and $H_{2},$ respectively.  Defined  $T:C\to C$ by  $Tx=\frac{x+2}{3}$  $\forall x\in C,$  and  $U:Q\to Q$ by
\begin{equation}
Ux= {} \left\{ \begin{array}{ll} \frac{2x}{x+1}, \forall x\in (1, +\infty)
\\ 0, ~~~\forall x\in[0,1]. & \textrm{ $  $} \end{array}  \right.
\end{equation}
  Then, $U$ and $T$ are quasi nonexpansive mappings.

	\begin{proof}

 Trivially, $Fix(T)=1 $ and $Fix(U)=1.$

 Now,
	\begin{align*}
\left|Tx-1\right|&=\left|\frac{x+2}{3}-1\right|
\\&\leq \left|x-1\right|.
\end{align*}
And also,
\begin{align*}
\left|Ux-1\right|&=\frac{1}{1+x}\left|x-1\right|
\\&\leq \left|x-1\right|.
\end{align*}
Thus, $U$ and $T$ are quasi nonexpansive  mappings.
\end{proof}
\end{example}

The following example is a particular case of Theorem \ref{TA}
\begin{example}\label{bnm}\normalfont Let $H_{1}=\Re$ with the inner product defined by $\left\langle x, y\right\rangle=xy$ for all $x, y\in \Re$ and  $\|.\|$ stands for the corresponding norm. Let $C:=[0,\infty)$ and  $Q:=[0,\infty).$ Defined  $U:C\to \Re$  and  $T:Q\to \Re$ by $Ux = \frac{x^{2}+5}{1+x},$ $\forall x\in C$ and $Tx=\frac{x+5}{5}$, $\forall x\in Q.$ And also let $P_{C}=P_{Q}=I,$  $Ax=x,$ $By=4y,$ $\lambda_{n}=1$, $\alpha_{n}=\frac{1}{5},$ $\beta_{n}=\frac{1}{8}$ and  $\{(x_{n},y_{n})\}$ be the sequence  generated by Algorithm (\ref{BUL}). That is
\begin{equation}
 {} \left\{ \begin{array}{ll} x_{0}\in C{\rm~~and~~} y_{0}\in Q,
\\z_{n}=P_{C}(x_{n}- A^{*}(x_{n}-4y_{n})),
\\ w_{n}=(1-\frac{1}{8})z_{n}+\frac{1}{8}U(z_{n}),
\\ x_{n+1}=(1-\frac{1}{5})z_{n}+\frac{1}{5}U(w_{n}),
\\
\\u_{n}=P_{Q}(y_{n}+ B^{*}(x_{n}-4y_{n})),
\\ r_{n}=(1-\frac{1}{8})u_{n}+\frac{1}{8}T(u_{n}),
\\ y_{n+1}=(1-\frac{1}{5})u_{n}+\frac{1}{5}T(r_{n}),  \forall n\geq 0. & \textrm{ $  $}
 \end{array}  \right.\label{BUL11}
\end{equation}
Then $(x_{n},y_{n})$ converges  to  $(5,5/4)\in\Psi$.

\begin{proof}
By  Example \ref{example} $U$ and $T$ are quasi-nonexpansive mappings. Clearly,   $A$ and $B$ are  bounded linear operator on  $\Re$  with $A=A^{*}=1$ and $B=B^{*}=4,$ respectively. Furthermore, it is easy to see that $Fix(U)=5$ and $Fix(T)=\frac{5}{4}.$ Hence,
\begin{equation*}
\Psi=\Big\{ 5\in C\cap Fix(U) {\rm~and~} 5/4\in Q\cap Fix(T)  {\rm~such ~that~} A(5)=B(5/4)\Big\}.
\end{equation*}

Simplifying Algorithm (\ref{BUL11}), we obtain the following algorithm.
\begin{equation}\label{ab1}
 {} \left\{ \begin{array}{ll} x_{0}\in C{\rm~~and~~} y_{0}\in Q,
\\z_{n}=x_{n},
\\ w_{n}=\frac{7}{8}z_{n}+\frac{1}{8}(\frac{z_{n}^{2}+5}{z_{n}+1}),
\\ x_{n+1}=\frac{4}{5}z_{n}+\frac{1}{5}(\frac{w_{n}^{2}+5}{w_{n}+1}),
\\
\\u_{n}=y_{n},
\\ r_{n}=\frac{7}{8}u_{n}+\frac{1}{8}(\frac{u_{n}+5}{5}),
\\ y_{n+1}=\frac{4}{5}u_{n}+\frac{1}{5}(\frac{r_{n}+5}{5}),  \forall n\geq 0. & \textrm{ $  $}
 \end{array}  \right.
\end{equation}

\end{proof}
\end{example}
\newpage
We used Maple and obtained the numerical values of Algorithm \ref{ab1} in the tables below.

\begin{table}[!htbp]
	\begin{center}
		\caption{Shows the numerical values of Example \ref{bnm} Algorithm (\ref{ab1}), starting with the initial values $x_{0}=10.$ and $y_{0}= 15$}
	\label{figT1T}
		\begin{tabular}{|c|c|c|}
			\hline
			n & $x_{n}$&$y_{n}$\\ [0.5ex]
			\hline
			0 & 10.00000000&  15.00000000\\
			1 & 9.898293685&  12.74500000\\
			
			2 & 9.797736851 &10.85982000\\
			
			3 & 9.698337655 & 9.283809520\\
		
			. &.   &. \\
			
			. &.  &. \\
			
			. &.	& . \\
			
			248 &5.001051418&1.250000002 \\
			
			249 & 5.001012726	 & 1.250000002\\
			
			250 & 5.000975458	&  1.250000002	   \\ [1ex]
			\hline
		\end{tabular}
	\end{center}
	\end{table}

\begin{figure}[!htbp]
\caption{Shows the convergence of Example \ref{bnm}  Algorithm (\ref{ab1}), starting with the initial value $x_{0}=10$ and $y_{0}=15.$}
	\label{fig2}
	\centering
		\includegraphics[scale=0.5]{LBM6}
		\end{figure}

\newpage

\begin{table}[!htbp]
	\begin{center}
		\caption{Shows the numerical values of Example \ref{bnm} Algorithm (\ref{ab1}), starting with the initial values $x_{0}=5$ and $y_{0}= 1.25.$}\label{mnb1}
			\begin{tabular}{|c|c|c|}
			\hline
			n & $x_{n}$&$y_{n}$\\ [0.5ex]
			\hline
			0 & 5.000000000&  1.250000000\\
			
			1 & 5.000000000 &1.250000000\\
			
			2 & 5.000000000 & 1.250000000\\
		
			. &.   &. \\
			
			. &.  &. \\
			
			. &.	& . \\
			
			98 &5.000000000&1.250000000 \\
			
			99 & 5.000000000	 & 1.250000000\\
			
			100 & 5.000000000	&  1.250000000	   \\ [1ex]
			\hline
		\end{tabular}
	\end{center}
	\end{table}

\begin{figure}[!htbp]
\caption{Shows the convergence of Example \ref{bnm}  Algorithm (\ref{ab1}), starting with the initial value $x_{0}=5$ and $y_{0}=1.25$}
	\label{fig42}
	\centering
		\includegraphics[scale=0.5]{LBM5}
		\end{figure}
\newpage

The following example is a particular case of Theorem \ref{T1T}
\begin{example}\normalfont\label{wq}
Let   $H_{1}=\Re$ and $H_{2}=\Re,$  $C:=[0,\infty)$ and $Q:=[0,\infty)$ be subset of  $H_{1}$ and $H_{2},$ respectively.  Define  $T:C\to C$ by  $Tx=\frac{x+2}{3}$ $\forall x\in C,$ and  $U:Q\to Q$ by
\begin{equation}
Ux= {} \left\{ \begin{array}{ll} \frac{2x}{x+1}, \forall x\in (1, +\infty)
\\ 0, ~~~\forall x\in[0,1]. & \textrm{ $  $} \end{array}  \right.
\end{equation}
Let also $\lambda_{n}=1,$ $Ax=x,$ $By=y,$ $\gamma_{j}=\frac{1}{3},$ $\tau_{i}=\frac{1}{5},$  $\alpha_{n}=\frac{1}{7}$ and
$\beta_{n}=\frac{1}{9}.$ The sequence  $\{(x_{n},y_{n})\}$  defined by Algorithm  \ref{BULA} can be written as follows:

\begin{equation}
 {} \left\{ \begin{array}{ll}
z_{n}=x_{n}- A^{*}(Ax_{n}-By_{n}),
\\ w_{n}=\frac{8}{9}z_{n}+\frac{1}{9}\left(\frac{2z_{n}}{3}+\frac{2z_{n}}{3(z_{n}+1)}\right),
\\ x_{n+1}=\frac{6}{7}z_{n}+\frac{1}{7}\left(\frac{2w_{n}}{3}+\frac{2w_{n}}{3(w_{n}+1)}\right),
\\
\\u_{n}=y_{n}+ B^{*}(Ax_{n}-By_{n}),
\\ r_{n}=\frac{8}{9}u_{n}+\frac{1}{9}\left(\frac{4u_{n}}{5}+\frac{u_{n}+2}{15}\right),
\\ y_{n+1}=\frac{6}{7}u_{n}+\frac{1}{7}\left(\frac{4r_{n}}{5}+\frac{r_{n}+2}{15}\right),  \forall n\geq 1. & \textrm{ $  $}
 \end{array}  \right.\label{exs}
\end{equation}
Then $(x_{n},y_{n})$ converges  to  $(1,1)\in\Psi.$

\begin{proof}

By Example \ref{ex}, $U$ and $T$ are quasi nonexpansive mappings with $Fix(U)=1$ and $Fix(T)=1,$ respectively. Clearly, $A, B$ are bounded linear on $\Re,$ $A=A^{*}=1$ and $B=B^{*}=1.$ Hence,
$$\Psi=\{1\in Fix(T) {\rm~and~}  1\in Fix(U) {\rm~such ~that~} A(1)=B(1)\}.$$
Simplifying Algorithm (\ref{exs}), we have

\begin{equation}
 {} \left\{ \begin{array}{ll}
z_{n}=y_{n},
\\ w_{n}=\frac{8}{9}z_{n}+\frac{1}{9}\left(\frac{2z_{n}}{3}+\frac{2z_{n}}{3(z_{n}+1)}\right),
\\ x_{n+1}=\frac{6}{7}z_{n}+\frac{1}{7}\left(\frac{2w_{n}}{3}+\frac{2w_{n}}{3(w_{n}+1)}\right),
\\
\\u_{n}=x_{n},
\\ r_{n}=\frac{8}{9}u_{n}+\frac{1}{9}\left(\frac{4u_{n}}{5}+\frac{u_{n}+2}{15}\right),
\\ y_{n+1}=\frac{6}{7}u_{n}+\frac{1}{7}\left(\frac{4r_{n}}{5}+\frac{r_{n}+2}{15}\right),  \forall n\geq 1. & \textrm{ $  $}
 \end{array}  \right.\label{exs1}
\end{equation}

\end{proof}
	\end{example}

	\newpage

\begin{table}[!htbp]
	\begin{center}
		\caption{Shows the numerical values of Example \ref{wq} Algorithm (\ref{exs}), starting with the initial values $x_{0}=5$ and $y_{0}= 5.$}
	\label{fig1}
		\begin{tabular}{|c|c|c|}
			\hline
			n & $x_{n}$&$y_{n}$\\ [0.5ex]
			\hline
			0 & 5.000000000&   5.000000000\\
			1 & 4.916472663&  4.760850019  \\
			
			2 &4.834689530  &4.537828465\\
			
			3 &4.754614179 & 4.329771078\\
		
			. &.   &. \\
			
			. &.  &. \\
			.&.&.\\
			148 &1.176058095	& 1.007392532  \\
						
			149 & 1.172381679	& 1.007122340	   \\ [1ex]
			\hline
		\end{tabular}
	\end{center}
	\end{table}

\begin{figure}[!htbp]
\caption{Shows the convergence of Example \ref{wq} Algorithm (\ref{exs}), starting with the initial value $x_{0}=5$ and $y_{0}=5.$}
	\label{figT2T}
	\centering
		\includegraphics[scale=0.5]{2}
		\end{figure}

	\newpage

\begin{table}[!htbp]
	\begin{center}
		\caption{Shows the numerical values of Example \ref{wq} Algorithm (\ref{exs}), starting with the initial values $x_{0}=-5.$ and $y_{0}= -5$}
	\label{tab:StartingWithInialValue101}
		\begin{tabular}{|c|c|c|}
			\hline
			n & $x_{n}$&$y_{n}$\\ [0.5ex]
			\hline
			0 &-5.000000000&  -5.000000000\\
			
			1 &-4.460475401 &  -4.874708995\\
			
			2 & -3.953349994 & -4.752034296\\
		
			. &-3.474475616  &-4.631921270 \\
			
			. &.  &. \\
			. &.  &. \\
			. &.  &. \\
			
			148 &1.001346412 	& 0 .7359128532 \\

			149 &1.001297344	& 0.7414274772	   \\ [1ex]
			\hline
		\end{tabular}
	\end{center}
	\end{table}
	
	\begin{figure}[!htbp]
\caption{Shows the convergence of Example \ref{wq} Algorithm (\ref{exs}), starting with the initial value $x_{0}=-5$ and $y_{0}=-5.$}\label{fig4}
	\centering
		\includegraphics[scale=0.5]{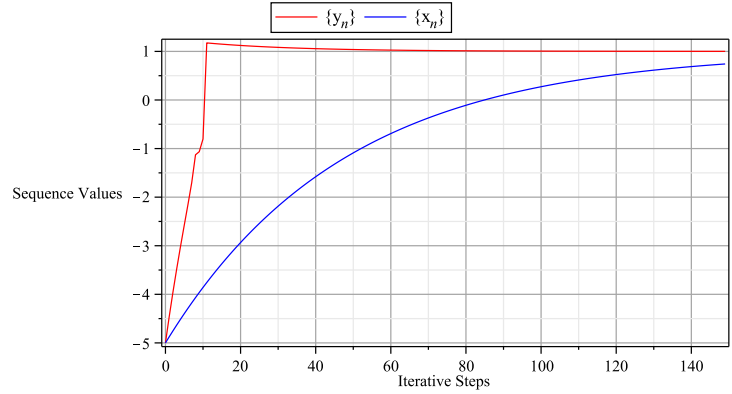}
		\end{figure}

\section{The split feasibility and fixed point  problems for quasi-nonexpansive mappings  in Hilbert spaces }
In this section, we propose  Ishikawa-type extra-gradient algorithms for solving the split feasibility and fixed point problems. Under some suitable assumptions imposed on some parameters and operators involved, we prove the strong convergence theorems of these algorithms.

\subsection{Problem Formulation}

The split feasibility and fixed point problems (SFFPP)  required to find a vector
\begin{equation}x^{*}\in C\cap Fix(T_{1}) {\rm~such~that~} Ax^{*}\in Q\cap Fix(T_{2}),\label{6.1}
\end{equation}
where $T_{1}:H_{1}\to H_{1}$ and $T_{2}:H_{2}\to H_{2}$ are  quasi-nonexpansive mappings,
and $A:H_{1}\to H_{2}$ is a bounded linear operator.\\

We denote the solution set of  Problem (\ref{6.1}) by 
\begin{equation}
\Delta=\Big\{ x^{*}\in C\cap Fix(T_{1}){\rm~such~that~} Ax^{*}\in Q\cap Fix(T_{2})\Big\}. \label{6.3}
\end{equation}
In sequel, we assume that $\Delta\neq\emptyset.$

\subsection{Preliminary Results}

The following well-known results are significant in proving the main result of this chapter.

\begin{lemma}\normalfont\label{lem6.1} Let  $C$ be a nonempty closed convex subset of a Hilbert space.
\begin{itemize}
\item[(i)] If $G$ and $S$ are two quasi-nonexpansive mappings on C, then $GS$ is also  quasi- nonexpansive mapping.
\item [(ii)] Let $G_{\alpha}=(1-\alpha)I+\alpha G,$ where $\alpha\in (0,1]$ and  $G$ is a quasi-nonexpansive mapping on C.  Then for all $x\in C$ and $q\in Fix(G),$  $G_{\alpha}$ is also a quasi-nonexpansive.
\item [(iii)]  Let $\{G_{i}\}^{N}_{i=1}: C\to C$ be N-quasi-nonexpansive mappings and $\{\alpha_{i}\}^{N}_{i=1}$ be a positive sequence in (0,1) such that $\sum^{N}_{i=1}\alpha_{i}=1.$   Suppose that $\{G_{i}\}^{N}_{i=1}$  has a fixed point. Then $$Fix\left(\sum^{N}_{i=1}{\alpha_{i}G_{i}}\right)= \bigcap^{N}_{i=1}Fix(G_{i}).$$
	\item [(iv)] Let $\{G_{i}\}^{N}_{i=1}$ and $\{\alpha_{i}\}^{N}_{i=1}$ be as in (iii) above. Then
	$\sum^{N}_{i=1}{\alpha_{i}G_{i}},$ is a quasi-nonexpansive mapping. Furthermore, if for each $i=1,2,3,...N,$ $G_{i}-I$ is demiclosed at zero, then  $\sum^{N}_{i=1}{\alpha_{i}G_{i}}-I$ is also.
			\end{itemize}
	\end{lemma}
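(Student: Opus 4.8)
The plan is to treat the four parts in the order stated, since each later part reuses the earlier ones, and to lean throughout on two facts: the defining inequality $\|Tx-z\|\le\|x-z\|$ for $z\in Fix(T)$ of a quasi-nonexpansive map, and the $N$-term convexity identity in a Hilbert space,
\[
\Big\|\sum_{i=1}^{N}\alpha_i v_i\Big\|^2=\sum_{i=1}^{N}\alpha_i\|v_i\|^2-\frac12\sum_{i,j=1}^{N}\alpha_i\alpha_j\|v_i-v_j\|^2 ,
\]
which holds whenever $\sum_i\alpha_i=1$ and follows by expanding both sides (equivalently, by repeated application of Lemma \ref{l2}(ii)). This identity is the engine behind the two genuinely nontrivial steps below.

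For (i), reading the statement so that it is meaningful, I would take a common fixed point $z\in Fix(G)\cap Fix(S)$; then $GSz=Gz=z$, so $Fix(GS)\ne\emptyset$, and for any $x\in C$ the two defining inequalities chain to give $\|GSx-z\|\le\|Sx-z\|\le\|x-z\|$. For (ii), I would first check $Fix(G_\alpha)=Fix(G)$: one inclusion is immediate, and $G_\alpha q=q$ yields $\alpha(Gq-q)=0$, hence $Gq=q$ as $\alpha>0$. Then, writing $G_\alpha x-q=(1-\alpha)(x-q)+\alpha(Gx-q)$ and applying Lemma \ref{l2}(ii),
\[
\|G_\alpha x-q\|^2=(1-\alpha)\|x-q\|^2+\alpha\|Gx-q\|^2-\alpha(1-\alpha)\|Gx-x\|^2\le\|x-q\|^2 ,
\]
where the quasi-nonexpansiveness of $G$ bounds the middle term.

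Part (iii) is where the first real obstacle appears. The inclusion $\bigcap_i Fix(G_i)\subseteq Fix(\sum_i\alpha_iG_i)$ is trivial because $\sum_i\alpha_i=1$. For the reverse, I take $q\in Fix(\sum_i\alpha_iG_i)$ and a common fixed point $p$; writing $q-p=\sum_i\alpha_i(G_iq-p)$ and combining Jensen's inequality with the quasi-nonexpansive bounds gives $\|q-p\|^2\le\sum_i\alpha_i\|G_iq-p\|^2\le\|q-p\|^2$, so equality holds everywhere. The hard part is converting this equality into $G_iq=q$ for each $i$: using the identity above, its right-hand side must vanish, and since all $\alpha_i>0$ this forces $\|G_iq-G_jq\|=0$ for all $i,j$, so all the $G_iq$ coincide with their weighted average $q$, whence $q\in\bigcap_i Fix(G_i)$.

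Part (iv) has two claims. Quasi-nonexpansiveness of $S:=\sum_i\alpha_iG_i$ follows from (iii) (which supplies a fixed point) and the triangle inequality $\|\sum_i\alpha_i(G_ix-q)\|\le\sum_i\alpha_i\|G_ix-q\|\le\|x-q\|$. The demiclosedness assertion is the main obstacle of the whole lemma and again rests on the convexity identity. Given $x_n\rightharpoonup x$ with $(S-I)x_n\to0$ and a common fixed point $p$, I would first note that $\|x_n-p\|^2-\|Sx_n-p\|^2\to0$, since $\big|\|Sx_n-p\|-\|x_n-p\|\big|\le\|Sx_n-x_n\|\to0$ and both quantities stay bounded. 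Expanding the difference with the identity splits it into two nonnegative pieces, $\sum_i\alpha_i\big(\|x_n-p\|^2-\|G_ix_n-p\|^2\big)$ and $\tfrac12\sum_{i,j}\alpha_i\alpha_j\|G_ix_n-G_jx_n\|^2$, each of which must then tend to $0$. From $\|G_ix_n-G_jx_n\|\to0$ together with $Sx_n-x_n\to0$, the decomposition $G_ix_n-x_n=\sum_j\alpha_j(G_ix_n-G_jx_n)+(Sx_n-x_n)$ gives $G_ix_n-x_n\to0$ for every $i$; the demiclosedness of each $G_i-I$ at zero then yields $x\in Fix(G_i)$ for all $i$, hence $(S-I)x=0$, which is the desired demiclosedness of $S-I$.
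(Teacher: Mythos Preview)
Your proof is correct and, in fact, does substantially more than the paper does. The paper gives no argument for this lemma at all: it simply states that (i) ``follows trivially,'' refers (ii) to Moudafi, and refers (iii) and (iv) to Li and He. So there is no paper-proof to compare against; you have supplied a complete self-contained argument where the authors only gave citations.

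A few remarks on what your argument buys. Your use of the $N$-term convexity identity
\[
\Big\|\sum_i\alpha_i v_i\Big\|^2=\sum_i\alpha_i\|v_i\|^2-\tfrac12\sum_{i,j}\alpha_i\alpha_j\|v_i-v_j\|^2
\]
is exactly the right tool for (iii) and the demiclosedness half of (iv); it makes transparent why equality in the quasi-nonexpansive estimate forces all the $G_iq$ to coincide, and why $\|Sx_n-x_n\|\to 0$ propagates down to each $\|G_ix_n-x_n\|\to 0$. This is presumably what the cited Li--He argument does as well, but your write-up is clean and the step showing $\|x_n-p\|^2-\|Sx_n-p\|^2\to 0$ via the reverse triangle inequality plus boundedness is handled carefully.

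Your caveat on (i) is well taken: as literally stated, the claim that $GS$ is quasi-nonexpansive is not true in general, because a fixed point of $GS$ need not lie in $Fix(G)$ or $Fix(S)$. Your reading---that the relevant inequality is with respect to $z\in Fix(G)\cap Fix(S)$---is exactly how the paper later uses the lemma (composing $G$ with $P_Q$ and testing against a point of the solution set $\Delta$), so your interpretation matches the intended application.
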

	The proof of (i) follows trivially, for the proof of (ii) see Moudafi \cite{moudafi2011note} while the proof of (iii) and (iv) are deduce from Li and He \cite{li2015new}.
	\newpage

\section{Ishikawa-type Extra-Gradient Iterative Methods for Quasi- Nonexpansive Mappings in Hilbert Spaces}

\begin{theorem}\normalfont\label{T6.1}
Let     $T:C\to H_{1}$  and $G:Q\to H_{2}$ be two quasi nonexpansive mappings and $A:H_{1}\to H_{2}$ be a bounded linear operator  with its adjoint  $A^{*}.$   Assume that $(T-I)$ and $(GP_{Q}-I)$ are demiclosed at zero, and   $\Delta\neq\emptyset.$  Define  $\{x_{n}\}$  by
\begin{equation}
 {} \left\{ \begin{array}{ll} x_{0}\in C~ {\rm ~chosen~arbitrarily,}\label{6.10}
\\ y_{n}=P_{C}(x_{n}-\gamma_{n} A^{*}(I-GP_{Q})Ax_{n}),
\\ z_{n}=P_{C}(y_{n}-\gamma_{n} A^{*}(I-GP_{Q})Ay_{n}),
\\ w_{n}=(1-\alpha_{n})z_{n}+\alpha_{n}T\Big((1-\beta_{n})z_{n}+\beta_{n}Tz_{n}\Big),
\\C_{n+1}=\Big\{z\in C_{n}: \|w_{n}-z\|^{2}\leq \|z_{n}-z\|^{2}\leq \|y_{n}-z\|^{2}\leq \|x_{n}-z\|^{2}\Big\},
\\x_{n+1}=P_{C_{n+1}}(x_{0}),
 \forall n\geq 0, & \textrm{ $  $}
 \end{array}  \right.
\end{equation}

where $P$ is a projection operator,  $0<a<\alpha_{n}<1,$ $0<b<\beta_{n}<1,$ and  $0<c<\gamma_{n}<\frac{1}{L},$  with
 $L=\|AA^{*}\|.$ Then $x_{n}\to x^{*}\in\Delta$.
\end{theorem}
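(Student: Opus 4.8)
The plan is to treat this as a hybrid (shrinking) projection, or $CQ$-type, argument: the sets $C_n$ shrink, each $x_n$ is the nearest point of $C_n$ to the anchor $x_0$, and strong convergence falls out of monotonicity of $\{\|x_n-x_0\|\}$ together with a projection-based Cauchy estimate, while membership of the limit in $\Delta$ comes from demiclosedness. First I would verify the scheme is well defined, i.e. that every $C_n$ is a nonempty closed convex set. Convexity and closedness are immediate because each constraint $\|w_n-z\|^2\le\|z_n-z\|^2$ (and similarly for the other pairs) is, after expanding via Lemma \ref{l2}(i), a half-space inequality linear in $z$; intersecting with $C_n$ and inducting preserves closed convexity. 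Nonemptiness I would obtain by showing $\Delta\subset C_n$ for all $n$. Fix $p\in\Delta$, so $p\in C\cap Fix(T)$ and $Ap\in Q\cap Fix(G)$, whence $(I-GP_Q)Ap=0$. Using that $GP_Q$ is quasi-nonexpansive (Lemma \ref{lem6.1}(i), since $P_Q$ is firmly nonexpansive) together with the quasi-nonexpansive inequality (\ref{vwx}), I would estimate the Landweber step
\[
\|x_n-\gamma_nA^*(I-GP_Q)Ax_n-p\|^2\le\|x_n-p\|^2-\gamma_n(1-\gamma_nL)\|(I-GP_Q)Ax_n\|^2,
\]
and since $P_C$ is nonexpansive and $p=P_Cp$, conclude $\|y_n-p\|\le\|x_n-p\|$ and likewise $\|z_n-p\|\le\|y_n-p\|$. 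For the fixed-point block I would apply Lemma \ref{l2}(ii) twice, to $v_n=(1-\beta_n)z_n+\beta_nTz_n$ and to $w_n=(1-\alpha_n)z_n+\alpha_nTv_n$, using $\|Tz_n-p\|\le\|z_n-p\|$, to get $\|w_n-p\|\le\|z_n-p\|$. These four comparisons are exactly the defining inequalities of $C_{n+1}$, so $p\in C_{n+1}$, and induction gives $\Delta\subset C_n$.

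With the scheme well defined, I would establish strong convergence of $\{x_n\}$. Since $C_{n+1}\subset C_n$ and $x_{n+1}\in C_{n+1}\subset C_n$ while $x_n=P_{C_n}x_0$, minimality gives $\|x_n-x_0\|\le\|x_{n+1}-x_0\|$, and $\Delta\subset C_n$ gives $\|x_n-x_0\|\le\|p-x_0\|$; hence $\{\|x_n-x_0\|\}$ is nondecreasing and bounded, so it converges. Applying Lemma \ref{l5} with the set $C_n$, the point $x_0$, and $y=x_m\in C_n$ for $m\ge n$ yields $\|x_m-x_n\|^2\le\|x_m-x_0\|^2-\|x_n-x_0\|^2$, whose right-hand side tends to $0$; thus $\{x_n\}$ is Cauchy and converges strongly to some $x^*\in C$. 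In particular $\|x_{n+1}-x_n\|\to0$, and since $x_{n+1}\in C_{n+1}$ forces $\|w_n-x_{n+1}\|\le\|z_n-x_{n+1}\|\le\|y_n-x_{n+1}\|\le\|x_n-x_{n+1}\|$, the triangle inequality gives $\|x_n-y_n\|\to0$, $\|x_n-z_n\|\to0$ and $\|x_n-w_n\|\to0$, so that $y_n,z_n,w_n\to x^*$ as well.

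Next I would identify $x^*$ as a solution. Re-running the estimates of the first step with $p\in\Delta$ but keeping the subtracted terms gives $\gamma_n(1-\gamma_nL)\|(I-GP_Q)Ax_n\|^2\le\|x_n-p\|^2-\|y_n-p\|^2$ and $\alpha_n\beta_n(1-\beta_n)\|Tz_n-z_n\|^2\le\|z_n-p\|^2-\|w_n-p\|^2$; since $x_n,y_n,z_n,w_n$ all converge to $x^*$, both right-hand sides vanish, and the parameter bounds $\gamma_n\in(c,1/L)$, $\alpha_n>b$, $\beta_n>b$ keep the coefficients bounded away from $0$, so $\|(I-GP_Q)Ax_n\|\to0$ and $\|Tz_n-z_n\|\to0$. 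Because $z_n\to x^*$ and $(T-I)$ is demiclosed at zero, $Tx^*=x^*$; because $A$ is bounded linear we have $Ax_n\to Ax^*$ and, by demiclosedness of $(GP_Q-I)$ at zero, $GP_Q(Ax^*)=Ax^*$. To upgrade this fixed point of $GP_Q$ to the required $Ax^*\in Q\cap Fix(G)$, I would use that $\Delta\neq\emptyset$ furnishes $q^*=Ap\in Q\cap Fix(G)$: from $Ax^*=G(P_QAx^*)$ and nonexpansiveness one gets $\|Ax^*-q^*\|\le\|P_QAx^*-q^*\|\le\|Ax^*-q^*\|$, forcing equality, and then Lemma \ref{l5} (with $x=Ax^*$ and $y=q^*\in Q$) gives $\|Ax^*-P_QAx^*\|^2\le\|q^*-Ax^*\|^2-\|q^*-P_QAx^*\|^2=0$, i.e. $Ax^*\in Q$ and hence $G(Ax^*)=Ax^*$. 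Together with $x^*\in C$ (closedness of $C$) this shows $x^*\in\Delta$, completing the proof.

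The step I expect to be most delicate is the last one: the demiclosedness hypothesis is stated for the composite operator $GP_Q$, so it only yields a fixed point of $GP_Q$, and one must separately argue, via the monotone squeeze and Lemma \ref{l5} above, that this point actually lies in $Q$, which is what promotes $Ax^*\in Fix(GP_Q)$ to the genuine condition $Ax^*\in Q\cap Fix(G)$. A secondary technical point worth checking is that the parameter ranges keep $\gamma_n(1-\gamma_nL)$ and $\alpha_n\beta_n(1-\beta_n)$ bounded below (so the asymptotic-regularity limits follow) and that the compositions $Tz_n$ and $Tv_n$ remain in the domain of $T$; these are routine under the stated assumptions but should be recorded.
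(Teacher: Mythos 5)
Your proposal is correct and follows essentially the same shrinking-projection scheme as the paper's proof: closedness and convexity of each $C_{n}$, the inclusion $\Delta\subset C_{n}$ via the same three quasi-nonexpansiveness estimates, the Cauchy property of $\{x_{n}\}$ obtained from Lemma \ref{l5} together with monotonicity and boundedness of $\{\|x_{n}-x_{0}\|\}$, and identification of the strong limit through asymptotic regularity plus demiclosedness. Two departures are worth recording. The first is cosmetic but in your favour: you note that each defining inequality of $C_{n+1}$ is a half-space condition in $z$ (so convexity is immediate), and you derive the correct monotonicity $\|x_{n}-x_{0}\|\leq\|x_{n+1}-x_{0}\|$ from minimality of $x_{n}=P_{C_{n}}x_{0}$, whereas the paper asserts the sequence is non-increasing; either way only the existence of the limit is used. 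The second is substantive: the paper ends by asserting that $Ap\in Fix(GP_{Q})$ ``implies'' $Ap\in Q\cap Fix(G)$ with no argument, and this implication is not automatic on its face, since a fixed point of the composition $GP_{Q}$ need not visibly lie in $Q$. Your closing argument --- taking $q^{*}=Ap\in Q\cap Fix(G)$ from $\Delta\neq\emptyset$, deriving the squeeze $\|Ax^{*}-q^{*}\|\leq\|P_{Q}Ax^{*}-q^{*}\|\leq\|Ax^{*}-q^{*}\|$ from quasi-nonexpansiveness of $G$ at $q^{*}$ and nonexpansiveness of $P_{Q}$, then invoking Lemma \ref{l5} to force $Ax^{*}=P_{Q}Ax^{*}\in Q$ --- supplies exactly the justification the paper omits, so on this point your write-up is more complete than the paper's. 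One caveat applies equally to both proofs: concluding $\|Tz_{n}-z_{n}\|\to 0$ and $\|(I-GP_{Q})Ax_{n}\|\to 0$ requires the coefficients $\alpha_{n}\beta_{n}(1-\beta_{n})$ and $\gamma_{n}(1-\gamma_{n}L)$ to stay bounded away from zero, which the stated hypotheses $\beta_{n}<1$ and $\gamma_{n}<\frac{1}{L}$ do not literally guarantee; you at least flag this, while the paper assumes it silently.
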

\begin{proof}

\textbf{Step 1.} First, we show that $P_{C_{n+1}}$ is well defined. To show this, it  suffices to show that  for each $n\geq 0,$ $C_{n}$ is  closed and convex. Trivially,  $C_{n}$ is  closed. \\

Next, we show that  $C_{n}$ is  convex. To show this, it suffices to show that for each  $r_{1}, r_{2}\in C_{n}$ and $\xi\in(0,1),$ $\xi r_{1}+ (1-\xi)r_{2}\in C_{n}.$\\

Now, we compute
\begin{align}
\|w_{n}-\xi r_{1}- (1-\xi)r_{2}\|^{2}&=\|\xi(w_{n}- r_{1})+(1-\xi)(w_{n}-r_{2})\|^{2}\nonumber
\\&=\xi\|w_{n}-r_{1}\|^{2}+(1-\xi)\|w_{n}-r_{2}\|^{2}-(1-\xi)\xi\|r_{1}-r_{2}\|^{2}\nonumber
\\&\leq\xi\|z_{n}-r_{1}\|^{2}+(1-\xi)\|z_{n}-r_{2}\|^{2}-(1-\xi)\xi\|r_{1}-r_{2}\|^{2}\nonumber
\\&=\|z_{n}-\xi r_{1}- (1-\xi)r_{2}\|^{2}.\label{6.11}
\end{align}
Similarly, we  obtain that
\begin{align}
\|z_{n}-\xi r_{1}- (1-\xi)r_{2}\|^{2}&\leq\|y_{n}-\xi r_{1}- (1-\xi)r_{2}\|^{2}\nonumber
\\&\leq\|x_{n}-\xi r_{1}- (1-\xi)r_{2}\|^{2}.\label{6.12}
\end{align}
Thus,   for each  $r_{1}, r_{2}\in C_{n},$  $\xi r_{1}+ (1-\xi)r_{2}\in C_{n}.$ \\

\textbf{Step 2.} Here, we show that $\Delta\subset C_{n},$ $n\geq 0.$\\

Let $q\in\Delta$ and $u_{n}=(1-\beta_{n})z_{n}+\beta_{n}Tz_{n}.$ The fact that $T$ is quasi-nonexpansive,
it follows from (\ref{6.10})  that
\begin{align}
\left\|w_{n}-q\right\|^{2}&=\left\|(1-\alpha_{n})z_{n}+\alpha_{n}Tu_{n}-q\right\|^{2}\nonumber
\\&=\left\|(1-\alpha_{n})(z_{n}-q)+\alpha_{n}(Tu_{n}-q)\right\|^{2}\nonumber
\\&=(1-\alpha_{n})\left\|z_{n}-q\right\|^{2}+\alpha_{n}\left\|Tu_{n}-q\right\|^{2}-\alpha_{n}(1-\alpha_{n})\left\|Tu_{n}-z_{n}\right\|^{2}\nonumber
\\&\leq(1-\alpha_{n})\left\|z_{n}-q\right\|^{2}+\alpha_{n}\left\|u_{n}-q\right\|^{2}-\alpha_{n}(1-\alpha_{n})\left\|Tu_{n}-z_{n}\right\|^{2}\nonumber
\\&\leq(1-\alpha_{n})\left\|z_{n}-q\right\|^{2}+\alpha_{n}\left\|(1-\beta_{n})(z_{n}-q)+\beta_{n}(Tz_{n}-q)\right\|^{2}\nonumber
\\&-\alpha_{n}(1-\alpha_{n})\left\|Tu_{n}-z_{n}\right\|^{2}\nonumber
\\&= (1-\alpha_{n})\left\|z_{n}-q\right\|^{2}+\alpha_{n} (1-\beta_{n})\left\|z_{n}-q\right\|^{2} +\alpha_{n}\beta_{n}\left\|Tz_{n}-q\right\|^{2}\nonumber
\\&-\alpha_{n}\beta_{n}(1-\beta_{n})\left\|Tz_{n}-z_{n}\right\|^{2}-\alpha_{n}(1-\alpha_{n})\left\|Tu_{n}-z_{n}\right\|^{2}\nonumber
\\&\leq (1-\alpha_{n})\left\|z_{n}-q\right\|^{2}+\alpha_{n} (1-\beta_{n})\left\|z_{n}-q\right\|^{2} +\alpha_{n}\beta_{n}\left\|z_{n}-q\right\|^{2}\nonumber
\\&-\alpha_{n}\beta_{n}(1-\beta_{n})\left\|Tz_{n}-z_{n}\right\|^{2}-\alpha_{n}(1-\alpha_{n})\left\|Tu_{n}-z_{n}\right\|^{2}\nonumber
\\&\leq \left\|z_{n}-q\right\|^{2}.\label{6.13}
\end{align}
On the other hand, since $G$ and $P_{C}$ are both quasi-nonexpansive, by Lemma (\ref{lem6.1}), we obtain that $GP_{C}$ is also quasi-nonexpansive. Thus, we have
\begin{align}
\left\|z_{n}-q\right\|^{2}&=\left\|P_{C}(y_{n}-\gamma_{n} A^{*}(I-GP_{Q})Ay_{n})-q\right\|^{2}\nonumber
\\&\leq \left\|y_{n}-\gamma_{n} A^{*}(I-GP_{Q})Ay_{n}-q\right\|^{2}\nonumber
\\&= \left\|y_{n}-q\right\|^{2}-2\gamma_{n}\left\langle  y_{n}-q, A^{*}(I-GP_{Q})Ay_{n}\right\rangle+\left\|\gamma_{n} A^{*}(I-GP_{Q})Ay_{n}\right\|^{2}\nonumber
\\&= \left\|y_{n}-q\right\|^{2}-2\gamma_{n}\left\langle  Ay_{n}-GP_{Q}Ay_{n}+GP_{Q}Ay_{n}-Aq, Ay_{n}-GP_{Q}Ay_{n}\right\rangle\nonumber
\\&+\gamma_{n}^{2}L\left\|(I-GP_{Q})Ay_{n}\right\|^{2}\nonumber
\\&= \left\|y_{n}-q\right\|^{2}+2\gamma_{n}\left\langle  Aq -GP_{Q}Ay_{n}, Ay_{n}-GP_{Q}Ay_{n}\right\rangle\nonumber
\\&-\gamma_{n}(2-\gamma_{n}L)\left\|GP_{Q}Ay_{n}-Ay_{n}\right\|^{2}\nonumber
\\&\leq \left\|y_{n}-q\right\|^{2}-\gamma_{n}(1-\gamma_{n}L)\left\|GP_{Q}Ay_{n}-Ay_{n}\right\|^{2}\label{6.14}.
\end{align}
Following the same way as in the proof of  (\ref{6.14}), we obtain that
\begin{align}
\left\|y_{n}-q\right\|^{2}&\leq \left\|x_{n}-q\right\|^{2}.\label{6.15}
\end{align}
Combine Equation (\ref{6.13}) $-$ (\ref{6.15}),
we have
\begin{align}
\|w_{n}-q\|^{2}\leq \|z_{n}-q\|^{2}\leq \|y_{n}-q\|^{2}\leq \|x_{n}-q\|^{2}.\label{6.16}
\end{align}
Thus, we have that $q\in C_{n},$ this implies that $ \Delta\subset C_{n}.$\\

Noticing that $\Delta\subset C_{n+1}\subset C_{n}$ and  $x_{n+1}=P_{C_{n+1}}(x_{0})\subset C_{n},$
 we have that
\begin{equation}
\|x_{n+1}-x_{0}\|\leq\|q-x_{0}\|, \forall n\geq 0 {\rm~and~} q\in\Delta.\label{6.17}
\end{equation}
This shows that $\{x_{n}\}$ is bounded. By Lemma (\ref{l5}) we have
\begin{align}
\|x_{n+1}-x_{n}\|^{2}+\|x_{n+1}-x_{0}\|^{2}&=\|P_{C_{n+1}}(x_{0})-x_{n}\|^{2}+\|P_{C_{n+1}}(x_{0})-x_{0}\|^{2}\nonumber
\\&\leq \|x_{n}-x_{0}\|^{2}.\label{6.18}
\end{align}
This implies that
\begin{align*}
\|x_{n+1}-x_{0}\|&\leq \|x_{n}-x_{0}\|.
\end{align*}
Thus,  $\{\|x_{n}-x_{0}\|\}$ is a non-increasing sequence and  bounded below by zero. Therefore,  $\underset{n\to\infty}\lim \|x_{n}-x_{0}\|$ exists. \\

On the other hand, for each $k>n,$ we also obtain that
\begin{align}
\|x_{k}-x_{n}\|^{2}+\|x_{n}-x_{0}\|^{2}&=\|P_{C_{n}}(x_{0})-x_{k}\|^{2}+\|P_{C_{n}}(x_{0})-x_{0}\|^{2}\nonumber
\\&\leq \|x_{k}-x_{0}\|^{2}.\label{6.19}
\end{align}
  Thus, by (\ref{6.19}) and the fact that the $\underset{n\to\infty}\lim \|x_{n+1}-x_{0}\|$ exist, we  obtain that
	$$\underset{k,n\to\infty}\lim \|x_{k}-x_{n}\|=0.$$
	This shows that $\{x_{n}\}$ is  Cauchy.\\

Since $x_{n+1}=P_{C_{n+1}}(x_{0})\in C_{n+1}\subset C_{n}$ and the fact that $\{x_{n}\}$ is a Cauchy sequence, we deduce that
\begin{align}
\|y_{n}-x_{n}\|&\leq \|y_{n}-x_{n+1}\|+\|x_{n+1}-x_{n}\|\nonumber
\\&\leq 2\|x_{n+1}-x_{n}\|,\nonumber
\end{align}
and
\begin{align}
\|z_{n}-x_{n}\|&\leq \|z_{n}-x_{n+1}\|+\|x_{n+1}-x_{n}\|\nonumber
\\&\leq 2\|x_{n+1}-x_{n}\|.\nonumber
\end{align}
Thus, as $n\to\infty,$  we deduce that
\begin{align}
\underset{n\to\infty}{\lim}\|z_{n}-x_{n}\|=0 {\rm~and~} \underset{n\to\infty}{\lim}\|y_{n}-x_{n}\|=0.\label{876}
\end{align}
On the other hand,
\begin{align*}
\|y_{n}-z_{n}\|&\leq \|y_{n}-x_{n}\|+\|x_{n}-z_{n}\|.
\end{align*}
By (\ref{876}) we obtain that
\begin{align}\underset{n\to\infty}{\lim}\|y_{n}-z_{n}\|=0.\label{6.21}
\end{align}
Similarly, we obtain that
\begin{align}
\underset{n\to\infty}{\lim}\|w_{n}-x_{n}\|= 0, \underset{n\to\infty}{\lim}\|z_{n}-y_{n}\|=0
{\rm~and~}\underset{n\to\infty}{\lim}\|w_{n}-z_{n}\|=0.\label{mnb}
\end{align}
By (\ref{6.14}),   we obtain that
\begin{align}
\left\|GP_{Q}Ay_{n}-Ay_{n}\right\|^{2}&\leq \frac{\left\|y_{n}-x^{*}\right\|^{2}-\left\|z_{n}-x^{*}\right\|^{2}}{\gamma_{n}(1-\gamma_{n}L)}\nonumber
\\&\leq \frac{\left\|y_{n}-z_{n}\right\|\Big(\left\|z_{n}-y_{n}\right\|+2\left\|z_{n}-x^{*}\right\|\Big)}{c(1-\gamma_{n}L)}.\nonumber
\end{align}
Thus, by  (\ref{6.21}) we deduce that
\begin{align*}
\underset{n\to\infty}{\lim}\left\|GP_{Q}Ay_{n}-Ay_{n}\right\|=0.
\end{align*}
Similarly, by (\ref{6.13})  and (\ref{mnb}), we deduce that
\begin{align}
\underset{n\to\infty}{\lim}\left\|Tz_{n}-z_{n}\right\|=0.\nonumber
\end{align}
  Finally, we show that  $x_{n}\to x^{*}.$\\
	
	Since $\{x_{n}\}$ is  Cauchy,  we  assume that $x_{n}\to p.$ By Equation (\ref{6.10}), we have that $z_{n}\to p,$ this implies that $z_{n}\rightharpoonup p.$ The fact that the $\underset{n\to\infty}{\lim}\|Tz_{n}-z_{n}\|=0$ together with the demiclosed of $(T-I)$ at zero, we deduce that $p\in Fix(T).$\\
	
On the other hand, since $x_{n}\to p,$  implies that $y_{n}\rightharpoonup p,$ by (\ref{6.10}), we deduce that
$p=P_{C}p$ which implies that $p\in C$
and therefore we have $p\in C\cap Fix(T).$	\\

Furthermore,  by the definition of $A,$   we have that $Ay_{n}\to Ap,$ this implies that $Ay_{n}\rightharpoonup Ap.$ The fact that the $\underset{n\to\infty}{\lim}\|GP_{Q}Ay_{n}-Ay_{n}\|=0$ together with the demiclosed of $(GP_{Q}-I)$ at zero, we deduce that $Ap\in Fix(GP_{Q}),$ this implies that $	Ap\in Q\cap Fix(G).$  Hence $p\in \Delta.$ This show that $x_{n}\to x^{*}.$ The  proof is complete.
\end{proof}\vspace{6mm}\newpage

 Next, we consider the split feasibility and fixed point problems for the class of finite family of quasi-nonexpansive mappings.

\begin{theorem}\normalfont\label{T62}
Let   $\{T_{i}\}^{M}_{i=1}:C\to H_{1}$  and $\{G_{j}\}^{N}_{j=1}:Q\to H_{2}$ be  quasi nonexpansive mappings with $\bigcap^{M}_{i=1}Fix(T_{i})\neq\emptyset$ and $\bigcap^{N}_{j=1}Fix(G_{j})\neq\emptyset.$ And also let $A:H_{1}\to H_{2}$ be a bounded linear operator with its adjoint $A^{*}.$   Assume that $(T_{i}-I),i=1,2,3,...,M$ and $(G_{j}P_{Q}-I), j=1,2,3,...,N$ are demiclosed at zero, and  $\Delta\neq\emptyset.$  Define  $\{x_{n}\}$  by
\begin{equation}
 {} \left\{ \begin{array}{ll} x_{0}\in C~ {\rm ~chosen~arbitrarily,}\label{6.22}
\\ y_{n}=P_{C}(x_{n}-\gamma_{n} A^{*}(I-\sum^{N}_{j=1}\delta_{j}G_{j}P_{Q})Ax_{n}),
\\ z_{n}=P_{C}(y_{n}-\gamma_{n} A^{*}(I-\sum^{N}_{j=1}\delta_{j}G_{j}P_{Q})Ay_{n}),
\\ w_{n}=(1-\alpha_{n})z_{n}+\alpha_{n}\sum^{M}_{i=1}\lambda_{i}T_{i}\Big((1-\beta_{n})z_{n}+\beta_{n}\sum^{M}_{i=1}\lambda_{i}T_{i}z_{n}\Big),
\\C_{n+1}=\Big\{z\in C_{n}: \|w_{n}-z\|^{2}\leq \|z_{n}-z\|^{2}\leq \|y_{n}-z\|^{2}\leq \|x_{n}-z\|^{2}\Big\},
\\x_{n+1}=P_{C_{n+1}}(x_{0}),
 \forall n\geq 0, & \textrm{ $  $}
 \end{array}  \right.
\end{equation}
where $P$ is a projection operator,  $0<a<\alpha_{n}<1,$ $0<b<\beta_{n}<1,$ and  $0<c<\gamma_{n}<\frac{1}{L}$ with $L=\|AA^{*}\|.$  Then $x_{n}\to x^{*}\in\Delta.$
\end{theorem}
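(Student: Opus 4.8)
The plan is to reduce Theorem \ref{T62} to the single-operator result of Theorem \ref{T6.1} by aggregating each finite family into a single quasi-nonexpansive operator. Concretely, I would set
\[
T:=\sum_{i=1}^{M}\lambda_{i}T_{i}\qquad\text{and}\qquad G:=\sum_{j=1}^{N}\delta_{j}G_{j},
\]
so that $GP_{Q}=\sum_{j=1}^{N}\delta_{j}G_{j}P_{Q}$ because the projection passes through the convex combination. Under this identification the recursion (\ref{6.22}) becomes literally the recursion (\ref{6.10}): the two-level definition of $w_{n}$ collapses to $(1-\alpha_{n})z_{n}+\alpha_{n}T\big((1-\beta_{n})z_{n}+\beta_{n}Tz_{n}\big)$, the extra-gradient steps producing $y_{n}$ and $z_{n}$ take the form $P_{C}(\,\cdot\,-\gamma_{n}A^{*}(I-GP_{Q})A\,\cdot\,)$, and the sets $C_{n+1}$ are defined by the same four-term chain of inequalities.

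The substantive task is to confirm that $T$ and $GP_{Q}$ inherit every hypothesis demanded by Theorem \ref{T6.1}. By Lemma \ref{lem6.1}(iv), $T$ is quasi-nonexpansive and, since each $(T_{i}-I)$ is demiclosed at zero, $(T-I)$ is demiclosed at zero. For the second operator I would first note that $P_{Q}$ is nonexpansive with fixed-point set $Q$, hence quasi-nonexpansive, so Lemma \ref{lem6.1}(i) makes each composition $G_{j}P_{Q}$ quasi-nonexpansive; applying Lemma \ref{lem6.1}(iv) to the family $\{G_{j}P_{Q}\}_{j=1}^{N}$ then yields that $GP_{Q}$ is quasi-nonexpansive and that $(GP_{Q}-I)$ is demiclosed at zero, exactly as required. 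Finally, Lemma \ref{lem6.1}(iii) gives $Fix(T)=\bigcap_{i=1}^{M}Fix(T_{i})$ and the analogous identity for $G$, so the solution set $\Delta$ of the family problem coincides with that of the reduced single-operator problem and the standing assumption $\Delta\neq\emptyset$ transfers unchanged.

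With all hypotheses verified, Theorem \ref{T6.1} applies verbatim to $T$, $GP_{Q}$ and the identified iteration, delivering $x_{n}\to x^{*}\in\Delta$. I expect the only delicate point to be the demiclosedness bookkeeping in the second paragraph: the hypotheses are imposed on the compositions $(G_{j}P_{Q}-I)$ rather than on $(G_{j}-I)$, and it is precisely this formulation that lets Lemma \ref{lem6.1}(iv) act on $\{G_{j}P_{Q}\}$ and produce demiclosedness of $(GP_{Q}-I)$; one should also check that the fixed-point identity under $P_{Q}$ correctly matches the target set $Q\cap\bigcap_{j}Fix(G_{j})$. Once this is settled, no additional estimation is needed and the conclusion is immediate.
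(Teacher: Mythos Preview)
Your proposal is correct and follows essentially the same strategy as the paper: aggregate the families via $T=\sum_{i}\lambda_iT_i$ and $G=\sum_{j}\delta_jG_j$, invoke Lemma~\ref{lem6.1} to recover quasi-nonexpansiveness, demiclosedness, and the fixed-point identities, and then apply Theorem~\ref{T6.1} verbatim. If anything, your handling of the demiclosedness of $(GP_Q-I)$ is more careful than the paper's, which records only that $\sum_j\delta_j(G_j-I)$ is demiclosed and leaves the passage to the composed operator $GP_Q$ implicit.
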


\begin{proof}
\end{proof}

By Lemma \ref{lem6.1}, we deduce that
\begin{itemize}
\item [(i)] $\sum^{N}_{j=1}\delta_{j}G_{j}$ and $\sum^{M}_{i=1}\lambda_{i}T_{i}$ are quasi-nonexpansive mappings.
\item[(ii)] $\sum^{N}_{j=1}\delta_{j}(G_{j}-I)$ and $\sum^{M}_{i=1}\lambda_{i}(T_{i}-I)$ are demiclosed at zero.
\item [(iii)] $Fix\left(\sum^{M}_{i=1}{\lambda_{i}T_{i}}\right)= \bigcap^{M}_{i=1}Fix(T_{i})$ and $Fix\left(\sum^{N}_{j=1}{\delta_{j}G_{j}}\right)= \bigcap^{N}_{j=1}Fix(G_{j}).$
\end{itemize}
Thus, all the hypothesis of Theorem \ref{T62} is satisfied. Therefore, the proof of this theorem follows trivially from
Theorem \ref{T6.1}.\\

As the consequence of Theorem \ref{T62}, we immediately obtain the following corollary.

\begin{corollary}\normalfont\label{TB7.16}
Let  $\{T_{i}\}^{M}_{i=1}:C\to H_{1}$  and $\{G_{j}\}^{N}_{j=1}:Q\to H_{2}$ be  quasi nonexpansive mappings with $\bigcap^{M}_{i=1}Fix(T_{i})\neq\emptyset$ and $\bigcap^{N}_{j=1}Fix(G_{j})\neq\emptyset,$ respectively. And also let $A:H_{1}\to H_{2}$ be a bounded linear operator with its adjoint $A^{*}.$  Assume that $(T_{i}-I),i=1,2,3,...,M$ and $(G_{j}P_{Q}-I), j=1,2,3,...,N$ are demiclosed at zero and    $\Delta\neq\emptyset.$  Define $\{x_{n}\}$ by
\begin{equation}
 {} \left\{ \begin{array}{ll} x_{0}\in C~ {\rm ~chosen~arbitrarily,}\label{6.23}
\\ z_{n}=P_{C}(x_{n}-\gamma_{n} A^{*}(I-\sum^{N}_{j=1}\delta_{j}G_{j}P_{Q})Ax_{n}),
\\ w_{n}=(1-\alpha_{n})z_{n}+\alpha_{n}\sum^{M}_{i=1}\lambda_{i}T_{i}z_{n},
\\C_{n+1}=\Big\{z\in C_{n}: \|w_{n}-z\|^{2}\leq \|z_{n}-z\|^{2}\leq \|x_{n}-z\|^{2}\Big\},
\\x_{n+1}=P_{C_{n+1}}(x_{0}),
 \forall n\geq 0, & \textrm{ $  $}
 \end{array}  \right.
\end{equation}
where $P$ is a projection operator,  $0<a<\alpha_{n}<1,$ $0<b<\beta_{n}<1,$ and  $0<c<\gamma_{n}<\frac{1}{L}$ with $L=\|AA^{*}\|.$  Then $x_{n}\to x^{*}\in\Delta$.
\end{corollary}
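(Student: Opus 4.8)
The plan is to reduce the finite-family scheme (\ref{6.23}) to a single pair of averaged mappings and then replay the CQ-projection argument of Theorem \ref{T6.1} in its Mann, single-extra-gradient form. First I would set $T:=\sum_{i=1}^{M}\lambda_{i}T_{i}$; by Lemma \ref{lem6.1} (iii) and (iv), $T$ is quasi-nonexpansive, $Fix(T)=\bigcap_{i=1}^{M}Fix(T_{i})$, and $(T-I)$ is demiclosed at zero. For the feasibility part, each $G_{j}P_{Q}$ is quasi-nonexpansive by Lemma \ref{lem6.1} (i), and since $\sum_{j=1}^{N}\delta_{j}G_{j}P_{Q}$ is a convex combination of these maps, Lemma \ref{lem6.1} (iv) applied to the family $\{G_{j}P_{Q}\}$ shows that $GP_{Q}:=\sum_{j=1}^{N}\delta_{j}G_{j}P_{Q}$ is quasi-nonexpansive and $(GP_{Q}-I)$ is demiclosed at zero. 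With this notation (\ref{6.23}) is exactly the scheme of Theorem \ref{T6.1} with the Ishikawa inner relaxation suppressed, so that $w_{n}=(1-\alpha_{n})z_{n}+\alpha_{n}Tz_{n}$, and with a single extra-gradient projection replacing the double one; every estimate in the proof of Theorem \ref{T6.1} then survives, and in fact simplifies.

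The first concrete step is to check that $P_{C_{n+1}}$ is well defined, i.e.\ that each $C_{n}$ is closed and convex. Closedness is immediate because $C_{n}$ is cut out by non-strict inequalities between continuous functions. For convexity I would take $r_{1},r_{2}\in C_{n}$, $\xi\in(0,1)$, and expand $\|a-\xi r_{1}-(1-\xi)r_{2}\|^{2}$ for $a\in\{w_{n},z_{n},x_{n}\}$ via Lemma \ref{l2} (ii); the common cross term $-\xi(1-\xi)\|r_{1}-r_{2}\|^{2}$ ensures the nested inequalities $\|w_{n}-z\|\le\|z_{n}-z\|\le\|x_{n}-z\|$ pass to the convex combination. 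Next I would prove $\Delta\subset C_{n}$ by induction: for $q\in\Delta$, quasi-nonexpansiveness of $T$ yields $\|w_{n}-q\|^{2}\le\|z_{n}-q\|^{2}$ just as in (\ref{6.13}), and the single extra-gradient estimate, computed as in (\ref{6.14}) with $GP_{Q}$ quasi-nonexpansive, gives $\|z_{n}-q\|^{2}\le\|x_{n}-q\|^{2}-\gamma_{n}(1-\gamma_{n}L)\|GP_{Q}Ax_{n}-Ax_{n}\|^{2}$. Hence $q\in C_{n+1}$, and $P_{C_{n+1}}$ is well defined.

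With $\Delta\subset C_{n+1}$ and $x_{n+1}=P_{C_{n+1}}(x_{0})$, the bound $\|x_{n+1}-x_{0}\|\le\|q-x_{0}\|$ shows $\{x_{n}\}$ is bounded, while Lemma \ref{l5} gives both monotonicity of $\{\|x_{n}-x_{0}\|\}$ and, for $k>n$, the estimate $\|x_{k}-x_{n}\|^{2}\le\|x_{k}-x_{0}\|^{2}-\|x_{n}-x_{0}\|^{2}$; since that distance sequence converges, $\{x_{n}\}$ is Cauchy and $x_{n}\to p$ for some $p$. The defining inequalities of $C_{n+1}$ then force $\|z_{n}-x_{n}\|\to0$ and $\|w_{n}-z_{n}\|\to0$, and rearranging the two estimates of the previous step yields the asymptotic regularity $\|GP_{Q}Ax_{n}-Ax_{n}\|\to0$ and $\|Tz_{n}-z_{n}\|\to0$. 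Since $z_{n}\in C$ with $z_{n}\to p$ and $C$ is closed, $p\in C$; demiclosedness of $(T-I)$ at zero gives $p\in Fix(T)=\bigcap_{i}Fix(T_{i})$; and since $Ax_{n}\to Ap$, demiclosedness of $(GP_{Q}-I)$ at zero gives $Ap\in Fix(GP_{Q})$, which as in Theorem \ref{T6.1} means $Ap\in Q\cap\bigcap_{j}Fix(G_{j})$. Therefore $p\in\Delta$ and $x_{n}\to x^{*}:=p\in\Delta$.

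The step I expect to be the main obstacle is the asymptotic-regularity claim for the feasibility operator: to pass from the telescoped decrease to $\|GP_{Q}Ax_{n}-Ax_{n}\|\to0$ one needs the coefficient $\gamma_{n}(1-\gamma_{n}L)$ bounded away from zero, which is exactly what the hypothesis $0<c<\gamma_{n}<\frac{1}{L}$ with $L=\|AA^{*}\|$ supplies. The only bookkeeping genuinely new relative to Theorem \ref{T6.1} is confirming that collapsing the Ishikawa update to a Mann update and the double projection to a single one leaves the nested inequalities defining $C_{n+1}$ intact; once Lemma \ref{lem6.1} delivers quasi-nonexpansiveness and demiclosedness of the averaged maps $T$ and $GP_{Q}$, this is routine and the argument closes.
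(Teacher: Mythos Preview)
Your proposal is correct and follows essentially the same route as the paper: the paper simply observes that setting $y_{n}=x_{n}$ and $\beta_{n}=0$ in Algorithm (\ref{6.22}) collapses Theorem \ref{T62} to the present scheme, so the corollary is an immediate specialization. You unfold this reduction explicitly---first aggregating the families via Lemma \ref{lem6.1} and then replaying the simplified CQ-projection argument of Theorem \ref{T6.1}---but the underlying logic and the key ingredients (quasi-nonexpansiveness and demiclosedness of the averaged maps, the nested $C_{n}$ estimates, and the Cauchy argument from Lemma \ref{l5}) are identical.
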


\begin{proof}

In Algorithm (\ref{6.22}), take $y_{n}=x_{n}$ and $\beta_{n}=0,$ then, Algorithm (\ref{6.22}) reduces to  Algorithm (\ref{6.23});  therefore, all the hypothesis of Theorem \ref{T62} is satisfied. Hence, the proof of this corollary follows directly from Theorem \ref{T62}.
\end{proof}

\subsection{Application to Split Feasibility Problems}

As a special case of Problem (\ref{6.3}), we  give the following theorems for solving split feasibility Problem and the fixed point problem.

\begin{theorem}\normalfont\label{T6.3}
Let $\{T_{i}\}^{M}_{i=1}:C\to H_{1}$  and $\{G_{j}\}^{N}_{j=1}:Q\to H_{2}$ be  quasi-nonexpansive mappings with $\bigcap^{M}_{i=1}Fix(T_{i})\neq\emptyset$ and $\bigcap^{N}_{j=1}Fix(G_{j})\neq\emptyset.$ And also let $A:H_{1}\to H_{2}$ be a bounded linear operator with its adjoint $A^{*}.$  Assume that $(T_{i}-I),i=1,2,3,...,M$ and $(G_{j}-I), j=1,2,3,...,N$ are demiclosed at zero and   $\Delta\neq\emptyset.$  Define $\{x_{n}\}$  by
\begin{equation}
 {} \left\{ \begin{array}{ll} x_{0}\in C~ {\rm ~chosen~arbitrarily,}\label{6.25}
\\ y_{n}=P_{C}(x_{n}-\gamma_{n} A^{*}(I-\sum^{N}_{j=1}\delta_{j}G_{j})Ax_{n}),
\\ z_{n}=P_{C}(y_{n}-\gamma_{n} A^{*}(I-\sum^{N}_{j=1}\delta_{j}G_{j})Ay_{n}),
\\ w_{n}=(1-\alpha_{n})z_{n}+\alpha_{n}\sum^{M}_{i=1}\lambda_{i}T_{i}\Big((1-\beta_{n})z_{n}+\beta_{n}\sum^{M}_{i=1}\lambda_{i}T_{i}z_{n}\Big),
\\C_{n+1}=\Big\{z\in C_{n}: \|w_{n}-z\|^{2}\leq \|z_{n}-z\|^{2}\leq \|y_{n}-z\|^{2}\leq \|x_{n}-z\|^{2}\Big\},
\\x_{n+1}=P_{C_{n+1}}(x_{0}),
 \forall n\geq 0, & \textrm{ $  $}
 \end{array}  \right.
\end{equation}
where $P$ is a projection operator,  $0<a<\alpha_{n}<1,$ $0<b<\beta_{n}<1,$ and  $0<c<\gamma_{n}<\frac{1}{L}$ with $L=\|AA^{*}\|.$  Then $x_{n}\to x^{*} \in\Omega.$
\end{theorem}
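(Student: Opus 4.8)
The plan is to obtain Theorem \ref{T6.3} as the specialization of Theorem \ref{T62} in which $P_Q = I$ (equivalently $Q = H_2$), so that the metric projection onto $Q$ disappears from the dynamics. First I would substitute $P_Q = I$ throughout the scheme (\ref{6.22}): every composition $G_j P_Q$ collapses to $G_j$, and consequently the definitions of $y_n$, $z_n$, $w_n$, the sets $C_{n+1}$, and the update $x_{n+1} = P_{C_{n+1}}(x_0)$ become term-by-term identical to those of scheme (\ref{6.25}). Thus the two iterations coincide under this identification, and no new recursion needs to be analyzed.

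Next I would verify that each hypothesis of Theorem \ref{T62} is inherited. The families $\{T_i\}^{M}_{i=1}$ and $\{G_j\}^{N}_{j=1}$ are quasi-nonexpansive with $\bigcap^{M}_{i=1}Fix(T_i)\neq\emptyset$ and $\bigcap^{N}_{j=1}Fix(G_j)\neq\emptyset$ by assumption; the demiclosedness at zero of $(G_j P_Q - I)$ demanded by Theorem \ref{T62} reduces, when $P_Q = I$, precisely to the demiclosedness of $(G_j - I)$ assumed here; and the demiclosedness of $(T_i - I)$ together with $\Delta\neq\emptyset$ are assumed verbatim. Invoking Lemma \ref{lem6.1}, the convex combinations $\sum^{M}_{i=1}\lambda_i T_i$ and $\sum^{N}_{j=1}\delta_j G_j$ are again quasi-nonexpansive, their fixed-point sets equal $\bigcap^{M}_{i=1}Fix(T_i)$ and $\bigcap^{N}_{j=1}Fix(G_j)$ respectively, and $\sum^{M}_{i=1}\lambda_i(T_i-I)$, $\sum^{N}_{j=1}\delta_j(G_j-I)$ remain demiclosed at zero, so the structural lemmas underlying Theorem \ref{T62} apply unchanged.

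Having matched both the iteration and all the hypotheses, I would conclude by direct appeal to Theorem \ref{T62} that $x_n\to x^*$ with the limit lying in the solution set $\Delta$ of (\ref{6.3}) (the set written $\Omega$ in the statement, which I read as $\Delta$). The one point I would treat with care — the main obstacle, such as it is — is the interpretation of the feasibility constraint: with $P_Q = I$ the projection step onto $Q$ drops out, and the target condition $Ax^*\in Q\cap Fix\big(\sum^{N}_{j=1}\delta_j G_j\big)$ is now carried entirely by the fixed-point membership $Ax^*\in Fix\big(\sum^{N}_{j=1}\delta_j G_j\big)=\bigcap^{N}_{j=1}Fix(G_j)$. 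I would check explicitly that this reduced condition coincides with the membership required by the solution set in the statement, so that the limit furnished by Theorem \ref{T62} genuinely solves the split feasibility and fixed point problem in the sense intended here.
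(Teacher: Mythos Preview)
Your proposal is correct and follows essentially the same approach as the paper: the paper's proof simply observes that setting $P_Q=I$ in Algorithm (\ref{6.22}) reduces it to Algorithm (\ref{6.25}), so all hypotheses of Theorem \ref{T62} are satisfied and the conclusion follows directly. Your write-up is in fact more careful than the paper's, since you explicitly check that the demiclosedness hypothesis on $G_jP_Q-I$ specializes to that on $G_j-I$ and that the solution set is correctly identified, points the paper leaves implicit.
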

\begin{proof}

In Algorithm (\ref{6.22}), take $P_{Q}=I$ (the identity mapping),  then, Algorithm (\ref{6.22}) reduces to  Algorithm (\ref{6.25}),  therefore, all the hypothesis of Theorem \ref{T62} is satisfied. Hence, the proof of this theorem, follows directly from Theorem \ref{T62}.
\end{proof}

\begin{theorem}\normalfont\label{T6.4}
Let  $\{T_{i}\}^{M}_{i=1}:C\to H_{1}$  be  quasi-nonexpansive mapping with \\$\bigcap^{M}_{i=1}Fix(T_{i})$$\neq$$\emptyset.$    Assume that $(T_{i}-I),i=1,2,3,...,M$ is demiclosed at zero.  Define $\{x_{n}\}$  by
\begin{equation}
 {} \left\{ \begin{array}{ll} x_{0}\in C~ {\rm ~chosen~arbitrarily,}\label{6.26}
\\ u_{n}=(1-\beta_{n})x_{n}+\beta_{n}\sum^{M}_{i=1}\lambda_{i}T_{i}x_{n},
\\ w_{n}=(1-\alpha_{n})u_{n}+\alpha_{n}\sum^{M}_{i=1}\lambda_{i}T_{i}u_{n},
\\C_{n+1}=\Big\{z\in C_{n}: \|w_{n}-z\|^{2}\leq \|u_{n}-z\|^{2}\leq \|x_{n}-z\|^{2}\Big\},
\\x_{n+1}=P_{C_{n+1}}(x_{0}),
 \forall n\geq 0, & \textrm{ $  $}
 \end{array}  \right.
\end{equation}
where $P$ is a projection operator,  $0<a<\alpha_{n}<1$ and $0<b<\beta_{n}<1.$    Then $\{x_{n}\}$ converges strongly to the solution of common fixed point of $\{T_{i}\}^{M}_{i=1}$.
\end{theorem}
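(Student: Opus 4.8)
The plan is to collapse the finite family into a single averaged operator and then run the standard hybrid (CQ) projection argument, exactly as in the proof of Theorem \ref{T6.1}. Set $S:=\sum_{i=1}^{M}\lambda_{i}T_{i}$. By Lemma \ref{lem6.1}, $S$ is quasi-nonexpansive, $Fix(S)=\bigcap_{i=1}^{M}Fix(T_{i})=:F\neq\emptyset$, and $(S-I)$ is demiclosed at zero. With this notation Algorithm (\ref{6.26}) reads $u_{n}=(1-\beta_{n})x_{n}+\beta_{n}Sx_{n}$, $w_{n}=(1-\alpha_{n})u_{n}+\alpha_{n}Su_{n}$, together with the shrinking sets $C_{n+1}$ and $x_{n+1}=P_{C_{n+1}}(x_{0})$; it therefore suffices to prove that $x_{n}\to P_{F}(x_{0})$.

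First I would verify that each $C_{n}$ (starting from $C_{0}=C$) is closed and convex, so that $P_{C_{n+1}}$ is well defined. Closedness is clear, and for convexity one rewrites each inequality $\|w_{n}-z\|^{2}\le\|u_{n}-z\|^{2}$ and $\|u_{n}-z\|^{2}\le\|x_{n}-z\|^{2}$ as a linear half-space constraint in $z$ after cancelling the $\|z\|^{2}$ terms; intersecting $C_{n}$ with these half-spaces preserves closedness and convexity, and an induction finishes this step just as in Step 1 of Theorem \ref{T6.1}. Next I would show $F\subset C_{n}$ for every $n$. Fixing $q\in F$ and applying Lemma \ref{l2}(ii) together with the quasi-nonexpansiveness of $S$ gives $\|u_{n}-q\|^{2}=(1-\beta_{n})\|x_{n}-q\|^{2}+\beta_{n}\|Sx_{n}-q\|^{2}-\beta_{n}(1-\beta_{n})\|Sx_{n}-x_{n}\|^{2}\le\|x_{n}-q\|^{2}$, and likewise $\|w_{n}-q\|^{2}\le\|u_{n}-q\|^{2}$. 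Hence $q$ satisfies the defining inequalities of $C_{n+1}$, so by induction $F\subset C_{n}$ for all $n$; in particular each $C_{n}$ is nonempty.

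The central step is to prove that $\{x_{n}\}$ is Cauchy. Since $x_{n+1}=P_{C_{n+1}}(x_{0})$ and $F\subset C_{n+1}$, any $q\in F$ gives $\|x_{n+1}-x_{0}\|\le\|q-x_{0}\|$, so $\{x_{n}\}$ is bounded. Because $C_{k}\subset C_{n}$ for $k>n$ and $x_{n}=P_{C_{n}}(x_{0})$, Lemma \ref{l5} applied with $x=x_{0}$, $P_{C_{n}}(x_{0})=x_{n}$, $y=x_{k}\in C_{n}$ yields $\|x_{k}-x_{n}\|^{2}\le\|x_{k}-x_{0}\|^{2}-\|x_{n}-x_{0}\|^{2}$. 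Taking $k=n+1$ shows $\{\|x_{n}-x_{0}\|\}$ is non-decreasing and bounded, hence convergent, and the general estimate then forces $\|x_{k}-x_{n}\|\to0$ as $k,n\to\infty$; thus $x_{n}\to p$ for some $p\in C$. This shrinking-projection bookkeeping, resting on Lemma \ref{l5}, is the part demanding the most care and is the main obstacle of the argument.

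Finally I would extract asymptotic regularity and pass to the limit. From $x_{n+1}\in C_{n+1}$ the defining inequalities give $\|u_{n}-x_{n+1}\|\le\|x_{n}-x_{n+1}\|\to0$ and $\|w_{n}-x_{n+1}\|\le\|x_{n}-x_{n+1}\|\to0$, whence $\|u_{n}-x_{n}\|\to0$ by the triangle inequality; since $u_{n}-x_{n}=\beta_{n}(Sx_{n}-x_{n})$ with $\beta_{n}>b>0$, this gives $\|Sx_{n}-x_{n}\|\to0$. As $x_{n}\to p$ (so $x_{n}\rightharpoonup p$) and $(S-I)$ is demiclosed at zero, we conclude $Sp=p$, i.e. $p\in F=\bigcap_{i=1}^{M}Fix(T_{i})$. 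Letting $n\to\infty$ in $\|x_{n+1}-x_{0}\|\le\|q-x_{0}\|$ for arbitrary $q\in F$ yields $\|p-x_{0}\|\le\|q-x_{0}\|$, so $p=P_{F}(x_{0})$ is the sought common fixed point, completing the proof. Everything after the Cauchy step is a routine invocation of demiclosedness.
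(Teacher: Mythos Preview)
Your proposal is correct and follows essentially the same route as the paper. The paper's own proof simply specializes Theorem~\ref{T62} by taking $\gamma_{n}=0$ and $P_{C}=I$, which collapses Algorithm~(\ref{6.22}) to Algorithm~(\ref{6.26}); since Theorem~\ref{T62} is itself proved by applying Lemma~\ref{lem6.1} to pass to a single quasi-nonexpansive operator and then invoking Theorem~\ref{T6.1}, your direct argument (set $S=\sum_{i}\lambda_{i}T_{i}$, use Lemma~\ref{lem6.1}, run the hybrid projection steps of Theorem~\ref{T6.1}) is exactly the unrolled version of the paper's reduction, with the added bonus that you identify the limit as $P_{F}(x_{0})$.
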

\begin{proof}

In Algorithm (\ref{6.22}), take $\gamma_{n}=0$ and  $P_{C}=I$ (the identity mapping),  then, Algorithm (\ref{6.22}) reduces to  Algorithm (\ref{6.26}),  therefore; all the hypothesis of Theorem \ref{T62} is satisfied. Hence, the proof of this theorem, follows directly from Theorem \ref{T62}.
\end{proof}

\subsubsection{Conclusion}
In this section,  we have proposed Ishikawa-type extra-gradient methods for solving the split feasibility and fixed point problems for the class of quasi-nonexpansive mappings in  Hilbert spaces. Under some suitable assumptions imposed on some parameters and operators involved, we proved the strong convergence theorems of these algorithms. Furthermore, as an application, we gave the strong convergence theorem for the split feasibility problem.  The results presented in this chapter, not only extend the result of Chen et al., \cite{chen2015extra} but also   extend, improve and generalize   the results of;  Takahashi and Toyoda \cite{takahashi2003weak}, Nadezhkina and Takahashi \cite{nadezhkina2006weak}, Ceng et al., \cite{ceng2013relaxed} and Li and He \cite{li2015new}   in    the following ways:
 \begin{itemize}
\item The theorem of Chen et al., \cite{chen2015extra} gave the weak convergence results while ours gave the strong convergence results.
\item The technique of proving our results is entirely different from that of Chen et al., \cite{chen2015extra}. Furthermore, the algorithms of Chen et al., \cite{chen2015extra} involve the class of nonexpansive mappings while our algorithm includes the class of quasi-nonexpansive mappings which are more general that nonexpansive mappings.
\item The method for finding the solution of the split feasibility and fixed point problems is more general that the method of finding the solution to split feasibility problem.
\item The theorem of Li and He \cite{li2015new} gave the strong convergence results for the split feasibility problem while ours gave the strong convergence for the split feasibility and fixed point problems. Furthermore, our algorithms generalize that of Li and He  \cite{li2015new}. For instance, in Theorem \ref{T6.1} Algorithm (\ref{6.10}) take $y_{n}=x_{n}$ and $\beta_{n}=0$, hence, our algorithm reduces to that of Li and He  \cite{li2015new} Theorem 2.1 Algorithm 2.1.
\item Our theorems gave the strong convergence for the solution of the split feasibility and fixed point problems for the class of quasi-nonexpansive mappings, while the results of Ceng et al., \cite{ceng2012extragradient} gave a weak convergence result for the solution of the split feasibility and fixed point problems for the class of nonexpansive mappings.
\item The split feasibility and fixed point problems is a fascinating problem. It generalizes the split feasibility problem  (SFP) and fixed point problem (FPP). All the results and conclusions that are true for the split feasibility and fixed point problems continue to holds for these problems (SFP, FPP), and it shows the significance and the range of applicability of split feasibility and fixed point problems.
\item The novelty of our theorems gives strong convergence results while the theorem of;  \cite{takahashi2003weak} Nadezhkina and Takahashi \cite{nadezhkina2006weak}, Ceng et al., \cite{ceng2013relaxed} and Li and He \cite{li2015new} all gives weak convergence results.
 \end{itemize}

\section{Conclusion}
In this work, we have studied the split common fixed point problems and its applications. We have  suggested some  algorithms for solving  this split common fixed point problems and its variant forms for  different classes of nonlinear mappings.\\

Proceeding systematically in our work, we gave the basic definitions and results from the literature. Also, we briefly provided an overview of the split common fixed point problems and its variant forms in Section 2. In the next section, we have suggested and analysed iterative algorithms for solving the split common fixed point problems for the class of total quasi asymptotically nonexpansive mappings in Hilbert spaces. Also, we gave the strong convergence results of the proposed algorithms. Also, we considered an algorithm for solving this split common fixed point problems for the class of demicontractive mappings without any prior information on the normed on the bounded linear operator and established the strong convergence results of the proposed algorithm. \\

As a generalization of the split feasibility problem, we  proposed Ishikawa-type extra-gradient methods for solving the
split feasibility and fixed point problems for the class of quasi-nonexpansive mappings in Hilbert spaces. Under some suitable assumptions imposed on some parameters and operators involved, we proved the strong convergence theorems of these algorithms.
\\

In the end, we proposed a new problem called "Split Feasibility and Fixed Point Equality Problems (SFFPEP)" and study it for the class of quasi-nonexpansive mappings in Hilbert spaces. We also proposed new iterative methods for solving this SFFPEP and proved the convergence results of the proposed algorithms. In additions, as a generalization of SFFPEP, we consider another problem called "Split Common Fixed Equality Problems (SCFPEP)" and study it for the class of finite family of quasi-nonexpansive mappings in Hilbert spaces. Finally, We suggested some algorithms for solving this SCFPEP and proved the convergence results of the proposed algorithms.


\begin{thebibliography}{99}

\bibitem{rudin1973functional} Rudin, W. (1973). Functional Analysis, McGraw-Hill Series in Higher Mathematics.
\bibitem{arino1984fixed} Arino, O., Gautier, S., \& Penot, J. P. (1984). A fixed point theorem for sequentially continuous mappings with application to ordinary differential equations. Funkcial. Ekvac, 27(3), 273-279.
\bibitem{yamamoto1998numerical} Yamamoto, N. (1998). A numerical verification method for solutions of boundary value problems with local uniqueness by Banach's fixed-point theorem. SIAM Journal on Numerical Analysis, 35(5), 2004-2013.
\bibitem{taleb2000fixed}Taleb, A., \& Hanebaly, E. (2000). A fixed point theorem and its application to integral equations in modular function spaces. Proceedings of the American Mathematical Society, 128(2), 419-426.
\bibitem{nieto2005contractive}Nieto, J. J., \& Rodríguez-López, R. (2005). Contractive mapping theorems in partially ordered sets and applications to ordinary differential equations. Order, 22(3), 223-239.
\bibitem{pathak2007common}Pathak, H. K., Khan, M. S., \& Tiwari, R. (2007). A common fixed point theorem and its application to nonlinear integral equations. Computers \& Mathematics with Applications, 53(6), 961-971.
\bibitem{sestelo2015existence}Sestelo, R. F., \& Pouso, R. L. (2015). Existence of solutions of first-order differential equations via a fixed point theorem for discontinuous operators. Fixed Point Theory and Applications, 2015(1), 220.
\bibitem{chow1974existence} Chow, S. N. (1974). Existence of periodic solutions of autonomous functional differential equations. Journal of Differential Equations, 15(2), 350-378.
\bibitem{grimmer1979existence}Grimmer, R. (1979). Existence of periodic solutions of functional differential equations. Journal of Mathematical Analysis and Applications, 72(2), 666-673.
\bibitem{torres2003existence}Torres, P. J. (2003). Existence of one-signed periodic solutions of some second-order differential equations via a Krasnoselskii fixed point theorem. Journal of Differential Equations, 190(2), 643-662.
\bibitem{kiss2012computational}Kiss, G., \& Lessard, J. P. (2012). Computational fixed-point theory for differential delay equations with multiple time lags. Journal of Differential Equations, 252(4), 3093-3115.
\bibitem{censor2009split}Censor, Y., \& Segal, A. (2009). The split common fixed point problem for directed operators. J. Convex Anal, 16(2), 587-600.
\bibitem{censor2006unified}Censor, Y., Bortfeld, T., Martin, B., \& Trofimov, A. (2006). A unified approach for inversion problems in intensity-modulated radiation therapy. Physics in Medicine and Biology, 51(10), 2353.
\bibitem{chidume2006applicable} Chidume, C. E. (2006). Applicable functional analysis. International Centre for Theoretical Physics Trieste, Italy.
\bibitem{goebel1990topics}Goebel, K., \& Kirk, W. A. (1990). Topics in metric fixed point theory (Vol. 28). Cambridge University Press.
\bibitem{dunford1971linear}Dunford, N., Schwartz, J. T., Bade, W. G., \& Bartle, R. G. (1971). Linear operators. New York: Wiley-interscience.
\bibitem{goebel1972fixed}Goebel, K., \& Kirk, W. A. (1972). A fixed point theorem for asymptotically nonexpansive mappings. Proceedings of the American Mathematical Society, 35(1), 171-174.
\bibitem{alber2006approximating}Alber, Y., Chidume, C. E., \& Zegeye, H. (2006). Approximating fixed points of total asymptotically nonexpansive mappings. Fixed Point Theory and Applications, 2006(1), 1-20.
\bibitem{chidume2007approximation}Chidume, C. E., \& Ofoedu, E. U. (2007). Approximation of common fixed points for finite families of total asymptotically nonexpansive mappings. Journal of Mathematical Analysis and Applications, 333(1), 128-141.
\bibitem{chidume2009new}Chidume, C. E., \& Ofoedu, E. U. (2009). A new iteration process for approximation of common fixed points for finite families of total asymptotically nonexpansive mappings. International journal of mathematics and mathematical sciences, 2009.
\bibitem{browder1967construction}Browder, F. E., \& Petryshyn, W. V. (1967). Construction of fixed points of nonlinear mappings in Hilbert space. Journal of Mathematical Analysis and Applications, 20(2), 197-228.
\bibitem{diaz1967structure}Diaz, J. B., \& Metcalf, F. T. (1967). On the structure of the set of subsequential limit points of successive approximations. Bulletin of the American Mathematical Society, 73(4), 516-519.
\bibitem{he2012nonlinear}He, Z., \& Du, W. S. (2012). Nonlinear algorithms approach to split common solution problems. Fixed Point Theory and Applications, 2012(1), 130.
\bibitem{chidume2015split}Chidume, C. E., Ndambomve, P., \& Bello, U. A. (2015). The split equality fixed point problem for demi-contractive mappings. Journal of Nonlinear Analysis and Optimization: Theory \& Applications, 6(1), 61-69.
\bibitem{zaknoon2003algorithmic}Zaknoon, M., \& מארון זכנון. (2003). Algorithmic developments for the convex feasibility problem. University of Haifa Faculty of Science and Science Education, Department of Mathematics.
\bibitem{cegielski2010generalized}Cegielski, A. (2010). Generalized relaxations of nonexpansive operators and convex feasibility problems. Contemp. Math, 513, 111-123.
\bibitem{cegielski2011opial}Cegielski, A., \& Censor, Y. (2011). Opial-type theorems and the common fixed point problem. In Fixed-Point Algorithms for Inverse Problems in Science and Engineering (pp. 155-183). Springer New York.
\bibitem{moudafi2011note}Moudafi, A. (2011). A note on the split common fixed-point problem for quasi-nonexpansive operators. Nonlinear Analysis: Theory, Methods \& Applications, 74(12), 4083-4087.
\bibitem{acedo2007iterative}Acedo, G. L., \& Xu, H. K. (2007). Iterative methods for strict pseudo-contractions in Hilbert spaces. Nonlinear Analysis: Theory, Methods \& Applications, 67(7), 2258-2271.
\bibitem{li2015new}Li, R., \& He, Z. (2015). A new iterative algorithm for split solution problems of quasi-nonexpansive mappings. Journal of Inequalities and Applications, 2015(1), 131.
\bibitem{xu2002iterative}Xu, H. K. (2002). Iterative algorithms for nonlinear operators. Journal of the London Mathematical Society, 66(1), 240-256.
\bibitem{xu1993approximating}Tan, K. K., \& Xu, H. K. (1993). Approximating fixed points of nonexpansive mappings by the Ishikawa iteration process. Journal of Mathematical Analysis and Applications, 178(2), 301-308.
\bibitem{bauschke1996projection}Bauschke, H. H., \& Borwein, J. M. (1996). On projection algorithms for solving convex feasibility problems. SIAM review, 38(3), 367-426.
\bibitem{censor2005multiple}Censor, Y., Elfving, T., Kopf, N., \& Bortfeld, T. (2005). The multiple-sets split feasibility problem and its applications for inverse problems. Inverse Problems, 21(6), 2071.
\bibitem{moudafi2014alternating}Moudafi, A. (2014). Alternating CQ-algorithm for convex feasibility and split fixed-point problems. J. Nonlinear Convex Anal, 15(4), 809-818.
\bibitem{censor1994multiprojection}Censor, Y., \& Elfving, T. (1994). A multiprojection algorithm using Bregman projections in a product space. Numerical Algorithms, 8(2), 221-239.
\bibitem{stark1987image}Stark, H. (Ed.). (1987). Image recovery: theory and application. Elsevier.
\bibitem{ceng2012relaxed}Ceng, L. C., Ansari, Q. H., \& Yao, J. C. (2012). Relaxed extragradient methods for finding minimum-norm solutions of the split feasibility problem. Nonlinear Analysis: Theory, Methods \& Applications, 75(4), 2116-2125.
\bibitem{ceng2012extragradient}Ceng, L. C., Ansari, Q. H., \& Yao, J. C. (2012). An extragradient method for solving split feasibility and fixed point problems. Computers \& Mathematics with Applications, 64(4), 633-642.
\bibitem{byrne2002iterative}Byrne, C. (2002). Iterative oblique projection onto convex sets and the split feasibility problem. Inverse Problems, 18(2), 441.
\bibitem{yang2004relaxed}Yang, Q. (2004). The relaxed CQ algorithm solving the split feasibility problem. Inverse problems, 20(4), 1261.
\bibitem{zhao2005several}Zhao, J., \& Yang, Q. (2005). Several solution methods for the split feasibility problem. Inverse Problems, 21(5), 1791--1799.
\bibitem{xu2006variable}Xu, H. K. (2006). A variable Krasnosel'skii–Mann algorithm and the multiple-set split feasibility problem. Inverse problems, 22(6), 2021--2034.
\bibitem{wang2011cyclic}Wang, F., \& Xu, H. K. (2011). Cyclic algorithms for split feasibility problems in Hilbert spaces. Nonlinear Analysis: Theory, Methods \& Applications, 74(12), 4105-4111.
\bibitem{wang2012split}Wang, X. R., Chang, S. S., Wang, L., \& Zhao, Y. H. (2012). Split feasibility problems for total quasi-asymptotically nonexpansive mappings. Fixed Point Theory and Applications, 2012(1), 151.
\bibitem{chang2013split}Chang, S. S., Lee, H. J., Chan, C. K., Wang, L., \& Qin, L. J. (2013). Split feasibility problem for quasi-nonexpansive multi-valued mappings and total asymptotically strict pseudo-contractive mapping. Applied Mathematics and Computation, 219(20), 10416-10424.
\bibitem{ma2013strong}Ma, Y. F., Wang, L., \& Zi, X. J. (2013). Strong and weak convergence theorems for a new split feasibility problem. In Int. Math. Forum (Vol. 8, No. 33, pp. 1621-1627).
\bibitem{ansari2014split}Ansari, Q. H., \& Rehan, A. (2014). Split feasibility and fixed point problems. In Nonlinear Analysis (pp. 281-322). Springer India.
\bibitem{shehu2016strong}Shehu, Y. (2016). Strong convergence theorem for multiple sets split feasibility problems in Banach spaces. Numerical Functional Analysis and Optimization, 37(8), 1021-1036.
\bibitem{mann1953mean}Mann, W. R. (1953). Mean value methods in iteration. Proceedings of the American Mathematical Society, 4(3), 506-510.
\bibitem{krasnosel1955two}Krasnosel'skii, M. A. (1955). Two remarks on the method of successive approximations. Uspekhi Matematicheskikh Nauk, 10(1), 123-127.
\bibitem{reich1979weak}Reich, S. (1979). Weak convergence theorems for nonexpansive mappings in Banach spaces. Journal of Mathematical Analysis and Applications, 67(2), 274-276.
\bibitem{sezan1987applications}Sezan, M. I., \& Stark, H. (1987). Applications of convex projection theory to image recovery in tomography and related areas. Image Recovery: Theory and Application, 155-270.
\bibitem{youla1987mathematical}Youla, D. C. (1987). Mathematical theory of image restoration by the method of convex projections. Image Recovery: Theory and Application, 29-77.
\bibitem{byrne2003unified}Byrne, C. (2003). A unified treatment of some iterative algorithms in signal processing and image reconstruction. Inverse problems, 20(1), 103.
\bibitem{qu2005note}Qu, B., \& Xiu, N. (2005). A note on the CQ algorithm for the split feasibility problem. Inverse Problems, 21(5), 1655.
\bibitem{chidume2013convergence}Chidume, C. E., Chidume, C. O., Djitte, N., \& Minjibir, M. S. (2013, May). Convergence theorems for fixed points of multivalued strictly pseudocontractive mappings in Hilbert spaces. In Abstract and Applied Analysis (Vol. 2013). Hindawi Publishing Corporation.
\bibitem{scherzer1995convergence}Scherzer, O. (1995). Convergence criteria of iterative methods based on Landweber iteration for solving nonlinear problems. Journal of Mathematical Analysis and Applications, 194(3), 911-933.
\bibitem{marino2007weak}Marino, G., \& Xu, H. K. (2007). Weak and strong convergence theorems for strict pseudo-contractions in Hilbert spaces. Journal of Mathematical Analysis and Applications, 329(1), 336-346.
\bibitem{yang2011multiple}Yang, L. I., Chang, S. S., Cho, Y. J., \& Kim, J. K. (2011). Multiple-set split feasibility problems for total asymptotically strict pseudocontractions mappings. Fixed Point Theory and Applications, 2011(1), 77.
\bibitem{moudafi2010split}Moudafi, A. (2010). The split common fixed-point problem for demicontractive mappings. Inverse Problems, 26(5), 587-600.
\bibitem{qin2012multiple}Qin, L. J., Wang, L., \& Chang, S. S. (2012). Multiple-set split feasibility problem for a finite family of asymptotically quasi-nonexpansive mappings. PanAmerican Mathematical Journal, 22(1), 37-45.
\bibitem{chang2012multiple}Chang, S. S., Cho, Y. J., Kim, J. K., Zhang, W. B., \& Yang, L. (2012, February). Multiple-set split feasibility problems for asymptotically strict pseudocontractions. In Abstract and Applied Analysis (Vol. 2012). Hindawi Publishing Corporation.
\bibitem{quan2013multiple}Quan, J., Chang, S. S., \& Zhang, X. (2013, October). Multiple-set split feasibility problems for-strictly pseudononspreading mapping in Hilbert spaces. In Abstract and applied analysis (Vol. 2013). Hindawi Publishing Corporation.
\bibitem{chang2014weak}Chang, S. S., Kim, J. K., Cho, Y. J., \& Sim, J. Y. (2014). Weak-and strong-convergence theorems of solutions to split feasibility problem for nonspreading type mapping in Hilbert spaces. Fixed Point Theory and Applications, 2014(1), 11.
\bibitem{mohammed2013note}Mohammed, L. B. (2013). A Note On The Split common fixed-Point Problem for strongly quasi-Nonexpansive Operator In Hi
\bibitem{mohammed2013strong}Mohammed, L. B. (2013). Strong convergence of an algorithm about quasi-nonexpansive mappings for the split common fixed-pint problem in Hilbert space. International Journal of Innovative Research and Studies, 2(8), 298-306.
\bibitem{kraikaew2014split}Kraikaew, R., \& Saejung, S. (2014). On split common fixed point problems. Journal of Mathematical Analysis and Applications, 415(2), 513-524.
\bibitem{combettes1993foundations}Combettes, P. L. (1993). The foundations of set theoretic estimation. Proceedings of the IEEE, 81(2), 182-208.
\bibitem{kiwiel1997monotone}Kiwiel, K. C. (1997). Monotone Gram matrices and deepest surrogate inequalities in accelerated relaxation methods for convex feasibility problems. Linear algebra and its applications, 252(1-3), 27-33.
\bibitem{bauschke1996approximation}Bauschke, H. H. (1996). The approximation of fixed points of compositions of nonexpansive mappings in Hilbert space. Journal of Mathematical Analysis and Applications, 202(1), 150-159.
\bibitem{bregman1967relaxation}Bregman, L. M. (1967). The relaxation method of finding the common point of convex sets and its application to the solution of problems in convex programming. USSR computational mathematics and mathematical physics, 7(3), 200-217.
\bibitem{gubin1967method}Gubin, L. G., Polyak, B. T., \& Raik, E. V. (1967). The method of projections for finding the common point of convex sets. USSR Computational Mathematics and Mathematical Physics, 7(6), 1-24.
\bibitem{censor1981row}Censor, Y. (1981). Row-action methods for huge and sparse systems and their applications. SIAM review, 23(4), 444-466.
\bibitem{mainge2010viscosity}Maingé, P. E. (2010). The viscosity approximation process for quasi-nonexpansive mappings in Hilbert spaces. Computers \& Mathematics with Applications, 59(1), 74-79.
\bibitem{cui2014iterative}Cui, H., \& Wang, F. (2014). Iterative methods for the split common fixed point problem in Hilbert spaces. Fixed Point Theory and Applications, 2014(1), 78.
\bibitem{chen2015extra}Chen, J. Z., Ceng, L. C., Qiu, Y. Q., \& Kong, Z. R. (2015). Extra-gradient methods for solving split feasibility and fixed point problems. Fixed Point Theory and Applications, 2015(1), 192.
\bibitem{takahashi2003weak}Takahashi, W., \& Toyoda, M. (2003). Weak convergence theorems for nonexpansive mappings and monotone mappings. Journal of Optimization Theory and Applications, 118(2), 417-428.
\bibitem{nadezhkina2006weak}Nadezhkina, N., \& Takahashi, W. (2006). Weak convergence theorem by an extragradient method for nonexpansive mappings and monotone mappings. Journal of Optimization Theory and Applications, 128(1), 191-201.
\bibitem{ceng2013relaxed}Ceng, L. C., Petruşel, A., \& Yao, J. C. (2013, March). Relaxed extragradient methods with regularization for general system of variational inequalities with constraints of split feasibility and fixed point problems. In Abstract and Applied Analysis (Vol. 2013). Hindawi Publishing Corporation.
\bibitem{korpelevi1976extragradient}Korpelevich, G. M. (1976). The extragradient method for finding saddle points and other problems. Matecon, 12, 747-756.
\bibitem{moudafi2013simultaneous}Moudafi, A., \& Al-Shemas, E. (2013). Simultaneous iterative methods for split equality problem. Trans. Math. Program. Appl, 1(2), 1-11.
\bibitem{opial1967weak}Opial, Z. (1967). Weak convergence of the sequence of successive approximations for nonexpansive mappings. Bulletin of the American Mathematical Society, 73(4), 591-597.
\bibitem{kbul}Bulama, L. M., \& Kiliçman, A. (2016). On synchronal algorithm for fixed point and variational inequality problems in hilbert spaces. SpringerPlus, 5(1), 103.
\bibitem{yamada} I. Yamada, \emph{The hybrid steepest decent method for variational inequality problem over the inter section the inter section of the fixed point sets of nonexpansive.} Inherently Parallel Algorithms in Feasibility and Optimization and their Applications.
\bibitem{lbm}Mohammed, L. B., \& Kiliçman, A. (2015, March). Strong convergence for the split common fixed-point problem for total quasi-asymptotically nonexpansive mappings in Hilbert space. In Abstract and Applied Analysis (Vol. 2015). Hindawi Publishing Corporation.
\bibitem{marino} G. Marino, and H.K. Xu, \emph{A General Iterative Method for Nonexpansive Mapping in Hilbert space.} J. Math Anal. Appl. 318 (1) 43-52 (2006).
 \bibitem{t} M. Tain, \emph{A General Iterative Algorithm for Quasi-Nonexpansive Mapping in Hilbert space.} J. Nonlinear Analysis: Theory, Methods and Applications 73 (3), 689-694 (2010).
\bibitem{di} M. Tian and L. Di, \emph{Synchronal algorithm and cyclic algorithm for fixed point problems and variational inequality problems in Hilbert space }. J. Fixed Point and Applications 21, 1-14 (2011).
\bibitem{g} G. Marino, and  H.K. Xu \emph{Weak and strong convergence theorems for strict-pseudocontractions in Hilbert spaces.} J. Math. Anal. Appl. 329,336-346 (2007).
\bibitem{li} S.S. Changa, H.W. Joseph Lee, C.K. Chanb, L. Wanga, L.J. Qin, \emph{ Split feasibility problem for quasi-nonexpansive multi-valued mappings and total asymptotically strict pseudo-contractive mapping}. Applied Mathematics and Computation  219 (20) 10416–10424 (2013).
\bibitem{5kil}Kilicman, A., \& Mohammed, L. B. (2016, June). Strong convergence for the split common fixed point problems for demicontractive mappings in Hilbert spaces. In S. Salleh, N. A. Aris, A. Bahar, Z. M. Zainuddin, N. Maan, M. H. Lee, \& Y. M. Yusof (Eds.), AIP Conference Proceedings (Vol. 1750, No. 1, p. 050016). AIP Publishing.








\end{thebibliography}
\end{document}